\newtheorem{thm}{Theorem}
\newtheorem*{thm*}{Theorem}
\newtheorem*{question*}{Question}
\newtheorem{prop}{Proposition}[section]
\newtheorem{lem}[prop]{Lemma}
\newtheorem{cor}[prop]{Corollary}
\newtheorem{rem}[prop]{Remark}
\DeclareMathOperator{\bR}{\mathbb{R}}
\DeclareMathOperator{\bZ}{\mathbb{Z}}
\DeclareMathOperator{\cA}{\mathcal{A}}
\DeclareMathOperator{\cC}{\mathcal{C}}
\DeclareMathOperator{\cD}{\mathcal{D}}
\DeclareMathOperator{\cE}{\mathcal{E}}
\DeclareMathOperator{\cF}{\mathcal{F}}
\DeclareMathOperator{\cI}{\mathcal{I}}
\DeclareMathOperator{\cK}{\mathcal{K}}
\DeclareMathOperator{\cL}{\mathcal{L}}
\DeclareMathOperator{\cM}{\mathcal{M}}
\DeclareMathOperator{\cP}{\mathcal{P}}
\DeclareMathOperator{\cS}{\mathcal{S}}
\DeclareMathOperator{\cT}{\mathcal{T}}
\DeclareMathOperator{\cX}{\mathcal{X}}
\DeclareMathOperator{\cY}{\mathcal{Y}}
\DeclareMathOperator{\cZ}{\mathcal{Z}}
\DeclareMathOperator{\ft}{\mathfrak{t}}
\DeclareMathOperator{\PFC}{\operatorname{PFC}}
\DeclareMathOperator{\PFH}{\operatorname{PFH}}
\DeclareMathOperator{\ECH}{\operatorname{ECH}}
\DeclareMathOperator{\ECC}{\operatorname{ECC}}
\DeclareMathOperator{\CZ}{\operatorname{CZ}}
\title[Low-action holomorphic curves and invariant sets]{Low-action holomorphic curves and invariant sets}
\author{Dan Cristofaro-Gardiner \and Rohil Prasad}
\email{dcristof@umd.edu, rrprasad@berkeley.edu}
\begin{document}

\begin{abstract}

We prove a compactness theorem for sequences of low-action punctured holomorphic curves of controlled topology, in any dimension, without imposing the typical assumption of uniformly bounded Hofer energy.  In the limit, we extract a family of closed Reeb-invariant subsets.   Then, we prove new structural results for the $U$-map in ECH and PFH, implying that such sequences exist in abundance in low-dimensional symplectic dynamics.

We obtain applications to symplectic dynamics and to the geometry of surfaces.  First, we prove  generalizations to higher genus surfaces and three-manifolds of the celebrated Le Calvez--Yoccoz theorem.   Second, we show that for any closed Riemannian or Finsler surface a dense set of points have geodesics passing through them that visit different sections of the surface.  Third, we prove a version of Ginzburg--G\"{u}rel's ``crossing energy bound'' for punctured holomorphic curves, of arbitrary topology, in symplectizations of any dimension.  
\end{abstract}

\maketitle

\tableofcontents

\section{Introduction}

\subsection{Main results}

The detection and classification of compact invariant sets is a fundamental question in dynamical systems.  For example, a natural question asks how much of the parameter space is seen by a given trajectory; the closure of any such trajectory is a compact invariant set.  For diffeomorphisms of the circle and smooth flows on the plane, the theorems of Denjoy \cite{Denjoy32} and Poincar\'e--Bendixson \cite{Bendixson01} give a nearly complete picture.  The situation in higher dimensions, on the other hand, is much more mysterious, with a far greater diversity in the possible behaviors.  

 It is the detection question for invariant sets that will concern us here.  Perhaps the main theme of our paper is that, at least in low dimensional symplectic dynamics, such sets exist in abundance.   To prove this, we prove a new result, that holds in any dimension and for a broad class of dynamical systems, detecting invariant sets via Gromov's theory of pseudoholomorphic curves; this theorem is inspired by the recent work of Fish--Hofer \cite{FH23}, producing invariant sets for Hamiltonian flows on $\mathbb{R}^4$.  
 
What seems special about low-dimensions, at least from the point of view of the present work, is that, as we prove here, the necessary curves exist in a wide range of situations of interest.

We illustrate all this by first discussing the main dynamical applications.

\subsubsection{A general Le Calvez--Yoccoz property}
\label{sec:ley}
 
Recall that an invariant set $U$ of a map or flow, closed or not, is called \emph{minimal} if the orbit of each initial condition $p \in U$ is dense in $U$. A groundbreaking $1997$ paper by Le Calvez--Yoccoz \cite{LCY97}, improving on an earlier result of Handel \cite{Handel92}, showed that for any homeomorphism of $S^2$ the complement of an invariant finite set of points is never minimal.  Their result resolved the $2$-dimensional case of an old question of Ulam from the Scottish Book \cite[p. $208$]{scottish}.   

Our first results give a generalization of this, in the smooth symplectic setting, to higher-genus surfaces and $3$-manifolds. 

\begin{thm}\label{thm:2d_mid}
Let $\Sigma$ be a closed, oriented surface and let $\phi: \Sigma \to \Sigma$ be any monotone area-preserving diffeomorphism. Then for any proper compact invariant set $\Lambda \subset \Sigma$, the complement $\Sigma\,\setminus\,\Lambda$ is not minimal. 
\end{thm}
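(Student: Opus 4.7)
The plan is to translate the problem to the mapping torus $Y_\phi$, apply the paper's new structural results on the $U$-map in $\PFH$ to produce a sequence of low-action holomorphic curves through a prescribed basepoint, and then invoke the main compactness theorem to extract a closed Reeb-invariant subset whose projection to $\Sigma$ has arbitrarily small area. The contradiction will come from observing that minimality of $\Sigma \setminus \Lambda$ forces any closed $\phi$-invariant subset meeting $\Sigma \setminus \Lambda$ to have area bounded below by a fixed positive constant.

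In more detail, I would form the mapping torus $Y_\phi = (\Sigma \times \mathbb{R})/((x,t+1)\sim(\phi(x),t))$ with its canonical stable Hamiltonian structure and Reeb vector field $R = \partial_t$, so that a $\phi$-invariant subset $\Lambda \subset \Sigma$ lifts to an $R$-invariant subset $\widetilde{\Lambda} \subset Y_\phi$. Assuming for contradiction that $\Sigma \setminus \Lambda$ is minimal, every closed $\phi$-invariant subset $K' \subset \Sigma$ meeting $\Sigma \setminus \Lambda$ must contain the orbit closure of any of its points there, hence must contain $\overline{\Sigma \setminus \Lambda} = \Sigma \setminus \mathrm{int}(\Lambda)$, which has positive area $A$ because $\Lambda \neq \Sigma$. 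Next, I would fix a marked point $q \in Y_\phi$ whose projection to $\Sigma$ lies in $\Sigma \setminus \Lambda$, and consider the $U$-map $U_q$ on $\PFH(\phi)$ defined by counting curves through $\{0\} \times \{q\}$ in the symplectization $\mathbb{R} \times Y_\phi$; the monotone hypothesis is exactly what one needs for $\PFH(\phi)$ to be well-defined with ordinary (untwisted) coefficients. The structural result on the $U$-map should then produce, for each large integer $n$, nonzero classes $\alpha_n, \beta_n$ with $U_q[\alpha_n] = [\beta_n]$ and with action gap (appropriately normalized by the $\PFH$ degree) tending to zero; unwinding gives a sequence of punctured $J$-holomorphic curves $C_n \subset \mathbb{R} \times Y_\phi$ of controlled topology, each passing through $(0,q)$, with normalized action tending to zero.

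To conclude, I would apply the main compactness theorem to the sequence $\{C_n\}$: after passing to a subsequence, one extracts a family $\{K_s\}_{s \in I}$ of closed $R$-invariant subsets of $Y_\phi$ such that $q \in K_{s_0}$ for some $s_0 \in I$, and such that the union $\bigcup_s K_s$ has vanishing symplectic measure (inherited from the vanishing action of the $C_n$). Projecting $K_{s_0}$ to $\Sigma$ then yields a closed $\phi$-invariant subset $K'_{s_0}$ containing the projection of $q$ (and hence meeting $\Sigma \setminus \Lambda$) but of area zero, contradicting the lower bound $A > 0$ derived from minimality.

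The principal obstacle I anticipate is the quantitative interplay between the low-action assumption on the $C_n$ and the symplectic measure of the limiting family: verifying that vanishing normalized action on the curves genuinely produces $R$-invariant subsets with vanishing symplectic measure is the technical heart of the argument. A related subtlety is ensuring the basepoint $q$ is inherited by some $K_s$ (rather than escaping vertically in the symplectization), which should be handled by marking each $C_n$ at a point of the finite slice $\{0\} \times Y_\phi$ and extracting the limit through this slice. That the $U$-map structural result furnishes curves through a \emph{prescribed} point of $Y_\phi$---rather than some unspecified point---is crucial, and represents the main new input beyond the standard $\PFH$ machinery.
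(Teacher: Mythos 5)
Your reduction to the mapping torus, the invocation of the PFH $U$-map to produce low-action curves through a prescribed point, and the appeal to the main compactness theorem all track the paper's architecture. But the final step contains a genuine gap that the argument cannot survive: you assert that the family $\{K_s\}$ of limiting invariant sets has \emph{vanishing symplectic measure}, ``inherited from the vanishing action of the $C_n$,'' and then project to get a zero-area $\phi$-invariant set through $q$. Nothing in Theorem~\ref{thm:limit_set_intro} (or anywhere else in the paper) gives measure control on the extracted invariant sets. The limit set is a \emph{Hausdorff} limit of height-$2$ slices $u_k(C_k)\cap(s_k-1,s_k+1)\times Y$; each slice is a real $2$-dimensional subset (the curve itself, not an invariant set), and its Hausdorff limit can have arbitrarily large, even full, Lebesgue measure. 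Small action $\int_C u^*\omega\to 0$ controls an integral quantity on the curve, not the size of a Hausdorff limit of its slices --- think of a very thin filament that becomes $\epsilon$-dense in $\Sigma$: its $\omega$-area can be made arbitrarily small while its Hausdorff limit is all of $\Sigma$. Worse, the Hofer energy of the curves produced by Proposition~\ref{prop:pfh} grows like $d$ as the action shrinks like $d^{-1/2}$, so the slices are in fact becoming \emph{longer}, not more concentrated. So the contradiction with your (correct) area lower bound from minimality does not materialize.

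The paper avoids this issue entirely by not arguing measure-theoretically. It first proves the stronger Theorem~\ref{thm:2d} (that $\Lambda$ is not locally maximal whenever $\Lambda\supseteq\cP$ and $\Lambda\neq\Sigma$), and for that it uses \emph{connectedness} of the limit set (Proposition~\ref{prop:lim}) together with the finite Hofer energy bound: one element of $\cX$ is trapped near the closure of the periodic orbits and hence contained in $\Lambda$, another element contains $q\notin\Lambda$, and Conley's characterization of locally maximal invariant sets (Lemma~\ref{lem:loc_maximal}) applied to the connected family $\{K\cup\Lambda : K\in\cX\}$ yields the contradiction. Theorem~\ref{thm:2d_mid} then follows by the elementary observation that a locally non-maximal $\Lambda$ admits a nearby invariant set $\Lambda'$ with $\Lambda'\cap(\Sigma\setminus\Lambda)$ a proper nonempty subset of $\Sigma\setminus\Lambda$. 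Your proposal uses none of the connectedness structure or the Hofer energy bound of Proposition~\ref{prop:pfh}(b), and these are exactly the inputs needed to replace the false measure claim. You also omit the degenerate-to-nondegenerate approximation (with the attendant stability of constants in the compactness theorem), though that is a lesser issue.
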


\begin{thm}\label{thm:3d_mid}
Let $Y$ be a closed, oriented $3$-manifold equipped with a co-oriented contact structure $\xi$ with torsion first Chern class. Let $\lambda$ be any contact form defining $\xi$ and let $\phi = \{\phi^t\}_{t \in \bR}$ denote the Reeb flow of $\lambda$. Then for any proper compact invariant set $\Lambda \subset Y$, the complement $Y\,\setminus\,\Lambda$ is not minimal.
\end{thm}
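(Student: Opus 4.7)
Argue by contradiction: assume $Y \setminus \Lambda$ is minimal. After a standard reduction on $\Lambda$, we may assume that $\Lambda$ has empty interior, so $\overline{Y \setminus \Lambda} = Y$. The goal is then to produce a nonempty closed Reeb-invariant subset $Z \subsetneq Y$ that meets $Y \setminus \Lambda$. Indeed, choosing any $p \in Z \cap (Y \setminus \Lambda)$, minimality forces the Reeb orbit of $p$ to be dense in $Y \setminus \Lambda$, and hence in $Y$; since $Z$ is closed and Reeb-invariant, $Z$ would have to equal $Y$, contradicting $Z \subsetneq Y$.

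Since $c_1(\xi)$ is torsion, $\ECH(Y, \lambda)$ is equipped with an absolute $\bZ$-grading, and after perturbing $\lambda$ to be non-degenerate the $U$-map is defined and decreases grading by $2$. The plan is to invoke the paper's new structural results on the $U$-map: for the prescribed marked point $p \in Y \setminus \Lambda$ and a generic $\lambda$-adapted almost complex structure $J$ on $\bR \times Y$, these results should produce an infinite sequence of punctured $J$-holomorphic curves $C_k \subset \bR \times Y$ such that each $C_k$ passes through $(0, p)$, the genera and total number of positive and negative ends of the $C_k$ are uniformly bounded in $k$, and the difference between the total action at the positive and negative ends of $C_k$ is uniformly bounded in $k$. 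This last estimate is precisely the ``low-action'' hypothesis feeding the paper's compactness theorem.

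Apply the compactness theorem to $\{C_k\}$ to extract a nonempty closed Reeb-invariant subset $Z \subset Y$; since the marked-point incidence passes to the limit, $p \in Z$. The uniform low-action bound also forces $Z \subsetneq Y$: were $Z = Y$, the limit invariant set would have to carry a quantity of Reeb action comparable to the full contact volume of $(Y, \lambda)$, which is incompatible with the uniform action bound on the $C_k$. After undoing the non-degenerate perturbation of $\lambda$ by a further compactness argument, one recovers such a $Z$ for the original contact form, and the first paragraph then delivers the contradiction.

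The main obstacle is the structural $U$-map result itself: extracting from iterates of $U$ a sequence of curves whose aggregate action grows at most linearly in their topological complexity while keeping the genus and number of ends uniformly bounded, so that after normalization one lands in the low-action regime. A naive iteration of $U$ produces curves whose action grows linearly with the grading drop but whose Hofer energy is completely uncontrolled, which is exactly why the classical Gromov--Hofer compactness is insufficient and the paper's new compactness theorem -- which tolerates unbounded Hofer energy -- becomes indispensable. A secondary technical point is verifying the strict inclusion $Z \subsetneq Y$ quantitatively from the action data, which requires a careful comparison between the limiting action mass absorbed by $Z$ and the contact volume of $Y$.
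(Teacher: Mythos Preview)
Your argument has a genuine gap at the step where you claim $Z \subsetneq Y$. You write that if $Z = Y$ then ``the limit invariant set would have to carry a quantity of Reeb action comparable to the full contact volume,'' but this is not a meaningful statement: a closed invariant subset of $Y$ does not carry any ``action,'' and the curve action $\mathcal{A}(C_k) = \int_{C_k} d\lambda \to 0$ only tells you that the tangent planes of $C_k$ are asymptotically vertical. It says nothing about how spread out the image of $C_k$ is in $Y$. Indeed, the Hofer energy of the curves produced by the $U$-map (which controls the length of level sets) grows without bound, so nothing prevents the slice through $(0,p)$ from Hausdorff-converging to all of $Y$. The ``secondary technical point'' you flag at the end is in fact the entire difficulty, and the comparison you propose between ``limiting action mass'' and contact volume does not exist.

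The paper does \emph{not} attempt to show that any single limit invariant set is proper. Instead it exploits two features of the curve sequence that you do not use. First, each $C_k$ has \emph{finite} Hofer energy, so its ends are asymptotic to closed Reeb orbits; taking slices at heights $s_k \to \pm\infty$ shows that \emph{some} element of the limit set $\mathcal{X}$ is contained in the closure $\overline{\mathcal{P}}$ of the periodic orbits. Second, after passing to a subsequence, $\mathcal{X}$ is \emph{connected} in the Hausdorff topology. Now one first reduces to the case where $\Lambda$ contains all periodic orbits (otherwise a periodic orbit already lies in $Y \setminus \Lambda$ and minimality fails trivially), so $\overline{\mathcal{P}} \subseteq \Lambda$. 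Then $\{\Lambda \cup K : K \in \mathcal{X}\}$ is a connected family of closed invariant sets containing both $\Lambda$ itself and a set strictly containing $\Lambda$ (namely $\Lambda$ union the element of $\mathcal{X}$ through $p$). By Conley's characterization of local maximality, $\Lambda$ is not locally maximal, and non-minimality of $Y \setminus \Lambda$ follows. The key mechanism is the connected \emph{family} interpolating between something inside $\Lambda$ and something outside, not a single proper invariant set.
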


To put this in context, let us emphasize two main points.  The first involves the assumptions in the theorem.  Without any assumptions, the above theorems are clearly false, even in the conservative setting; one can consider, for example, an irrational translation or flow on a torus.  At the same time, the assumptions we impose apply to broad classes of systems in Hamiltonian dynamics.  For example, any Hamiltonian diffeomorphism of a closed symplectic surface is monotone, as is any rational area-preserving diffeomorphism of the $2$-torus.  As we will see, any geodesic flow on a closed Finsler surface corresponds to a Reeb flow with torsion first Chern class.  ``Most" three-manifolds are rational homology spheres, and these also give examples.  We discuss to what degree these assumptions could potentially be weakened in \S\ref{sec:rational}.

The next point involves what is novel about the results.
What is important 
in the context of conservative dynamics is the level of generality.  The simplest kinds of compact invariant sets are {\em periodic orbits}: this means that the invariant set is homeomorphic to a circle (in the case of flows) or a finite set of points with a transitive action (in the discrete setting). Previous results have established theorems like the above under strong dynamical assumptions such as the existence of only finitely many periodic orbits \cite{GG18, CGGM23};  prior results on the closing lemma (see \cite{Irie15, AsaokaIrie16, CGPZ21, EH21}) show that a $C^\infty$-generic system of the type we consider has a dense set of periodic points. 
That these theorems hold much more generally, at least in low dimensions, seems to us to be a quite new and arguably unexpected phenomenon.  

Theorem~\ref{thm:2d_mid} and Theorem~\ref{thm:3d_mid} guarantee the existence of an abundance of compact invariant sets. Moreover, the compact invariant sets are spread out in the manifold. For example, we obtain the following corollaries. 

\begin{cor}\label{cor:2d_elementary}
Under the assumptions of Theorem~\ref{thm:2d_mid}, the map $\phi$ has infinitely many distinct proper compact invariant sets whose union is dense in $\Sigma$. 
\end{cor}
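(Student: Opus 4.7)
The plan is to derive both assertions — infinitely many distinct proper compact invariant sets and density of their union — from repeated application of Theorem~\ref{thm:2d_mid}. The key observation is that, for any nonempty proper compact invariant set $\Lambda$, Theorem~\ref{thm:2d_mid} furnishes a point $p \in \Sigma \setminus \Lambda$ whose orbit is not dense in $\Sigma \setminus \Lambda$; that is, there exists $q \in \Sigma \setminus \Lambda$ with $q \notin \overline{\cO(p)}$. Then $\Lambda \cup \overline{\cO(p)}$ is closed, $\phi$-invariant, strictly contains $\Lambda$ (via $p$), and omits $q$, so it is a nonempty proper compact invariant set strictly larger than $\Lambda$.

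To produce infinitely many distinct such sets, I would build a strictly increasing chain $C_0 \subsetneq C_1 \subsetneq C_2 \subsetneq \cdots$. For $C_0$ take any nonempty proper compact invariant set; one exists by applying Theorem~\ref{thm:2d_mid} with $\Lambda = \emptyset$ (so $\Sigma$ itself is not minimal) and taking the closure of an orbit that is not dense in $\Sigma$. The inductive step is precisely the key observation applied to $\Lambda = C_k$, which yields $C_{k+1} := C_k \cup \overline{\cO(p_{k+1})}$, a nonempty proper compact invariant set strictly containing $C_k$. The sets $\{C_k\}_{k \geq 0}$ are then pairwise distinct.

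For density, I would let $W$ denote the union of all nonempty proper compact invariant sets and show $\overline{W} = \Sigma$ by contradiction. The closure $\overline{W}$ is closed, $\phi$-invariant, and nonempty (since at least one nonempty proper compact invariant set exists, by the previous paragraph). If $\overline{W} \neq \Sigma$, then $\overline{W}$ is itself a nonempty proper compact invariant set. Applying Theorem~\ref{thm:2d_mid} with $\Lambda = \overline{W}$ produces $p \in \Sigma \setminus \overline{W}$ with $\overline{\cO(p)}$ a proper subset of $\Sigma$. But then $\overline{\cO(p)}$ is itself a nonempty proper compact invariant set, so $p \in \overline{\cO(p)} \subseteq W \subseteq \overline{W}$, contradicting $p \notin \overline{W}$. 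Combining the two constructions, the collection of all proper compact invariant sets is infinite and has dense union, which is exactly the corollary.

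There is no significant obstacle; the entire argument is point-set, once Theorem~\ref{thm:2d_mid} is in hand. The one delicate point is the extraction of the \emph{second} point $q \in \Sigma \setminus \Lambda$ with $q \notin \overline{\cO(p)}$ from non-minimality: it is precisely this $q$ that guarantees $\Lambda \cup \overline{\cO(p)}$ remains proper, so that the enlargement step in the chain does not stall and the closure argument for density goes through.
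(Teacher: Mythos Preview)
Your proposal is correct and follows essentially the same approach as the paper: both arguments hinge on the fact that Theorem~\ref{thm:2d_mid} forces any proper compact invariant set to admit a strictly larger proper compact invariant set, and both deduce density by observing that the closure of the union of all such sets would otherwise be a maximal element. The only cosmetic differences are that the paper phrases the infinitude as ``a nonempty poset with no maximal element is infinite'' rather than building an explicit chain, and the paper obtains the initial nonempty proper invariant set from the existence of a periodic orbit (via Proposition~\ref{prop:ucycle}) rather than by applying Theorem~\ref{thm:2d_mid} with $\Lambda = \emptyset$.
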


\begin{cor}\label{cor:3d_mid}
Under the assumptions of Theorem~\ref{thm:3d_mid}, the Reeb flow has infinitely many distinct proper compact invariant sets whose union is dense in $Y$. 
\end{cor}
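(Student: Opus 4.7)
The plan is to derive both claims from Theorem~\ref{thm:3d_mid} together with Taubes's proof of the Weinstein conjecture, via a maximality argument on the poset $\cF$ of proper compact $\phi$-invariant subsets of $Y$ (ordered by inclusion). Writing $\Lambda_p := \overline{\{\phi^t(p) : t \in \bR\}}$ for the orbit closure of a point $p$, my central observation will be the following \emph{enlargement lemma}: every $\Lambda \in \cF$ is strictly contained in some $\Lambda' \in \cF$. To prove it, I apply Theorem~\ref{thm:3d_mid} to $\Lambda$, obtaining a point $p \in Y \setminus \Lambda$ whose orbit is not dense in $Y \setminus \Lambda$; equivalently, there is a nonempty open $V \subset Y \setminus \Lambda$ with $V \cap \Lambda_p = \emptyset$. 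Then $\Lambda' := \Lambda \cup \Lambda_p$ is closed, $\phi$-invariant, misses $V$ (so is proper), and properly contains $\Lambda$.

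To produce infinitely many distinct proper compact invariant sets, I first invoke Taubes's theorem to exhibit a periodic Reeb orbit of $\lambda$, placing it in $\cF$; thus $\cF$ is nonempty. If $\cF$ were finite, as a finite poset it would have a maximal element, directly contradicting the enlargement lemma. Hence $\cF$ is infinite.

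For density, I argue by contradiction. Suppose the open set $W := Y \setminus \overline{\bigcup_{\Lambda \in \cF} \Lambda}$ is nonempty. Any $q \in W$ must satisfy $\Lambda_q = Y$, since otherwise $\Lambda_q \in \cF$ would place $q$ inside $\bigcup_{\Lambda \in \cF} \Lambda$, contradicting $q \in W$. Now Zorn's lemma applies to $\cF$: each $\Lambda \in \cF$ is contained in the closed set $Y \setminus W$, so for any chain $\{\Lambda_\alpha\} \subset \cF$ the closure $\overline{\bigcup_\alpha \Lambda_\alpha}$ stays inside $Y \setminus W$ and is a proper compact $\phi$-invariant upper bound in $\cF$. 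Let $\Lambda^* \in \cF$ be maximal. Applying the enlargement lemma to $\Lambda^*$ produces $\Lambda^* \cup \Lambda_p \supsetneq \Lambda^*$ for some $p$, and one checks $\Lambda_p \cap W = \emptyset$: a point $q \in \Lambda_p \cap W$ would give $Y = \Lambda_q \subset \Lambda_p$, contradicting properness. Hence $\Lambda^* \cup \Lambda_p$ also lies in $\cF$, contradicting the maximality of $\Lambda^*$. The hard part will be precisely this last check: the observation that every orbit through $W$ has closure equal to $Y$ is exactly what prevents the enlargement step from leaking into $W$, and so drives the contradiction.
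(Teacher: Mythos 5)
Your proposal is correct and follows the same overall structure as the paper: the key step is your ``enlargement lemma,'' which is exactly what the paper uses (phrased there as the statement that the poset $\cK'$ of proper compact invariant sets has no maximal element), and the infinitude then follows because a finite nonempty poset would have a maximal element. Where you diverge is the density step. The paper argues it in one line: the set $Z := \overline{\bigcup_{\Lambda \in \cK'}\Lambda}$ is itself compact and invariant and contains every element of $\cK'$, so if $Z \neq Y$ it would be the maximum (hence maximal) element of $\cK'$, a contradiction; therefore $Z = Y$. You instead assume $W := Y \setminus Z \neq \emptyset$, verify that chains in $\cF$ have upper bounds (because they stay inside the closed set $Y \setminus W$), and invoke Zorn's lemma to extract a maximal element to contradict the enlargement lemma. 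This works, but it is more machinery than the situation requires: the paper's $Z$ is itself the maximal element, so there is nothing to extract. Moreover, the check you single out as ``the hard part'' --- that $\Lambda_p \cap W = \emptyset$ --- is redundant: the enlargement lemma already hands you $\Lambda^* \cup \Lambda_p \in \cF$ (properness was established there via the open set $V$), which directly contradicts maximality of $\Lambda^*$; whether or not $\Lambda_p$ ``leaks into $W$'' is irrelevant to that contradiction, and in any case the disjointness from $W$ follows automatically once you know $\Lambda^* \cup \Lambda_p \in \cF$, since by construction $W$ is disjoint from every element of $\cF$.
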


Corollary~\ref{cor:2d_elementary} and \ref{cor:3d_mid} are clearly false if one requires the compact invariant sets to be periodic orbits. For example, an irrational rotation of a two-sphere satisfies the assumptions of Theorem~\ref{thm:2d_mid}, but has just two periodic points.  

\subsubsection{The geometry of surfaces} 

In the setting of Riemannian or more generally Finslerian manifolds, it is natural to ask how much of the manifold is visited by a given geodesic. Perhaps the simplest dichotomy in this direction is between the dense and non-dense geodesics.  At one extreme, one could imagine that every geodesic is dense; this can not occur, because a closed geodesic can not be dense, but it is natural to wonder how far off it is from the actual behavior. In fact, for surfaces we have the following result contrasting this sharply:
 
\begin{thm}\label{thm:geodesics}
Let $F$ be any closed Finsler surface. Then there exists an infinite collection $G$ of geodesics such that
\begin{enumerate}[(a)]
\item $\gamma(\bR)$ is not dense in $F$ for any $\gamma \in G$;
\item The union $\bigcup_{\gamma \in G} \gamma(\bR)$ is dense in $F$;
\item The closures $\overline{\gamma(\bR)}$ and $\overline{\gamma'(\bR)}$ are distinct for any pair of distinct elements $\gamma, \gamma' \in G$. 
\end{enumerate}
\end{thm}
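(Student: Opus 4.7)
The plan is to recognize the geodesic flow of $F$ as a Reeb flow on a closed oriented three-manifold with torsion first Chern class, and then apply Theorem~\ref{thm:3d_mid} and Corollary~\ref{cor:3d_mid}. First I would let $Y := ST^*F$ be the unit cotangent bundle, with contact form $\lambda$ obtained by restricting the canonical one-form on $T^*F$; then $Y$ is a closed oriented three-manifold, and the Reeb flow of $\lambda$ coincides with the geodesic flow on $F$ under the bundle projection $\pi:Y\to F$. The contact distribution $\xi = \ker\lambda$ contains the trivial vertical line bundle of the principal $S^1$-bundle $\pi$, hence admits a nowhere-vanishing section, so $c_1(\xi) = 0 \in H^2(Y;\bZ)$; in particular $c_1(\xi)$ is torsion and the hypotheses of Theorem~\ref{thm:3d_mid} are satisfied.

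Next I would apply Corollary~\ref{cor:3d_mid} to produce a countable family $\{\Lambda_i\}_{i \in \mathbb{N}}$ of pairwise distinct proper compact Reeb-invariant subsets of $Y$ whose union is dense in $Y$. For each $i$, I would choose a topologically minimal Reeb-invariant subset $M_i \subseteq \Lambda_i$ via Zorn's lemma and pick an orbit $\mathcal{O}_{p_i} \subseteq M_i$; by minimality $\overline{\mathcal{O}_{p_i}} = M_i$, so the associated geodesic $\gamma_{p_i}$ on $F$ satisfies $\overline{\gamma_{p_i}(\bR)} = \pi(M_i)$. Whenever $\pi(M_i) \subsetneq F$, the geodesic $\gamma_{p_i}$ is non-dense in $F$, providing a candidate member of $G$. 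The desired collection $G$ would then be extracted by a diagonal selection: first pass to a subfamily with pairwise distinct closures in $F$ (at most countably many closures arise), then refine once more, using the density of $\bigcup_i \pi(\Lambda_i)$ in $F$, so that $\bigcup_{\gamma \in G} \gamma(\bR)$ is dense in $F$.

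The hard part will be controlling the projections $\pi(M_i)$: a priori one can have a proper minimal Reeb-invariant subset $M_i \subsetneq Y$ with $\pi(M_i) = F$, as happens for the irrational invariant two-tori in the unit cotangent bundle of a flat torus. Two observations should overcome this. First, every closed geodesic on $F$ (at least one of which exists by Taubes's resolution of the Weinstein conjecture) yields a one-dimensional $\pi(M)\subsetneq F$. Second, whenever $\pi(M_i) = F$ one may reapply Theorem~\ref{thm:3d_mid} with $\Lambda = M_i$ to produce new orbits in $Y \setminus M_i$; iterating while tracking density yields infinitely many invariant sets with projection strictly smaller than $F$. Making this systematic---so that the family produced by Corollary~\ref{cor:3d_mid} cannot be exhausted by ``bad'' minimal sets whose projection is all of $F$---is the technical core of the argument.
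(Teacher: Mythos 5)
Your reduction to the Reeb setting is correct (and your observation that $c_1(\xi)=0$ via the nowhere-vanishing vertical section is a clean way to verify torsion), but the structural gap you flag at the end is real and the suggested fix does not close it. Applying Corollary~\ref{cor:3d_mid} gives a dense family $\{\Lambda_i\}$ of proper compact invariant sets of the Reeb flow on $Y$, but as you note, nothing prevents all (or cofinitely many) of the minimal pieces $M_i \subseteq \Lambda_i$ from satisfying $\pi(M_i) = F$: density of $\bigcup_i \Lambda_i$ in $Y$ simply does not survive the projection once you discard the ``bad'' $M_i$. Your proposed remedy---re-applying Theorem~\ref{thm:3d_mid} inside $Y\setminus M_i$ to find new orbits and ``iterating while tracking density''---produces new proper invariant sets of the Reeb flow, but there is again no mechanism forcing their projections to be proper subsets of $F$, so the iteration could loop forever without producing a single non-dense geodesic, let alone a dense family of them.

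The paper sidesteps this by changing the ambient object entirely: it works in the space of \emph{projected invariant sets} $\Xi = \pi(\Lambda) \subseteq F$ rather than in the space of invariant sets of the flow, and proves directly (Lemma~\ref{lem:geodesics_technical}) that every proper projected invariant set $\Xi$ is strictly contained in a larger proper one $\Xi'$. When $\Xi$ does not already contain all closed geodesics this is trivial; when it does, the argument forms the closure $\Lambda$ of the union of all compact invariant sets projecting into $\Xi$, observes this contains all closed Reeb orbits, and invokes the stronger Theorem~\ref{thm:3d} (local maximality) rather than the weaker minimality statement you use. Non-local-maximality produces a Hausdorff-convergent sequence $\Lambda_k \to \Lambda$ with $\Lambda \subsetneq \Lambda_k$, and---this is the step you are missing---the Hausdorff continuity of the pushforward $\pi_*$ (Lemma~\ref{lem:pushforward}) then forces $\pi(\Lambda_k)\to\Xi$, so $\pi(\Lambda_k)$ is proper for large $k$ while still strictly containing $\Xi$. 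That quantitative ``small perturbation'' control coming from non-local-maximality plus continuity of $\pi_*$ is exactly what guarantees properness of the new projection; it cannot be replaced by the softer non-minimality of Theorem~\ref{thm:3d_mid}, which gives no control on the Hausdorff distance of the new invariant set from the old one.
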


In other words, a dense set of points have a non-dense geodesic going through them; and, moreover, these geodesics all visit different sections of the surface.

In the negative curvature case, or the generic case, the above theorem is well-known, since in fact the stronger statement holds that the closed geodesics are dense.  However, we make no curvature assumption at all, and in this generality this property seems to be quite new.  Moreover, the theorem is false if one requires the geodesics to be closed. For example, there exist Finsler metrics on $S^2$ with exactly two geometrically distinct closed geodesics \cite{AK70}.

\subsubsection{Invariant sets from low-action holomorphic curves}

We now explain the promised detection theorem that we prove,
applicable in any dimension and of independent interest, for extracting invariant sets; it is used to prove all of the above dynamical results; it does not require a contact structure or even a stable Hamiltonian one.  


We work in the general setting of punctured holomorphic curves in symplectizations $\bR \times Y$ over framed Hamiltonian manifolds. A \emph{framed Hamiltonian structure} on a smooth, oriented manifold $Y$ of dimension $2n + 1 \geq 3$ is a pair $\eta = (\lambda, \omega)$ of a $1$-form $\lambda$ and a closed $2$-form $\omega$ such that $\lambda \wedge \omega^n > 0$. The \emph{Hamiltonian vector field} $R_\eta$ is defined implicitly by
$$\lambda(R_\eta) \equiv 1,\qquad \omega(R_\eta, -) \equiv 0.$$
The flow of $R_\eta$ preserves $\omega$ and the volume form $\lambda \wedge \omega^n$. This setup is an abstraction of many important classes of systems in symplectic and conservative dynamics, including mapping torii of symplectic diffeomorphisms, Reeb and stable Hamiltonian flows, and volume-preserving flows on three-manifolds. For example, if $\omega = d\lambda$, then $\lambda$ is a contact form and $R_\eta$ is its Reeb vector field. 

We follow the classical setup of holomorphic curve theory in symplectizations introduced by Hofer. Fix a Riemann surface $(C, j)$. A \emph{$J$-holomorphic curve} is a proper smooth map $u: C \to \bR \times Y$ satisfying the Cauchy--Riemann equation
$$J \circ Du = Du \circ j$$
 where $J$ is an \emph{$\eta$-adapted} almost-complex structure on $\bR \times Y$. This is a translation-invariant almost-complex structure restricting to a compatible almost-complex structure on the symplectic bundle $(\ker(\lambda), \omega)$ and sending $-R_\eta$ to the vector field $\partial_a$ defined by the $\bR$-coordinate on $\bR \times Y$. 
We say $u$ is \emph{standard} if the domain $C$ is homeomorphic to the complement of a finite subset of a closed Riemann surface. The geometry of a $J$-holomorphic curve in $\bR \times Y$ is controlled by the \emph{action} and \emph{Hofer energy}\footnote{This is not Hofer's original definition. Finiteness of $\mathcal{E}(C)$, however, is equivalent to finiteness of the original Hofer energy.}, defined respectively as
$$\mathcal{A}(u) := \int_C u^*\omega,\qquad \mathcal{E}(u) := \sup_{s \in \bR} \int_{C \cap u^{-1}(\{s\} \times Y)} u^*\lambda.$$
The action controls how far on average the tangent planes of $C$, which are $J$-invariant, are from the vertical plane spanned by $\partial_a$ and $R_\eta$. Therefore, a low-action holomorphic curve should approximate the vector field $R_\eta$ very well. The Hofer energy is, informally, the maximum length of the level sets of $C$ in $\bR \times Y$.


A key object in our method is the ``limit set'' of a sequence of holomorphic curves in a symplectization, which we now introduce. For any closed, smooth, odd-dimensional manifold $Y$, write $\cD(Y)$ for the space of pairs $(\eta, J)$ where $\eta$ is a framed Hamiltonian structure and $J$ is an $\eta$-adapted almost-complex structure. Equip it with the topology of $C^\infty$ convergence in both $\eta$ and $J$.  
Fix a pair $(\eta, J) \in \cD(Y)$ and a sequence $\{(\eta_k, J_k)\}$ in $\cD(Y)$ converging to $(\eta, J)$. Fix a sequence $\{u_k: C_k \to \bR \times Y\}$ where $u_k$ is $J_k$-holomorphic for each $k$. It is convenient to define $X := (-1, 1) \times Y$, and to define for any $s \in \bR$ the shift map $\tau_s: \bR \times Y \to \bR \times Y$, mapping $(t, y) \mapsto (t - s, y)$. Define the \emph{limit set} $\cX$ of the sequence $\{u_k\}$ to be the collection of all closed subsets $K \subseteq (-1, 1) \times Y$ arising as subsequential Hausdorff limits as $k \to \infty$ of height-$2$ slices of $u_k$. That is, there exists a sequence $\{s_k\}$ of real numbers such that 
a subsequence of 
$$\tau_{s_k} \cdot \Big(u_k(C_k) \cap (s_k - 1, s_k + 1) \times Y\Big) \subseteq X$$
converges in the Hausdorff topology to $K$. The limit set $\cX$ is a subset of $\cK(X)$, the space of all closed subsets of $X$ equipped with the topology of Hausdorff convergence. See \S~\ref{subsec:hausdorff} for a definition of the Hausdorff topology. 

The limit set has the following very important connectivity property:

\begin{prop} \label{prop:lim}
Fix a closed, smooth, oriented, odd-dimensional manifold $Y$ and fix a sequence $\{(\eta_k, J_k)\}$ converging in $\cD(Y)$ to a pair $(\eta, J)$. Let $\{u_k: C_k \to Y\}$ denote a sequence where $u_k$ is a standard $J_k$-holomorphic curve for each $k$. Then there exists a subsequence $\{u_{k_j}\}$ whose limit set $\cX$ is connected with respect to the Hausdorff topology. 
\end{prop}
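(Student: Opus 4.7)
The plan is to exhibit $\cX$ as a Hausdorff limit of connected subsets of $\cK(X)$, and then invoke the general fact that a Hausdorff limit of connected sets in a compact metric space is connected.

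For each $k$, I will consider the shift map $f_k \colon \bR \to \cK(X)$ defined by
\[
f_k(s) := \tau_s \cdot \bigl(u_k(C_k) \cap (s - 1, s + 1) \times Y\bigr),
\]
and first check that $f_k$ is continuous with respect to the Hausdorff topology on $\cK(X)$. Informally, this is because $u_k(C_k)$ is a closed subset of $\bR \times Y$ and the windows $(s-1, s+1) \times Y$ vary continuously with $s$: shifting $s$ by a small amount moves interior points of $f_k(s)$ only by a comparable amount, while any points gained or lost leak in or out through the open boundary of $X$, which is invisible in the Hausdorff topology on closed subsets of the locally compact space $X$ (see \S~\ref{subsec:hausdorff}). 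Setting $\Phi_k := \overline{f_k(\bR)} \in \cK(\cK(X))$, the connectedness of $\bR$ together with continuity of $f_k$ then shows each $\Phi_k$ is a closed, connected subset of $\cK(X)$.

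By compactness of $\cK(\cK(X))$, extract a subsequence $\{k_j\}$ along which $\Phi_{k_j}$ Hausdorff-converges to some $\Phi_\infty \in \cK(\cK(X))$. Since each $\Phi_{k_j}$ is connected and $\cK(X)$ is a compact metric space, $\Phi_\infty$ is connected: were $\Phi_\infty = B \sqcup C$ a separation into two nonempty closed sets, then $B$ and $C$ would be a positive distance apart in $\cK(X)$, so for large $j$ the set $\Phi_{k_j}$ would split into nonempty pieces in disjoint neighborhoods of $B$ and $C$, contradicting connectedness of $\Phi_{k_j}$.

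It remains to identify $\Phi_\infty$ with the limit set of the subsequence $\{u_{k_j}\}$. Given $K \in \cX$, the definition provides a sequence $s_{k_j}$ and a further subsequence along which $f_{k_j}(s_{k_j}) \in \Phi_{k_j}$ Hausdorff-converges to $K$, forcing $K \in \Phi_\infty$. Conversely, given $K \in \Phi_\infty$, Hausdorff convergence produces $K_j \in \Phi_{k_j}$ with $K_j \to K$; since $\Phi_{k_j} = \overline{f_{k_j}(\bR)}$, a diagonal argument yields $s_j \in \bR$ with $f_{k_j}(s_j) \to K$, placing $K$ in the limit set. I expect the main technical obstacle to lie in rigorously verifying continuity of the shift maps $f_k$ and compactness of $\cK(X)$ in the Hausdorff topology, both of which require careful setup of the topology on closed subsets of the non-compact space $X = (-1,1) \times Y$.
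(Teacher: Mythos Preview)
Your proposal is correct and follows essentially the same approach as the paper: show the slice map $s \mapsto f_k(s)$ is continuous (this is the paper's Lemma~\ref{lem:jcurve_cts}), deduce that the images $f_k(\bR)$ are connected subsets of the compact metrizable space $\cK(X)$, and then extract a subsequence whose limit set is connected. The only packaging difference is that the paper isolates the last step as an abstract lemma (Lemma~\ref{lem:connected_abstract}) working directly with the set of subsequential limit points of a sequence of connected subsets, whereas you take closures $\Phi_k = \overline{f_k(\bR)}$, pass to a Hausdorff-convergent subsequence in $\cK(\cK(X))$, invoke the standard fact that Hausdorff limits of connected sets are connected, and then run a diagonal argument to identify $\Phi_\infty$ with $\cX$; these are minor variants of the same argument.
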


A harder theorem, which we prove, is the following:

\begin{thm} \label{thm:limit_set_intro}
Fix a closed, smooth, oriented, odd-dimensional manifold $Y$ and a sequence $\{(\eta_k, J_k)\}$ converging in $\cD(Y)$ to a pair $(\eta, J)$. Let $\{u_k: C_k \to Y\}$ denote a sequence where $u_k$ is a standard $J_k$-holomorphic curve for each $k$ and let $\cX \subseteq \cK(X)$ denote their limit set.  Assume in addition that 
$$\lim_{k \to \infty} \cA(u_k) = 0\quad\text{and}\quad \inf_{k} \chi(C_k) > -\infty$$.
Then every set 
$\overline{\Lambda} \in \cX$ is equal to $(-1, 1) \times \Lambda$, where $\Lambda \in \cK(Y)$ is non-empty and invariant under the flow of the Hamiltonian vector field $R_\eta$. 
\end{thm}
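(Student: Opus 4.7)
The strategy is to use the vanishing of the action to show that each $u_k$ is near-vertical, so that any Hausdorff limit of shifted height-2 slices is a union of vertical cylinders over $R_\eta$-orbits in $Y$. Write $u_k = (a_k, f_k)$ and, using the splitting $TY = \bR \cdot R_{\eta_k} \oplus \ker(\lambda_k)$, decompose $df_k$ into its $R_{\eta_k}$-part and its $\ker(\lambda_k)$-part. The $(\eta_k, J_k)$-holomorphicity implies that $u_k^*\omega_k \geq 0$ pointwise, with value comparable to the squared norm of the horizontal component of $df_k$ with respect to $\omega_k(\cdot, J_k \cdot)$. Hence $\cA(u_k) \to 0$ forces the horizontal derivative of $f_k$ to vanish in $L^2$ over preimages of any compact subset of $\bR \times Y$. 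I would then prove the theorem in three steps: verticality of the limit, $R_\eta$-invariance, and non-emptiness.

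For verticality, I would combine the $L^2$ bound above with a monotonicity estimate for $J_k$-holomorphic curves (uniform since $(\eta_k, J_k) \to (\eta, J)$ in $\cD(Y)$) to upgrade $L^2$ smallness to $C^0$ closeness of the image: near any point $p_k$ in the shifted height-2 slice, the image $u_k(C_k)$ lies in an arbitrarily thin tube around the vertical arc through $p_k$, and extends a definite vertical distance in both directions, since asymptotic ends at punctures approach Reeb orbits and are themselves vertical. The topological hypothesis $\inf_k \chi(C_k) > -\infty$ is critical here: it bounds the number of punctures and potential bubble sites and, via monotonicity, controls how much horizontal area can concentrate in shrinking neighborhoods. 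Passing to the Hausdorff limit yields $\overline{\Lambda} = (-1,1) \times \Lambda$ with $\Lambda := \pi_Y(\overline{\Lambda}) \in \cK(Y)$.

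For $R_\eta$-invariance, fix $y_0 \in \Lambda$ and choose a sequence $p_k = u_k(x_k)$ with $p_k \to (t_0, y_0)$ after the shifts. In a flow-box for $\partial_a + R_\eta$ around $(t_0, y_0)$, verticality from the previous step places $u_k(C_k)$ eventually $C^0$-close to the integral arc $\{(s, \phi^{s-t_0}_\eta(y_0)) : |s - t_0| < \delta\}$, where $\delta > 0$ depends only on the flow-box size. Hausdorff convergence then puts $\phi^\tau_\eta(y_0)$ in $\Lambda$ for $|\tau| < \delta$. Since $\delta$ can be chosen uniformly over $y_0 \in Y$ by compactness, iterating and using closedness of $\Lambda$ extends the invariance to all $\tau \in \bR$. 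For non-emptiness, the shifts $s_k$ producing a non-trivial limit must eventually lie in the $\bR$-range of $a_k \circ u_k$, so that each shifted slice meets the compact set $\{0\} \times Y$; this meeting point then persists as a point of $\overline{\Lambda}$ in the Hausdorff limit.

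The principal obstacle is the $C^0$ verticality. The $L^2$ smallness of the horizontal derivative from $\cA(u_k) \to 0$ is immediate, but propagating it to $C^0$ control of the image without any uniform Hofer energy bound is delicate, since gradients may blow up at bubbles and standard Gromov compactness breaks down. The hypothesis $\inf_k \chi(C_k) > -\infty$ enters here in an essential way, controlling the topological complexity available for horizontal concentration and, combined with monotonicity, ruling out the scenarios in which the image could spread far in $Y$ within a small slab.
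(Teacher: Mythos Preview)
Your outline correctly identifies the overall structure and the central difficulty, but the step you flag as ``the principal obstacle'' --- upgrading $L^2$ smallness of the horizontal derivative to $C^0$ verticality of the image --- is not actually resolved in your proposal, and the tool you invoke is not the right one. Monotonicity gives a \emph{lower} bound on the area of a curve through a given point; it does not prevent the image from spreading far in $Y$ within a fixed slab. In the absence of a Hofer energy bound, the total area of $u_k$ inside $(s_k-1,s_k+1)\times Y$ can be arbitrarily large, so even with $\cA(u_k)\to 0$ the slice could a priori be dense in $Y$. The Euler characteristic bound does not ``bound the number of bubble sites'' in any directly usable sense here, since there is no global area bound to distribute among them.

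The paper's actual mechanism is substantially different and harder. The key input is a \emph{connected-local area bound} (Proposition~\ref{prop:local_area_bound}): for any $\zeta\in C_k$, the area of the connected component of $u_k^{-1}(\overline{B}_{\epsilon}(u_k(\zeta)))$ containing $\zeta$ is bounded by a constant depending only on $\chi(C_k)$ and ambient geometry, once $\cA(u_k)$ is small. This is where the Euler characteristic hypothesis is genuinely used, and its proof requires decomposing the domain into tracts with controlled boundary lengths (generalizing Fish--Hofer's feral-curve estimates from annuli to arbitrary topology) together with the exponential area bound for tracts. With this local area bound in hand, Fish's target-local Gromov compactness applies to the connected patches $S_\epsilon(\zeta_k)$ and extracts an actual limit $J$-holomorphic curve through any point of $\overline\Lambda$; that limit curve has zero action, hence tangent plane $\operatorname{Span}(\partial_a,R_\eta)$ everywhere, and verticality and $R_\eta$-invariance follow at once. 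Your flow-box argument for invariance is fine once verticality is known, but the route to verticality goes through local area control and compactness, not through a direct $L^2$-to-$C^0$ upgrade.

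A smaller gap: your non-emptiness argument assumes the shifted slices meet $\{0\}\times Y$, but this requires $(a\circ u_k)(C_k)=\bR$ for large $k$. That in turn needs the action quantization estimate (Proposition~\ref{prop:fh_quantization}): an interior extremum of $a\circ u_k$ would force $\cA(u_k)\geq\hbar>0$.
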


The novelty\footnote{Indeed, in the case where $\sup_{k \geq 1} \cE(u_k) \leq E$ for some finite $E$, Theorem~\ref{thm:limit_set_intro} follows from the original work of Hofer, and moreover one obtains the stronger conclusion that any $\overline{\Lambda} \in \cX$ is a cylinder over a finite union of periodic orbits.}  of Theorem~\ref{thm:limit_set_intro} is that it extracts invariant sets without requiring that the Hofer energies $\{\cE(u_k)\}$ admit a finite $k$-independent upper bound, or even that any of the Hofer energies $\cE(u_k)$ are finite.  Bounds on Hofer energy are a standard assumption in the vast majority of the symplectic field theory literature; the only exceptions known to us are \cite{FH23, Prasad23} about ``feral" curves\footnote{Loosely speaking, these are curves with unbounded Hofer energy; they do not play a role in our arguments.}.  Our proof of Theorem~\ref{thm:limit_set_intro} is inspired by, and builds on, ideas in these works, though one should emphasize that the setting here, of a sequence of standard curves with action tending to zero, is quite different.  As we will see in the proof, this leads to new topological challenges, with an essential point being that the curves we accommodate can have genus and many punctures.  Indeed, it is essential for our arguments to be able to allow such curves.

\begin{rem}
\normalfont
Fish--Hofer in \cite[Definition 4.46]{FH23} also define a kind of limit set (the ``x-limit set"). Our notion of limit set differs from theirs in several ways which are crucial for our arguments. In particular,  \cite[Definition 4.46]{FH23} has an asymptotic condition, which we do not want to require for our applications. And, their limit set is a single invariant set, while ours is a connected family of invariant sets. This distinction is actually a key point, since the connectedness is exploited to show that such families contain many distinct invariant sets. 
\end{rem}

\subsubsection{Low action curves of controlled topology}

Of course, Theorem~\ref{thm:limit_set_intro} is only useful if the necessary holomorphic curves exist.  It has been known for some time \cite{CGHP19} that the ``$U$-map on ``embedded contact homology" can be used to produce low-action holomorphic curves, for any (nondegenerate) contact form on a closed three-manifold.  There is a parallel result for area-preserving surface diffeomorphisms, via periodic Floer homology \cite{CGPZ21, CGPPZ21}.  Another feature of our work, of independent interest, is a collection of results guaranteeing the desired topological control on these curves.    In fact, while for our purposes any lower bound on the Euler characteristic suffices, we show that ``most" of the curves have Euler characteristic bounded by $-2$; we defer the precise statement to \S\ref{sec:existcurves}.  It is here that the assumptions in our theorems in \S\ref{sec:ley} are used.

\subsection{The crossing energy theorem in symplectizations}

Our detection theorem has another application to ``crossing energy" theorems", which we now recall. The crossing energy theorem is a powerful tool introduced by Ginzburg--G\"{u}rel \cite{GG14}.  Recall that a neighborhood $U$ of a locally maximal compact invariant set $\Lambda$ of a homeomorphism or flow is \emph{isolating} if any compact invariant set $\Lambda' \subset U$ is a subset of $\Lambda$. The crossing energy theorem asserts that if $\Lambda$ is a locally maximal invariant set of a Hamiltonian diffeomorphism (for example, a hyperbolic fixed point), $U$ is an isolating neighborhood, and $V \subset U$ is such that $\overline{V} \subset U$, then any ``Floer cylinder" crossing the shell $U \setminus V$ must have a uniform lower bound on its Floer energy. Analogues have been established for generating functions \cite{Allais22}, gradient flow lines of the energy functional on loop space \cite{GGM22}, Floer cylinders in symplectic homology \cite{CGGM23, CGGM24}, and Floer strips with Lagrangian boundary conditions \cite{Meiwes24}. It is central to many results, such as Conley conjecture type results on the multiplicity of periodic points \cite{Batoreo15, GG14, GG16}, dynamics of Hamiltonian and Reeb pseudorotations \cite{GG18, CGGM23}, and the study of topological entropy via barcode invariants \cite{CGG21, GGM22, CGGM24, Meiwes24}.  

Thus, one would like to generalize it for Reeb flows. This was first posed as a question in $2012$ by Ginzburg--G\"{u}rel.  Prior to our work it had not been clear how to prove it for general curves in symplectizations; see e.g. the discussion in \cite[p. 4]{GGM22}. In fact, Theorem~\ref{thm:limit_set_intro}, which uses new tools that did not exist at the time of \cite{GG14}, provides this theorem as a corollary, for any framed Hamiltonian flow and for holomorphic curves with domain any closed Riemann surface with finitely many punctures removed.  Here is the precise statement:
 
 \begin{thm}\label{thm:crossing_energy}
Fix a closed framed Hamiltonian manifold $(Y, \eta)$ and an $\eta$-adapted almost-complex structure $J$. Let $\Lambda$ be a locally maximal $R_\eta$-invariant set, $U$ an isolating neighborhood and $V \subset U$ such that $\overline{V} \subset U$.  Fix an integer $T > 0$ and let $u: C \to \bR \times Y$ be any standard $J$-holomorphic curve with  $\chi(C) \geq -T$.  Then there is a constant $c = c(\eta, J, \Lambda, U, V, T) > 0$ such that
$$\mathcal{A}(u) > c > 0$$
whenever there exists $s_-, s_+ \in \bR$ with
\begin{equation} \label{eq:crossing} u(C) \cap \{s_-\} \times Y \subset V, \quad u(C) \cap \{s_+\}\times Y \not\subset U.\end{equation}
\end{thm}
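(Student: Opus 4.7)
The plan is by contradiction. Suppose no such $c$ exists; then there is a sequence $u_k : C_k \to \bR \times Y$ of standard $J$-holomorphic curves with $\chi(C_k) \geq -T$, $\cA(u_k) \to 0$, and heights $s_k^-, s_k^+$ satisfying the crossing condition \eqref{eq:crossing}. Applying Proposition~\ref{prop:lim} to the constant sequence $(\eta_k, J_k) = (\eta, J)$, we pass to a subsequence so that the limit set $\cX$ is Hausdorff-connected. By Theorem~\ref{thm:limit_set_intro}, every $K \in \cX$ equals $(-1, 1) \times \Lambda'$ for a non-empty, $R_\eta$-invariant closed $\Lambda' \subseteq Y$.

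Using local maximality of $\Lambda$, fix an open set $U^*$ with $\Lambda \cup \overline{V} \subseteq U^* \subseteq \overline{U^*} \subset U$; the isolating property of $U$ then ensures every $R_\eta$-invariant closed subset of $\overline{U^*}$ is contained in $\Lambda \subseteq U^*$. Consider the disjoint Hausdorff-open subsets
\[
\cX_{\mathrm{in}} = \{K \in \cX : K \subseteq (-1,1) \times U^*\},\qquad
\cX_{\mathrm{out}} = \{K \in \cX : K \cap (-1,1) \times (Y \setminus \overline{U^*}) \neq \emptyset\}.
\]
A remaining case $K = (-1,1) \times \Lambda'$ with $\Lambda' \subseteq \overline{U^*}$ and $\Lambda' \cap \partial U^* \neq \emptyset$ is precluded: invariance forces $\Lambda' \subseteq \Lambda \subset U^*$, contradicting $\Lambda' \cap \partial U^* \neq \emptyset$. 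Hence $\cX = \cX_{\mathrm{in}} \sqcup \cX_{\mathrm{out}}$.

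Next, we produce elements of each, contradicting connectedness. Applying the limit-set construction with shifts $s_k = s_k^+$, a subsequential Hausdorff limit $K^+ = (-1,1) \times \Lambda^+$ contains a limit point of $u_k(C_k) \cap \{s_k^+\} \times (Y \setminus U)$ in the compact set $Y \setminus U \subseteq Y \setminus \overline{U^*}$, so $K^+ \in \cX_{\mathrm{out}}$. Applying the construction with shifts $s_k = s_k^-$ yields $K^- = (-1,1) \times \Lambda^-$. The key claim is $\Lambda^- \subseteq \overline{V} \subseteq U^*$, which places $K^-$ in $\cX_{\mathrm{in}}$. We expect this to follow from a quantitative Hofer-type slice-transport estimate implicit in the proof of Theorem~\ref{thm:limit_set_intro}: every $y \in \Lambda^-$ is a Hausdorff-limit of points $y_k \in u_k(C_k)$ at heights $s_k^- + t_k$ with $t_k \to 0$, and the vanishing of $\cA(u_k)$ forces $y_k$ to be within vanishing distance of $\phi^{t_k}(p_k)$ for some slice point $p_k$ of $u_k(C_k) \cap \{s_k^-\} \times Y \subseteq \{s_k^-\} \times V$; letting $k \to \infty$ gives $y \in \overline{V}$.

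Both $\cX_{\mathrm{in}}$ and $\cX_{\mathrm{out}}$ are then non-empty, disjoint, and Hausdorff-open, covering $\cX$, contradicting its Hausdorff-connectedness from the first step. The main obstacle is establishing $\Lambda^- \subseteq \overline{V}$: this is a quantitative refinement beyond the cylindrical-structure statement of Theorem~\ref{thm:limit_set_intro}, and its proof should follow from the low-action compactness techniques (isoperimetric-type inequalities for $J$-holomorphic curves) underlying that theorem, applied here to exploit that one height-slice of the curve is confined to the open set $V$.
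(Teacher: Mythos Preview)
Your overall strategy---contradiction via the connected limit set---matches the paper's. The paper packages the connectedness argument differently: rather than your clopen decomposition $\cX = \cX_{\mathrm{in}} \sqcup \cX_{\mathrm{out}}$, it passes to $\cY := \{\Lambda' : (-1,1)\times\Lambda' \in \cX\}$, forms the connected family $\cZ := \{\Lambda' \cup \Lambda : \Lambda' \in \cY\}$ via Lemma~\ref{lem:union}, and invokes Lemma~\ref{lem:loc_maximal} (Conley's characterization of local maximality) to conclude that every element of $\cZ$ lies in $\Lambda$; the contradiction then comes from the second crossing condition. Your clopen split is a legitimate substitute and has the mild advantage of avoiding the appeal to Conley's theorem, using only the raw isolating property of $U$.

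The step you flag as ``the main obstacle''---that the limit-set element $K^- = (-1,1)\times\Lambda^-$ obtained from shifts $s_k^-$ satisfies $\Lambda^- \subseteq \overline{V}$---is a genuine subtlety, and the paper also asserts the corresponding inclusion (``$\cY$ contains an invariant set $\Lambda'$ contained in $U$'') without further justification. Hausdorff convergence of height-$2$ slices alone does \emph{not} yield this: a point $(0,y)$ with $y\in\Lambda^-$ is approximated by points $(t_k,y_k)$ in the shifted slices with $t_k\to 0$, but not with $t_k$ exactly zero, so the hypothesis on the level set at height $s_k^-$ does not apply directly.

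Your proposed fix is in the right direction, though the mechanism is not quite a pointwise ``slice-transport'' estimate (vanishing action does not force $y_k$ to be uniformly close to $\phi^{t_k}(p_k)$ for some slice point $p_k$). What works is precisely the machinery inside the proof of Theorem~\ref{thm:limit_set_intro}: the connected-local area bound plus target-local Gromov compactness produce, through each $(0,y)$ with $y\in\Lambda^-$, a nonconstant limiting $J$-holomorphic curve of action zero, hence a piece of the trivial cylinder $t\mapsto (t,\phi^t(y))$. This cylinder meets $\{0\}\times Y$ transversally at $(0,y)$; the approximating patches of $u_k$, being $C^\infty$-close on compacta, must therefore also meet $\{0\}\times Y$ at points whose $Y$-projection tends to $y$. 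Since those points lie in $u_k(C_k)\cap\{s_k^-\}\times Y\subset\{s_k^-\}\times V$, one concludes $y\in\overline{V}$. With this in hand, both your argument and the paper's go through.
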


In analogy with the progress for Hamiltonian diffeomorphisms summarized above, one hopes that Theorem~\ref{thm:crossing_energy} has many potential applications concerning the dynamics of Reeb flows. 

\begin{rem}\label{rem:crossing_energy}
\normalfont
Theorem~\ref{thm:crossing_energy} directly generalizes the crossing energy theorem for Hamiltonian diffeomorphisms from \cite{GG14, GG18}. Given a Hamiltonian diffeomorphism $\phi$, the mapping torus $Y_\phi$ carries a natural framed Hamiltonian structure $\eta$ such that $R_\eta$ generates the suspension flow. There is an explicit correspondence between Floer cylinders for a choice of Hamiltonian $H$ generating $\phi$ and holomorphic cylinders in $\bR \times Y_\phi$\footnote{An explicit derivation for the $2$-disk, which generalizes to arbitrary symplectic manifolds, can be found in \cite[Lemma $20$]{Bramham15}.} The Floer energy of a Floer cylinder is equal to the action of its corresponding holomorphic cylinder. Thus, it suffices to apply Theorem~\ref{thm:crossing_energy} for $Y = Y_\phi$ and pass through this correspondence. On the other hand, there are also very interesting crossing energy theorems proved in the recent works \cite{CGGM23, CGGM24, Meiwes24}. The results in \cite{CGGM23, CGGM24} are for Floer cylinders in completed Liouville domains and the results in \cite{Meiwes24} are for Floer strips with Lagrangian boundary conditions; these do not similarly follow from Theorem~\ref{thm:crossing_energy}.
\end{rem}

\subsection{Further results and remarks}

\subsubsection{The work of Franks and Salazar}

In fact, Theorem~\ref{thm:2d_mid} and Theorem~\ref{thm:3d_mid} follow from slightly more general (but slightly harder to state) results, which we now explain.  This level of generality is also important for the applications to geodesic flows above.

Shortly after the work by Le Calvez--Yoccoz, Franks discovered the following refinement of their theorem. Recall that a compact invariant set $\Lambda$ of a homeomorphism or flow on a compact manifold is called \emph{locally maximal} if any sufficiently $C^0$-close compact invariant set must be contained in $\Lambda$. If a compact invariant set $\Lambda$ is \emph{not} locally maximal, then any neighborhood $U$ of $\Lambda$ contains a point $z \not\in U\,\setminus\,\Lambda$ with orbit closure contained in $U$. Franks \cite{Franks99} showed that for any homeomorphism of $S^2$, the union of periodic points is either infinite or not locally maximal. A subsequent refinement in the conservative case by Salazar \cite{Salazar06} showed that for any area-preserving homeomorphism of $S^2$ and any compact invariant set $\Lambda \subseteq S^2$ containing all periodic points, either $\Lambda = S^2$ or $\Lambda$ is not locally maximal. We are able to generalize these results in the smooth symplectic case as well:

\begin{thm}\label{thm:2d}
Let $\Sigma$ be a closed, oriented surface and let $\phi: \Sigma \to \Sigma$ be any monotone area-preserving diffeomorphism. Then for any compact invariant set $\Lambda \subseteq \Sigma$ containing all periodic orbits of $\phi$, either $\Lambda = \Sigma$ or $\Lambda$ is not locally maximal. 
\end{thm}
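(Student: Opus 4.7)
The plan is to argue by contradiction: assume $\Lambda\subsetneq\Sigma$ is locally maximal and contains every periodic orbit of $\phi$, then derive a contradiction by applying the crossing energy theorem (Theorem~\ref{thm:crossing_energy}) to a sequence of low-action holomorphic curves with controlled topology coming from the PFH $U$-map.

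First I would pass to the symplectization of the mapping torus. Monotonicity of the area-preserving diffeomorphism $\phi$ ensures that the mapping torus $Y_\phi$ carries a framed Hamiltonian structure $\eta=(\lambda,\omega)$ whose Hamiltonian vector field $R_\eta$ is the suspension of $\phi$; under this lift, closed orbits of $R_\eta$ are in bijection with periodic orbits of $\phi$, and $\phi$-invariant sets in $\Sigma$ correspond to $R_\eta$-invariant sets in $Y_\phi$. The lift $\widetilde\Lambda\subset Y_\phi$ is therefore a proper, locally maximal $R_\eta$-invariant set containing every closed $R_\eta$-orbit. Fix a point $y_\ast\in Y_\phi\setminus\widetilde\Lambda$ and choose open neighborhoods $\widetilde V\subset\overline{\widetilde V}\subset\widetilde U\subset Y_\phi\setminus\{y_\ast\}$ of $\widetilde\Lambda$, with $\widetilde U$ isolating; this is possible by shrinking any isolating neighborhood supplied by local maximality.

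Next I would invoke the existence results of \S\ref{sec:existcurves}. The PFH $U$-map is realized geometrically by counting punctured $J$-holomorphic curves passing through a prescribed base point, and its iterates detect sequences of classes with PFH action tending to zero. Combined with the controlled-topology refinement described in that section, this produces for a generic $\eta$-adapted $J$ a sequence of standard $J$-holomorphic curves $u_k:C_k\to\bR\times Y_\phi$ with $\cA(u_k)\to 0$, with $\chi(C_k)\geq -T$ for some fixed integer $T$, and such that $u_k(C_k)\cap(\{0\}\times Y_\phi)$ contains a point $(0,y_k)$ with $y_k\to y_\ast$. Because every closed $R_\eta$-orbit lies in $\widetilde\Lambda$, the positive and negative asymptotic ends of each $u_k$ converge exponentially to covers of orbits contained in $\widetilde\Lambda\subset\widetilde V$.

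The crossing energy theorem now finishes the argument. For all sufficiently large $k$ we have $y_k\notin\widetilde U$, so taking $s_+=0$ gives $u_k(C_k)\cap\{s_+\}\times Y_\phi\not\subset\widetilde U$; on the other hand the exponential decay at the negative asymptotic ends implies that for sufficiently negative $s_-$ the slice $u_k(C_k)\cap\{s_-\}\times Y_\phi$ is contained in $\widetilde V$. Condition~\eqref{eq:crossing} is therefore satisfied, and Theorem~\ref{thm:crossing_energy} yields a positive constant $c=c(\eta,J,\widetilde\Lambda,\widetilde U,\widetilde V,T)$ with $\cA(u_k)>c$ for all large $k$, contradicting $\cA(u_k)\to 0$. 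The main obstacle, as one would expect, is producing the sequence $\{u_k\}$: low action, a uniform lower bound on $\chi(C_k)$, and the point constraint at $y_\ast$ must hold simultaneously. This is the substantive content of \S\ref{sec:existcurves}, where the monotonicity hypothesis on $\phi$ is essential both for the well-definedness of PFH and for the structural analysis of iterates of the $U$-map; the remaining ingredients are routine given Theorem~\ref{thm:crossing_energy}, although a standard nondegenerate perturbation of $\eta$ combined with a limiting argument may be needed to set up PFH in the first place.
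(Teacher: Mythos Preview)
Your approach is correct in spirit and takes a somewhat different route from the paper. The paper does not argue by contradiction via Theorem~\ref{thm:crossing_energy}; instead it works directly with the limit set of the PFH curves. It applies Theorem~\ref{thm:limit_set_intro} and Proposition~\ref{prop:lim} to obtain a connected family $\cY\subset\cK(Y_\phi,R_\eta)$ of invariant sets, uses the Hofer energy bound to show one element of $\cY$ lies inside $\overline{\cP}\subseteq\Lambda$, uses the point constraint to show another element contains $z\notin\Lambda$, and then invokes Lemma~\ref{lem:loc_maximal}. Your argument instead routes through Theorem~\ref{thm:crossing_energy}, which is itself a corollary of Theorem~\ref{thm:limit_set_intro} and Proposition~\ref{prop:lim}; so the two approaches rest on the same primitives, with yours giving a more streamlined contradiction in the nondegenerate case.

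There is, however, a real gap in your handling of the degenerate case, which you flag but underestimate. Theorem~\ref{thm:crossing_energy} is stated for a \emph{fixed} pair $(\eta,J)$, and its constant $c$ depends on that pair. When $\phi$ is degenerate, Proposition~\ref{prop:pfh} only supplies $J_k$-holomorphic curves for perturbed data $(\eta_k,J_k)$, so Theorem~\ref{thm:crossing_energy} does not apply as written; you would need a version whose constant is stable under $C^\infty$-small perturbation of $(\eta,J)$. That stronger statement does follow from the same proof (Theorem~\ref{thm:limit_set_intro} explicitly allows varying $(\eta_k,J_k)$), but it is not what is stated. A second, related issue: in the perturbed setting the asymptotic ends of $u_k$ are closed orbits of $R_{\eta_k}$, not $R_\eta$, and these need not lie in $\widetilde\Lambda$ or even in $\widetilde V$. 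One must argue, as the paper does using the Hofer energy bound $\cE(u_{d,k})\leq d$ and the convergence $\limsup_{k\to\infty}\cP_d(k)\subseteq\cP$, that for $k$ chosen large relative to $d$ the relevant level set does land inside $\widetilde V$. Filling in both points essentially reproduces the paper's direct limit-set argument.
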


\begin{thm}\label{thm:3d}
Let $Y$ be a closed, oriented $3$-manifold equipped with a co-oriented contact structure $\xi$ with torsion first Chern class. Let $\lambda$ be any contact form defining $\xi$ and let $\{\phi^t\}_{t \in \bR}$ denote the Reeb flow of $\lambda$. Then for any compact invariant set $\Lambda \subseteq Y$ containing all periodic orbits of $\{\phi^t\}_{t \in \bR}$, either $\Lambda = Y$ or $\Lambda$ is not locally maximal.
\end{thm}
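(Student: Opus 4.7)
Suppose for contradiction that $\Lambda \subsetneq Y$ is locally maximal and contains every closed Reeb orbit of $\lambda$. Select nested open neighborhoods $\Lambda \subset V \subset \overline V \subset U \subset \overline U \subset U'$ of $\Lambda$ with $U'$ an isolating neighborhood, and fix a generic $p_0 \in Y \setminus \overline{U'}$, which exists since $\Lambda \neq Y$.

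Perturb $(\lambda, J)$ to $(\lambda_k, J_k) \to (\lambda, J)$ in $C^\infty$ with each $\lambda_k$ nondegenerate, and apply the structural results of \S\ref{sec:existcurves} to obtain standard $J_k$-holomorphic curves $u_k: C_k \to \bR \times Y$ passing through $(0, p_0)$, with $\chi(C_k) \geq -T$ uniformly and $\cA(u_k) \to 0$. After passing to a subsequence, Proposition~\ref{prop:lim} and Theorem~\ref{thm:limit_set_intro} imply that the limit set $\cX \subset \cK(X)$ is connected and every $K \in \cX$ has the form $(-1,1) \times \Lambda'$ for some closed $R_\lambda$-invariant $\Lambda' \subseteq Y$.

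Two explicit elements of $\cX$ will yield the contradiction. With $s_k = 0$ the Hausdorff limit $(-1,1) \times \Lambda_0 \in \cX$ contains $(0, p_0)$, so $\Lambda_0 \cap (Y \setminus \overline U) \neq \emptyset$. With $s_k$ chosen deep in the positive end of $u_k$, the shifted slices lie in a $1/k$-neighborhood of the positive asymptotic periodic orbit of $\lambda_k$; bounding the periods of these asymptotic orbits via the structural control of \S\ref{sec:existcurves} and applying Arzel\`a--Ascoli, the periodic orbits converge to orbits of $\lambda$, which by hypothesis lie in $\Lambda \subset V$. Taking a Hausdorff limit gives $(-1,1) \times \Lambda_1 \in \cX$ with $\Lambda_1 \subseteq \overline V \subset U$. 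The open subsets
$$\cO_{\mathrm{in}} = \{K \in \cK(X) : K \subset (-1,1) \times U\}, \quad \cO_{\mathrm{out}} = \{K \in \cK(X) : K \cap ((-1,1) \times (Y \setminus \overline U)) \neq \emptyset\}$$
are disjoint and each meets $\cX$, so by connectedness some $(-1,1) \times \Lambda_* \in \cX$ has $\Lambda_* \subseteq \overline U \subset U'$ and $\Lambda_* \not\subseteq U$. Since $U'$ is isolating, the compact invariant set $\Lambda_*$ satisfies $\Lambda_* \subseteq \Lambda \subseteq V \subset U$, a contradiction.

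The principal obstacle is the control of the end orbits: ensuring that for each $k$ the asymptotic orbits of $u_k$ have uniformly bounded periods (so that Arzel\`a--Ascoli yields convergence to periodic orbits of $\lambda$, which by hypothesis lie in $\Lambda$) and hence lie in $V$ for all large $k$. This is a structural requirement on the output of the ECH $U$-map machinery in \S\ref{sec:existcurves}, and it is here that the torsion first Chern class hypothesis enters. A potential shortcut is to invoke Theorem~\ref{thm:crossing_energy}: by continuity of the crossing-energy constant as $(\lambda_k, J_k) \to (\lambda, J)$, any $u_k$ admitting one slice in $V$ and another meeting $Y \setminus \overline U$ would satisfy $\cA(u_k) \geq c > 0$, contradicting $\cA(u_k) \to 0$. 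Producing the slice in $V$, however, still reduces to the same asymptotic control of end orbits, so both routes ultimately depend on the same ingredient.
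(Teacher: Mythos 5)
Your overall strategy matches the paper's --- extract a connected family of invariant sets from low-action $U$-map curves, use the point constraint to find one element leaving a neighborhood of $\Lambda$ and use the asymptotics to find another element inside it, then conclude by a connectedness/local-maximality argument. The isolating-neighborhood contradiction in your final paragraph is valid and essentially equivalent to the paper's appeal to Conley's characterization via Lemma~\ref{lem:loc_maximal}.

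However, there is a genuine gap in the step producing $\Lambda_1 \subseteq \overline V$. You assert that ``bounding the periods of these asymptotic orbits via the structural control of \S\ref{sec:existcurves} and applying Arzel\`a--Ascoli, the periodic orbits converge to orbits of $\lambda$.'' But the structural control (Proposition~\ref{prop:ech}) gives $\cE(u) \leq k^{3/4}$ while $\cA(u) \leq k^{-1/16}$: to force $\cA(u_k) \to 0$ you must send the $U$-tower index to infinity, which also sends the Hofer energy bound --- and hence the total period of the asymptotic Reeb orbit set --- to infinity. With unbounded periods there is no uniform Arzel\`a--Ascoli control, and the Hausdorff limit of periodic orbits of $\lambda_k$ with periods $\to \infty$ need not lie in the closure of the periodic orbit set of $\lambda$; it could, for instance, be a whole horseshoe. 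So the inclusion $\Lambda_1 \subseteq \overline V$ does not follow as written. You correctly observe that the alternative route via Theorem~\ref{thm:crossing_energy} reduces to the same missing ingredient.

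The paper resolves this with a doubly-indexed diagonal argument. It produces curves $u_{d,k}$ with $\cE(u_{d,k}) \leq d$ and $\cA(u_{d,k}) \leq d^{-1/2}$, where $d$ is the tower index and $k$ the perturbation index. For each \emph{fixed} $d$, the asymptotic orbit sets have total period $\leq d$, and $\limsup_{k\to\infty}\cP_d(k) \subseteq \cP$ since periodic orbits of $R_{\eta_k}$ of uniformly bounded period converge to periodic orbits of $R_\eta$. One then chooses $k_d \gg 1$ and a slice height $s_d$ so that the shifted slice of $u_{d,k_d}$ at $s_d$ Hausdorff-converges (as $d \to \infty$) into $(-1,1)\times\overline{\cP}$, and passes the diagonal sequence $\{u_{d,k_d}\}$ to Proposition~\ref{prop:lim} and Theorem~\ref{thm:limit_set_intro}. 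Your argument should be revised to keep the Hofer energy parameter and the perturbation parameter separate and to take this diagonal explicitly; once $\Lambda_1 \subseteq \overline{\cP} \subseteq \Lambda \subset V$ is established this way, the rest of your proof goes through.
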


As we will explain, Theorem~\ref{thm:2d} implies Theorem~\ref{thm:2d_mid}. The analogous chain of reasoning holds starting from Theorem~\ref{thm:3d}.  

\begin{rem}\label{rem:gg}
\normalfont
Related results for Hamiltonian diffeomorphisms of $\mathbb{CP}^n$ with finitely many periodic points and dynamically convex Reeb flows on $S^{2n+1}$ with finitely many closed orbits were respectively proved by Ginzburg--G\"{u}rel \cite{GG18} and Cineli--Ginzburg--G\"{u}rel--Mazzucchelli \cite{CGGM23}.  There is no dimensional restriction in these results, and we say a bit more about this in connection to our results in \S\ref{sec:high}.  
\end{rem}

\subsubsection{The $C^2$-stability conjecture}
\label{sec:cm}

A recent breakthrough result by Contreras--Mazzucchelli \cite{CM21} established the $C^2$-stability conjecture for geodesic flows on Riemannian surfaces, namely that the $C^2$-structurally stable flows are exactly the Anosov ones. A key ingredient in their proof, stated in \cite[Theorem D]{CM21}, was establishing a sufficient criterion for a Kupka--Smale geodesic flow to be Anosov. Our methods give an alternative proof of this result, and also remove the Kupka--Smale assumption:

\begin{cor} \label{cor:anosov_criterion}
Let $F$ be a closed Finsler surface, and let $\mathcal{P} \subseteq SF$ denote the closure of the union of closed orbits of the geodesic flow. If $\mathcal{P}$ is uniformly hyperbolic, then the geodesic flow is Anosov. 
\end{cor}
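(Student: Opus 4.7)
The plan is to deduce Corollary~\ref{cor:anosov_criterion} from Theorem~\ref{thm:3d} by applying it to the maximal invariant set of the flow in a small neighborhood of $\mathcal{P}$. Since the geodesic flow on a closed Finsler surface $F$ is the Reeb flow on $SF$ of a contact form whose induced contact structure has torsion first Chern class (as noted in \S\ref{sec:ley}), Theorem~\ref{thm:3d} applies.

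First, I would invoke the structural stability of uniformly hyperbolic invariant sets: because $\mathcal{P}$ is uniformly hyperbolic by hypothesis, there exists an open neighborhood $U \subset SF$ of $\mathcal{P}$ such that the maximal invariant set
$$\Lambda_U := \bigcap_{t \in \bR} \phi^t(U)$$
is itself uniformly hyperbolic, where $\{\phi^t\}$ denotes the geodesic flow on $SF$. This is a classical result in hyperbolic dynamics. By its very construction, $\Lambda_U$ is locally maximal. Next, I would observe that $\Lambda_U$ contains every closed orbit of the geodesic flow: any such orbit lies in $\mathcal{P}\subseteq U$ by definition of $\mathcal{P}$, and being flow-invariant it must lie in $\Lambda_U$. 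Applying Theorem~\ref{thm:3d} to $\Lambda_U$ then forces $\Lambda_U = SF$, since the alternative conclusion---that $\Lambda_U$ fails to be locally maximal---has been ruled out. Thus all of $SF$ is uniformly hyperbolic, which is precisely the definition of the geodesic flow being Anosov.

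The only delicate point is to confirm that the paper's definition of locally maximal (any sufficiently $C^0$-close compact invariant set is contained in $\Lambda$) agrees, for compact invariant sets of flows on compact manifolds, with the standard notion that $\Lambda$ equals the maximal invariant set in some open neighborhood. This equivalence is immediate: if every compact invariant subset of some neighborhood $U$ of $\Lambda$ is contained in $\Lambda$, then $\Lambda = \bigcap_{t \in \bR} \phi^t(U)$, and the converse is equally clear. I do not anticipate any other obstacles; the main content of the argument is the packaging through Theorem~\ref{thm:3d}, which removes the Kupka--Smale hypothesis of Contreras--Mazzucchelli by virtue of applying to \emph{all} compact invariant sets, not merely those that arise generically.
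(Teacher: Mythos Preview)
Your argument is correct and reaches the same conclusion, but by a different route than the paper. Following Contreras--Mazzucchelli, the paper shows that $\mathcal{P}$ \emph{itself} is locally maximal: since a uniformly hyperbolic set is locally maximal if and only if it has local product structure \cite[Theorem~6.2.7]{FH19}, it suffices to check the latter, and this holds because nearby periodic orbits lie in the same homoclinic class, so by the Birkhoff--Smale horseshoe theorem the relevant heteroclinic points are accumulated by periodic orbits and hence lie in $\mathcal{P}$. Theorem~\ref{thm:3d} together with Lemma~\ref{lem:finsler_torsion} then forces $\mathcal{P}=SF$. You instead enlarge $\mathcal{P}$ to the maximal invariant set $\Lambda_U$ in a small neighborhood, which is locally maximal by construction, hyperbolic by a cone-field persistence argument, and contains all closed orbits; applying Theorem~\ref{thm:3d} to $\Lambda_U$ gives $\Lambda_U=SF$. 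Your route is more economical in that it bypasses the homoclinic-class and Birkhoff--Smale input, at the modest cost of not establishing that $\mathcal{P}$ itself is locally maximal (a fact the paper records but does not actually need for the corollary). Two small points: give a precise reference for the persistence of hyperbolicity to $\Lambda_U$ rather than calling it ``classical structural stability,'' and take the maximal invariant set over a compact neighborhood $\overline{U}$ contained in the region where the cone-field conditions hold, so that $\Lambda_U$ is genuinely compact.
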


It is important to note that \cite[Theorem D]{CM21} holds more generally for any Kupka--Smale Reeb flow on a closed contact $3$-manifold. Corollary~\ref{cor:anosov_criterion} can be extended to Reeb flows that are not necessarily Kupka--Smale, but only on closed contact $3$-manifolds with torsion Chern class. The special case of geodesic flows, however, suffices for the application to the stability conjecture. 

The criterion in \cite[Theorem D]{CM21} and Newhouse's classical work \cite{Newhouse77} were also used by Knieper--Schulz \cite{KS22} to prove a different Anosov criterion for geodesic flows; in particular they show Riemannian or Finsler geodesic flows on closed surfaces are Anosov if and only if they are $C^2$-stably ergodic. 

\subsubsection{Non-rational maps}
\label{sec:rational}

We note that, in the case of the torus, our results are close to being sharp. Indeed, any non-rational $\phi$ is either i) Hamiltonian isotopic to a translation $(x, y) \mapsto (x + a, y + b)$ where $(a, b) \not\in \mathbb{Q}^2$ or ii) Hamiltonian isotopic to a smooth conjugate of an affine map 
$$(x, y) \mapsto (x + ny, y + b),$$
 where $n$ is a nonzero integer and $b \not\in \mathbb{Q}$. The examples in case i) have no proper compact invariant sets when both $a$ and $b$ are irrational and the examples in case ii) never have any proper compact invariant subsets.

It would be interesting to see whether our results hold for rational maps in higher genus.  To do this, the main point would be to either remove the assumption of a finite $k$-independent lower bound on $\chi(C_k)$ from Theorem~\ref{thm:limit_set_intro}; or, better understand the topology of ECH/PFH curves, which is of independent interest.

\subsubsection{Three-dimensional energy surfaces}

Some of the phenomena introduced here hold beyond the scope of this paper. A recent preprint \cite{hamiltonians} by the second author proves, among other results, density results such as Corollary~\ref{cor:2d_elementary} and Corollary~\ref{cor:3d_mid} for compact regular Hamiltonian hypersurfaces in $\mathbb{R}^4$. This result was announced in the first version of our preprint. Our Theorem~\ref{thm:limit_set_intro}, or more precisely its main technical precursor Proposition~\ref{prop:local_area_bound}, is one of the key ingredients in \cite{hamiltonians}. 

\subsubsection{Comparison with invariant measures}

The generality of the above theorems strongly precludes the space of possible improvements, without the imposition of additional restrictions. For example, one could hypothetically ask whether the invariant sets we detect support interesting invariant measures.  However, Anosov--Katok famously constructed \cite{AK70} an area-preserving diffeomorphism of $S^2$ whose invariant measures are as simple as possible: the only ergodic invariant measures are a pair of fixed points and the area measure. Corollary~\ref{cor:2d_elementary} applies to the Anosov--Katok example to produce many distinct proper compact invariant sets, but they all must therefore support essentially the same ergodic invariant measures. 

\subsubsection{Higher dimensions}
\label{sec:high}

We close with some speculations on the extension of our results to higher dimensions. As we mentioned in Remark~\ref{rem:gg}, higher-dimensional versions of the theorems in this paper were proved in \cite{GG18, CGGM23}, but with the dynamical assumption that the systems must have finitely many periodic orbits.  

It would be very interesting to find the weakest possible dynamical assumptions for which our theorems extend to higher-dimensional Hamiltonian diffeomorphisms and Reeb flows.  Our Theorem~\ref{thm:limit_set_intro} on the extraction of invariant sets from low-action holomorphic curves with bounded topology works in all dimensions, and as mentioned in Remark \ref{rem:crossing_energy}, implies a ``crossing energy bound'' which is a key technical ingredient (among many) in \cite{GG18}. Proposition~\ref{prop:lim} also clearly works in any dimension. However, our existence results for low-action holomorphic curves rely on deep properties of ECH/PFH, which are invariants defined for area-preserving surface diffeomorphisms and three-dimensional Reeb flows, respectively. 

Finding low-action holomorphic curves with bounded topology in higher dimensions in high generality seems like it will require substantial new ideas.  This is consistent with a more general theme in a range of problems in current symplectic research --- ranging from the kind of questions considered in this paper to problems about the algebraic structure of certain homeomorphism groups to questions like symplectic packing stability \cite{CGHS23, CGH23} --- where one would like analogues of various properties related to ECH/PFH in higher dimensions. 

On a more optimistic note, the fact that Theorem~\ref{thm:limit_set_intro} works for \emph{non-cylindrical} curves is an asset. It opens up for the first time the possibility of using powerful theories such as contact homology or SFT, which count non-cylindrical curves, to explore invariant sets of higher-dimensional Reeb flows.  Indeed, as we have seen here, one needs to consider non-cylindrical curves in our arguments to get our results.

\subsection{Outline of article} \S~\ref{sec:dynamics_proofs} proves Theorems~\ref{thm:2d}, \ref{thm:3d}, and \ref{thm:crossing_energy}. The proofs of Theorems~\ref{thm:2d} and \ref{thm:3d} require Theorem~\ref{thm:limit_set_intro} and two propositions (Propositions~\ref{prop:pfh} and \ref{prop:ech}). These respectively assert that monotone area-preserving maps and Reeb flows of torsion contact forms have many low-action holomorphic curves with controlled topology. \S~\ref{sec:ech} proves Proposition~\ref{prop:ech} using embedded contact homology. \S~\ref{sec:pfh} proves Proposition~\ref{prop:pfh} using periodic Floer homology. \S~\ref{sec:feral_curves} proves Theorem~\ref{thm:limit_set_intro}. 

\subsection{Acknowledgements}  

Crucial discussions around this project occurred at the July $2023$ workshop  \emph{Dynamical Systems} at the Oberwolfach Research Institute.  We thank the institute for their hospitality.  We also thank Erman Cineli, Viktor Ginzburg, Basak G{\"u}rel, and Marco Mazzucchelli for extremely helpful comments on an earlier version of our paper.

DCG: The origins of my interest in the topology of ECH curves are in my joint work \cite{CGHP19} with Michael Hutchings and Dan Pomerleano.  I thank them for many extremely helpful discussions about this topic.   I also thank Umberto Hryniewicz and Hui Liu for further discussions about this around our joint work \cite{CGHHL23}.  I also thank Giovanni Forni for helpful discussions and I think Rich Schwartz for helpful comments on an earlier draft of this work.   In addition, I thank the National Science Foundation for their support under agreements DMS-2227372 and DMS-2238091. 

RP: I am grateful for useful conversations with Joel Fish, Viktor Ginzburg, Helmut Hofer, and Patrice Le Calvez. I would also like to acknowledge support from the National Science Foundation under agreement DGE-1656466 and from the Miller Institute at the University of California Berkeley.

\section{Proofs of main dynamical results}\label{sec:dynamics_proofs}

This section is primarily concerned with the proofs of Theorems~\ref{thm:2d} and \ref{thm:3d}. 
The proofs we give here rely on Theorem~\ref{thm:limit_set_intro} and two propositions (Propositions~\ref{prop:pfh} and \ref{prop:ech}), whose proofs are all deferred to subsequent sections.  After these results are proved, we explain how they imply the other dynamical theorems stated in the introduction. We conclude the section with a proof of Theorem~\ref{thm:crossing_energy}, which is a straightforward corollary of Theorem~\ref{thm:limit_set_intro}.  

Before proceeding, let us explain the basic ideas behind the proofs. Proposition~\ref{prop:ech} asserts that any nondegenerate torsion contact form on a closed $3$-manifold $Y$ admits sequences of holomorphic curves in $\bR \times Y$ with uniformly bounded topology, finite Hofer energy, and arbitrarily low action.  Moreover, the curves in the sequence can be taken to
pass through any point $(0,z)$ in $\bR \times Y$ that is not on any closed Reeb orbit. The low action and bounded topology produce, via Theorem~\ref{thm:limit_set_intro} and Proposition~\ref{prop:lim},  a connected family $\cX$ of compact invariant sets. The finite Hofer energy and point constraints are then exploited, via an elementary topological argument, to conclude Theorem~\ref{thm:3d}. The theorem is proved for degenerate contact forms by approximating them by nondegenerate contact forms and using the resulting holomorphic curves; to make this work, we require a certain amount of quantitative control over the relevant curves, which is why Proposition~\ref{prop:ech} is quantitative in nature. The path from Proposition~\ref{prop:pfh} to Theorem~\ref{thm:2d} is completely analogous. 

\begin{rem}
\normalfont
Our style of argument is robust enough to generalize to other settings where low action holomorphic curves with bounded topology are present. For example, it follows from \cite{GG18} that the mapping torus of any Hamiltonian pseudorotation of $\mathbb{CP}^n$ admits holomorphic cylinders of arbitrarily low action, with finite Hofer energy, passing through any point in the symplectization. Applying our argument proves the analogue of Theorems~\ref{thm:2d} and \ref{thm:3d} for these maps. 
\end{rem}

\subsection{Existence of low-action holomorphic curves with bounded topology}\label{sec:existcurves}

The following two key propositions show that mapping torii of monotone area-preserving surface diffeomorphisms and Reeb flows of torsion $3$-dimensional contact forms have, after possibly making small perturbations, many low-action holomorphic curves with bounded topology. We start with the statement for area-preserving surface diffeomorphisms. Relevant notations and definitions are found in Section~\ref{sec:pfh}. 

\begin{prop}\label{prop:pfh}
Let $\phi$ be a monotone area-preserving diffeomorphism of a closed symplectic surface $(\Sigma, \omega)$. Let $Y_{\phi}$ denote the mapping torus of $\phi$. There exists a positive integer $d_0 \geq 1$, depending only on the Hamiltonian isotopy class of $\phi$, such that the following holds for all $d \geq d_0$ and any nondegenerate Hamiltonian perturbation $\phi'$ of $\phi$: For any fixed $z' \in Y_\phi'$, not on any closed Reeb orbit, and generic choice of $\phi'$-adapted $J'$, there exists a standard $J'$-holomorphic curve $u: C \to \bR \times Y_{\phi'}$ such that:
\begin{enumerate}[(a)]
\item $(0, z') \in u(C)$;
\item $\cE(u) \leq d$;
\item $\cA(u) \leq d^{-1/2}$;
\item $\chi(C) \geq -2$.
\end{enumerate}
\end{prop}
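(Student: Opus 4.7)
My plan is to apply the structure theory of (twisted) periodic Floer homology of the mapping torus $Y_{\phi'}$, and in particular its $U$-map, to produce the required curve. Since $\phi$ is monotone, one can fix $d_0$, depending only on the Hamiltonian isotopy class of $\phi$, so that for every $d \geq d_0$ the group $\TWPFH(Y_{\phi'}, d)$ contains a nonzero class $\sigma_d$ admitting a $U$-orbit of length at least of order $d$; this is the monotone analogue of the standard existence statements in ECH. Recall that the $U$-map is defined by counting, mod $2$, ECH-index-$2$ $J'$-holomorphic currents in $\bR \times Y_{\phi'}$ passing through a chosen marked point. After choosing the marked point to be $(0, z')$, any nonvanishing $U$-matrix coefficient produces a current whose underlying curve passes through $(0,z')$, immediately yielding (a); standardness of the domain is automatic from the usual asymptotic analysis of PFH curves.

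For (b) and (c) I would run a pigeonhole argument on PFH spectral invariants. By the quantitative Weyl law for PFH spectral invariants (as used in \cite{CGPZ21, CGPPZ21}), the values $c_{U^k \sigma_d}(d)$ are non-increasing in $k$ and all lie inside an interval of length $O(\sqrt{d})$ about $dA$, where $A$ is the total symplectic area of $(\Sigma, \omega)$. Since the $U$-orbit through $\sigma_d$ has length of order $d$, pigeonhole yields a $k$ with
\[ c_{U^k \sigma_d}(d) - c_{U^{k+1} \sigma_d}(d) \leq C\, d^{-1/2}. \]
The corresponding $U$-matrix coefficient is nonzero by construction, so after absorbing the constant $C$ into the choice of $d_0$ the resulting curve satisfies the action bound (c). For (b), every PFH generator in class $d$ has total $\lambda$-action at most $dA$, which in turn bounds the Hofer energy $\cE(u)$ of any holomorphic current with those ends; up to rescaling this gives $\cE(u) \leq d$.

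The main obstacle is the Euler-characteristic bound (d). Any $J'$-holomorphic current counted by $U$ has ECH/PFH index equal to $2$, and for a somewhere-injective curve the ECH index inequality gives $\mathrm{ind}(u) \leq 2$. Expanding the Fredholm index,
\[ -\chi(C) = \mathrm{ind}(u) - 2c_\tau(u) - \sum_i \CZ_\tau(\gamma_i^+) + \sum_j \CZ_\tau(\gamma_j^-), \]
so $\chi(C) \geq -2$ reduces to a bound on the asymptotic Conley--Zehnder and relative Chern-class terms. I expect the bulk of the work to be in invoking the structural theorem for the $U$-map promised in \S\ref{sec:existcurves}: for generic $J'$ and a suitable representative of the $U$-cycle, each contributing current decomposes as a simple component together with (possibly) trivial cylinders, the equality $I(u) = 2$ forces specific partition conditions at the ends, and the combinatorics of admissible partitions force $\chi \geq -2$ on the simple component. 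Genericity of $J'$, together with the hypothesis that $z'$ lies on no closed Reeb orbit, is used to exclude the multiply-covered and bubbled configurations that would otherwise introduce uncontrolled topology.
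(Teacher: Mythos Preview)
Your treatment of (a), (b), and the action bound (c) is in the right spirit, though the paper organizes it differently: instead of spectral invariants and a Weyl law, it uses the \emph{$U$-cycle} property (Proposition~\ref{prop:ucycle} and Lemma~\ref{lem:ucycle_chain}) to produce a closed chain of $m_0 m_1(d-g+1)$ $U$-currents whose total relative homology class is exactly $m_0 m_1[\Sigma]$. This makes the total action equal to $m_0 m_1 A$ on the nose, and then a pigeonhole over the chain gives the low-action curve. Your spectral-invariant route could be made to work for (c) alone, but the $U$-cycle formulation is what makes the topological bound tractable, as I explain next.

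The real gap is in (d). Your plan is to take the single low-action curve produced in (c) and then argue, via the Fredholm index formula and partition combinatorics, that \emph{this particular} curve has $\chi \geq -2$. This does not work: there is no mechanism forcing every index-$2$ $U$-current to have bounded Euler characteristic, and the Conley--Zehnder and Chern terms in your displayed formula have no a priori bound for an individual curve. The ``structural theorem'' you appeal to in \S\ref{sec:existcurves} is nothing more than the statement of Proposition~\ref{prop:pfh} itself, so invoking it is circular.

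What the paper does instead is use the \emph{$J_0$ index} (see \eqref{eq:j0_defn} and Proposition~\ref{prop:j0_genus}), which satisfies $J_0(\cC) \geq -\chi(C_2)$ for the embedded component $C_2$ of any $U$-current. The crucial point is that $J_0$ is additive along the $U$-cycle and depends only on the relative homology class, so over the whole cycle one computes exactly
\[
\sum_i J_0(\cC_i) = J_0(m_0 m_1[\Sigma]) = 2m_0 m_1(d+g-1).
\]
Here monotonicity of $\phi$ is used to ensure that $c_1(V_\phi)$ pairs trivially with $K_\phi$, so that $J_0$ is well-defined on the quotient. Since $J_0 \geq -1$ termwise, a second pigeonhole shows that at most roughly $\tfrac{3}{4}$ of the currents have $J_0 \geq 3$. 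Running both pigeonholes \emph{simultaneously} over the same chain of $m_0 m_1(d-g+1)$ currents yields one with $\cA \leq d^{-1/2}$ and $J_0 \leq 2$, hence $\chi \geq -2$. Your proposal separates the two pigeonholes and then tries to force the topology bound on a preselected curve; that is the step that fails.
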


Next, the statement for Reeb flows. Relevant notations and definitions are found in Section~\ref{sec:ech}. 

\begin{prop}\label{prop:ech}
Let $\lambda$ be a torsion contact form on a closed $3$-manifold $Y$. Then there exists a positive integer $k_0 \geq 0$ such that the following holds for any $k \geq k_0$ and any $C^\infty$-small nondegenerate perturbation $\lambda'$ of $\lambda$: For any fixed $z' \in Y$, not on any closed Reeb orbit, and generic choice of $\lambda'$-adapted $J'$, there exists a standard $J'$-holomorphic curve $u: C \to \bR \times Y$ such that:
\begin{enumerate}[(a)]
\item $(0, z') \in u(C)$;
\item $\cE(u) \leq k^{3/4}$;
\item $\cA(u) \leq k^{-1/16}$;
\item $\chi(C) \geq -2$. 
\end{enumerate}
\end{prop}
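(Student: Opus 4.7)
The plan is to combine the $U$-map on embedded contact homology with the asymptotic formula for ECH spectral invariants due to Cristofaro-Gardiner, Hutchings, and Ramos. Fix a $C^\infty$-small nondegenerate perturbation $\lambda'$ of $\lambda$ and a generic $\lambda'$-adapted $J'$. Because $c_1(\xi)$ is torsion, the ECH spectral invariants $c_k(Y, \lambda')$, defined by Hutchings as infima of actions of chain-level representatives of $k$-fold $U$-preimages of the contact unit class, are finite for all $k \geq 0$ and satisfy $c_k^2/k \to 2\,\mathrm{vol}(Y, \lambda')$. Combined with $C^0$-continuity of the spectral invariants in $\lambda'$, this asymptotic is uniform for $\lambda'$ ranging in a $C^\infty$-neighborhood of $\lambda$, so for all $k \geq k_0$ with $k_0$ depending only on $\lambda$, one has $c_k - c_{k-1} \leq C k^{-1/2} < k^{-1/16}$.

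To extract the curve, recall that for generic $J'$ and generic base point $z' \in Y$, the $U$-map is computed at the chain level by a signed count of embedded, ECH-index-$2$, $J'$-holomorphic currents passing through $(0, z')$. Choose a chain-level representative $\alpha$ of the relevant tower class with $\cA(\alpha) \leq c_k + \epsilon$. Since $U^{k-1}[U\alpha]$ equals the contact unit, one has $\cA(U\alpha) \geq c_{k-1}$, so some summand of $U\alpha$ has action at least $c_{k-1}$, and the corresponding contributing curve $u: C \to \bR \times Y$ passes through $(0, z')$ and satisfies
$$\cA(u) \leq (c_k + \epsilon) - c_{k-1} \leq k^{-1/16},$$
giving items (a) and (c). For (b), the identity $d(u^*\lambda) = u^*\omega \geq 0$ forces the map $s \mapsto \int_{u^{-1}(\{s\} \times Y)} u^*\lambda$ to be nondecreasing with limit the positive-end action, so $\cE(u) \leq \cA(\alpha) \leq c_k + \epsilon = O(\sqrt{k}) \leq k^{3/4}$ for $k$ large.

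The delicate step is the topological bound (d), $\chi(C) \geq -2$. For an embedded ECH-index-$2$ curve, the ECH index inequality $I(C) \geq \mathrm{ind}(C) + 2\delta(C)$ together with the Fredholm index formula
$$\mathrm{ind}(C) = -\chi(C) + 2c_\tau(C) + \sum_i \CZ_\tau(\alpha_+^i) - \sum_j \CZ_\tau(\alpha_-^j)$$
controls $\chi(C)$ once the combinatorics of the asymptotic ends are fixed. If every orbit end of $C$ is simple, one checks directly that $C$ is a cylinder or pair of pants and $\chi(C) \geq -1$; in general, multiplicities in the asymptotic orbit sets can produce additional punctures and further decrease $\chi(C)$. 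Following the strategy initiated in \cite{CGHP19}, the very small action bound restricts $\alpha_\pm$ to short Reeb orbits of $\lambda'$, and Hutchings' writhe bound and partition conditions then force $\chi(C) \geq -2$ for the curves in question. The main obstacle I anticipate is making this topological control uniform across all sufficiently small nondegenerate perturbations $\lambda'$; to handle this, I would arrange the perturbation $\lambda'$ so that the short-action orbits appearing as ends of $u$ arise as controlled small perturbations of Reeb orbits of $\lambda$, and then track the multiplicity structure carefully through this limit.
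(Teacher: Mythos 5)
Your proposal gets the broad strategy right (use the $U$-tower, the ECH volume property, and pass to chain level), but there are two genuine gaps where the argument does not go through as written.

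First, the action bound. You claim $c_k - c_{k-1} \leq C k^{-1/2}$ follows from the asymptotic $c_k^2/k \to 2\operatorname{vol}$. It does not: that asymptotic only controls the \emph{average} gap, and there is no reason individual consecutive gaps $c_k - c_{k-1}$ cannot occasionally be large (the sequence need not be concave or regularly spaced). The paper avoids this by not singling out one $U$-step. Instead it takes a tower of $k$ $U$-map currents $\cC_1,\ldots,\cC_k$ from a generator $\alpha_k$ of action $\leq c_{\sigma_k}(\lambda')$ down to $\beta_0$, notes $\sum_j \cA(\cC_j) \leq c_{\sigma_k}(\lambda') = O(\sqrt{k})$, and then runs a pigeonhole count: the set $S_2$ of indices $j$ with $\cA(\cC_j) \geq k^{-1/16}$ has $\#S_2 \leq c_{\sigma_k}(\lambda') k^{1/8} = O(k^{5/8})$, which is $o(k)$. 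So \emph{most} curves in the tower have small action, without any per-step gap estimate.

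Second, and more seriously, the topological bound (d). You invoke the ECH index inequality $I \geq \operatorname{ind} + 2\delta$, writhe bounds and partition conditions, and the strategy of \cite{CGHP19}, hoping that small action forces the asymptotic orbits into a manageable set. But \cite{CGHP19} crucially assumed finitely many Reeb orbits, and the whole point of this proposition is to drop that assumption. Without it, small action does \emph{not} restrict the asymptotic orbit sets to any controlled collection, and the writhe/partition approach does not close. The paper instead compares the ECH index $I$ to the $J_0$ index, which directly bounds topology via $J_0(\cC) \geq -\chi(C_2)$ (Proposition 3.6 in the paper). The key new step is the estimate
$$|I(Z) - J_0(Z)| \leq \delta\,\cA(\alpha),$$
proved by rewriting the defect $2c_\tau(Z) + \sum_i \CZ_\tau(\alpha_i^{m_i}) - \sum_j \CZ_\tau(\beta_j^{n_j})$ in terms of the ``based rotation number,'' then passing to the $n$-th tensor power $\xi_n$ of the contact structure: the torsion hypothesis on $c_1(\xi)$ is exactly what makes $\xi_n$ a trivial line bundle, so one can choose a \emph{global} trivialization $\ft$, kill the Chern term, and bound the rotation number linearly in the period by integrating a rotation density. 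With $\sum_j I(\cC_j) = 2k$ and $|I - J_0| \lesssim \cA(\alpha_k) = O(\sqrt{k})$, one gets $\sum_j J_0(\cC_j) \leq 2k + O(\sqrt{k})$, so the set $S_1$ of indices with $J_0(\cC_j) \geq 3$ has $\#S_1 \leq \frac{3}{4}k + O(\sqrt{k}) < k$. Combining with the $S_2$ bound produces an index $j \notin S_1 \cup S_2$, and any component of $\cC_j$ through $(0,z')$ satisfies (a)--(d). You should study the based-rotation-number mechanism and the role of the torsion hypothesis; that is the genuinely new content here and it is what your sketch is missing.
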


\begin{rem}
\normalfont
In fact, we will see in the proofs of Proposition~\ref{prop:pfh} and Proposition~\ref{prop:ech} that not only does there exist curves with the above properties, but that the ECH/PFH curves satisfy these properties under the assumptions of the proposition with probability $1$.  More precisely, as we will explain later, for $k$ (resp. $d$) as in the statement of the propositions, the non-triviality of the ``$U$"-map implies the existence of approximately $k$ (resp. $d$) curves, and we can look at the proportion that satisfy the conclusions of the propositions.  Our arguments imply that this number limits to $1$.  
\end{rem}

\subsection{The Hausdorff topology}\label{subsec:hausdorff} Let $Z$ denote any separable and locally compact metric space (e.g. any second countable topological manifold). Recall that $\cK(Z)$ denotes the space of all closed subsets of $Z$, equipped with the topology of Hausdorff convergence. We collect some basic facts about the Hausdorff topology here.  

\subsubsection{Hausdorff convergence} Fix any sequence $\{\Lambda_k\}$ in $\cK(Z)$. We define $\liminf \Lambda_k \in \cK(Z)$ to be the set of $z \in Z$ such that each neighborhood of $z$ intersects all but finitely many of the $\Lambda_k$. We define $\limsup\Lambda_k \in \cK(Z)$ to be the set of $z \in Z$ such that each neighborhood of $z$ intersects infinitely many of the $\Lambda_k$. It is clear that $\liminf\Lambda_k \subseteq \limsup \Lambda_k$. Then $\Lambda_k \to \Lambda$ in the Hausdorff topology if and only if $\liminf \Lambda_k = \Lambda = \limsup \Lambda_k$. We recall \cite[Corollary $2.2$]{McMullen96} that, under the imposed conditions on $Z$, the space $\cK(Z)$ is compact and metrizable.

\subsubsection{Continuity lemmas} The following lemmas are about continuity properties of maps between $\cK(Z)$'s. They are elementary and we omit the proofs. The first lemma asserts that taking a union with a closed set is continuous. 

\begin{lem}\label{lem:union}
Let $Z$ be a separable and locally compact metric space. For any $\Lambda' \in \cK(Z)$, the map $\Lambda \mapsto \Lambda \cup \Lambda'$ is a continuous map from $\cK(Z)$ to itself. 
\end{lem}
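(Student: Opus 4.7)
The plan is to use the sequential characterization: since $Z$ is separable and locally compact, $\cK(Z)$ is metrizable, so it suffices to show that whenever $\Lambda_k \to \Lambda$ in the Hausdorff topology, one has $\Lambda_k \cup \Lambda' \to \Lambda \cup \Lambda'$. By the criterion recalled just above, this amounts to verifying
$$\liminf_k (\Lambda_k \cup \Lambda') \;=\; \Lambda \cup \Lambda' \;=\; \limsup_k (\Lambda_k \cup \Lambda').$$
I would reduce this to the two set-theoretic identities
$$\liminf_k (\Lambda_k \cup \Lambda') = \big(\liminf_k \Lambda_k\big) \cup \Lambda', \qquad \limsup_k (\Lambda_k \cup \Lambda') = \big(\limsup_k \Lambda_k\big) \cup \Lambda',$$
after which the conclusion is immediate by substituting $\liminf_k \Lambda_k = \limsup_k \Lambda_k = \Lambda$.

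To establish the $\liminf$ identity I would argue by two containments. The inclusion $\supseteq$ is immediate: every $z \in \Lambda'$ lies in $\Lambda_k \cup \Lambda'$ for every $k$, and if every neighborhood of $z$ meets all but finitely many of the $\Lambda_k$, then it certainly meets all but finitely many of the $\Lambda_k \cup \Lambda'$. For the reverse containment, suppose $z \in \liminf_k (\Lambda_k \cup \Lambda')$ with $z \notin \Lambda'$. Using that $\Lambda'$ is closed, pick an open neighborhood $U$ of $z$ with $U \cap \Lambda' = \emptyset$; then for any neighborhood $W$ of $z$ contained in $U$ one has $W \cap (\Lambda_k \cup \Lambda') = W \cap \Lambda_k$, so the $\liminf$ hypothesis on $\Lambda_k \cup \Lambda'$ transfers directly to $\Lambda_k$ and places $z \in \liminf_k \Lambda_k$. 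The argument for $\limsup$ is identical, replacing ``all but finitely many'' by ``infinitely many''.

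I do not foresee any serious obstacle: the entire argument is a direct unpacking of definitions, with the only nontrivial input being closedness of $\Lambda'$, which is used to separate $z \notin \Lambda'$ from $\Lambda'$ by a neighborhood. This is presumably why the authors mark the statement as elementary and omit the proof.
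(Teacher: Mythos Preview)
Your argument is correct: the sequential criterion applies because $\cK(Z)$ is metrizable, and your two set-theoretic identities for $\liminf$ and $\limsup$ of $(\Lambda_k \cup \Lambda')$ are proved cleanly, with closedness of $\Lambda'$ being exactly the input needed for the nontrivial containment. The paper itself omits the proof as elementary, so there is nothing to compare against; your write-up is precisely the kind of direct definition-unpacking the authors had in mind.
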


%
%
%
%
%

On the other hand, taking an intersection with a closed set is usually not continuous. The next lemma asserts that it is continuous in a rather specific situation. 

\begin{lem}\label{lem:intersection}
Let $Y$ be any separable and locally compact metric space and set $X := (-1, 1) \times Y$, equipped with the product metric. Then for any sequence $\{\Lambda_k\}$ in $\cK(Y)$ such that 
$$(-1, 1) \times \Lambda_k \to (-1, 1) \times \Lambda \in \cK(X),$$ 
we have $\Lambda_k \to \Lambda$. 
\end{lem}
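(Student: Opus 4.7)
The plan is to verify the criterion that $\Lambda_k \to \Lambda$ in $\cK(Y)$ iff $\liminf \Lambda_k = \Lambda = \limsup \Lambda_k$. I would prove both set-containments $\Lambda \subseteq \liminf \Lambda_k$ and $\limsup \Lambda_k \subseteq \Lambda$, using the corresponding $\liminf$/$\limsup$ identities for the cylinders $(-1,1) \times \Lambda_k$ given by hypothesis, together with the fact that $(-1,1) \times Y$ carries the product metric so a base of neighborhoods of $(0,y) \in X$ is given by products $(-\varepsilon,\varepsilon) \times V$ for $V$ a neighborhood of $y$ in $Y$ and $\varepsilon \in (0,1)$.

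For $\Lambda \subseteq \liminf \Lambda_k$: fix $y \in \Lambda$ and a neighborhood $V$ of $y$ in $Y$. Then $(0, y) \in (-1,1) \times \Lambda$, and the Hausdorff convergence hypothesis (applied to the $\liminf$) implies that the product neighborhood $(-1/2, 1/2) \times V$ meets $(-1,1) \times \Lambda_k$ for all sufficiently large $k$. Projecting these intersection points to $Y$ produces elements of $V \cap \Lambda_k$, so $y \in \liminf \Lambda_k$.

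For $\limsup \Lambda_k \subseteq \Lambda$: fix $y \in \limsup \Lambda_k$ and an arbitrary neighborhood $W$ of $(0,y)$ in $X$, which we may shrink to $(-\varepsilon, \varepsilon) \times V$ with $V$ a neighborhood of $y$ in $Y$ and $\varepsilon \in (0,1)$. By hypothesis, $V$ meets $\Lambda_k$ for infinitely many $k$, so $W \supseteq \{0\} \times V$ meets $(-1,1) \times \Lambda_k$ for infinitely many $k$. Therefore $(0, y) \in \limsup\bigl((-1,1) \times \Lambda_k\bigr) = (-1,1) \times \Lambda$, which forces $y \in \Lambda$.

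I do not anticipate any real obstacle here: the statement is an exercise in unwinding definitions, and the two crucial features are the product metric structure on $X$ (so neighborhoods factor) and the fact that the slice height $0$ sits comfortably inside the open interval $(-1,1)$, so that product neighborhoods of small radius about $(0,y)$ remain inside $X$. No compactness or local compactness of $Y$ is actually needed for this particular implication; local compactness and separability are what ensure $\cK(Y)$ and $\cK(X)$ are metrizable so that the $\liminf$/$\limsup$ characterization applies in the first place.
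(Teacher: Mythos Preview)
The paper omits the proof of this lemma, stating only that it is elementary. Your argument via the $\liminf$/$\limsup$ characterization is correct and is exactly the expected verification; the observation that it suffices to work at the slice $\{0\}\times Y$ and use product neighborhoods is the natural way to carry this out.
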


%
%

The last lemma asserts that, when $Z_1$ is a compact metric space, pushing forward a closed set by a continuous map $f: Z_1 \to Z_2$ is Hausdorff continuous. 

\begin{lem}\label{lem:pushforward}
Let $f: Z_1 \to Z_2$ be a continuous map where $Z_1$ is a compact metric space and $Z_2$ is a separable and locally compact metric space. Then the map $\Lambda \mapsto f(\Lambda)$ defines a Hausdorff continuous map $\cK(Z_1) \to \cK(Z_2)$. 
\end{lem}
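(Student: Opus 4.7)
The plan is to use the $\liminf$/$\limsup$ characterization of Hausdorff convergence recalled just above the lemma. First, observe that the map is well-defined: since $Z_1$ is compact, every $\Lambda \in \cK(Z_1)$ is compact, so $f(\Lambda)$ is a compact (hence closed) subset of $Z_2$. Now fix a convergent sequence $\Lambda_k \to \Lambda$ in $\cK(Z_1)$; I want to show $f(\Lambda_k) \to f(\Lambda)$, which amounts to the two containments $f(\Lambda) \subseteq \liminf f(\Lambda_k)$ and $\limsup f(\Lambda_k) \subseteq f(\Lambda)$.

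For the first containment, I would take $y = f(x)$ with $x \in \Lambda = \liminf \Lambda_k$ and show that every open neighborhood $V$ of $y$ meets $f(\Lambda_k)$ for all sufficiently large $k$. The continuity of $f$ makes $f^{-1}(V)$ an open neighborhood of $x$, and the definition of $\liminf$ then yields points $x_k \in \Lambda_k \cap f^{-1}(V)$ for all large $k$, so $f(x_k) \in f(\Lambda_k) \cap V$ as desired.

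For the second containment, I would pick $y \in \limsup f(\Lambda_k)$, which supplies indices $k_j \to \infty$ and points $x_j \in \Lambda_{k_j}$ with $f(x_j) \to y$. Here is where compactness of $Z_1$ enters: after passing to a subsequence I may assume $x_j \to x$ for some $x \in Z_1$. Every neighborhood of $x$ then meets infinitely many $\Lambda_k$, so $x \in \limsup \Lambda_k = \Lambda$. Continuity of $f$ gives $f(x_j) \to f(x)$, and comparing with $f(x_j) \to y$ forces $y = f(x) \in f(\Lambda)$.

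The proof is essentially routine once the right characterization is used; the only substantive point is the subsequential extraction in the second containment, where compactness of $Z_1$ is essential. (Indeed, the lemma fails without compactness: consider $f : \bR \to \bR$, $f(x) = 1/(1+x^2)$, with $\Lambda_k = \{k\}$, for which $\Lambda_k \to \emptyset$ while $f(\Lambda_k) = \{1/(1+k^2)\} \to \{0\} \neq f(\emptyset)$.) Note also that no hypothesis on $Z_2$ beyond the standing separability/local compactness assumption is used; it is only needed to ensure that the Hausdorff topology on $\cK(Z_2)$ behaves as recalled in \S\ref{subsec:hausdorff}.
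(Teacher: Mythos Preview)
Your proof is correct. The paper omits the proof of this lemma, calling it elementary, so there is nothing to compare against; your argument via the $\liminf$/$\limsup$ characterization is exactly the routine verification the authors had in mind, and your counterexample nicely pinpoints where compactness of $Z_1$ is used.
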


%
%

\subsubsection{Invariant sets of flows} Suppose that $Z$ is a smooth and compact manifold and let $R$ denote a vector field on $Z$. Let $\cK(Z, R) \subseteq \cK(Z)$ denote the subspace of all closed subsets invariant under the flow of $R$. This subspace is closed and therefore compact, but it may not be connected. The following lemma states and proves an important property of locally maximal $\Lambda \in \cK(Y, R)$. 

\begin{lem}\label{lem:loc_maximal}
If an element $\Lambda \in \cK(Z, R)$ is locally maximal, then it is maximal with respect to inclusion in any connected subspace $\cZ \subseteq \cK(Z, R)$ containing $\Lambda$. 
\end{lem}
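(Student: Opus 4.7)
The plan is to exploit the Hausdorff-topological fact that a subset of a connected space which is both open and closed must be the whole space. Concretely, I would set up a single ``membership-in-an-isolating-neighborhood'' subset of $\cZ$ and show that the local maximality hypothesis simultaneously forces this subset to be open and closed.

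First, I would invoke local maximality to choose a compact neighborhood $V$ of $\Lambda$ whose interior $\mathrm{int}(V)$ still contains $\Lambda$, and such that every $R$-invariant compact set $K \subseteq V$ satisfies $K \subseteq \Lambda$; such a $V$ exists because by hypothesis a small enough Hausdorff neighborhood of $\Lambda$ in $\cK(Z,R)$ forces containment in $\Lambda$, and by compactness of $\Lambda$ one can replace this Hausdorff neighborhood by a metric tubular neighborhood in $Z$ of the form $\{K : K \subseteq V\}$. Then define
\[
\cZ_V \;:=\; \{\,K \in \cZ : K \subseteq V\,\}.
\]
Clearly $\Lambda \in \cZ_V$, so $\cZ_V$ is nonempty.

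The main observation is the chain of inclusions
\[
\{K \in \cZ : K \subseteq \mathrm{int}(V)\} \;\subseteq\; \cZ_V \;\subseteq\; \{K \in \cZ : K \subseteq \Lambda\} \;\subseteq\; \{K \in \cZ : K \subseteq \mathrm{int}(V)\},
\]
where the middle containment is precisely the local maximality hypothesis and the last one uses $\Lambda \subseteq \mathrm{int}(V)$. Hence $\cZ_V$ coincides with both $\{K\in\cZ: K\subseteq V\}$ and $\{K\in\cZ: K\subseteq \mathrm{int}(V)\}$. The former set is closed in $\cZ$, since if $K_n \to K$ in the Hausdorff topology with each $K_n \subseteq V$ then every point of $K$ is a limit of points in $V$ and hence lies in $V$; the latter set is open in $\cZ$, since for any $K \subseteq \mathrm{int}(V)$ compactness of $K$ and openness of $\mathrm{int}(V)$ give an $\varepsilon$-neighborhood of $K$ lying inside $\mathrm{int}(V)$, and this neighborhood is Hausdorff-open. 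Therefore $\cZ_V$ is clopen in $\cZ$.

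Finally, connectedness of $\cZ$ together with $\cZ_V \neq \emptyset$ force $\cZ_V = \cZ$, so every $K \in \cZ$ satisfies $K \subseteq V$ and hence, by local maximality, $K \subseteq \Lambda$. This is exactly the asserted maximality of $\Lambda$ in $\cZ$. I do not anticipate a serious obstacle: the only subtle point is extracting a compact neighborhood $V$ witnessing local maximality from the ``Hausdorff-close'' formulation of the hypothesis, which is a routine consequence of compactness of $\Lambda$ and local compactness of $Z$.
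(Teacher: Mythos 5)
Your proof is correct and takes a genuinely different route from the paper. The paper dispatches the lemma in two lines by citing Conley's theorem (\cite[Theorem 3.5]{Conley72}), which asserts that $\Lambda$ is locally maximal if and only if it is maximal with respect to inclusion in some clopen subspace $\cZ' \subseteq \cK(Z,R)$; since $\cZ' \cap \cZ$ is then a nonempty clopen subset of the connected space $\cZ$, one gets $\cZ \subseteq \cZ'$ and the lemma follows. You instead give a direct, self-contained argument: you pass to the standard ``isolating neighborhood'' form of local maximality (a compact $V$ with $\Lambda \subseteq \mathrm{int}(V)$ such that every invariant $K \subseteq V$ satisfies $K \subseteq \Lambda$), and then verify by hand that $\cZ_V = \{K \in \cZ : K \subseteq V\}$ is a nonempty clopen subset of $\cZ$, hence equal to $\cZ$, so every element of $\cZ$ lies in $\Lambda$. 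Your route buys independence from Conley's result at the cost of a bit more length, and gives the slightly stronger conclusion that $\Lambda$ is an upper bound for all of $\cZ$, not merely a maximal element. One imprecision worth fixing: the sentence claiming one can ``replace this Hausdorff neighborhood by a metric tubular neighborhood \ldots\ of the form $\{K : K \subseteq V\}$'' is misleading, since $\{K : K \subseteq V\}$ is not a Hausdorff neighborhood of $\Lambda$ (it contains tiny invariant sets that are far from $\Lambda$ in Hausdorff distance). The correct bridge between the paper's Hausdorff definition of local maximality and your isolating-neighborhood formulation is the observation that for invariant compact $K \subseteq V$, the set $K \cup \Lambda$ is invariant, compact, and Hausdorff-close to $\Lambda$ once $V$ is a small enough tubular neighborhood, so $K \cup \Lambda \subseteq \Lambda$ and hence $K \subseteq \Lambda$. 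This is routine, but it is the $K \cup \Lambda$ step, not a direct ``replacement'' of neighborhoods, that makes the translation work.
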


\begin{proof}
Conley proved \cite[Theorem $3.5$]{Conley72} that $\Lambda$ is locally maximal if and only if it is maximal with respect to inclusion in some open and closed subspace $\cZ' \subseteq \cK(Z, R)$. Any connected subspace $\cZ$ such that $\Lambda \in \cZ$ must be contained in $\cZ'$ and the lemma follows. 
\end{proof}

\subsection{Proofs of Theorem~\ref{thm:2d} and Theorem~\ref{thm:3d}}

In this section, we prove Theorems~\ref{thm:2d} and Theorem~\ref{thm:3d}.  The proofs are virtually identical, so for brevity we will only give the proof of Theorem~\ref{thm:2d}. 

The main ingredients in the proof of Theorem~\ref{thm:2d} are Theorem~\ref{thm:limit_set_intro}, Proposition~\ref{prop:pfh}, and Proposition~\ref{prop:lim}. Choose any monotone area-preserving diffeomorphism $\phi$ of a closed symplectic surface $(\Sigma, \omega)$. Let $Y_\phi$ denote its mapping torus and $\eta = (dt, \omega_\phi)$ denote its associated framed Hamiltonian structure. We observe that Theorem~\ref{thm:2d} follows from proving its analogue in the mapping torus:

\begin{prop}\label{prop:2d_mapping_torus}
For any closed $R_\eta$-invariant set $\Lambda \subseteq Y_\phi$ containing all periodic orbits of $R_\eta$, either $\Lambda = Y_\phi$ or $\Lambda$ is not locally maximal. 
\end{prop}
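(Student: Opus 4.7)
The plan is to argue by contradiction: suppose $\Lambda$ is locally maximal and $\Lambda \neq Y_\phi$. By Conley's theorem (invoked in Lemma~\ref{lem:loc_maximal}), there is an open neighborhood $U$ of $\Lambda$ such that $\cZ' := \{\Lambda'' \in \cK(Y_\phi, R_\eta) : \Lambda'' \subseteq U\}$ is both open and closed in $\cK(Y_\phi, R_\eta)$. Since $\Lambda$ contains every closed $R_\eta$-orbit and $\Lambda \neq Y_\phi$, I may choose a point $z \in Y_\phi \setminus \overline{U}$ that lies on no closed $R_\eta$-orbit. For each $d \geq d_0$, pick a nondegenerate Hamiltonian perturbation $\phi_d$ of $\phi$ with $\phi_d \to \phi$ in $C^\infty$, a generic $\phi_d$-adapted $J_d$, and a point $z_d \to z$; Proposition~\ref{prop:pfh} yields a standard $J_d$-holomorphic curve $u_d : C_d \to \bR \times Y_{\phi_d}$ with $(0, z_d) \in u_d(C_d)$, $\cA(u_d) \leq d^{-1/2}$, $\cE(u_d) \leq d$, and $\chi(C_d) \geq -2$.

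Since $\cA(u_d) \to 0$ and the Euler characteristics are uniformly bounded below, Theorem~\ref{thm:limit_set_intro} applies and Proposition~\ref{prop:lim} lets me pass to a subsequence whose limit set $\cX$ is Hausdorff-connected, every element being of the form $(-1, 1) \times \Lambda_\alpha$ for a nonempty closed $R_\eta$-invariant $\Lambda_\alpha \subseteq Y_\phi$; by Lemma~\ref{lem:intersection} this identifies $\cX$ with a connected subspace $\widetilde{\cX} \subseteq \cK(Y_\phi, R_\eta)$. Two distinguished elements of $\widetilde{\cX}$ are then extracted. The point constraint, realized by the shift sequence $s_{k_j} \equiv 0$, yields $\Lambda_0 \in \widetilde{\cX}$ with $z \in \Lambda_0$, so $\Lambda_0 \not\subseteq U$ and hence $\Lambda_0 \notin \cZ'$. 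The finite Hofer energy of each individual $u_d$ guarantees that $u_d$ has asymptotic ends on periodic orbits of $R_{\eta_d}$; choosing shifts $s_d^{+} \to +\infty$ at a rate controlled by each $d$ then yields a second element $\Lambda_1 \in \widetilde{\cX}$, realized as a Hausdorff limit of these asymptotic orbits.

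The conclusion is elementary topology: by Lemma~\ref{lem:union}, $\Lambda'' \mapsto \Lambda \cup \Lambda''$ is continuous, so its image on $\widetilde{\cX}$ is a connected subspace of $\cK(Y_\phi, R_\eta)$; granted that $\Lambda_1 \subseteq \Lambda$, this image contains $\Lambda \cup \Lambda_1 = \Lambda \in \cZ'$ together with $\Lambda \cup \Lambda_0$, which contains $z \notin \overline{U}$ and is therefore not in $\cZ'$, contradicting that $\cZ'$ is clopen. The main obstacle is therefore to verify $\Lambda_1 \subseteq \Lambda$: because the bound $\cE(u_d) \leq d$ is not uniform in $d$, the periods of the asymptotic orbits can in principle diverge, and their Hausdorff limits are only a priori guaranteed to be closed $R_\eta$-invariant sets, not manifestly contained in $\Lambda$. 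I would overcome this by exploiting the PFH provenance of the $u_d$ supplied by Proposition~\ref{prop:pfh}, whose quantitative bounds on $\cA$ and $\cE$ are tuned precisely so that the orbit-set generators of degree $d$ appearing as asymptotic ends of the constructed $U$-map curves have Hausdorff limits decomposing into closed $R_\eta$-orbits; every such orbit lies in $\Lambda$ by the hypothesis that $\Lambda$ contains every periodic orbit of $R_\eta$, giving $\Lambda_1 \subseteq \Lambda$ as needed.
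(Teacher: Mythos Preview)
Your overall architecture is correct and matches the paper's, but there is a genuine gap at exactly the point you flag. The claim that $\Lambda_1 \subseteq \Lambda$ is not justified by the hand-wave about ``PFH provenance.'' Proposition~\ref{prop:pfh} gives only $\cE(u_d)\le d$, so the asymptotic orbit sets of $u_d$ are periodic orbits of the \emph{perturbed} vector field $R_{\eta_d}$ with period bounded by $d$. Since $d\to\infty$ along your sequence, these periods may diverge, and the Hausdorff limit of a sequence of $R_{\eta_d}$-periodic orbits with unbounded period is only an $R_\eta$-invariant set, not a union of $R_\eta$-periodic orbits. Nothing in Proposition~\ref{prop:pfh} is ``tuned'' to prevent this; the bounds there are tuned so that Theorem~\ref{thm:limit_set_intro} applies, not so that asymptotic ends converge to honest periodic orbits.

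The paper fixes this by decoupling the perturbation parameter from the degree. It fixes a single sequence of nondegenerate perturbations $\phi_k\to\phi$ and, for each pair $(d,k)$, obtains a curve $u_{d,k}$. The key observation (equation~\eqref{eqn:periodic} in the paper) is that for \emph{fixed} $d$, periodic orbits of $R_{\eta_k}$ of period $\le d$ converge as $k\to\infty$ into $\cP$, the periodic set of $R_\eta$; this is a standard Arzel\`a--Ascoli argument using the uniform period bound. One then chooses $k_d\gg 1$ and a height $s_d$ so that the slice of $u_{d,k_d}$ at $s_d$ lies within $1/d$ of $\overline{\cP}$, and applies Theorem~\ref{thm:limit_set_intro} and Proposition~\ref{prop:lim} to the diagonal sequence $\{u_{d,k_d}\}$. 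This produces a connected limit set containing both an element through $z$ and an element inside $\overline{\cP}\subseteq\Lambda$, after which your concluding connectivity argument (via Lemmas~\ref{lem:union} and~\ref{lem:loc_maximal}) goes through verbatim. Your single-indexed sequence could be made to work if you additionally required each $\phi_d$ to be close enough to $\phi$ that all period-$\le d$ orbits of $R_{\eta_d}$ lie within $1/d$ of $\overline{\cP}$---but that is precisely the diagonal choice the paper makes, and it is the missing ingredient in your write-up.
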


\begin{proof}[Proof of Proposition~\ref{prop:2d_mapping_torus}]

Let $\Lambda \subseteq Y_\phi$ be any compact invariant set. We assume without loss of generality that it is proper, since if $\Lambda = Y_\phi$ we are already done.  Our argument will go via approximation to the nondegenerate case, so choose a sequence $\{H_k\}$ of smooth functions $H_k: \bR/\bZ \times \Sigma \to \bR$ converging in $C^\infty$ to $0$ such that for each $k$, the map $\phi_k := \phi \circ \psi^1_{H_k}$ is nondegenerate. For each $k$, the pair $\eta_k := (dt, \omega_\phi + dH_k \wedge dt)$ is a framed Hamiltonian structure, and the map 
$$(t, p) \mapsto (t, (\psi^t_{H_k})^{-1}(p))$$
descends to an isomorphism 
$$(Y_\phi, \eta_k) \to (Y_{\phi_k}, (dt, \omega_{\phi_k}))$$
of framed Hamiltonian manifolds. Since $H_k$ converges to $0$ in the $C^\infty$ topology as $k \to \infty$, we have that $\eta_k$ converges in $C^\infty$ to $\eta$ as $k \to \infty$. For each $k$ we choose an $\eta_k$-adapted $J_k$ such that the sequence $\{(\eta_k, J_k)\}$ converges to a pair $(\eta, J)$ in $\cD(Y)$. 

Now fix any point $z \in Y_\phi$ not in $\Lambda$.  Then there is a sequence of points $\{z_k\}$ converging to $z$, such that $z_k$ is not on any closed Reeb orbit for $\phi_k$; this follows from the fact that the $\phi_k$ are nondegenerate, hence the union of their closed Reeb orbits has measure zero.  By Proposition~\ref{prop:pfh}, after possibly making an arbitrarily small perturbation to each $J_k$, for each sufficiently large $d$ and each sufficiently large $k$ there exists a $J_k$-holomorphic curve
$$u_{d,k}: C_{d,k} \to \bR \times Y_\phi$$
such that:
\begin{enumerate}[(\roman*)]
\item $(0, z_k) \in u_{d,k}(C_{d,k})$;
\item $\cE(u_{d,k}) \leq d$;
\item $\cA(u_{d,k}) \leq d^{-1/2}$;
\item $\chi(C_{d,k}) \geq -2$. 
\end{enumerate}

Write $\cP \subseteq Y_\phi$ for the union of closed orbits of $R_\eta$ and write $\overline{\cP}$ for its closure. For each $d$ and $k$, write $\cP_d(k)$ for the union of closed orbits of $R_{\eta_k}$ of period at most $d$. By the Hofer energy bound in (ii), we have that the level sets concentrate around closed orbits of period $\leq d$ as $s \to \infty$. We also note that since $R_{\eta_k} \to R_\eta$, we have for each fixed $d$ that
$$\limsup_{k \to \infty} \cP_d(k) \subseteq \cP,$$
that is periodic orbits of $R_{\eta_k}$ with bounded period converge to periodic orbits of $R_\eta$. It follows that for each $d$, we can choose some large $k_d \gg 1$ and some $s_d \in \bR$ such that
\begin{equation}
\label{eqn:periodic}
\limsup_{d \to \infty} \tau_{s_d} \cdot \Big(u_{d,k_d}(C_{d,k_d}) \cap (s_d - 1, s_d + 1) \times Y_\phi\Big)\subseteq (-1, 1) \times \overline{\cP}.
\end{equation}

Write $X := (-1, 1) \times Y_\phi$ and write $\cX \subseteq \cK(X)$ for the limit set of the sequence $\{u_{d,k_d}\}_{d \geq 1}$; by passing to a subsequence, we can assume by Proposition~\ref{prop:lim} that this is connected. Let $f: \cK(X) \to \cK(Y_\phi)$ denote the map $\overline{\Lambda} \mapsto \overline{\Lambda} \cap \{0\} \times Y_\phi$ and write $\cY := f(\cX)$. By Lemma~\ref{lem:intersection}, the map $f$ is continuous at each point of $\cX$, so it follows that $\cY$ is a connected subset of $\cK(Y)$. 

By (iii), (iv),  and Theorem~\ref{thm:limit_set_intro}, we have that $\cY \subseteq \cK(Y_\phi, R_\eta)$.  By \eqref{eqn:periodic}, some $\Lambda' \in \cY$ is contained entirely in $\overline{\cP}$, and is therefore contained in $\Lambda$. By (i), some $\Lambda'' \in \cY$ contains the point $z$.  Now let $\cZ \subset \cK(Y_\phi, R_\eta)$ denote the collection of closed invariant sets equal to a union $K \cup \Lambda$ for some $K \in \cX$. By Lemma~\ref{lem:union}, $\cZ$ is the image of a connected set by a continuous map, so it is connected. Moreover, $\Lambda \in \cZ$, since $\Lambda = \Lambda' \cup \Lambda$, and $\Lambda'' \cup \Lambda \in \cZ$ by definition.  However, $\Lambda'' \cup \Lambda$ is not a subset of $\Lambda$, since it contains $z$.  Hence, by Lemma~\ref{lem:loc_maximal}, $\Lambda$ is not locally maximal. 

\end{proof}

\subsection{Proofs of other dynamical results}\label{subsec:other_results}

\subsubsection{Proofs of Theorems~\ref{thm:2d_mid} and \ref{thm:3d_mid}} We prove Theorem~\ref{thm:2d_mid} using Theorem~\ref{thm:2d}. Theorem~\ref{thm:3d_mid}, the version for three-dimensional Reeb flows, follows from the same formal argument using Theorem~\ref{thm:3d}, and so its proof will be skipped for brevity. 

\begin{proof}[Proof of Theorem~\ref{thm:2d_mid}]
Let $\phi: \Sigma \to \Sigma$ be a monotone area-preserving diffeomorphism of a closed surface and let $\Lambda \subset \Sigma$ be a proper closed invariant set. Set $U := \Sigma\,\setminus\,\Lambda$. Let $\cP \subseteq \Sigma$ denote the union of all periodic orbits. If $\cP\not\subseteq \Lambda$, then there exists a periodic orbit contained in $U$. Therefore, $U$ is not minimal. If $\cP \subseteq \Lambda$, then by Theorem~\ref{thm:2d} there exists some proper closed invariant subset $\Lambda'$ arbitrarily $C^0$-close to $\Lambda$ but not equal to $\Lambda$. By taking $\Lambda'$ sufficiently close to $\Lambda$, we can ensure that $\Lambda' \cap U$ is not equal to $U$. Then any point $z \in \Lambda' \cap U$ will not have dense orbit in $U$, so $U$ is not minimal. 
\end{proof}

\subsubsection{Proofs of Corollaries~\ref{cor:2d_elementary} and \ref{cor:3d_mid}} We prove Corollary~\ref{cor:2d_elementary}; the same formal argument proves Corollary~\ref{cor:3d_mid}. 

\begin{proof}[Proof of Corollary~\ref{cor:2d_elementary}]
Let $\phi: \Sigma \to \Sigma$ be a diffeomorphism satisfying the assumptions of Theorem~\ref{thm:2d_mid}. The corollary is equivalent to the statement that there exists infinitely many proper compact $R_\phi$-invariant subsets whose union is dense in the mapping torus $Y_\phi$. Let $\mathcal{K}' := \mathcal{K}(Y_\phi, R_\phi)\,\setminus\,\{Y_\phi\}$ denote the set of all proper compact $R_\phi$-invariant sets. The set $\mathcal{K}'$ admits a natural partial order defined by inclusion. By Theorem~\ref{thm:2d_mid}, $\mathcal{K}'$ has no maximal element. The set $\mathcal{K}'$ is non-empty since $\phi$ has at least one periodic orbit (see Proposition~\ref{prop:ucycle}). The set $\mathcal{K}'$ is infinite, since otherwise it would have a maximal element. Now consider the set
$$Z := \overline{\bigcup_{\Lambda \in \mathcal{K}'}} \Lambda \subseteq Y_\phi,$$ 
the closure of the union of all proper compact invariant sets. The set $Z$ is compact, invariant, and contains all proper compact invariant sets. Since $\mathcal{K}'$ has no maximal element, $Z$ must be equal to $Y_\phi$.
\end{proof}

\subsubsection{Proof of Theorem~\ref{thm:geodesics}} 
\begin{proof}
Fix a closed Finsler surface $F$ as in the statement of the theorem. Now we introduce the notion of a ``projected invariant set'' of the geodesic flow, which will be useful for our proof. Recall that there is a Reeb vector field whose orbits project to geodesics.  We write $SF$ for the unit tangent bundle and $R$ for this vector field. Let $\pi: SF \to F$ denote the bundle projection. We call a compact subset $\Xi \subseteq F$ a \emph{projected invariant set} if there exists some compact $R$-invariant subset $\Lambda \subseteq SF$ such that $\pi(\Lambda) = \Xi$. A projected invariant set $\Xi$ is called \emph{proper} if $\Xi \neq F$. The standard example of a projected invariant set is the closure $\overline{\gamma(\bR)}$ of some geodesic $\gamma$. We use Theorem~\ref{thm:3d} to prove the following analogue of Corollaries~\ref{cor:2d_elementary} and \ref{cor:3d_mid}. 

\begin{lem}\label{lem:geodesics_technical}
There exists infinitely many distinct proper projected invariant sets whose union is dense in $F$. 
\end{lem}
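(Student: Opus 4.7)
The plan is to mimic the argument for Corollary~\ref{cor:3d_mid}, but carried out at the level of projected invariant sets. Let $R$ denote the Reeb vector field on $SF$ generating the geodesic flow; since the corresponding contact form has torsion first Chern class (as recalled in the introduction), Theorem~\ref{thm:3d} applies to $(SF, R)$. Write $\mathcal{K}''$ for the collection of proper projected invariant sets in $F$. I aim to show that $\mathcal{K}''$ is non-empty, has no maximal element under inclusion, and that $\overline{\bigcup_{\Xi \in \mathcal{K}''} \Xi} = F$. Non-emptiness is immediate: $R$ admits at least one closed orbit $\gamma$ (Weinstein conjecture), and $\pi(\gamma)$ is a closed geodesic---a $1$-dimensional compactum in $F$, hence proper.

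The heart of the argument is the ``no maximal element'' step. Suppose $\Xi \in \mathcal{K}''$ were maximal, and let $\Lambda^\dagger := \{v \in SF : \mathcal{O}(v) \subseteq \pi^{-1}(\Xi)\}$ be the maximal $R$-invariant subset of $\pi^{-1}(\Xi)$; this is a closed proper invariant subset of $SF$ with $\pi(\Lambda^\dagger) = \Xi$. I split into two cases. If some closed Reeb orbit $\gamma$ satisfies $\pi(\gamma) \not\subseteq \Xi$, then $\pi(\gamma) \cup \Xi = \pi(\gamma \cup \Lambda^\dagger)$ is a projected invariant strictly containing $\Xi$; it remains proper, since $\pi(\gamma)$ is a compact set of Hausdorff dimension at most $1$ (hence with empty interior in the surface $F$), so $(F \setminus \Xi) \setminus \pi(\gamma)$ is a non-empty open set. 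This contradicts maximality. Otherwise every closed Reeb orbit lies in $\pi^{-1}(\Xi)$ and hence in $\Lambda^\dagger$, so $\Lambda^\dagger$ contains all periodic orbits of $R$, and Theorem~\ref{thm:3d} implies that $\Lambda^\dagger$ is not locally maximal. Then for each sufficiently small $\epsilon > 0$ I pick a closed $R$-invariant $\Lambda_\epsilon \not\subseteq \Lambda^\dagger$ within Hausdorff distance $\epsilon$ of $\Lambda^\dagger$ and set $\widetilde{\Lambda}_\epsilon := \Lambda^\dagger \cup \Lambda_\epsilon$. Fixing metrics on $SF$ and $F$ so that $\pi$ is $1$-Lipschitz, the image $\pi(\widetilde{\Lambda}_\epsilon)$ lies in the $\epsilon$-neighborhood of $\Xi$ in $F$ and so is proper for small $\epsilon$, while it strictly contains $\Xi$ (otherwise $\widetilde{\Lambda}_\epsilon \subseteq \pi^{-1}(\Xi)$ would contradict the maximality of $\Lambda^\dagger$). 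Thus $\pi(\widetilde{\Lambda}_\epsilon) \in \mathcal{K}''$ is strictly larger than $\Xi$, again contradicting maximality.

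To finish, let $Z'' := \overline{\bigcup_{\Xi \in \mathcal{K}''} \Xi}$. Because $\pi$ is a continuous closed surjection from the compact space $SF$ onto $F$, $Z''$ equals $\pi$ of a closed $R$-invariant subset of $SF$ (the closure of the union of any choice of invariant lifts $\Lambda_\Xi$ with $\pi(\Lambda_\Xi) = \Xi$), so $Z''$ is itself a projected invariant set. If $Z'' \neq F$, then $Z'' \in \mathcal{K}''$ would be its maximum, contradicting the previous step; hence $Z'' = F$. Since $\mathcal{K}''$ is non-empty with no maximal element, it is infinite, and this infinite collection of distinct proper projected invariant sets has dense union in $F$, as claimed.

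The principal obstacle is the no-maximum step: the dichotomy based on whether some periodic orbit escapes $\Xi$ appears unavoidable, since the direct dimension-counting argument applies only to a single periodic orbit, while Theorem~\ref{thm:3d} requires $\Lambda^\dagger$ to contain every periodic orbit before it can be invoked in the complementary case; the transition from Hausdorff proximity in $SF$ to proximity in $F$ is then handled by the $1$-Lipschitz property of $\pi$.
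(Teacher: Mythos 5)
Your proof is correct and follows essentially the same approach as the paper: the same dichotomy on whether some closed geodesic escapes $\Xi$ (in which case you adjoin it) or all closed geodesics project into $\Xi$ (in which case you apply Theorem~\ref{thm:3d} to the maximal invariant lift $\Lambda^\dagger$ of $\Xi$ and push forward a slightly larger invariant set), followed by the same ``no maximal element implies infinite and dense union'' argument used for Corollary~\ref{cor:2d_elementary}. The only differences are cosmetic --- you make the properness of $\Xi\cup\pi(\gamma)$ explicit via a dimension count and use Lipschitz continuity of $\pi$ where the paper invokes Lemma~\ref{lem:pushforward}.
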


\begin{proof}
Let $\Xi \subseteq F$ be any proper projected invariant set. We claim that there exists a proper projected invariant set $\Xi'$ such that $\Xi$ is a strict subset of $\Xi'$. Once this claim is established, an analogous argument to the proof of Corollary~\ref{cor:2d_elementary} proves the lemma. We omit this part and focus on proving the claim.

Fix a proper projected invariant set $\Xi$ as above. We construct the set $\Xi'$ in two cases, depending on whether $\Xi$ contains all of the closed geodesics or not. First, suppose that there exists a closed geodesic $\gamma: \bR \to F$ such that $\gamma(\bR)$ is not a subset of $\Xi$. Then, we set $\Xi' = \Xi \cup \gamma(\bR)$.  

Next, suppose that $\Xi$ contains the images of all closed geodesics. Let $\Lambda \subset SF$ denote the closure of the union of all compact invariant sets projecting into $\Xi$; note that $\pi(\Lambda) = \Xi$. Since $\Xi$ contains the images of all closed geodesics, it follows from the construction that $\Lambda$ must contain all closed orbits of the geodesic flow. 
By Theorem~\ref{thm:3d} and Lemma~\ref{lem:finsler_torsion}, $\Lambda$ is not locally maximal. Therefore, there exists a Hausdorff convergent sequence $\Lambda_k \to \Lambda$ such that $\Lambda_k$ is not a subset of $\Lambda$ for each $k$. We can assume without loss of generality that $\Lambda$ is a strict subset of $\Lambda_k$ for each $k$. This is done by replacing $\Lambda_k$ with $\Lambda_k \cup \Lambda$ for each $k$; these sets still Hausdorff converge to $\Lambda$ by Lemma~\ref{lem:union}. By the definition of $\Lambda$, it follows that $\Xi$ is a strict subset of $\pi(\Lambda_k)$ for each $k$. Moreover, it follows from Lemma~\ref{lem:pushforward} that $\pi(\Lambda_k) \to \Xi$, so $\pi(\Lambda_k)$ is a proper projected invariant set for sufficiently large $k$. We set $\Xi' = \pi(\Lambda_k)$ for any choice of sufficiently large $k$. 
\end{proof}

We now discuss how to prove Theorem~\ref{thm:geodesics} using the lemma.  By Lemma~\ref{lem:geodesics_technical}, there exists an infinite collection $\mathcal{F} \subseteq \mathcal{K}(F)$ of projected proper invariant sets whose union is dense in $F$. For any proper projected invariant set $\Xi \subset F$ and any point $z \in F$, there exists a geodesic $\gamma: \bR \to F$ such that $\gamma(0) = z$ and $\overline{\gamma(\bR)} \subseteq \Xi$. Hence, for each $\Xi \in \mathcal{F}$, we obtain a countable sequence of geodesics $\gamma^{\Xi}_k$, such that i) $\gamma^\Xi_k(\bR) \subset \Xi$ for each $k$ and ii) $\cup_k \gamma^\Xi_k(\bR)$ is dense in $\Xi$. After passing to a 
subset, we may assume that these geodesics have distinct closures. Let $G$ be the collection of all geodesics $\gamma^\Xi_k$ over all $\Xi \in \mathcal{F}$ and all $k$. Then $G$ is infinite, because $\mathcal{F}$ is, and satisfies Theorem~\ref{thm:geodesics}(a--c). 

\end{proof}

\subsubsection{Proof of Corollary~\ref{cor:anosov_criterion}} 
\begin{proof}
Contreras--Mazzucchelli proved \cite[Section $3.4$, p. $18$]{CM21} that, if the invariant set $\mathcal{P}$ is uniformly hyperbolic, then it is locally maximal. Their argument is short and we paraphrase it here for the convenience of the reader. A uniformly hyperbolic invariant is locally maximal if and only if it has ``local product structure'' \cite[Theorem $6.2.7$]{FH19}.  By the definition of local product structure \cite[Section $3.2$]{CM21}, the set $\mathcal{P}$ has local product structure if and only if for any pair of sufficiently close periodic orbits $\gamma_1$ and $\gamma_2$, the local stable/unstable manifolds intersect in $\mathcal{P}$. Contreras--Mazzucchelli observe that if $\gamma_1$ and $\gamma_2$ are close, then they are in the same homoclinic class. By the Birkhoff--Smale horseshoe theorem \cite[Theorem $6.5.2$]{FH19}, it follows that any intersection of local stable/unstable manifolds is approximated by periodic orbits. 

Now, the corollary follows immediately from our main Theorem~\ref{thm:3d}. Since $\mathcal{P}$ is locally maximal, Theorem~\ref{thm:3d} and Lemma~\ref{lem:finsler_torsion} imply that $\mathcal{P} = SF$. It follows that $SF$ is uniformly hyperbolic, and therefore that the geodesic flow is Anosov. 
\end{proof}

\subsection{Proof of crossing energy theorem}

We prove Theorem~\ref{thm:crossing_energy} using Theorem~\ref{thm:limit_set_intro}, Proposition~\ref{prop:lim}, and the lemmas from this section. 

\begin{proof}[Proof of Theorem~\ref{thm:crossing_energy}]
Assume for the sake of contradiction that the corollary is false. Then there exists a sequence of $J$-holomorphic curves $\{u_k: C_k \to \bR \times Y\}$ satisfying \eqref{eq:crossing}, $\chi(C_k) \geq -T$ and $\mathcal{A}(u_k) \leq 1/k$. By Theorem~\ref{thm:limit_set_intro}, every element of the limit set $\cX$ is a cylinder over a closed invariant set. By Proposition~\ref{prop:lim}, after replacing $\{u_k\}$ with a subsequence if necessary, the limit set is connected. Let $f: \cK(X) \to \cK(Y)$ denote the map $\overline{\Lambda} \mapsto \overline{\Lambda} \cap \{0\} \times Y$ and set $\cY := f(\cX)$. By Lemma~\ref{lem:intersection} and Theorem~\ref{thm:limit_set_intro}, $\cY$ is a connected subspace of $\cK(Y, R_\eta)$.

Let $\cZ \subseteq \cK(Y)$ denote the collection of closed invariant sets which are a union $K \cup \Lambda$ for some $K \in \cY$; this is connected by Lemma~\ref{lem:union}.  The first item in \eqref{eq:crossing} implies $\cY$ contains an invariant set $\Lambda'$ contained in $U$, and since $U$ is an isolating neighborhood of $\Lambda$, this implies $\Lambda' \subseteq \Lambda$. It follows that $\Lambda \in \cZ$. Since $\Lambda$ is locally maximal, every element of $\cZ$ is a subset of $\Lambda$ by Lemma~\ref{lem:loc_maximal}. Hence, every element of $\cY$ is a subset of $\Lambda$. This in turn implies that for sufficiently large $k$, every level set of $u_k(C_k)$ lies inside $\bR \times \overline{V}$. We now arrive at a contradiction by appealing to the second condition of \eqref{eq:crossing}. 
\end{proof}

\section{Low-action curves from embedded contact homology}\label{sec:ech}

This section proves Proposition~\ref{prop:ech}. Previously statements like this have been proved under the assumption of two periodic Reeb orbits \cite{HT09b}, or later under the assumption of finitely many periodic Reeb orbits \cite{CGHP19}; this section shows that this phenomenon in fact holds much more generally.   

The basic strategy of proof follows the strategy developed in \cite{HT09b,CGHP19}, whereby one considers a tower of $U$-curves and compares the ECH index $I$ to the $J_0$ index, which controls the topology of the curves --- we will review these terms below.   What is new here is the argument to bound the difference between $I$ and $J_0$: this is controlled by the Chern class and the Conley-Zehnder index, and some new ideas are needed to bound these terms without assuming finitely many orbits.

\subsection{Embedded contact homology}\label{subsec:ech_overview}

We review the basic features of embedded contact homology \cite{Hutchings02, Hutchings14} here. Fix a closed, smooth, connected, oriented three-manifold $Y$ and a contact structure $\xi$. 

\subsubsection{Reeb flow basics}\label{subsec:reeb_flows} Fix any contact form $\lambda$ defining $\xi$, i.e. satisfying the identity $\ker(\lambda) = \xi$. Recall that the \emph{Reeb vector field} $R$ is the unique vector field solving the equations
$$\lambda(R) \equiv 1,\quad d\lambda(R, -) \equiv 0.$$
A {\em closed Reeb orbit} is a smooth map $\gamma: \bR/T\bZ \to Y$ for some $T > 0$ such that $\dot\gamma(t) = R(\gamma(t))$ for all $t$; as is standard we will make no distinction between two closed Reeb orbits that agree up to a reparameterization of the domain. A closed Reeb orbit is \emph{simple} if $\gamma$ is injective. For any closed Reeb orbit $\gamma$, we write $\gamma^k: \bR/kT\bZ \to Y$ for its $k$-th iteration. The number $T$ is the \emph{action} of $\gamma$, denoted by
$$\mathcal{A}(\gamma) := \int_\gamma \lambda = T.$$

The time $T$ linearized flow of $R$ determines a symplectic isomorphism $\xi_{\gamma(0)} \to \xi_{\gamma(0)}$ called the \emph{Poincar\'e return map}. The orbit $\gamma$ is \emph{nondegenerate} if the return map does not have an eigenvalue equal to $1$. A nondegenerate orbit $\gamma$ is \emph{hyperbolic} if $P_\gamma$ has real eigenvalues and \emph{elliptic} if $P_\gamma$ has complex eigenvalues of unit length. We say $\lambda$ is \emph{nondegenerate} if all closed Reeb orbits are nondegenerate. For any fixed contact structure $\xi$, a generic defining contact form $\lambda$ is nondegenerate. 

\subsubsection{ECH generators} A \emph{Reeb orbit set} is a (possibly empty) finite set $\alpha = \{(\alpha_i, m_i)\}$ of pairs $(\alpha_i, m_i)$, where $\alpha_i$ is a simple closed Reeb orbit and $m_i \in \mathbb{N}$ is a positive integer multiplicity. An \emph{ECH generator} is a Reeb orbit set $\alpha = \{(\alpha_i, m_i)\}$ such that i) each of the $\alpha_i$ are pairwise distinct and ii) $m_i = 1$ if $\alpha_i$ is hyperbolic. Denote by $\ECC(Y, \lambda)$ the $\bZ/2$-vector space generated by the set of ECH generators. Any Reeb orbit set $\alpha$ has a homology class $[\alpha] := \sum_i m_i[\alpha_i] \in H_1(Y; \bZ)$. For each $\Gamma \in H_1(Y; \bZ)$, let $\ECC(Y, \lambda, \Gamma)$ denote the sub-module generated by ECH generators homologous to $\Gamma$. 

\subsubsection{ECH differential} Assume that $\lambda$ is nondegenerate. Choose a generic $\lambda$-adapted almost-complex structure $J$ on $\bR \times Y$. The ECH differential $$\partial_J: \ECC(Y, \lambda) \to \ECC(Y,\lambda)$$ is defined by counting certain ``$J$-holomorphic currents'' which we now define. We say that a $J$-holomorphic curve $u: C \to \bR \times Y$ is \emph{somewhere injective} if there exists $\zeta \in C$ such that $u^{-1}(u(\zeta)) = \{\zeta\}$ and $Du$ is injective at $\zeta$. A \emph{$J$-holomorphic current} is a finite set $\cC = \{(C_k, d_k)\}$ of pairs where the $C_k$ denote distinct standard, somewhere injective $J$-holomorphic curves with finite Hofer energy, and the $d_k$ are positive integer multiplicities. We say that $\cC$ is \emph{somewhere injective} if $d_k = 1$ for each $k$ and \emph{embedded} if the $C_k$ are pairwise disjoint and embedded.  For any $J$-holomorphic current $\cC = \{(C_k, d_k)\}$, the slices 
$$\cC \cap \{s\} \times Y = \{(C_k \cap \{s\} \times Y, d_k)\}$$ 
form for $|s| \gg 1$ a weighted collection of embedded loops in $Y$. The slices converge as $1$-dimensional currents to Reeb orbit sets $\alpha$ and $\beta$ as $s \to \infty$ and $s \to -\infty$, respectively. For any pair of Reeb orbit sets $\alpha$ and $\beta$ with $[\alpha] = [\beta]$, we let $\cM(\alpha, \beta)$ denote the moduli space of $J$-holomorphic curves with positive asymptotic limit at $\alpha$ and negative asymptotic limit at $\beta$. 

Any $\cC \in \cM(\alpha, \beta)$ has an associated \emph{ECH index} $I(\cC) \in \bZ$, defined below, and for each $k \in \bZ$ we let $\cM_k(\alpha, \beta)$ denote the subspace of curves of ECH index $k$. When $J$ is sufficiently generic, the space $\cM_1(\alpha, \beta)$ is a smooth $1$-dimensional manifold. Moreover, it has a free $\bR$-action given by the translation action on $\bR \times Y$, and the quotient $\cM_1(\alpha, \beta)/\bR$ is a finite set of points. The matrix coefficient of the ECH differential with respect to a pair of ECH generators $\alpha$ and $\beta$ is defined by the identity
$$\langle \partial_J\alpha , \beta \rangle := \#_2\cM(\alpha, \beta)/\bR$$
where $\#_2$ denotes the modulo $2$ count of points. By \cite{HT07, HT09a}, $\partial_J^2 = 0$, and therefore $(\ECC(Y, \lambda), \partial_J)$ is a chain complex. The \emph{embedded contact homology} $\ECH(Y, \xi)$ is its homology group. A consequence of Taubes' isomorphism of ECH with monopole Floer homology \cite{Taubes10} is that $\ECH(Y, \xi)$ does not depend on the choice of contact form $\lambda$ defining $\xi$ or the choice of $J$ used to define the ECH differential. The ECH differential preserves the homology class $[\alpha] \in H_1(Y;\bZ)$ of an orbit set $\alpha$; for any $\Gamma \in H_1(Y; \bZ)$ we write $\ECH(Y, \xi, \Gamma)$ for the homology of $(\ECC(Y, \lambda, \Gamma), \partial_J)$. 

\subsubsection{The $U$-map} For a generic choice of $\lambda$-adapted almost-complex structure $J$ and any point $z \in Y$ not on any closed Reeb orbit, there exists a chain map
$$U_{J,z}: \ECC(Y, \lambda,\Gamma) \to \ECC(Y, \lambda,\Gamma)$$
defined as follows. Write $\cM_2(\alpha, \beta; z)$ for the space of all $J$-holomorphic currents with ECH index $2$ whose support contains $(0, z) \in \bR \times Y$. For a generic choice of $J$ and any $z$ not on any closed Reeb orbit, the space $\cM_2(\alpha, \beta; z)$ is a finite set of points. The matrix coefficient of $U_{J,z}$ with respect to $\alpha$ and $\beta$ is defined by the identity
$$\langle U_{J,z}\alpha, \beta \rangle = \#_2\cM(\alpha, \beta; z).$$
The map $U_{J, z}$ descends to a map on homology that we call the \emph{$U$-map}:
$$U: \ECH(Y, \xi,\Gamma) \to \ECH(Y, \xi,\Gamma).$$
The map $U_{J,z}$ may vary with different choices of $J$ and $z$. However, the chain homotopy class of $U_{J,z}$ does not depend on the choice of $J$ and $z$, so the induced map on homology does not depend on the choice of $J$ and $z$. 

\subsubsection{$U$-towers and the volume property}

Fix any $\Gamma \in H_1(Y; \bZ)$. A \emph{$U$-tower} is a sequence of nonzero classes 
$$\{\sigma_k\}_{k \geq 0} \subset \ECH(Y, \xi, \Gamma)$$ such that i) $U\sigma_k = \sigma_{k-1}$ for each $k > 0$ and ii) $U\sigma_0 = 0$. Taubes' isomorphism \cite{Taubes10} and a computation by Kronheimer--Mrowka \cite[Chapter $35$]{KM07} prove that $\ECH(Y, \xi, \Gamma)$ contains a $U$-tower whenever the class $c_1(\xi) + 2\operatorname{PD}(\Gamma) \in H^2(Y; \bZ)$ is torsion. Here $c_1(\xi)$ denotes the first Chern class of $\xi$ with respect to any complex structure which rotates positively with respect to $d\lambda$.  

For any nonzero class $\sigma \in \ECH(Y, \xi)$, define its \emph{spectral invariant} $c_\sigma(\lambda) \in \bR$ to be the infimum of all $L$ such that $\sigma$ is represented by a cycle in $\ECC(Y, \lambda)$ with all constituent generators having action $\leq L$. Here the action of an ECH generator $\alpha = \{(\alpha_i, m_i)\}$ is defined to be
$$\cA(\alpha) := \sum_i m_i\cA(\alpha).$$
A quantitative version of the proof that ECH is independent of the choice of $J$ used to define the ECH differential shows that the spectral invariants do not depend on $J$ either. However, the spectral invariants can and usually do vary with $\lambda$. Each spectral invariant $c_\sigma(\lambda)$ is $C^0$-continuous with respect to $\lambda$; this allows us to extend the definition of $c_\sigma(\lambda)$ to degenerate $\lambda$. The following lemma records some relevant chain-level information that we can extract from a $U$-tower. 

\begin{lem}\label{lem:utower}
Assume that there exists a $U$-tower $\{\sigma_k\}_{k \geq 0} \subset \ECH(Y, \xi, \Gamma)$ for some $\Gamma \in H_1(Y; \bZ)$. Assume that $\lambda$ is nondegenerate and choose generic $J$, and $(0,z)$ not on any closed Reeb orbit, so that the ECH differential $\partial_J$ and the chain map $U_{J,z}$ are well-defined. Then for each $\epsilon > 0$ and each $k \geq 1$, there exists an ECH generator $\alpha_k$ such that
\begin{enumerate}[(a)]
\item $\cA(\alpha_k) \leq c_{\sigma_k}(\lambda)$; 
\item $U_{J,z}^k(\alpha_k) \neq 0$.
\end{enumerate}
\end{lem}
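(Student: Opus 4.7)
The plan is to deduce this directly from the definitions of the spectral invariant, the $U$-tower, and the chain-level $U$-map, using only elementary chain-level reasoning.

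The first step is to unpack the spectral invariant. Since $\lambda$ is nondegenerate, the set of actions of ECH generators in the homology class $\Gamma$ is a discrete subset of $[0,\infty)$, so the infimum defining $c_{\sigma_k}(\lambda)$ is attained by a specific cycle representative. I would therefore fix, for each $k \geq 1$, a cycle
$$z_k = \sum_{i=1}^{N_k} \alpha_{k,i} \in \ECC(Y, \lambda, \Gamma)$$
representing $\sigma_k$, all of whose constituent ECH generators $\alpha_{k,i}$ satisfy $\cA(\alpha_{k,i}) \leq c_{\sigma_k}(\lambda)$. If one prefers to avoid the discreteness remark, the bound can be weakened to $c_{\sigma_k}(\lambda)+\epsilon$, which explains the $\epsilon$-quantifier appearing in the statement.

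The next step is to apply the chain map $U_{J,z}^k$ to $z_k$ and exploit the $U$-tower relation. By hypothesis, $U^k\sigma_k = \sigma_0 \neq 0$ in $\ECH(Y,\xi,\Gamma)$, so the chain $U_{J,z}^k(z_k)$ is a cycle representing this nonzero homology class and therefore cannot vanish as a chain. Expanding by $\bZ/2$-linearity,
$$U_{J,z}^k(z_k) = \sum_{i=1}^{N_k} U_{J,z}^k(\alpha_{k,i}),$$
so at least one summand $U_{J,z}^k(\alpha_{k,i_0})$ is itself nonzero as a chain. Setting $\alpha_k := \alpha_{k,i_0}$ gives (b), while (a) is immediate from the action bound on the constituents of $z_k$.

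There is no real obstacle here: the lemma is essentially a chain-level reformulation of the $U$-tower hypothesis. The only subtlety is the quiet use of nondegeneracy to guarantee a minimizing cycle representative, which the $\epsilon$-buffer already built into the statement sidesteps anyway.
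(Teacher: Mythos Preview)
Your proof is correct and essentially identical to the paper's own argument: both pick a cycle representing $\sigma_k$ with generators of action at most $c_{\sigma_k}(\lambda)$ (using nondegeneracy to pass from the $\epsilon$-approximate bound to the exact one), apply $U_{J,z}^k$, observe the result represents $\sigma_0 \neq 0$, and extract a generator on which $U_{J,z}^k$ does not vanish. Even your remark about the role of nondegeneracy mirrors the paper's reasoning.
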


\begin{proof}
For any fixed $\epsilon > 0$ and $k$, there exists a cycle $x \in \ECC(Y, \lambda, \Gamma)$ representing $\sigma_k$ which splits as a sum of ECH generators $x = \sum_{i=1}^N x_i$ with action less than $c_{\sigma_k}(\lambda) + \epsilon$.  
Since there are only finitely many Reeb orbit sets of action $\le c_{\sigma_k}(\lambda) + 1$, as $\lambda$ is nondegenerate, we can therefore find a cycle $x$ as above with action exactly $c_{\sigma_k}(\lambda)$. Since $U_{j,z}^k(x) \neq 0$, it follows that $U_{J,z}^k(x_i) \neq 0$ for some $i$. Set $\alpha_k := x_i$. 
\end{proof}

One of the most powerful properties of the ECH spectral invariants is the \emph{volume property} proved by Cristofaro-Gardiner--Hutchings--Ramos \cite{CGHR15}. Their result, stated in the following proposition, shows that that the spectral invariants of a $U$-tower asymptotically recover the contact volume. 

\begin{thm}[ECH volume property, \cite{CGHR15}] \label{prop:ech_volume}
Assume that there exists a $U$-tower $\{\sigma_k\}_{k \geq 0} \subset \ECH(Y, \xi, \Gamma)$ for some $\Gamma \in H_1(Y; \bZ)$. Then for any contact form $\lambda$ we have
\begin{equation}\label{eq:ech_volume} \lim_{k \to \infty} c_{\sigma_k}(\lambda)^2/2k = \int_Y \lambda \wedge d\lambda.\end{equation}
\end{thm}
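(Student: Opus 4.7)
Write $V(\lambda) := \int_Y \lambda\wedge d\lambda$. Since the spectral invariants $c_\sigma(\lambda)$ are $C^0$-continuous in $\lambda$, and nondegenerate contact forms are $C^0$-dense among those defining $\xi$, it suffices to prove the limit for nondegenerate $\lambda$; in that case each $c_{\sigma_k}(\lambda)$ is realized as the action of an actual ECH generator. The natural strategy is to establish matching asymptotic bounds $\limsup c_{\sigma_k}^2/(2k)\leq V$ and $\liminf c_{\sigma_k}^2/(2k)\geq V$, with fundamentally different ingredients feeding into each.

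\textbf{Upper bound.} The guiding idea is a Weyl-type count: the number $N(L,\Gamma)$ of ECH generators of class $\Gamma$ with action at most $L$ should grow at least like $\tfrac{1}{2}V L^2$ as $L\to\infty$. I would first verify this by exhibiting many ``packings by short Reeb orbits'': informally, symbols built from iterations of an adapted family of simple orbits fill a polytope whose volume is governed by $V$, and the number of admissible symbols grows quadratically. Given such a count, a pigeonhole argument, combined with the fact that the $U$-tower fills an arithmetic progression of gradings $|\sigma_k|=2k+c$, produces for each $\epsilon>0$ and each sufficiently large $k$ a cycle representing $\sigma_k$ made of generators of action at most $\sqrt{2(V+\epsilon)k}$, giving the $\limsup$ inequality.

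\textbf{Lower bound.} By Lemma~\ref{lem:utower} I obtain for each $k$ an ECH generator $\alpha_k$ with $\cA(\alpha_k)\leq c_{\sigma_k}(\lambda)$ and $U_{J,z}^k(\alpha_k)\neq 0$. Unpacking the non-vanishing of $U_{J,z}^k$ produces a chain of $J$-holomorphic currents of total ECH index $2k$ whose top positive asymptotic limit is $\alpha_k$, and whose total symplectic area is bounded by $\cA(\alpha_k)$ via Stokes' theorem. The crux is then to prove an asymptotic inequality of the form
\[
\cA(\alpha)^2\;\geq\;2V\cdot I(\alpha)\;-\;C(\lambda)\,I(\alpha)^{3/4}
\]
valid for every ECH generator $\alpha$ admitting a $J$-holomorphic current of ECH index $I(\alpha)$. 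Such a bound should follow from combining the ECH index formula $I=c_\tau(\alpha)+Q_\tau(\alpha)+\CZ^I(\alpha)$ with the relative adjunction formula and an asymptotic isoperimetric estimate that compares $Q_\tau$ to action squared and to the contact volume. Specializing to $\alpha=\alpha_k$ and dividing by $2k$ then yields $\liminf c_{\sigma_k}^2/(2k)\geq V$.

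\textbf{Main obstacle.} The principal difficulty is the isoperimetric-type inequality underlying the lower bound: obtaining the optimal constant $2V$, rather than any strictly smaller constant, requires globalizing the relative intersection numbers $c_\tau(\alpha)$ and $Q_\tau(\alpha)$ into an estimate involving $\int_Y\lambda\wedge d\lambda$ itself. This seems to need the full machinery of relative intersection theory for ECH currents together with a careful analysis of the boundary contributions of long iterates in the Conley--Zehnder term. Controlling these lower-order corrections sharply enough to close the gap between action and index is the technical heart of the argument, and I expect it to be the hardest step.
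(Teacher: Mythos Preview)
The paper does not prove this statement; it is quoted as a result of \cite{CGHR15}, so there is no in-paper proof to compare against. I compare your proposal instead to the argument in \cite{CGHR15}.

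Your lower-bound outline is in the right spirit and correctly isolates the hard step: \cite{CGHR15} does establish an inequality of roughly the shape $\cA(\alpha)^2 \geq 2V\cdot I - o(I)$ for ECH generators detected by the $U$-tower, and obtaining the sharp constant $V$ there is indeed the technical heart.

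Your upper bound, however, has a genuine gap. A Weyl-type count of the number of ECH generators of action $\leq L$ in class $\Gamma$ does not by itself produce a representative of the \emph{specific} homology class $\sigma_k$ with small action. The pigeonhole step you sketch would at best show that many graded pieces of the action-filtered chain complex are nonzero; it gives no mechanism forcing the map from low-action homology into $\ECH(Y,\xi,\Gamma)$ to hit $\sigma_k$. Concretely, a low-action chain in grading $|\sigma_k|$ need not be closed, and even if closed it could represent zero rather than $\sigma_k$. The actual upper bound in \cite{CGHR15} proceeds by an entirely different mechanism: one symplectically embeds a disjoint union of standard balls of total volume arbitrarily close to $\int_Y\lambda\wedge d\lambda$ into a cobordism with positive end $(Y,\lambda)$, and invokes monotonicity of ECH spectral invariants under such cobordisms (a consequence of Taubes' Seiberg--Witten isomorphism) together with the explicitly known asymptotics of the ECH capacities of a ball. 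No direct generator-counting argument of the kind you propose is known to yield the sharp constant.
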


\subsubsection{The ECH index} 

We now give the previously deferred definition of the ECH index.  Fix a nondegenerate contact form $\lambda$ and a pair of ECH generators $\alpha = \{(\alpha_i, m_i)\}$ and $\beta = \{(\beta_j, n_j)\}$. Let $H_2(Y, \alpha, \beta)$ denote the space of equivalence classes of integral $2$-chains with boundary $\alpha - \beta$, where two such chains are equivalent if and only if they differ by a $2$-boundary.  The \emph{ECH index} of a class $Z \in H_2(Y, \alpha, \beta)$ is an integer defined by the formula
\begin{equation} \label{eq:ech_index} I(Z) := c_\tau(Z) + Q_\tau(Z) + \sum_i\sum_{k=1}^{m_i}\CZ_\tau(\alpha_i^k) - \sum_j\sum_{l=1}^{n_j}\CZ_\tau(\beta_j^l).\end{equation}
Definitions of these terms can be found in \cite{Hutchings14}. We will narrow our discussion of the ECH index to exactly those terms which are useful for the proof of Proposition~\ref{prop:ech}, namely the relative Chern class and the Conley-Zehnder index. The \emph{relative Chern class} $c_\tau(Z)$ is defined as follows. We choose an oriented smooth surface $S \subset Y$ representing $Z$ and choose a section $\psi: S \to \xi$, transverse to the zero section, such that $\psi$ is a nonzero constant on each component of $\partial S$ and we set 
$$c_\tau(Z) := \#\psi^{-1}(0)$$
where $\#$ denotes the oriented count of points.  As for the Conley-Zehnder index, we define it by
$$\CZ_\tau(\gamma) = \lceil \theta_\tau(\gamma) \rceil + \lfloor \theta_\tau(\gamma) \lfloor$$
where $\theta_\tau(\gamma)$ denotes the ``monodromy number" of the linearized flow along $\gamma$ in the trivialization $\tau$. We refer the reader to \cite{Hutchings14} for a definition of the monodromy number. We define the ECH index of a curve to be the ECH index of its homology class, emphasizing that this is independent of the choice of $\tau$.

\subsubsection{Topological complexity of $U$-map curves}

A variant of the ECH index called the \emph{$J_0$ index} plays a key role in the proof of Propositions~\ref{prop:pfh} and \ref{prop:ech}. In the notation of \eqref{eq:ech_index} we write
\begin{equation} \label{eq:j0_defn} 
\begin{split} J_0(Z) &:= -c_\tau(Z) + Q_\tau(Z) + \sum_i \sum_{k=1}^{m_i-1} \CZ_\tau(\alpha_i^k) - \sum_j \sum_{l=1}^{n_j-1} \CZ_\tau(\beta_j^l) \\
&= I(Z) - 2c_\tau(Z) - (\sum_i \CZ_\tau(\alpha_i^{m_i}) - \sum_j \CZ_\tau(\beta_j^{n_j})). 
\end{split}
\end{equation}

The $J_0$ index controls the topological complexity of a holomorphic current counted by the $U$-map. To state the bound precisely, we need to recall the following structural property of $U$-map currents, a proof of which is found in \cite[Proposition $3.7$]{Hutchings14}. Any current $\cC \in \cM_2(\alpha, \beta; z)$ counted by the $U$-map splits as a disjoint union $\cC_0 \sqcup C_2$ where $\cC_0$ is a union of trivial cylinders with multiplicities and $C_2$ is an embedded $J$-holomorphic curve with $I(C_2) = 2$. 

\begin{prop}[{\cite[Section $6$]{Hutchings09}}] \label{prop:j0_genus}
Fix a generic $J$ and point $z$ so that the chain map $U_{J,z}$ is defined, and let $\cC = \cC_0 \sqcup C_2 \in \cM_2(\alpha, \beta; z)$ be a $J$-holomorphic current counted by the $U$-map.  Then 
\begin{equation} \label{eq:j0_genus} J_0(\cC) \geq -\chi(C_2). \end{equation} 
\end{prop}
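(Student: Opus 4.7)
The plan is to reduce the inequality to an assertion about the embedded component $C_2$ alone, and then to apply the relative adjunction formula together with the ECH writhe bounds and partition conditions that are special to index-$2$ curves.

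First, I would treat the trivial cylinder contribution. Writing $\cC_0 = \{(\bR \times \gamma_k, d_k)\}$, note that each trivial cylinder has Euler characteristic zero and $c_\tau(\bR \times \gamma_k) = Q_\tau(\bR \times \gamma_k) = 0$. The asymptotic orbit sets of the full current $\cC$ have positive and negative multiplicities at each $\gamma_k$ that agree, so the covers of $\gamma_k$ with multiplicity in $\{1, \dots, d_k - 1\}$ contribute identical Conley--Zehnder terms at both ends of the union and cancel. A short bookkeeping check along these lines shows $J_0(\cC) = J_0(C_2)$ (modulo an inequality in the right direction whenever $C_2$ shares an end with a trivial cylinder in $\cC_0$, since additional covers of shared orbits appear with higher multiplicity on the $C_2$ side and their CZ contributions can only increase $J_0$). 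Hence it suffices to prove $J_0(C_2) \geq -\chi(C_2)$.

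Next, since $C_2$ is embedded and somewhere injective, the relative adjunction formula (see e.g. \cite{Hutchings14}) gives
\[
c_\tau(C_2) = \chi(C_2) + Q_\tau(C_2) + w_\tau(C_2),
\]
where $w_\tau(C_2)$ is the total writhe of the braids formed by the slices $C_2 \cap \{s\} \times Y$ for $|s|$ large. Substituting into the definition of $J_0$ in \eqref{eq:j0_defn} yields
\[
J_0(C_2) = -\chi(C_2) - w_\tau(C_2) + \sum_i \sum_{k=1}^{m_i'-1} \CZ_\tau(\alpha_i^k) - \sum_j \sum_{l=1}^{n_j'-1} \CZ_\tau(\beta_j^l),
\]
where $m_i', n_j'$ denote the multiplicities of the asymptotic orbits of $C_2$ itself. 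Thus the claim becomes
\[
-w_\tau(C_2) + \sum_i \sum_{k=1}^{m_i'-1} \CZ_\tau(\alpha_i^k) - \sum_j \sum_{l=1}^{n_j'-1} \CZ_\tau(\beta_j^l) \geq 0.
\]

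Finally, I would invoke the writhe bounds from Hutchings' asymptotic analysis of punctured holomorphic curves, which express $w_\tau(C_2)$ as a signed sum of intersection numbers of the incoming/outgoing braids, and are controlled above by certain sums of rounded monodromy angles $\lfloor \theta_\tau \rfloor$ and $\lceil \theta_\tau \rceil$. Combined with the ECH partition conditions, these bounds precisely yield
\[
w_\tau(C_2) \;\leq\; \sum_i \sum_{k=1}^{m_i'-1}\CZ_\tau(\alpha_i^k) - \sum_j \sum_{l=1}^{n_j'-1}\CZ_\tau(\beta_j^l)
\]
for any embedded holomorphic curve, with the deficit in this inequality being exactly $2c_\tau(C_2) + 2\delta(C_2) - (I - J_0)(C_2)$-type corrections; since $I(C_2) = 2$ is the minimal possible value for a non-trivial current, the partition conditions are saturated and the estimate closes.

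The main obstacle is the third step: carefully juggling the writhe bounds and partition conditions, and in particular verifying the inequality between writhe and Conley--Zehnder sums for both ends simultaneously. The accounting for orbits shared between $C_2$ and the trivial cylinders in $\cC_0$ in the first step is a subtler-than-expected point, but can be handled by observing that any extra Conley--Zehnder contributions appear with a favorable sign in $J_0$.
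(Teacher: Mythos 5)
The paper does not actually prove this proposition: it is quoted directly from \cite[\S 6]{Hutchings09} and no argument is given in the text, so there is no internal proof to compare against. That said, your proposed framework --- relative adjunction applied to the embedded nontrivial component, combined with the ECH writhe bounds and partition conditions --- is indeed the skeleton of Hutchings' argument in the cited reference. The issues are in the two steps you yourself flag as the delicate ones.

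The main gap is in Step~1. You assert that $J_0(\cC)$ reduces to $J_0(C_2)$ up to a favourable inequality by observing that trivial cylinders have $c_\tau = Q_\tau = 0$ and that the CZ terms over the $\gamma_k$ ``cancel.'' But $Q_\tau$ is a quadratic form on relative homology, so $Q_\tau(\cC) = Q_\tau(\cC_0) + 2\,Q_\tau(\cC_0, C_2) + Q_\tau(C_2)$, and the cross term $Q_\tau(\cC_0, C_2)$ and the self-pairing $Q_\tau(\cC_0)$ involve linking numbers of the trivial orbits with each other and with the asymptotic braids of $C_2$; neither is zero in general. Your argument never touches these terms. Likewise, the CZ sums in $J_0(\cC)$ run up to the \emph{total} multiplicity $m_i - 1$ at each orbit (combining the contributions of $\cC_0$ and $C_2$), so the ranges of summation do not split into a ``trivial cylinder'' part that cancels by symmetry and a ``$C_2$'' part; when $C_2$ shares an end with a trivial cylinder the bookkeeping is genuinely intertwined with the $Q_\tau$ cross terms. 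In the cited reference these contributions are controlled together via positivity of intersections (the trivial cylinders and $C_2$ are disjoint) and the writhe/linking estimates, not by the parity argument you sketch. Secondly, in Step~3 the inequality $w_\tau(C_2) \leq \sum \CZ_\tau(\alpha_i^k) - \sum \CZ_\tau(\beta_j^l)$ is not the form of the ECH writhe bound, which treats positive and negative ends with opposite signs and is expressed through the ``rounded'' monodromy angles rather than whole CZ indices; the parenthetical description of the ``deficit'' as $2c_\tau + 2\delta - (I - J_0)$-type corrections does not correspond to an actual identity, and the jump from ``$I(C_2)=2$ is minimal'' to ``partition conditions are saturated and the estimate closes'' is not justified. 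So while your plan is pointed in the right direction, as written both key steps contain substantive gaps rather than merely unfinished details.
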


\subsection{The based rotation number}

To prove what we need to know about the ECH curves, we will also need to recall some information about the ``rotation number" of flows.  Our treatment here is inspired by, and closely follows \cite{CE22}, though we handle a few points in a different way that is better suited for our purposes. For any closed Reeb orbit $\gamma: \bR/T\bZ \to Y$ and any choice of (positive) symplectic trivialization $\tau$ there is a number $\rho_\tau(\gamma, \xi) \in \bR$ called the \emph{based rotation number} of $\gamma$, which measures how $\xi$ rotates under the linearization of the Reeb flow.  It is defined as follows.  The linearized flow is symplectic; apply polar decomposition and take the unitary part. The unitary part descends to a flow on the oriented real projectivization $P(\xi)$.  Pull back by $\gamma$ and conjugate with the trivialization $\tau$ to define a flow
$$\bar{\Phi}: \bR \times (\bR/T\bZ \times \bR/\bZ) \to \bR/T\bZ \times \bR/\bZ$$
generated by a vector field $\overline{R}$. We write $\theta: \bR/T\bZ \times \bR/\bZ \to \bR/\bZ$ for the angular coordinate on the target. Let $\overline{\theta}: [0, T] \to \bR$ be the unique real-valued lift of the circle-valued map $t \mapsto \theta(\bar{\Phi}_t(0, 0))$ satisfying the initial condition $\overline{\theta}(0) = 0$. Then the based rotation number is 
$$\rho_\tau(\gamma, \xi) := \overline{\theta}(T).$$
This depends only on the homotopy class, relative to endpoints, of the path of symplectic matrices arising from the linearized flow.

The next lemma relates the based rotation number for the contact structure to the Conley--Zehnder index. 
\begin{lem}\label{lem:rho_cz}
For any closed Reeb orbit $\gamma$ and any trivialization $\tau$, we have the bound
\begin{equation} \label{eq:cz_rotation} |\CZ_\tau(\gamma) - 2\rho_\tau(\gamma, \xi)| \leq 6. \end{equation}
\end{lem}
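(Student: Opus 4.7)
The plan is to reduce Lemma~\ref{lem:rho_cz} to an elementary estimate about paths in $\mathrm{Sp}(2, \mathbb{R})$ and exploit the polar decomposition. Upon trivializing $\xi$ along $\gamma$ using $\tau$, the linearized Reeb flow becomes a path $\Phi: [0, T] \to \mathrm{Sp}(2, \mathbb{R})$ with $\Phi(0) = I$ and no eigenvalue of $\Phi(T)$ equal to $1$. Both $\CZ_\tau(\gamma)$ and $\rho_\tau(\gamma, \xi)$ depend only on the homotopy class of $\Phi$ rel endpoints, so it suffices to produce bounds intrinsic to the path. Write $\Phi(t) = U(t) P(t)$ for the pointwise polar decomposition, where $U(t) \in \mathrm{SO}(2)$ and $P(t)$ is symmetric positive-definite with $\det P(t) = 1$. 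Unwinding the definition of $\rho_\tau$ in the paper, $2\pi\rho_\tau(\gamma, \xi)$ is the value at $t=T$ of the continuous lift $\tilde{\alpha}(t) \in \mathbb{R}$ of the angular coordinate of $U(t)$, starting at $\tilde{\alpha}(0)=0$.

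The main comparison step is to control the winding of a fixed unit vector under $\Phi(t)$ against that of $U(t)$. Fix $v_0 \in \xi_{\gamma(0)}$ of unit length and let $\tilde{\theta}(t)$ denote the continuous lift to $\mathbb{R}$ of the angular coordinate of $\Phi(t)v_0/|\Phi(t)v_0|$ with $\tilde{\theta}(0) = 0$. A symplectic, symmetric, positive-definite $2\times 2$ matrix $P$ has eigenvalues $\lambda \geq 1 \geq \lambda^{-1}$ with orthogonal eigenaxes, and a short trigonometric calculation (writing $v$ in the eigenbasis) shows that its induced map on the oriented projective line $S^1$ displaces every point by an angle of absolute value strictly less than $\pi/2$. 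Applied pointwise to $P(t)$, this yields that $\tilde{\theta}(t) - \tilde{\alpha}(t)$ is continuous, vanishes at $t=0$, and projects to a value in $(-\pi/2, \pi/2) \subset \mathbb{R}/2\pi\mathbb{Z}$ for all $t$; continuity of the lift forces $|\tilde{\theta}(t) - \tilde{\alpha}(t)| < \pi/2$ throughout, and in particular $|\tilde{\theta}(T)/(2\pi) - \rho_\tau(\gamma, \xi)| < 1/4$.

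Next I relate $\tilde{\theta}(T)$ to the monodromy number $\theta_\tau(\gamma)$ used in the ECH definition of the Conley--Zehnder index. For an elliptic orbit, $\Phi(T)$ is conjugate to a rotation and $\theta_\tau(\gamma)$ is (by definition) the common winding number of any unit vector under the path $\Phi$, so in fact $\theta_\tau(\gamma) = \tilde{\theta}(T)/(2\pi)$. For a hyperbolic orbit, $\theta_\tau(\gamma)$ is the half-integer winding of an eigendirection under $\Phi$, and since $\Phi(T)$ acts on $S^1$ as a diffeomorphism of degree $\pm 1$, the winding of any other unit vector differs from this by at most $1$. In all cases $|\theta_\tau(\gamma) - \tilde{\theta}(T)/(2\pi)| \leq 1$. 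Combining with the definition $\CZ_\tau(\gamma) = \lceil \theta_\tau(\gamma) \rceil + \lfloor \theta_\tau(\gamma) \rfloor$, which immediately yields $|\CZ_\tau(\gamma) - 2\theta_\tau(\gamma)| \leq 1$, I obtain
\[
|\CZ_\tau(\gamma) - 2\rho_\tau(\gamma, \xi)| \leq 1 + 2 \cdot \tfrac{5}{4} = \tfrac{7}{2} < 6.
\]

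The main obstacle is bookkeeping around conventions: verifying the precise relationship between Hutchings's monodromy number $\theta_\tau$ and the winding $\tilde{\theta}(T)$ of an arbitrary unit vector uniformly across the elliptic, positive hyperbolic, and negative hyperbolic cases, and making sure that the polar decomposition bound interacts correctly with the chosen lifts. The constant $6$ in the statement is not tight; the above plan in fact yields a bound closer to $4$, which is more than enough for all downstream applications in the paper.
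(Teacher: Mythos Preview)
Your approach is essentially the same as the paper's: both arguments compare $\CZ_\tau$ to $2\theta_\tau$ via the floor/ceiling formula, then compare $\theta_\tau$ to the winding of a fixed vector under the full linearized flow (your $\tilde\theta(T)/(2\pi)$, the paper's $\rho'_\tau$), and finally compare that winding to $\rho_\tau$ via polar decomposition. Your polar-decomposition step is sharper and more self-contained --- you directly show that a symmetric positive-definite symplectic $2\times 2$ matrix displaces directions by less than $\pi/2$, yielding $|\rho_\tau - \tilde\theta(T)/(2\pi)| < 1/4$; the paper instead notes that if the basepoint is chosen to be an eigenvector of $P(T)$ then $\rho_\tau = \rho'_\tau$, and then invokes a basepoint-change bound from \cite{CE22} to get $\leq 1$.

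There is, however, a small gap in your elliptic case. It is not true that ``$\theta_\tau(\gamma)$ is the common winding number of any unit vector under the path $\Phi$'': when $\Phi(T)$ is merely \emph{conjugate} to a rotation, different unit vectors wind by different amounts under the path $\Phi$, and Hutchings's monodromy number is not defined as the winding of an arbitrary vector. The repair is exactly the mechanism you already use in the hyperbolic case: since $\Phi(t)$ induces orientation-preserving diffeomorphisms of $S^1$, the windings of any two unit vectors differ by less than $1$, and one then needs the statement that $\theta_\tau$ lies within $1$ of the winding of the trivialization's basepoint. This is precisely \cite[Lemma~2.6]{CE22}, which the paper cites rather than reproves; once you invoke it (or reprove it uniformly across the elliptic and hyperbolic cases), your bound of $7/2 < 6$ goes through.
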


\begin{proof}
The Conley--Zehnder index is defined by
$$\CZ_\tau(\gamma) = \lceil \theta_\tau(\gamma) \rceil + \lfloor \theta_\tau(\gamma) \lfloor$$
where $\theta_\tau(\gamma)$ denotes the monodromy number of $\gamma$ in the trivialization $\tau$. Define $\rho'_\tau(\gamma,\xi)$ analogously to $\rho_\tau(\gamma,\xi)$, but using the full linearized flow rather than just the unitary part.  It is proved in \cite[Lemma $2.6$]{CE22} that
$$|\theta_\tau(\gamma) - \rho'_\tau(\gamma, \xi)| \leq 1.$$
It remains to understand the relationship between $\rho$ and $\rho'$.  We claim that
\begin{equation}
\label{eqn:approx}
|\rho_\tau(\gamma,\xi) - \rho'_\tau(\gamma, \xi)| \leq 1,
\end{equation}
which implies the lemma in view of the above.  To see why \eqref{eqn:approx} holds, we first note that if we choose as our basepoint (i.e. our trivialization $\tau$) an eigenvector of the positive-definite symmetric part of the polar decomposition of the time $T$ linearized flow, then in fact $\rho'_{\tau} = \rho_\tau$.  Indeed, the space of positive-definite symmetric and symplectic matrices is contractible, and the rotation number only depends on the homotopy class, rel endpoints, so we can replace the positive-definite part of the path of matrices arising from the linearized flow by symmetric and symplectic positive-definite matrices which all have $\tau$ as an eigenvector with positive eigenvalue.  The claimed inequality \eqref{eqn:approx} now follows from \cite[Lemma $2.6$]{CE22}, which bounds the difference between the based rotation number measured with respect to two different basepoints. 
\end{proof}

\subsection{Proof of Proposition~\ref{prop:ech}} We can now give the proof of Proposition~\ref{prop:ech}.   

\begin{proof}

\textbf{Step $1$:}  To deal with the fact that we are considering contact structures that are torsion, but possibly non-trivial, we will need to work with an ``$n^{\text{th}}$-power" construction.  This step collects the results we will need about this.

Fix an integer $n \geq 1$ such that $n \cdot c_1(\xi) = 0$. Write $\xi_n = \xi \otimes \ldots \otimes \xi$ for the $n$-fold (complex) tensor product of $\xi$.  This is a (trivial) complex line bundle. Choose a unitary trivialization $\tau$ of $\xi$ over the simple closed Reeb orbits. This induces a trivialization $\tau_n$ of $\xi_n$. The line bundle $\xi_n$ has a relative Chern class, defined analogously to the contact case. We first note that the relative Chern class of $\xi_n$ with respect to $\tau_n$ is computed as follows:
\begin{equation} \label{eq:ctau_additive} c_{\tau_n}(Z, \xi_n) = n \cdot c_\tau(Z, \xi). \end{equation}

Next, it is useful to understand how the Chern class depends on the choice of trivialization. Fix any pair of unitary trivializations $\tau, \tau'$ and any simple closed Reeb orbit $\gamma: \bR/T\bZ \to Y$. The trivializations define unitary bundle isomorphisms 
$$\tau, \tau': \gamma^*\xi_n \to \bR/T\bZ \times \mathbb{C};$$ 
the composition $\tau \circ (\tau')^{-1}$ defines a smooth map $\bR/T\bZ \to U(1)$. Denote the degree of this map by $\operatorname{wind}_\gamma(\tau, \tau'; \xi_n)$. Then we have the identity
\begin{equation} \label{eq:chern_change_triv} c_{\tau}(Z, \xi_n) - c_{\tau'}(Z, \xi_n) = -\sum_i m_i\operatorname{wind}_{\alpha_i}(\tau, \tau'; \xi) + \sum_j n_j\operatorname{wind}_{\beta_j}(\tau, \tau'; \xi_n). \end{equation}
This is proved by the same argument as in the case of contact structures \cite{Hutchings02}.

There is an analogous story for the based rotation number. The unitary part of the linearized flow, being complex linear, defines a map on the complex tensor product $\xi_n$ and we can defined the based rotation number analogously, which we call the {\em induced based rotation number} (in the trivialization $\tau)$ on $\xi_n$, denoted $\rho_{\tau}(\gamma, \xi_n)$. We now prove some basic properties of the induced based rotation number analogous to the observed properties of the relative Chern class. 

\begin{lem}\label{lem:rho_properties}
The unitary component of the based rotation number and the induced based rotation number satisfy the following basic properties:
\begin{itemize}
\item (Change of trivialization) For any pair $\tau$, $\tau'$ of unitary trivializations we have 
\begin{equation} \label{eq:rotation_change_triv} \rho_{\tau}(\gamma, \xi_n) - \rho_{\tau'}(\gamma, \xi_n) = \operatorname{wind}_\gamma(\tau, \tau'; \xi_n). \end{equation}
\item (Additive under tensor product) For any $n \geq 1$ we have 
\begin{equation} \label{eq:rot_additive} \rho_{\tau_n}(\gamma, \xi_n) = n \cdot \rho_\tau(\gamma, \xi). \end{equation}
\end{itemize}
\end{lem}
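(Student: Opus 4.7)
The plan is to prove both identities by unpacking the definition of the induced based rotation number and reducing to elementary properties of $U(1)$-valued paths. Since $\xi_n$ is a complex line bundle, the unitary part of the linearized Reeb flow acts on each fiber $\xi_n|_{\gamma(t)}$ by multiplication by a scalar in $U(1)$, so in any unitary trivialization it is represented by a smooth path in $U(1)$ starting at the identity. The argument of this path admits a canonical real-valued lift starting at $0$, and evaluating this lift at $t = T$ recovers the rotation number (this is just a repackaging of the definition via the angular coordinate on the oriented projectivization).

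For the additivity \eqref{eq:rot_additive}, the essential observation is that if $u \in U(1)$ acts on $\xi$ as a unitary scalar, then its induced action on $\xi^{\otimes n}$ is multiplication by $u^n$. Hence if the unitary part of the linearized flow in the trivialization $\tau$ is represented by the $U(1)$-valued path $e^{i\alpha(t)}$ with $\alpha(0) = 0$, then the induced unitary path on $\xi_n$ in the trivialization $\tau_n$ is $e^{i n \alpha(t)}$. The canonical lift of its argument is exactly $n\alpha(t)$, and evaluating at $t = T$ yields $\rho_{\tau_n}(\gamma, \xi_n) = n \rho_\tau(\gamma, \xi)$.

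For the change of trivialization identity \eqref{eq:rotation_change_triv}, let $g: \bR/T\bZ \to U(1)$ denote the fiberwise transition function determined by $\tau_t = g(t)\tau'_t$. Because $U(1)$ is abelian, a direct computation shows that the unitary parts of the linearized flow in the two trivializations are related by $U^\tau(t) = g(t) g(0)^{-1} U^{\tau'}(t)$. Taking arguments and lifting both sides canonically (starting at $0$), the difference of the two lifts equals the canonical lift of the $U(1)$-valued path $t \mapsto g(t) g(0)^{-1}$, whose value at $t = T$ is, by definition, the degree $\operatorname{wind}_\gamma(\tau, \tau'; \xi_n)$ of $g$.

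Neither step poses a serious obstacle. The only point genuinely requiring care is verifying that ``taking the unitary part'' commutes both with tensor product on complex line bundles and with conjugation by a transition function, and that arguments of products of $U(1)$-valued paths lift additively to $\bR$; both follow immediately from the fact that unitary maps on a complex line are scalars, which makes all relevant operations commute.
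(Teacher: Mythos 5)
Your proof is correct and follows essentially the same route as the paper's. You phrase things in terms of $U(1)$-valued paths and their argument lifts rather than the paper's flows $\bar\Phi$ on $\bR/T\bZ \times \bR/\bZ$ and lifts $\overline{\theta}$, but these are the same computation: the additivity step is the paper's observation $(\bar{\Phi}_n)_t = n\cdot\bar{\Phi}_t$, and the change-of-trivialization step is the paper's factorization $\bar{\Phi}_t(0,0) = (\tau\circ(\tau')^{-1})\circ\bar{\Phi}'_t(0,0)$ (where you keep the factor $g(0)^{-1}$ explicit instead of normalizing $\tau^{-1}(0,0)=(\tau')^{-1}(0,0)$ up front), with commutativity of $U(1)$ and additivity of argument lifts doing the work in both.
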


\begin{proof}
Let $\bar{\Phi}$ and $\bar{\Phi}'$ be the respective flows on $\bR/T\bZ \times \bR/\bZ$ defined by $\tau$ and $\tau'$. After applying a constant rotation to one of the trivializations, we may assume without loss of generality that $\tau^{-1}(0, 0) = (\tau')^{-1}(0, 0)$. It follows that
$$\bar{\Phi}_t(0,0) = (\tau \circ (\tau')^{-1}) \circ \bar{\Phi}'_t(0,0)$$
for each $t \in \bR$. The lifts $\overline{\theta}$ and $\overline{\theta}'$ corresponding to $\tau$ and $\tau'$ differ by the lift of the map $\bR/T\bZ \to U(1)$ defined by $\tau \circ (\tau')^{-1}$. The map $\bR/T\bZ \to U(1)$ has degree $\operatorname{wind}_\gamma(\tau, \tau'; \Xi)$, so it follows that 
$$\theta(T) - \overline{\theta}'(T) = \operatorname{wind}_\gamma(\tau, \tau'; \Xi)$$
which proves \eqref{eq:rotation_change_triv}. 

Now fix any $n \geq 1$ and let $\bar{\Phi}$ and $\bar{\Phi}_n$ be the respective flows on $\bR/T\bZ \times \bR/\bZ$ defined by $\tau$ and $\tau_n$. We observe that 
$$(\bar{\Phi}_n)_t = n \cdot \bar{\Phi}_t$$
for each $t \in \bR$. This implies that $\overline{\theta}_n = n \cdot \overline{\theta}$ where $\overline{\theta}$ and $\overline{\theta}_n$ are the lifts corresponding to $\tau$ and $\tau_n$. Evaluating both sides at $T$ yields \eqref{eq:rot_additive}. 
\end{proof}

\textbf{Step $2$:} Recall that the tensor power $\xi_n$ is a trivial complex line bundle. We fix a \emph{global} unitary trivialization $\ft$ of $\xi_n$ over the entire manifold $Y$. Let $\lambda'$ be any nondegenerate contact form. Let $\alpha = \{(\alpha_i, m_i)\}$ and $\beta = \{(\beta_j, n_j)\}$ be any pair of homologous ECH generators such that $\cA(\beta) \leq \cA(\alpha)$. This step proves that there exists a constant $\delta > 0$ depending only on the background metric, the $C^2$ norm of $\lambda'$, and the trivialization $\ft$ such that for any $Z \in H_2(Y, \alpha, \beta)$ we have
\begin{equation} \label{eq:ech_defect_bound} 
|I(Z) - J_0(Z)| \leq \delta\cA(\alpha).
\end{equation}

Write $K(Z) := I(Z) - J_0(Z)$. Choose any symplectic trivialization $\tau$ of $\xi$ over the simple closed Reeb orbits. Then $K(Z)$ expands as 
$$K(Z) = 2c_{\tau}(Z) + \sum_i \CZ_{\tau}(\alpha_i^{m_i}) - \sum_j \CZ_{\tau}(\beta_j^{m_j}).$$

We define
$$K_{\text{approx}}(Z, \xi) := 2c_{\tau}(Z, \xi) + 2\sum_i \rho_{\tau}(\alpha_i^{m_i}, \xi) - 2\sum_j \rho_{\tau}(\beta_j^{m_j}, \xi)$$
and a corresponding version
\begin{equation}\label{eq:ech_technical_7} K_{\text{approx}}(Z, \xi_n) := 2c_{\tau'}(Z, \xi_n) + 2\sum_i \rho_{\tau'}(\alpha_i^{m_i}, \xi_n) - 2\sum_j \rho_{\tau'}(\beta_j^{m_j}, \xi_n)\end{equation}
for $\xi_n$, where $\tau'$ denotes a unitary trivialization of $\xi_n$ over the simple closed Reeb orbits. It follows from \eqref{eq:chern_change_triv} and \eqref{eq:rotation_change_triv} that the definition of $K_{\text{approx}}(Z, \xi_n)$ does not depend on the choice of $\tau'$. It follows from \eqref{eq:cz_rotation} that
\begin{equation} \label{eq:ech_technical_1} |K(Z) - K_{\text{approx}}(Z, \xi)| \leq 6\sum_i m_i + 6\sum_j m_j \leq 12T_{\text{min}}(\lambda')^{-1}\cA(\alpha)\end{equation}
where $T_{\text{min}}(\lambda')$ denotes the minimal period of a closed Reeb orbit of $\lambda'$. Note that $T_{\text{min}}(\lambda')$ admits a positive lower bound depending only on the $C^2$ norm of $\lambda'$. 

We now bound $K_{\text{approx}}(Z, \xi)$. Since the left-hand side in \eqref{eq:ech_technical_7} does not depend on the choice of $\tau'$ on the right-hand side, we set $\tau' = \tau_n$ and use \eqref{eq:ctau_additive} and \eqref{eq:rot_additive} to show that 
\begin{equation}\label{eq:ech_technical_2} K_{\text{approx}}(Z, \xi) = n^{-1} \cdot K_{\text{approx}}(Z, \xi_n).\end{equation}

Now we set $\tau' = \ft$ and expand
$$K_{\text{approx}}(Z, \xi_n) := 2c_{\ft}(Z, \xi_n) + 2\sum_i \rho_{\ft}(\alpha_i^{m_i}, \xi_n) - 2\sum_j \rho_{\ft}(\beta_j^{m_j}, \xi_n).$$

It follows that immediately that $c_{\ft}(Z, \xi_n) = 0$. It remains to bound $\rho_{\ft}(\gamma, \xi_n)$ for any closed Reeb orbit $\gamma: \bR/T\bZ \to Y$. To do so, it is convenient to observe that the based rotation number along $\gamma$ can be computed by integrating the ``rotation density'' of the flow with respect to $\ft$. To be precise, the global trivialization $\ft$ and the action of the unitary part of the linearized flow on $\xi_n$ define a flow 
$$\bar\Phi_t: \bR \times Y \times \bR/\bZ \to Y \times \bR/\bZ$$
generated by a vector field $\bar{R}$. The Lie derivative of the $\bR/\bZ$-coordinate on the target is a smooth function $r_{\ft}: Y \to \bR$, which restricts to the rotation density on any simple closed Reeb orbit. The function $r_{\ft}$ depends only on $\ft$ and the linearized Reeb flow, so $|r_{\ft}|$ admits a finite upper bound $c > 0$ depending only on $\ft$ and the $C^2$ norm of $\lambda'$. We conclude that
$$|\rho_{\ft}(\gamma, \xi_n)| \leq \sup |r_{\ft}| \cdot \cA(\gamma) \leq \delta_1 \cdot \cA(\gamma).$$

It follows from the above bound that
\begin{equation}\label{eq:ech_technical_4}|K_{\text{approx}}(Z, \xi_n)| \leq 2\delta_1(\sum_i m_i\cA(\alpha_i) + \sum_j n_j\cA(\beta_j)) \leq 4\delta_1\cA(\alpha).\end{equation}

Combine \eqref{eq:ech_technical_1}, \eqref{eq:ech_technical_2}, and \eqref{eq:ech_technical_4} to show
$$|K(Z)| \leq (12T_{\text{min}}(\lambda')^{-1} + 4\delta_1)\cA(\alpha)$$
which proves \eqref{eq:ech_defect_bound} with $\delta := 12T_{\text{min}}(\lambda')^{-1} + 4\delta_1$. 

\vspace{2 mm}

\noindent\textbf{Step $3$:} 
To simplify the notation, write $c'_k = c_{\sigma_k}(\lambda')$. Choose generic $J$ and $z$ not on a closed Reeb orbit such that the chain map $U_{J,z}$ is well-defined. By Lemma~\ref{lem:utower}, for any $k \geq 1$, there exists an ECH generator $\alpha_k$ such that
\begin{enumerate}[(\roman*)]
\item $\cA(\alpha_k) \leq c'_k$;
\item $U_{J,z}^k(\alpha_k) \neq 0$.
\end{enumerate}

It follows that there exists a sequence of ECH generators $\{\beta_j\}_{j=0}^k$, each with $\cA(\beta_j) \leq c'_k$, and $J$-holomorphic currents $\{\cC_j\}_{j=1}^k$ such that $\cC_j \in \cM(\beta_j, \beta_{j-1})$, $I(\cC_j) = 2$, and $(0, z) \in \operatorname{supp}(\cC_j)$ for each $j$. Now set $Z := \sum_{j=1}^k [\cC_j]$. Using the fact that $I(\cC_j) = 2$ for each $j$ and the bound \eqref{eq:ech_defect_bound} we derive the bound

\begin{equation}\label{eq:ech_technical_5} \sum_{j=1}^k J_0(\cC_j) = J_0(Z) \leq 2k + \delta\cA(\alpha_k) \leq 2k + 2\delta c'_k.\end{equation}

It is an immediate consequence of \eqref{eq:j0_genus} that $J_0(\cC_j) \geq -1$ for each $j$. Write $S_1$ for the set of indices $j$ such that $J_0(\cC_j) \geq 3$. It follows that $3S_1 - (k-S_1) \le 2k + \delta c'_k$, hence
\begin{equation}
\label{eqn:S1bound}
\#S_1 \le \frac{3}{4}k + \frac{\delta}{4}c'_k.
\end{equation}

Write $S_2$ for the set of all indices $j$ such that $\cA(\cC_j) \geq k^{-1/16}$. Since $\sum_{j=1}^k \cA(\cC_j) = \cA(\alpha_k) - \cA(\beta_0) \leq c'_k$, and the action is nonnegative, it follows that

\begin{equation}
\label{eqn:S2bound}
\#S_2 \leq c'_k k^{1/8}.
\end{equation} 
The quantity $c_k(\lambda)$ is $O(k^{1/2})$, in view of \eqref{eq:ech_volume}, and for $\lambda'$ sufficiently close to $\lambda$ $c'_k \le 2 c_k(\lambda)$.  

Thus, by \eqref{eqn:S1bound} and \eqref{eqn:S2bound} there exists an index $0 \le j \le k$ in neither $S_1$ nor $S_2$.  Take $C_k$ to be any component of $\cC_j$ passing through $(0, z)$. By \eqref{eq:j0_genus} it follows that $\chi(C_k) \geq -J_0(\cC_j)$. Thus, $C_k$ satisfies the requirements of Proposition~\ref{prop:ech}.  
\end{proof}

\section{Low-action curves from periodic Floer homology}\label{sec:pfh}

This section proves Proposition~\ref{prop:pfh} using the theory of periodic Floer homology (PFH), an analogue of ECH for area-preserving surface maps defined in \cite{Hutchings02, HS05}. The proof is relatively simple compared to the proof of Proposition~\ref{prop:ech} above, which uses deep quantitative properties of ECH. We exploit an algebraic aspect of PFH that is not present in ECH, namely that PFH has many ``$U$-cycles'' \cite{EH21, CGPPZ21, CGP21}. 

\subsection{Periodic Floer homology}\label{subsec:pfh} We review the theory of PFH. We will discuss both the basics and some key new developments in the theory. Fix a closed, oriented surface $\Sigma$ of genus $g$, an area form $\omega$, and a diffeomorphism $\phi: \Sigma \to \Sigma$ preserving the area form.

\subsubsection{Basics} The \emph{mapping torus} of $\phi$ is the $3$-manifold
$$Y_\phi := [0,1] \times \Sigma\,/\,(1, p) \sim (0, \phi(p)).$$

Write $t$ for the coordinate on the interval $[0,1]$. The one-form $dt$ on $[0,1] \times \Sigma$ descends to a closed $1$-form, also denoted by $dt$, on $Y_\phi$. The area form $\omega$ defines a closed two-form $\omega_\phi$ on $Y_\phi$. The pair $\eta = (dt, \omega_\phi)$ is a framed Hamiltonian structure and the Reeb vector field $R_\phi := R_\eta$ generates the suspension flow of $\phi$. The mapping torus $Y_\phi$ fibers over the circle; write $V_\phi \to Y_\phi$ for the vertical tangent bundle. 

Several key definitions carry over to this setting from ECH. In analogy with ECH, we will call periodic orbits of $R_\phi$ \emph{closed Reeb orbits}. The definitions of elliptic/hyperbolic orbits from ECH have analogues here, replacing the bundle $\xi$ with the bundle $V_\phi$. Moreover, the ECH and $J_0$ indices are also defined in this setting, again replacing $\xi$ with $V_\phi$. 

\subsubsection{Rationality and monotonicity}\label{subsec:monotonicity} We say $\phi$ is \emph{rational} if the cohomology class $[\omega_\phi]$ is a real multiple of a rational class. We say that $\phi$ is \emph{monotone} if it is rational and we have $c_1(V_\phi) = c[\omega_\phi]$ for some constant $c \in \bR$. When $g \neq 1$, our monotonicity condition coincides with the monotonicity condition introduced by Seidel \cite{Seidel02}. When $g = 1$, we show in Lemma~\ref{lem:torus_monotone} that $c_1(V_\phi)$ always vanishes, so any rational area-preserving diffeomorphism is monotone in this case. 

\subsubsection{Definition of PFH} The definition of the version of PFH that we will use requires that $\phi$ is rational and also \emph{nondegenerate}, meaning every closed Reeb orbit is either elliptic or hyperbolic. Choose a generic $\eta$-adapted almost-complex structure $J$ on $\bR \times Y_\phi$. Choose a union $\gamma$ of embedded loops, transverse to $Y_\phi$, called a \emph{reference cycle}. Let $\Sigma$ denote the homology class of a fiber of the map $Y_\phi \to S^1$. The \emph{degree} $d(\gamma)$ of $\gamma$ is the oriented intersection number of $\gamma$ with $[\Sigma] \in H_2(Y_\phi; \bZ)$. We assume that $d(\gamma) > \max(0, g - 1)$ and that that $\gamma$ is \emph{monotone}. This means that the homology class $\Gamma := [\gamma] \in H_1(Y_\phi; \bZ)$ satisfies the identity
\begin{equation} \label{eq:monotone} c_1(V_\phi) + 2\operatorname{PD}(\Gamma) = c \cdot [\omega_\phi] \end{equation}
for some constant $c \neq 0$. The constant $c$ is explicitly computable: pairing both sides of \eqref{eq:monotone} with $[\Sigma]$ shows that $c = 2A^{-1}(d - g + 1)$, where $A := \int_\Sigma \omega$. We note that \eqref{eq:monotone} has a solution if and only if $\phi$ is rational, and that if \eqref{eq:monotone} has a solution, it has solutions of arbitrarily high degree. Finally, let $K_\phi := \ker(\omega_\phi)$ denote the subgroup of all integral homology classes on which $\omega_\phi$ integrates to $0$. 

The PFH chain complex $\PFC_*(\phi, \gamma)$ is defined to be the vector space over $\bZ/2$ freely generated by pairs $\Theta = (\alpha, Z)$ that we call \emph{anchored ECH generators}. Here $\alpha = \{(\alpha_i, m_i)\}$ is an ECH generator such that $[\alpha] = [\gamma]$ and $Z$ is an element of $H_2(Y_\phi, \alpha, \gamma) / K_\phi$ (recall that $H_2(Y_\phi, \alpha, \gamma)$ is an affine space over $H_2(Y_\phi; \bZ)$). 

The differential $\partial_J$ is defined similarly to the ECH differential, although now we take the relative homology classes of the holomorphic curves into account. Write $\cM(\alpha, \beta, W)$ for the moduli space of holomorphic currents from $\alpha$ to $\beta$ that represent the class $W \in H_2(Y, \alpha, \beta)$; let $\cM_k(\alpha, \beta, W)$ denote the subspace of currents with ECH index $k$. Fix a pair of anchored ECH generators $\Theta = (\alpha, Z)$, $\Theta' = (\beta, Z')$. The matrix coefficient of $\partial_J$ with respect to $\Theta$ and $\Theta'$ is defined by the formula
$$\langle \partial\Theta, \Theta' \rangle := \#_2\cM_1(\alpha, \beta, Z - Z')/\bR.$$

Write $\PFH_*(\phi, \gamma)$ for the homology of the complex $(\PFC_*(\phi, \gamma), \partial_J)$. The PFH chain complex and homology group carry some additional basic features that we now review. There is a natural action of $H_2(Y_\phi; \bZ)$ on $\PFC_*(\phi, \gamma, J)$; a class $W \in H_2(Y_\phi; \bZ)$ acts on a generator $(\alpha, Z)$ by sending it to $(\alpha, Z + W)$. This action commutes with the differential and so descends to an action on $\PFH_*(\phi, \gamma)$ as well. The $U$-map on PFH is also defined analogously to the $U$-map for ECH. 

After choosing a framing of $V_\phi$ over $\gamma$, the PFH complex also comes equipped with a $\bZ$-grading, which is defined for each anchored ECH generator by the formula
\begin{equation}\label{eq:pfh_grading} I(\Theta) := c_\tau(Z) + Q_\tau(Z) + \sum_i \sum_{k=1}^{m_i} \CZ_\tau(\alpha_i^k). \end{equation}

The differential and $U$-map have degree $-1$ and $-2$ with respect to this grading. The $H_2$-action shifts the grading as follows: 
\begin{equation}\label{eq:h2_grading}
I(W \cdot \Theta) = I(\Theta) + \langle c_1(V_\phi) + 2\operatorname{PD}(\Gamma), W \rangle = I(\Theta) + 2A^{-1}(d - g + 1)\int_W \omega_\phi
\end{equation}
for any anchored ECH generator $\Theta$ and any $W \in H_2(Y_\phi; \bZ)$. The last line uses \eqref{eq:monotone} and our computation of the monotonicity constant above. The identity \eqref{eq:h2_grading} also shows that the $\bZ$-grading is well-defined. 

\subsubsection{The $U$-cycle property} The analogue of a $U$-tower in ECH is a \emph{$U$-cycle}. Assume that $\phi$ is nondegenerate and rational and choose a monotone reference cycle $\gamma$ so that PFH is well-defined. A nonzero element $\sigma \in \PFH_*(\phi, \gamma, G)$ is \emph{$U$-cyclic of order $m$} for some integer $m \geq 1$ if 
$$U^{m(d(\gamma) - g + 1)}\sigma = (-m[\Sigma]) \cdot \sigma.$$

It is known that every nonzero element of PFH is $U$-cyclic as long as $\gamma$ has sufficiently high degree. 

\begin{prop}[Existence of $U$-cyclic elements, \cite{CGPPZ21}] \label{prop:ucycle}
Assume that $\phi$ is nondegenerate and rational and fix a monotone reference cycle $\gamma$. There exists an integer $d_0 > \max(0, g - 1)$, depending only on the Hamiltonian isotopy class of $\phi$, such that if $d(\gamma) \geq d_0,$ then $\PFH_*(\phi, \gamma) \neq 0$ and every nonzero class is $U$-cyclic.
\end{prop}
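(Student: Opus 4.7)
The plan is to prove the proposition in two parts: first establish nonvanishing of $\PFH_*(\phi, \gamma)$ for sufficiently large $d(\gamma)$, then deduce the $U$-cyclicity of every nonzero class from the compatibility of the $U$-map and the $H_2$-action on PFH via its identification with monopole Floer cohomology.

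For nonvanishing, I would invoke the Lee--Taubes--Hutchings-type isomorphism identifying $\PFH_*(\phi, \gamma)$ with a version of Seiberg--Witten Floer cohomology $\widehat{\HM}^*(Y_\phi, \mathfrak{s}_\Gamma)$ of the mapping torus, where $\mathfrak{s}_\Gamma$ is the spin-c structure with $c_1(\mathfrak{s}_\Gamma) = c_1(V_\phi) + 2\operatorname{PD}(\Gamma)$. By the monotonicity condition \eqref{eq:monotone}, this first Chern class equals $c\cdot[\omega_\phi]$ with $c = 2A^{-1}(d-g+1) > 0$, which evaluates positively on the fibers of $Y_\phi \to S^1$. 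Since these fibers form a taut foliation, Kronheimer--Mrowka's nonvanishing theorem for monopole Floer cohomology in the presence of a taut foliation (together with the behavior in non-torsion spin-c structures) provides nontriviality of $\widehat{\HM}^*(Y_\phi, \mathfrak{s}_\Gamma)$, hence of $\PFH_*(\phi, \gamma)$, once $d(\gamma) \geq d_0$ is large enough to ensure we are in the regime where the identification and the nonvanishing both hold.

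For the $U$-cyclicity, observe that the grading shift formula \eqref{eq:h2_grading} gives
\begin{equation*}
I\bigl((-m[\Sigma])\cdot\Theta\bigr) = I(\Theta) - 2m(d-g+1),
\end{equation*}
which exactly matches the grading shift of $U^{m(d-g+1)}$, since $U$ decreases grading by $2$. Thus the operator $\Psi := U^{d-g+1}\circ[-\Sigma]^{-1}$ preserves the $\bZ$-grading on $\PFH_*(\phi, \gamma)$. The Lee--Taubes--Hutchings isomorphism intertwines the PFH $U$-map and $H_2$-action with their monopole counterparts, and under this identification the monopole-theoretic version of $\Psi$ is an isomorphism of each graded piece (it arises from naturality of $\widehat{\HM}$ under the relevant grading-preserving symmetries). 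By nondegeneracy of $\phi$ there are only finitely many anchored ECH generators of any given grading (since the action filtration combined with \eqref{eq:pfh_grading} bounds the relevant orbit sets), so $\PFH_*(\phi, \gamma)$ is finite-dimensional in each grading. Hence $\Psi$ restricts to an invertible endomorphism of a finite-dimensional $\bZ/2$-vector space in each grading and therefore has finite order on each graded piece; taking $m$ to be the order of $\Psi$ on the graded piece containing $\sigma$ gives the $U$-cyclic relation $U^{m(d-g+1)}\sigma = (-m[\Sigma])\cdot\sigma$.

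The main obstacle in this approach is verifying the precise compatibility of the PFH $U$-map and $H_2$-action with their monopole Floer counterparts, i.e., making Step 2 above rigorous. This requires tracking the Lee--Taubes correspondence carefully at the level of the geometric definitions of both operations and checking that the chain-level intertwining descends correctly to homology. A secondary technical subtlety is ensuring that $d_0$ depends only on the Hamiltonian isotopy class of $\phi$: since PFH and all of its structural operations are invariant under Hamiltonian isotopy (as can be seen from continuation maps induced by Hamiltonian symplectic cobordisms), the threshold degree depends only on the topological data $(g, [\omega_\phi], c_1(V_\phi))$ of the mapping torus, which is itself a Hamiltonian isotopy invariant.
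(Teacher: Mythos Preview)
The paper does not give its own proof of this proposition; it is quoted from \cite{CGPPZ21} and used as a black box. So there is no in-paper argument to compare against.

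A few comments on your sketch nonetheless. Your overall framework---pass to monopole Floer cohomology via the Lee--Taubes isomorphism and exploit finite-dimensionality in each grading---is the correct one, and your finite-dimensionality argument is right (indeed it is reproduced as Step~1 of the paper's proof of Lemma~\ref{lem:ucycle_chain}). But two steps are not adequately justified. First, for nonvanishing you invoke Kronheimer--Mrowka's taut-foliation theorem, which only yields $\widehat{\HM}^*(Y_\phi)\neq 0$ for \emph{some} spin-c structure, not for the specific $\mathfrak{s}_\Gamma$ you need; a separate argument is required to land in the correct spin-c structure. Second, and more seriously, your claim that $\Psi$ is an isomorphism---because ``it arises from naturality of $\widehat{\HM}$ under the relevant grading-preserving symmetries''---is not an argument. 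Since the $[\Sigma]$-action is manifestly invertible, invertibility of $\Psi$ is equivalent to invertibility of $U$ on $\PFH_*(\phi,\gamma)$, which is exactly the substantive content of the proposition; it does not follow formally from the Lee--Taubes isomorphism alone and requires genuine structural input about monopole Floer homology in non-torsion spin-c structures. The obstacle you flag instead---compatibility of $U$ and the $H_2$-action across the Lee--Taubes isomorphism---is by contrast already established in the literature and is not where the difficulty lies.
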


The following lemma is a ``chain-level'' version of Proposition~\ref{prop:ucycle}. 

\begin{lem}\label{lem:ucycle_chain}
Assume that $\phi$ is nondegenerate and rational and fix a monotone reference cycle $\gamma$. There exists an integer $d_0 > \max(0, g - 1)$, depending only on the Hamiltonian isotopy class of $\phi$, such that the following holds. Choose any monotone reference cycle $\gamma$ such that $d(\gamma) \geq d_0$. Choose generic $J$ and $z \in Y_\phi$ so that the chain-level map $U_{J,z}$ is well-defined. Then there exist positive integers $m_0$ and $m_1$ and a sequence $\{\Theta_j\}_{j=1}^{m_1}$ of nonzero generators of $\PFC_*(\phi, \gamma)$ such that 
$$\langle U_{J,z}^{m_0(d(\gamma) - g + 1)}\Theta_j, \Theta_{j+1} \rangle \neq 0$$
for each $j \in \{1, \ldots, m_1 - 1\}$ and 
$$\langle U_{J,z}^{m_0(d(\gamma) - g + 1)}\Theta_{m_1}, m_0m_1[\Sigma] \cdot \Theta_1 \rangle \neq 0.$$
\end{lem}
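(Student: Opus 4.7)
The plan is to promote the homological $U$-cyclic identity from Proposition~\ref{prop:ucycle} to a chain-level closed loop by working with a grading-preserving normalization of the $U$-map. Apply Proposition~\ref{prop:ucycle} to fix $d_0$ so that whenever $d(\gamma) \ge d_0$ the group $\PFH_*(\phi,\gamma)$ is nonzero and every nonzero class is $U$-cyclic. Pick a nonzero $\sigma \in \PFH_*(\phi,\gamma)$, let $m_0$ be its $U$-cyclic order, and set $V := U_{J,z}^{m_0(d-g+1)}$ and let $L$ denote the chain operator ``act by $-m_0[\Sigma]$'' on $\PFC_*(\phi,\gamma)$. The maps $V$ and $L$ are chain maps, they commute (since $U_{J,z}$ commutes with the $H_2$-action at the chain level), and by \eqref{eq:h2_grading} they shift the $\bZ$-grading by the same amount $-2m_0(d-g+1)$. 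Consequently $\bar V := L^{-1}V$ is a grading-preserving chain map, and the $U$-cyclic relation lifts to $\bar V x = x + \partial(L^{-1}y)$ for every cycle representative $x$ of $\sigma$.

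Let $k$ denote the grading of $\sigma$ and write $V_k := \PFC_k(\phi,\gamma)$. By monotonicity together with \eqref{eq:h2_grading} and the standard action--index bounds for anchored ECH generators, $V_k$ is a finite-dimensional $\bZ/2$-vector space. Since $\bar V^N x$ is a cycle representing $\sigma \ne 0$ for every $N \ge 0$, the restriction $\bar V|_{V_k}$ is not nilpotent. Form the directed graph $G$ whose vertices are the generators in $V_k$ and whose edges are the pairs $\Theta \to \Theta'$ with $\langle \bar V\Theta, \Theta'\rangle \ne 0$. If $G$ were acyclic, one could topologically order its vertices so that $\bar V|_{V_k}$ becomes strictly upper triangular and hence nilpotent, contradicting the above. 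Thus $G$ contains a directed cycle
\[
\Theta_1 \to \Theta_2 \to \cdots \to \Theta_{m_1} \to \Theta_1,
\]
which unpacks to $\langle V\Theta_j, L\Theta_{j+1}\rangle \ne 0$ for every $j$ (indices taken modulo $m_1$).

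To match the exact form asked for in the lemma, concentrate all of the accumulated $L$-shifts onto the final arrow. Set $\tilde\Theta_j := L^{j-1}\Theta_j$ for $j = 1,\dots,m_1$. Using that $V$ commutes with the $H_2$-action and the adjoint identity $\langle W\cdot\phi,\psi\rangle = \langle \phi,-W\cdot\psi\rangle$, a direct computation gives
\[
\langle V\tilde\Theta_j, \tilde\Theta_{j+1}\rangle = \langle V\Theta_j, L\Theta_{j+1}\rangle \ne 0 \qquad (1 \le j \le m_1 - 1),
\]
while the wrap-around step yields $\langle V\tilde\Theta_{m_1}, L^{m_1}\tilde\Theta_1\rangle = \langle V\Theta_{m_1}, L\Theta_1\rangle \ne 0$. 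Recognizing $L^{m_1}$ as the $H_2$-action by the class $\pm m_0 m_1[\Sigma]$ (consistent with the sign convention used in the $U$-cyclic relation) yields the desired sequence $\tilde\Theta_1,\dots,\tilde\Theta_{m_1}$.

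The main obstacle is the finite-dimensionality of $V_k$: it is what licenses the ``DAG implies nilpotent'' step, and verifying it requires combining the grading formula \eqref{eq:pfh_grading}, the $H_2$-shift formula \eqref{eq:h2_grading}, and the monotonicity hypothesis to bound the action of anchored ECH generators of a fixed grading. The remaining work is careful bookkeeping of the sign conventions for $L$ so that $L^{m_1}$ lines up with the class $m_0 m_1[\Sigma]$ appearing in the statement.
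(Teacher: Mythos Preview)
Your proof is correct and takes essentially the same approach as the paper: normalize $U_{J,z}^{m_0(d-g+1)}$ by the $H_2$-shift to a grading-preserving chain map on a finite-dimensional grading piece, extract a directed cycle on generators, and undo the shift (the paper finds an exact fixed point $Tx=x$ via the dimension count $\dim Z_k > \dim B_k$ and then uses ``no sources $\Rightarrow$ cycle'' on the support of $x$, instead of your ``DAG $\Rightarrow$ nilpotent'' on all of $\PFC_k$, but this is a cosmetic variant). Note that finite-dimensionality of $\PFC_k$ does not require any action--index bounds: by \eqref{eq:h2_grading} (whose coefficient is nonzero since $d>g-1$) each orbit set $\alpha$ carries at most one anchoring class $Z$ of grading $k$, and nondegeneracy gives only finitely many $\alpha$ in the class $[\gamma]$.
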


\begin{proof}
Suppose $\gamma$ has sufficiently high degree so that Proposition~\ref{prop:ucycle} holds. Choose a trivialization of the restriction of $V_\phi$ to $\gamma$ and use this to define a $\bZ$-grading on PFH. Fix a grading $k$ for which $\PFH_k(\phi, \gamma) \neq 0$. Write $Z_k \subset \PFC_k(\phi, \gamma)$ for the space of cycles of degree $k$, and $B_k$ for the space of boundaries of degree $k$. The proof will take $3$ steps. 

\noindent\textbf{Step $1$:} This step shows that $Z_k$ and $B_k$ have finite dimension over $\bZ/2$. By the change of grading formula \eqref{eq:h2_grading}, it follows that for each ECH generator $\alpha$ there exists at most one anchored ECH generator $\Theta = (\alpha, Z)$ such that $I(\Theta) = k$. Since $\phi$ is nondegenerate, it has finitely many ECH generators representing any given homology class in $H_1(Y_\phi;\mathbb{Z})$. This implies that $\PFC_k(\phi, \gamma)$ contains finitely many anchored ECH generators, so it is a finite-dimensional $\bZ/2$-vector space. This implies that $Z_k$ and $B_k$ have finite dimension as well. 

\noindent\textbf{Step $2$:} This step uses Proposition~\ref{prop:ucycle} to show that there is a nonzero cycle $x \in Z_k$ fixed up to a shift by an iterate of the $U$-map. Fix generic $J$ and $z$ so that the chain-level map $U_{J,z}$ is well defined. Proposition~\ref{prop:ucycle} implies that there exists an integer $m_0 \geq 1$ such that for any nonzero cycle $x \in Z_k$, there exists a chain $z$ such that
\begin{equation}\label{eq:ucycle_chain_1}m_0[\Sigma] \cdot U_{J,z}^{m_0(d(\gamma) - g + 1)}x = x + \partial z.\end{equation}

Let $T$ be the restriction of $m_0[\Sigma] \cdot U_{J,z}^{m_0(d(\gamma) - g + 1)}$ to $Z_k$. Then \eqref{eq:ucycle_chain_1} implies that $\operatorname{Im}(T - 1) \subseteq B_k$. Since $\PFH_k(\phi, \gamma) \neq 0$, it follows that $\operatorname{dim}(Z_k) > \operatorname{dim}(B_k)$. This implies that the operator $T - 1$ has nonzero kernel and therefore there exists some nonzero $x \in Z_k$ such that $Tx = x$. 

\noindent\textbf{Step $3$:} This step completes the proof. Expand the element $x$ from the previous step into a sum $\sum_{i=1}^N x_i$ where each $x_i$ is an anchored ECH generator. The desired cyclic sequence $\{\Theta_j\}_{j = 1}^{m_1}$ of anchored ECH generators will be picked out from the $x_i$ using a short combinatorial argument. Define a directed graph $G$ as follows. The vertex set of $G$ is $\{1, \ldots, N\}$ and there is an edge from $i$ to $j$ if and only if $\langle Tx_i, x_j \rangle \neq 0$. We allow edges to start and end at the same vertex. It is well-known that any directed graph with no \emph{sources}, i.e. vertices which have no incoming edges, has a directed cycle. Now, $G$ has no sources: this follows because $Tx = x$ implies that for each $j$, the identity $\langle Tx, x_j \rangle \neq 0$ holds, which in turn implies that there exists some $i$ such that $G$ has an edge from $i$ to $j$.  Thus, $G$ has a cycle. Thus, there exists a set $\{x'_j\}_{j = 1}^{m_1}$ of anchored ECH generators such that 
\begin{equation}
\label{eqn:teqns}
\langle Tx'_j, x'_{j+1} \rangle \neq 0, \quad \langle Tx'_{m_1}, x'_1 \rangle \neq 0,
\end{equation}
for each $j \in \{1, \ldots, m_1 - 1\}$. For each $j$, set $\Theta_j := -(j-1)m_0[\Sigma] \cdot x'_j$.  Then, the $\Theta_j$ satisfy the conditions of the lemma by \eqref{eqn:teqns}, since $T = m_0[\Sigma] \cdot U_{J,z}^{m_0(d(\gamma) - g + 1)}$ by definition. 
\end{proof}

\subsection{Proof of Proposition~\ref{prop:pfh}} 

We now suppose that $\phi$ is \emph{monotone}, which we recall means $c_1(V_\phi) = c[\omega_\phi]$ for some constant $c \in \bR$. The proof of Proposition~\ref{prop:pfh} is an immediate consequence of the following result, since the monotonicity condition is preserved under Hamiltonian isotopy.

\begin{prop}\label{prop:pfh_technical}
Assume that $\phi$ is nondegenerate and monotone. 
There exists an integer $d_0 \geq \max(0, g - 1)$ depending only on $g$ and the Hamiltonian isotopy class such that for any $z \in Y_\phi$ not on any closed Reeb orbit, and generic $J$, there exists a standard $J$-holomorphic curve $u_d: C_d \to \bR \times Y_\phi$ satisfying the following properties:
\begin{enumerate}[(a)]
\item $(0, z) \in u_d(C_d)$.
\item $\cE(u_d) \leq d$.
\item $\cA(u_d) \leq d^{-1/2}$.
\item $\chi(C_d) \geq -2$. 
\end{enumerate}
\end{prop}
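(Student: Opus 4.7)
The plan is to apply the chain-level $U$-cycle property (Lemma~\ref{lem:ucycle_chain}) to assemble a closed loop of $U$-map currents, each passing through $(0,z)$, and then to extract a single component with all four required properties via a pigeonhole argument on action and $J_0$ index. First, fix $d \geq d_0$ so that Lemma~\ref{lem:ucycle_chain} applies to a monotone reference cycle $\gamma$ of degree $d$, and fix the given point $z$ and generic $J$ so that $U_{J,z}$ is well-defined at the chain level. The lemma yields integers $m_0, m_1 \geq 1$ and anchored generators $\Theta_1, \ldots, \Theta_{m_1}$ whose $U^{m_0(d-g+1)}$ matrix coefficients close into a cycle returning to $m_0 m_1 [\Sigma] \cdot \Theta_1$. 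Expanding each matrix coefficient into a composition of $I=2$ currents counted by $U_{J,z}$, we produce a sequence of embedded $J$-holomorphic currents $\cC_1, \ldots, \cC_N$ with $N := m_0 m_1 (d - g + 1)$, each passing through $(0,z)$, whose relative homology classes $W_i$ concatenate to the absolute class $m_0 m_1 [\Sigma] \in H_2(Y_\phi; \bZ)$.

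Next I would compute the total action and total $J_0$ index along the loop. Since $\omega_\phi$ descends to $H_2(Y_\phi; \bZ)/K_\phi$,
\begin{equation*}
\sum_{i=1}^N \cA(\cC_i) = \int_{m_0 m_1 [\Sigma]} \omega_\phi = m_0 m_1 A, \qquad A := \int_\Sigma \omega.
\end{equation*}
For $J_0$, intermediate Conley--Zehnder contributions telescope: each asymptotic orbit appears once as a positive end of one curve and once as a negative end of the next, and the initial positive asymptote of $\cC_1$ and the final negative asymptote of $\cC_N$ have the same underlying ECH generator (the anchor shifts by $m_0 m_1 [\Sigma]$ while the generator is unchanged). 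Using the identity $I(Z) - J_0(Z) = 2 c_\tau(Z) + (\text{top-multiplicity CZ difference})$ from \eqref{eq:j0_defn} together with $c_\tau([\Sigma]) = \langle c_1(V_\phi), [\Sigma] \rangle = \chi(\Sigma) = 2 - 2g$,
\begin{equation*}
\sum_{i=1}^N J_0(\cC_i) = 2N - 2 c_\tau(m_0 m_1 [\Sigma]) = 2 m_0 m_1 (d + g - 1).
\end{equation*}

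The last step is a double pigeonhole. By Proposition~\ref{prop:j0_genus}, each $J_0(\cC_i) \geq -1$, so the number of indices with $J_0(\cC_i) \geq 3$ is at most $\tfrac{1}{4}(2 m_0 m_1 (d + g - 1) + N) = \tfrac{m_0 m_1}{4}(3d + g - 1)$; this is a fraction of $N$ that tends to $3/4$ as $d \to \infty$. The number of indices with $\cA(\cC_i) > d^{-1/2}$ is at most $m_0 m_1 A \cdot d^{1/2}$, which is an $o(1)$ fraction of $N$. For $d$ sufficiently large, some index $i$ satisfies both $\cA(\cC_i) \leq d^{-1/2}$ and $J_0(\cC_i) \leq 2$. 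Taking $C_d$ to be the non-trivial embedded component of $\cC_i$ passing through $(0, z)$, we verify the four claims: $(0,z) \in C_d$ by construction of the $U$-map; $\cA(C_d) = \cA(\cC_i) \leq d^{-1/2}$, since trivial cylinders contribute zero action as $\omega_\phi$ vanishes on the Reeb direction; $\chi(C_d) \geq -J_0(\cC_i) \geq -2$ by Proposition~\ref{prop:j0_genus}; and $\cE(C_d) \leq d$ because $\lambda = dt$ has integral over any slice equal to the degree of $C_d$'s asymptotic orbit set, which is bounded by the total degree $d$ of the current $\cC_i$.

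The main obstacle I expect is the precise verification of the $J_0$ telescoping identity: one must carefully confirm that the intermediate Conley--Zehnder terms and the top-multiplicity correction terms cancel in pairs along the closed loop, including at the wrap-around $\Theta_1 \leftrightarrow m_0 m_1 [\Sigma] \cdot \Theta_1$. Once this clean formula is in hand, the monotonicity input $c_\tau([\Sigma]) = 2 - 2g$ combined with the growth $N \sim d$ from the $U$-cycle property leaves ample slack for the pigeonhole to produce a single good curve, and no further quantitative estimates (in contrast to the ECH volume formula used for Proposition~\ref{prop:ech}) are required.
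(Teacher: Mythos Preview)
Your proposal is correct and follows essentially the same approach as the paper's proof: both use Lemma~\ref{lem:ucycle_chain} to produce a closed loop of $N = m_0m_1(d-g+1)$ $U$-map currents summing to $m_0m_1[\Sigma]$ in $H_2(Y_\phi;\bZ)/K_\phi$, compute $\sum\cA(\cC_i)=m_0m_1A$ and $\sum J_0(\cC_i)=2m_0m_1(d+g-1)$, and run the identical double pigeonhole on action and $J_0$. The only cosmetic difference is that you derive the $J_0$ total via telescoping of the Conley--Zehnder terms and $c_\tau([\Sigma])=2-2g$, whereas the paper invokes additivity of $J_0$ directly; in either route the monotonicity hypothesis is what guarantees $\langle c_1(V_\phi), K_\phi\rangle = 0$, so that the $c_\tau$ (equivalently $J_0$) contribution of the total class is well-defined despite the $K_\phi$ ambiguity---this is precisely the point you correctly flag as the main thing to verify.
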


\begin{proof}
Fix $d_0 > 0$ so that Lemma~\ref{lem:ucycle_chain} holds and fix any monotone reference cycle $\gamma$ with degree $d := d(\gamma) \geq d_0$. The proof will take $2$ steps.

\noindent\textbf{Step $1$:}  Fix generic $J$ and $z$ so that the map $U_{J, z}$ is well-defined on $\PFC_*(\phi, \gamma)$.   Let $\{\Theta_j\}_{j=1}^{m_1}$ denote the sequence of generators provided by Lemma~\ref{lem:ucycle_chain}.  Then, by Lemma~\ref{lem:ucycle_chain}, we obtain a sequence $\cC_1, \ldots, \cC_{m_0m_1(d-g+1)}$ of $J$-holomorphic currents counted by the $U$-map such that
\[ \sum_{i=1}^{m_1m_0(d(\gamma) - g + 1)} [\cC_i] =  m_0m_1[\Sigma] \in H_2(Y_\phi,\alpha_1,\alpha_1) = H_2(Y_\phi; \bZ) / K_\phi.\]
By additivity of the action and of $J_0$ we therefore obtain
\begin{equation}\label{eq:pfh_technical2} 
\sum_{i=1}^{m_1m_0(d(\gamma) - g + 1)} \cA(\cC_i) = m_0m_1A, \quad \quad \sum_{i=1}^{m_1m_0(d(\gamma)-g+1)} J_0(\cC_i)  = 2m_0m_1(d(\gamma) + g - 1).
\end{equation}
In the equality for $J_0$, we have used the fact that $\phi$ is monotone, which implies that $c_1(V_\phi)$ has zero pairing with $K_\phi$, together with the fact that $J_0([\Sigma]) = 2(d(\gamma) + g - 1).$

\noindent\textbf{Step $2$:} This step finishes the proof of the proposition. Write $S_1$ for the set of $i$ such that $\cA(\cC_i) > d(\gamma)^{-1/2}$ and write $S_2$ for the set of $i$ such that $J_0(\cC_i) \geq 3$.  Then, by nonnegativity of the action of pseudoholomorphic curves, and \eqref{eq:pfh_technical2}, we have
\begin{equation} \label{eq:pfh_technical4} \#S_1 \leq Am_0m_1(d(\gamma))^{1/2}. \end{equation}

Since the $J_0$ index is bounded below by $-1$, the bound \eqref{eq:pfh_technical2} implies 
\begin{equation} \label{eq:pfh_technical5} \#S_2 \leq m_0m_1(3d(\gamma) +g - 1)/4.
\end{equation}

Thus, after possibly increasing $d_0$, we have the strict inequality
$$\#(S_1 \cup S_2) < m_0m_1(d(\gamma) - g + 1).$$
This implies that there exists some $i$ such that $\cA(\cC_i) \leq (d(\gamma))^{-1/2}$ and $J_0(\cC_i) \leq 2$. Thus, the component $u_d: C_d \to \bR \times Y_\phi$ of $\cC_i$ containing $(0, z)$ has $\cA(u_d) \leq (d(\gamma))^{1/2}$ and $\chi(C_d) \geq -2$, by \eqref{eq:j0_genus}. It remains to show that $\cE(u_d) \leq d(\gamma):$ this follows since, as $dt$ is closed, the integral of $dt$ over any level set of $\cC_i$ is equal to the pairing $\langle dt, [\gamma] \rangle = d(\gamma)$.
\end{proof}

\section{Invariant sets from low-action holomorphic curves}\label{sec:feral_curves}

The purpose of this section is to prove Theorem~\ref{thm:limit_set_intro}. For the remainder of the section, we fix a closed, smooth, connected, oriented manifold $Y$ of odd dimension $2n + 1 \geq 3$. 

\subsection{Notational preliminaries} \label{subsec:feral_prelim} 

Let us begin by reviewing the setup.

\subsubsection{Stable constants} The statements and proofs below will involve several constants which depend on $Y$, $\eta$, and $J$, where $\eta$ is a framed Hamiltonian structure on $Y$ and $J$ is an $\eta$-adapted almost-complex structure. We say that such a constant is \emph{stable} if it can be taken to be invariant under $C^\infty$-small perturbations of $\eta$ and $J$. 

\subsubsection{Geometry of symplectizations} Let $\cD(Y)$ be the space of pairs $(\eta, J)$ where $\eta$ is a framed Hamiltonian structure and $J$ is an $\eta$-adapted almost-complex structure of $\bR \times Y$. We equip $\cD(Y)$ with the topology of $C^\infty$-convergence. That is, a sequence $\{(\eta_k = (\lambda_k, \omega_k), J_k)\}$ in $\cD(Y)$ converges to $(\eta = (\lambda, \omega), J)$ if and only if the sequences $\{\lambda_k\}$, $\{\omega_k\}$, and $\{J_k\}$ converge in the $C^\infty$-topology to $\lambda$, $\omega$, and $J$, respectively. Choose a pair $(\eta = (\lambda, \omega), J) \in \cD(Y)$. To this pair we associate the following translation-invariant and $J$-invariant Riemannian metric on $\bR \times Y$: 
$$g := da \otimes da + \lambda \otimes \lambda + \omega(-, J-).$$
We fix notation for norms of tensors with respect to $g$. For any smooth tensor $\cT$ on $\bR \times Y$, write $|\cT|_g$ for its pointwise $g$-norm, which is a smooth function on $\bR \times Y$. Write $\|\cT\|_g := \sup_{z \in \bR \times Y} |\cT|_g(z)$ for the $C^0$ norm of $\cT$ with respect to $g$.  We fix notation for the metric balls of $g$. Let $\operatorname{dist}_g$ denote the distance function of $g$. Omitting the dependence on $g$ for brevity, we let 
$$\overline{B}_r(z) := \{w \in \bR\times Y\,|\,\operatorname{dist}_g(z, w) \leq r\}$$ 
denote the closed metric ball of radius $r > 0$ centered at $z \in \bR \times Y$. 

\subsubsection{Geometry of $J$-holomorphic curves} Fix a $J$-holomorphic curve $u: C \to \bR \times Y$. We say $u$ is \emph{compact} and \emph{connected} if the domain $C$ is respectively compact and connected. We say $u$ is \emph{generally immersed} if the critical point set $\operatorname{Crit}(u)$ is discrete. This is always true if $C$ is connected and $u$ is not a constant map. We say $u$ is \emph{boundary immersed} if the restriction of $u$ to $\partial C$ is an immersion. We assume for the sake of convenience that any $J$-holomorphic curves is generally immersed and boundary immersed unless stated otherwise.

We let $\gamma := u^*g$ denote the pullback metric on $C$, which is defined at any point $z \in C$ such that $du(z) \neq 0$. In particular, for a generally immersed curve, the metric is defined outside of a discrete subset of points. Let $\alpha := u^*\lambda$ denote the pullback of $\lambda$. Let $|\cT|_\gamma$ and $\|\cT\|_\gamma$ denote the pointwise and $C^0$ norms of a tensor $\cT$ with respect to $\gamma$. For any domain $U \subseteq C$, we define its \emph{area} to be the integral of the volume form of $\gamma$ over the set $U\,\setminus\,\operatorname{Crit}(u)$:
$$\operatorname{Area}_\gamma(U) := \int_{U\,\setminus\,\operatorname{Crit}(u)} \operatorname{dvol}_\gamma.$$

\subsection{The connected-local area bound and its significance} \label{subsec:local_area_statement}

The main estimate required for the proof of Theorem~\ref{thm:limit_set_intro} is a so-called ``connected-local area bound''.  In this section we state this estimate, deferring the proof to Section~\ref{subsec:local_area_proof}, and then use it to prove Theorem~\ref{thm:limit_set_intro}.

Given a $J$-holomorphic curve $u: C \to \bR \times Y$, any point $\zeta \in C$, and any $r > 0$, let $S_r(\zeta)$ denote the connected component of $u^{-1}(\overline{B}_r(u(\zeta)))$ containing $\zeta$.  Our estimate gives an a priori bound on the area of $S_r(\zeta)$ assuming that $\cA(u)$ is small and $r$ is small. The bound depends on the Euler characteristic of $C$, which is the primary reason why Euler characteristic bounds are assumed in Theorem~\ref{thm:limit_set_intro}. 

\begin{prop}[Connected-local area bound for low-action curves]\label{prop:local_area_bound}
Fix $(\eta,J)\in\cD(Y)$. There exists stable constants $\epsilon_7 = \epsilon_7(\eta, J) > 0$ and $\epsilon_8 = \epsilon_8(\eta, J) > 0$ such that the following holds. Let $u:C \to \bR \times Y$ be a standard $J$-holomorphic curve such that $\cA(u) \leq \epsilon_7$. Then for any point $\zeta \in C$, we have the bound 
\begin{equation}\label{eq:local_area_bound}\operatorname{Area}_{\gamma}(S_{\epsilon_8}(\zeta)) \leq \epsilon_8^{-1}(\chi(C)^2 + 1).\end{equation}
\end{prop}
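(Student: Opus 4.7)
My plan is to adapt the Fish--Hofer style area estimate, replacing the Hofer energy input by the small-action hypothesis $\cA(u) \le \epsilon_7$ together with the Euler-characteristic bound, which enters through a topological count of level-set components.

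I would begin by choosing $\epsilon_8$ small enough (depending only on $(\eta,J)$, and stable by the definition of stable constants) that on every $g$-ball $\overline{B}_{2\epsilon_8}(z) \subset \bR \times Y$, the pair $(\eta,J)$ is $C^\infty$-close to the flat model symplectization structure. This makes uniform versions of the Gromov monotonicity lemma and of the isoperimetric inequality for $J$-holomorphic curves with boundary available on every such ball. The parameter $\epsilon_7$ would then be chosen small relative to $\epsilon_8$ so that action contributions are negligible against the geometric terms.

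Next, I would decompose the area. Because $J$ is $\eta$-adapted, for any measurable $U \subseteq C$ we have
\begin{equation*}
\operatorname{Area}_\gamma(U) \;=\; \int_U u^*\omega \;+\; \int_U u^*(da\wedge\lambda).
\end{equation*}
The first summand is globally bounded by $\cA(u) \le \epsilon_7$, so it suffices to control the second. Writing $a_0 := a(u(\zeta))$ and applying the coarea formula in the $a$-coordinate to $S_{\epsilon_8}(\zeta)$, the second summand equals
\begin{equation*}
\int_{a_0-\epsilon_8}^{a_0+\epsilon_8} L(s)\,ds, \qquad L(s) \;:=\; \int_{u^{-1}(\{s\}\times Y)\,\cap\, S_{\epsilon_8}(\zeta)} u^*\lambda,
\end{equation*}
so the whole problem reduces to a bound $L(s) \le C\bigl(\chi(C)^2+1\bigr)$ for almost every $s$.

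The core of the argument is a combined topological--isoperimetric estimate for $L(s)$. For generic $s$, the slice $\Sigma_s := u^{-1}(\{s\}\times Y)\cap S_{\epsilon_8}(\zeta)$ is a compact $1$-manifold whose boundary lies on $u^{-1}(\partial\overline{B}_{\epsilon_8}(u(\zeta)))$. A standard transversality and Mayer--Vietoris count bounds the number of components of $\Sigma_s$ linearly in the genus and number of ends of $C$, hence by $O(|\chi(C)|+1)$. For each component I would split into two cases: (i) components bounding a small subdisk inside $S_{\epsilon_8}(\zeta)$, whose $\lambda$-length is controlled via the local monotonicity lemma by their $\omega$-area, which in aggregate is absorbed by $\cA(u) \le \epsilon_7$; and (ii) components with endpoints on $\partial \overline{B}_{\epsilon_8}(u(\zeta))$, whose individual $\lambda$-length admits a bound of order $|\chi(C)|$ obtained from a Gronwall iteration on a nested family of sub-balls $S_\rho(\zeta)$, $0<\rho<\epsilon_8$. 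Multiplying the per-component bound $O(|\chi(C)|)$ by the number of components $O(|\chi(C)|+1)$ gives the desired $O(\chi(C)^2+1)$ estimate for $L(s)$; integrating over $s$ and adjusting constants yields \eqref{eq:local_area_bound}.

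The main obstacle I anticipate is precisely the length control on an individual level-set component in the absence of any Hofer energy bound. With Hofer energy in hand, this is tautological; without it, one must extract length control from the action alone, by arguing that a level-set loop of anomalously large $\lambda$-length must either accumulate nontrivial $\omega$-area in nearby level sets (bounded by $\cA(u)\le\epsilon_7$) or else trap additional topology in $C$. Iterating this dichotomy carefully while keeping the cumulative cost polynomial in $\chi(C)$ is the essential technical input, and the quadratic dependence on $\chi(C)$ in \eqref{eq:local_area_bound} appears to be the natural output of the two topological bounds being multiplied together.
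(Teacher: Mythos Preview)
Your area decomposition and coarea reduction to bounding $L(s)$ are correct and are indeed the natural first moves. The genuine gap is the claim that a ``standard transversality and Mayer--Vietoris count bounds the number of components of $\Sigma_s$ linearly in the genus and number of ends of $C$.'' This is false as stated. Morse theory only controls the \emph{signed} count of critical points of $a\circ u$ in terms of $\chi$, not the total number; a compact surface of fixed Euler characteristic admits Morse functions with arbitrarily many saddles, and hence regular level sets with arbitrarily many components. Nothing about $J$-holomorphicity rules this out for $a\circ u$ restricted to $S_{\epsilon_8}(\zeta)$ (the open-mapping property only forbids interior local extrema, not saddles). Concretely, a low-action cylinder can shadow a Reeb orbit that passes many times through the target ball $\overline{B}_{\epsilon_8}(u(\zeta))$; the connected preimage $S_{\epsilon_8}(\zeta)$ then has level sets $\Sigma_s$ with a large, $\chi$-independent number of arcs. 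So both halves of your product estimate --- the component count and the per-component length bound via an unspecified ``Gronwall iteration'' --- are doing real work that is not accounted for.

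The paper's route avoids counting level-set components altogether. One first restricts to a compact slab $\widehat{C}\subset C$ of controlled height around $\zeta$ (using action quantization to rule out deep overshoots), then constructs a \emph{tract decomposition}: after a tame perturbation, one cuts $\widehat{C}$ along finitely many short gradient trajectories of the height function into tracts, each of whose bottom horizontal boundary has $\widetilde{\alpha}$-length at most a fixed constant. Finding those short gradient trajectories is exactly where the low-action hypothesis is spent (via Lemma~\ref{lem:modest_flow_lines}). Two topological counts then enter, and they are about tracts, not level sets: the number of horizontal boundary components of any single tract is $O(|\chi(C)|)$ (Lemma~\ref{lem:tract_topology}), and $S_{\epsilon_5}(\zeta)$ is covered by $O(|\chi(C)|)$ tracts (Lemma~\ref{lem:tract_geometry}, via a graph argument using that $S_{\epsilon_5}(\zeta)$ cannot touch both vertical sides of a rectangular tract). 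The exponential area bound (Proposition~\ref{prop:fh_area_bound}) then gives $O(|\chi(C)|)$ area per tract, and the product yields the quadratic bound. So the quadratic dependence does arise as a product of two linear topological bounds, as you anticipated, but the objects being counted are tracts with controlled boundary, not raw level-set components; producing those tracts is the substantive missing step in your proposal.
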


Proposition~\ref{prop:local_area_bound} is inspired by \cite[Thm. 5]{FH23}, the ``asymptotic connected-local area bound" for ``feral" curves, a fundamental result in the new Fish-Hofer theory.  There are several new aspects here.  The main point is that \cite[Thm. 5]{FH23} is an asymptotic result for a fixed curve (of possibly unbounded Hofer energy, but finite action.)  This allows for the reduction to annular curves, whereas in our setting we need to consider much more complicated topologies; new arguments are required for this.  Another novelty is the replacement of the asymptotic condition with a bound on the action instead.  Both here and in \cite{FH23} one also has to be very careful with the constants to ensure stability.

Let us now explain why Theorem~\ref{thm:limit_set_intro} follows from Proposition~\ref{prop:local_area_bound}.  The argument for this appeared in the feral context in \cite[Proposition $4.47$]{FH23}.

\begin{proof}[Proof of Theorem~\ref{thm:limit_set_intro}] 

As the argument from here is essentially the same as \cite[Proposition $4.47$]{FH23} except for minor modifications, we will only provide a sketch highlighting the key points. Fix $(\eta, J) \in \cD(Y)$ and a sequence $\{(\eta_k, J_k)\}$ in $\cD(Y)$ converging to it. Fix a sequence $\{u_k: C_k \to Y\}$ of standard $J_k$-holomorphic curves such that $\lim_{k \to\infty} \cA(u_k) = 0$ and $\inf_{k} \chi(C_k) > -\infty$. Let $\cX \subset \cK(X)$ denote the limit set and choose any $\overline{\Lambda} \in \cX$.   When $k$ is sufficiently large, and therefore $\cA(u_k)$ is sufficiently small, it follows
that
 $(a \circ u_k)(C_k) = \bR$, hence $\overline{\Lambda}$ is non-empty. To see that $\overline{\Lambda} = (-1, 1) \times \Lambda$, where $\Lambda \in \cK(Y)$ is $R_\eta$-invariant, it suffices to show that for any $z := (t, y) \in \overline{\Lambda}$ there exist some $\epsilon > 0$ such that 
$$(t + \tau, y) \in \overline{\Lambda}, \qquad (t, \phi^\tau(y)) \in \overline{\Lambda}$$ 
for any $\tau \in (-\epsilon, \epsilon)$, where $\{\phi^t\}_{t \in \bR}$ denotes the flow of $R_\eta$.  This is proved by a standard application of Fish's target-local Gromov compactness theorem (the version stated in \cite[Theorem $2.36$]{FH23}). Fix points $\zeta_k \in C_k$ such that $u_k(\zeta_k) \to z$ and define local patches $S_k := S_{\epsilon_8}(\zeta_k) \subset C_k$. Proposition~\ref{prop:local_area_bound} gives a $k$-independent upper bound on $\operatorname{Area}_\gamma(S_k)$ in view of the fact that $\cA(u_k) \to 0$, $\{\chi(C_k)\}$ is uniformly bounded, and all constants are stable. The surfaces $S_k$ have uniformly bounded genus as well, because of the uniform bound on $\chi(C_k)$. Thus, we are justified in applying target-local Gromov compactness, and after passing to the limit obtain a non-constant holomorphic curve passing through $z$ and contained in $\overline{\Lambda}$. The curve has action $0$, due to the fact that $\cA(u_k) \to 0$, and hence has tangent plane at any immersed point equal to $\operatorname{Span}(\partial_a, R_\eta)$, from which the desired property follows. 
\end{proof}

\subsection{Properties of the limit set} Before diving into the proof of the connected-local area bound, we provide a proof of Proposition~\ref{prop:lim} from the introduction. We start with a pair of lemmas. To state the first lemma, it is useful to fix the following notation. For any $(\eta', J') \in \cD(Y)$, any standard $J'$-holomorphic curve $u: C \to \bR \times Y$ defines a map
\begin{gather*}
\mathcal{S}_u: \bR \to \cK(X), \\
s \mapsto \tau_s \cdot \Big( u(C) \cap (s - 1, s + 1) \times Y\Big).
\end{gather*}

We make the following assertion:
\begin{lem}\label{lem:jcurve_cts}
For any $(\eta', J') \in \cD(Y)$ and any standard $J'$-holomorphic curve $u: C \to \bR \times Y$, the map $\mathcal{S}_u$ is continuous. 
\end{lem}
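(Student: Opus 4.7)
The plan is to unpack Hausdorff convergence via the $\liminf$/$\limsup$ characterization recalled in \S\ref{subsec:hausdorff} and verify both inclusions directly. The only nontrivial input is that $u(C)$ is a closed subset of $\bR \times Y$, which holds because $J'$-holomorphic curves in this paper are required to be proper maps. Given this, it is convenient to reformulate the map as
$$\mathcal{S}_u(s) = \{(a, y) \in X : (a + s, y) \in u(C)\},$$
since a point of $u(C) \cap (s - 1, s + 1) \times Y$, after shifting by $\tau_s$, is precisely a point $(a, y)$ with $a \in (-1, 1)$ and $(a + s, y) \in u(C)$.

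Fix $s_0 \in \bR$ and a sequence $s_k \to s_0$, and set $K_k := \mathcal{S}_u(s_k)$; it suffices to show $\liminf K_k = K_0 = \limsup K_k$. For the inclusion $K_0 \subseteq \liminf K_k$, given $(a, y) \in K_0$, I would consider the explicit sequence $z_k := (a + s_0 - s_k, y)$: its first coordinate tends to $a \in (-1, 1)$, and its $\tau_{s_k}$-shift $(a + s_0, y)$ lies in $u(C)$ by hypothesis, so $z_k \in K_k$ for all large $k$ and $z_k \to (a, y)$; hence every neighborhood of $(a, y)$ meets $K_k$ eventually. For the reverse inclusion $\limsup K_k \subseteq K_0$, suppose some $(a, y) \in X$ has the property that every neighborhood meets infinitely many $K_k$; extract a subsequence $(a_{k_j}, y_{k_j}) \in K_{k_j}$ converging to $(a, y)$. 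Then $(a_{k_j} + s_{k_j}, y_{k_j}) \in u(C)$ converges to $(a + s_0, y)$, and closedness of $u(C)$ in $\bR \times Y$ forces $(a + s_0, y) \in u(C)$, so that $(a, y) \in K_0$.

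There is no substantive obstacle here: once $u(C)$ is observed to be closed, both inclusions follow by direct construction. The only mild subtlety is that $(s-1, s+1)$ is an open interval, so slices at heights exactly $s \pm 1$ are deliberately excluded from $\mathcal{S}_u(s)$; this is consistent with Hausdorff convergence being tested in the open manifold $X = (-1, 1) \times Y$, since any limit point $(a, y)$ of such slices automatically satisfies $a \in (-1, 1)$, and does not interfere with either direction of the argument.
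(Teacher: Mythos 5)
Your proof is correct and takes essentially the same route as the paper's: both directions of the $\liminf$/$\limsup$ characterization are verified directly, with the single nontrivial input being that properness of $u$ makes $u(C)$ closed in $\bR\times Y$. Your reformulation $\mathcal{S}_u(s)=\{(a,y)\in X:(a+s,y)\in u(C)\}$ streamlines the bookkeeping slightly (the paper instead picks a small $\epsilon$ and intersects with a closed slab $[s+t-\epsilon,s+t+\epsilon]\times Y$ to invoke closedness), but the substance — pass to a convergent subsequence, use closedness of $u(C)$, and note the first coordinate lands in $(-1,1)$ because convergence is tested in $X$ — is identical.
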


The next lemma is a purely topological statement.

\begin{lem}\label{lem:connected_abstract}
    Let $K$ be any compact and metrizable topological space. Let $Z_k \subseteq K$ be a sequence of connected subsets. Then there exists a subsequence $W_j := Z_{k_j}$ such that their set of subsequential limit points is connected.  
\end{lem}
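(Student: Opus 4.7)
The plan is to lift the problem into the Hausdorff hyperspace $\cK(K)$ and then invoke the classical Gołąb-type fact that a Hausdorff limit of connected compacta is connected.

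First I would replace each $Z_k$ by its closure $\overline{Z_k} \subseteq K$. Since the closure of a connected set is connected and $K$ is compact, each $\overline{Z_k}$ is a compact connected subset of $K$, hence an element of the hyperspace $\cK(K)$, which is compact and metrizable in the Hausdorff topology because $K$ is. (We may assume $Z_k \neq \emptyset$ for all large $k$: the remaining indices can be discarded, and if all but finitely many $Z_k$ are empty then the limsup is itself empty and the conclusion is vacuous.) By sequential compactness of $\cK(K)$, extract a Hausdorff-convergent subsequence $\overline{Z_{k_j}} \to L \in \cK(K)$, and set $W_j := Z_{k_j}$. Any cluster point in $K$ of a sequence $x_j \in Z_{k_j}$ lies in $L$, so $L$ is non-empty.

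Second, I would identify the set of subsequential limit points of $(W_j)$, in the sense of the $\limsup$ from \S\ref{subsec:hausdorff}, with $L$. The Kuratowski $\limsup$ depends only on closures, so $\limsup_j W_j = \limsup_j \overline{W_j}$; Hausdorff convergence $\overline{W_j} \to L$ in $\cK(K)$ is equivalent to $\liminf_j \overline{W_j} = L = \limsup_j \overline{W_j}$, yielding $\limsup_j W_j = L$.

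The only substantive step is to show $L$ is connected. Suppose for contradiction that $L = A \sqcup B$ with $A, B$ non-empty, compact and disjoint, and set $\epsilon := \tfrac{1}{3}\operatorname{dist}(A, B) > 0$. Hausdorff convergence $\overline{W_j} \to L$ implies that for all sufficiently large $j$, $\overline{W_j} \subseteq N_\epsilon(A) \sqcup N_\epsilon(B)$, where the two $\epsilon$-neighbourhoods are disjoint by the choice of $\epsilon$. The $\liminf$ inclusion $A, B \subseteq L = \liminf_j \overline{W_j}$ further forces $\overline{W_j}$ to meet each of $N_\epsilon(A)$ and $N_\epsilon(B)$ for all large $j$. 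Thus $\overline{W_j}$ would decompose as a disjoint union of two non-empty relatively open pieces, contradicting the connectedness of $\overline{W_j}$. The only potential obstacle is this final Gołąb-type argument, and it is classical and short.
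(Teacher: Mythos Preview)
Your argument is correct. You pass to the hyperspace $\cK(K)$, extract a Hausdorff-convergent subsequence $\overline{Z_{k_j}}\to L$, identify $L$ with the Kuratowski $\limsup$ of the chosen subsequence, and then run the standard ``a Hausdorff limit of continua is a continuum'' contradiction. Each step is sound, including the observation that $\limsup_j W_j=\limsup_j\overline{W_j}$ because open sets meeting a closure must meet the set itself.

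The paper's proof is organized differently. It does not first pass to a Hausdorff-convergent subsequence of closures; instead it chooses any subsequence along which some \emph{single} sequence of points $w_j\in W_j$ converges to a point $w\in K$, and then argues directly: given a disjoint open cover $U\sqcup V$ of the limit set $W$, it shows $W_j\subseteq U\sqcup V$ for large $j$ (else a subsequential limit would fall outside $U\sqcup V$), uses connectedness of $W_j$ to force $W_j$ into one piece, and uses $w_j\to w\in U$ to identify that piece as $U$. In effect, the paper proves a slightly stronger statement --- the conclusion holds for any subsequence admitting a single convergent selection of points --- while your approach restricts to the (more special) Hausdorff-convergent subsequence. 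What you gain in exchange is brevity: once you know $\overline{W_j}\to L$, the identification $W=L$ and the splitting argument are immediate, and you can cite the classical fact instead of reproving it. Both routes hinge on the same underlying idea (connected sets cannot be separated by a disjoint open cover of their accumulation set), but the paper's version is more self-contained and yours leans on hyperspace machinery that the paper has in any case already set up.
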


Lemma~\ref{lem:jcurve_cts} has a short and elementary proof.

\begin{proof}[Proof of Lemma~\ref{lem:jcurve_cts}]
It suffices to prove that for any sequence $s_k \to s$ in $\bR$, we have $\cS_u(s_k) \to \cS_u(s)$ in $\cK(X)$. The proof will take $2$ steps. 

\noindent\textbf{Step $1$:} This step proves that $\limsup \cS_u(s_k) \subseteq \cS_u(s)$. Choose any point $z = (t, y) \in \limsup \cS_u(s_k) \subseteq X$. Then any neighborhood of $z$ meets infinitely many of the sets 
$$\cS_u(s_k) = \tau_{s_k} \cdot \Big(u(C) \cap (s_k - 1, s_k + 1) \times Y\Big).$$

After passing to a subsequence, there exists a sequence of points $z_k = (t_k, y_k) \in \cS_u(s_k)$ such that $t_k \to t$ and $y_k \to y$. Choose some small $\epsilon > 0$ such that $t \pm \epsilon \in (-1, 1)$. Note that $\tau_{-s_k}(z_k) \in u(C)$ for each $k$. Since $s_k \to s$ and $t_k \to t$, it follows that 
$$\tau_{-s_k}(z_k) \in u(C) \cap [s + t - \epsilon, s + t + \epsilon] \times Y$$ for sufficiently large $k$. Since $u(C)$ is proper, the set on the right-hand side is closed. The points $\tau_{-s_k}(z_k)$ converge to $\tau_{-s}(z)$, so it follows that
$$\tau_{-s}(z) ) \in u(C) \cap [s+t-\epsilon, s+t+\epsilon] \times Y \subset u(C) \cap (s - 1, s + 1) \times Y$$
and therefore that $z \in \cS_u(s)$. 

\noindent\textbf{Step $2$:} This step proves that $\cS_u(s) \subseteq \liminf \cS_u(s_k)$. Fix any $z = (t, y) \in \cS_u(s)$. We must show that any neighborhood of $z$ has non-empty intersection with all but finitely many of the sets $\cS_u(s_k)$. Write $z_k := \tau_{s_k-s}(z) = (-s_k + s + t, y)$. For all sufficiently large $k$, we have that $-s_k + s + t \in (-1, 1)$, so it follows that $\tau_{-s_k}(z_k) \in u(C) \cap (s_k - 1, s_k + 1) \times Y$. This implies $z_k \in \cS_u(s_k)$ for sufficiently large $k$. Since $z_k \to z$, we conclude that any neighborhood of $z$ meets $\cS_u(s_k)$ for sufficiently large $k$. 

\end{proof}

Next, we prove Lemma~\ref{lem:connected_abstract}.

\begin{proof}[Proof of Lemma~\ref{lem:connected_abstract}]

Let $W_j := Z_{k_j}$ be any subsequence for which there exist points $w_j \in W_j$ converging to some $w \in K$. Let $W$ denote the set of subsequential limit points of $\{W_j\}$. The proof that $W$ is connected will take $4$ steps.

\noindent\textbf{Step $1$:} This step makes some simplifying observations. It suffices to show that, for any pair of disjoint open subsets $U$ and $V$ such that $W \subseteq U \sqcup V$, we have either $W \subseteq U$ or $W \subseteq V$. Without loss of generality, we will assume that the point $w \in W$ is contained in $U$, and then show that $W \subseteq U$. 

\noindent\textbf{Step $2$:} This step proves that $W_j \subseteq U \sqcup V$ for all sufficiently large $j$. We prove it by contradiction. Assume that the claim is false. Then, after passing to a subsequence, there exists for each $j$ some $w_j' \in W_j$ such that $w_j' \not\in U \sqcup V$ for any $j$. Since $K$ is compact and metrizable, it is sequentially compact, and therefore a subsequence of $\{w_j'\}$ converges to some $w' \in W$. However, the complement of $U \sqcup V$ is closed, so $w'$ does not lie in $U \sqcup V$, contradicting the initial assumption that $W \subseteq U \sqcup V$. 

\noindent\textbf{Step $3$:} This step proves that $W_j \subseteq U$ for all sufficiently large $j$.  By Step $2$ and the fact that $W_j$ is connected for each $j$, we either have $W_j \subseteq U$ or $W_j \subseteq V$ for large $j$. Since $w_j \to w \in U$, it follows that $W_j$ has non-empty intersection with $U$ for sufficiently large $j$, which implies $W_j \subseteq U$. 

\noindent\textbf{Step $4$:} This step completes the proof. By Step $3$, it follows that $W$ lies inside the closure $\overline{U}$. Any open set intersecting $\overline{U}$ must intersect $U$, so $V$ is disjoint from $\overline{U}$. We conclude that $W \subseteq U$. 

\end{proof}

We now give the promised proof of Proposition~\ref{prop:lim}. 

\begin{proof}[Proof of Proposition~\ref{prop:lim}]
Fix $(\eta, J) \in \cD(Y)$ and a sequence $\{(\eta_k, J_k)\}$ in $\cD(Y)$ converging to it. Fix a sequence $\{u_k: C_k \to \bR \times Y\}$ of standard $J_k$-holomorphic curves. Recall that each curve $u_k$ defines a continuous map $\cS_k := \cS_{u_k}$ from $\bR$ to $\cK(X)$. Write $Z_k := \cS_k(\bR)$ for each $k$. By Lemma~\ref{lem:jcurve_cts}, $Z_k$ is the image by a continuous map of a connected space, so it is connected. Thus, $\{Z_k\}$ is a sequence of connected subsets of the compact and metrizable space $\cK(X)$. The limit set $\cX$ is equal to the set of subsequential limit points of the sequence $\{Z_k\}$. The proposition now follows from Lemma~\ref{lem:connected_abstract}.
\end{proof}

\subsection{Preliminaries from feral curve theory}

It remains to prove Proposition~\ref{prop:local_area_bound}, which will take up the remainder of the paper.  To do this, we need to first collect and review some more preliminaries from the work of Fish--Hofer \cite{FH23}, which is the purpose of this section.

\subsubsection{Perturbed holomorphic curves}

Many of the estimates in \cite{FH23} are easiest when the height function $a \circ u$ of the pseudoholomorphic curve is Morse.  Unfortunately, this is not in general the case, so the notion of a ``perturbed curve" was introduced there to remedy this.  More precisely, a \emph{perturbed $J$-holomorphic curve} is a pair $(u, f)$ where $u: C \to \bR \times Y$ is a $J$-holomorphic curve and $f: C \to \bR$ is a smooth function which is compactly supported in the open subset $C\,\setminus\,(\partial C \cup \operatorname{Crit}(u))$.  Perturbing $u$ in the vertical direction by $f$ defines a new map 
$$\widetilde{u}: \zeta \mapsto \operatorname{exp}^g_{u(\zeta)}(f(\zeta)\partial_a).$$
Write $\widetilde{\gamma} := \widetilde{u}^*g$ for the induced pullback metric. Define an almost-complex structure $\widetilde{j}$ on $C$ as the unique almost-complex structure which is a $\widetilde{\gamma}$-isometry and coincides with $j$ on the complement of $\operatorname{supp}(f)$. We then define a one-form 
$$\widetilde{\alpha} := -(\widetilde{u}^*da \circ \widetilde{j})$$
on $C$. This should be thought of as a perturbation of $\alpha = u^*\lambda = -(u^*da \circ j)$. 

As in \cite{FH23}, the perturbation is subject to some quantitative controls.  This is encoded in the notion of a \emph{$(\delta, \epsilon)$-tame perturbation}, which is found in \cite[Definition $4.24$]{FH23}. We will not repeat the precise definition of a $(\delta, \epsilon)$-tame perturbation, since we never make explicit use of it, but we will recall its key attributes. The constant $\delta > 0$ controls the support of $f$ and the domain where $a \circ \widetilde{u}$ is Morse. That is, $f$ is supported outside a $\delta/2$-neighborhood of $\operatorname{Crit}(u)$, and $a \circ \widetilde{u}$ is Morse outside of a $\delta$-neighborhood. The constant $\epsilon$ controls the $C^2$ norm of $f$. We require $\delta$ to be smaller than a constant depending on $u$ and ambient geometry, and $\epsilon$ to be smaller than a constant depending on $\delta$, $u$, and ambient geometry. 

Given a tame perturbation $(u, f)$, several explicit estimates are proved in \cite[Section $4.1$]{FH23} relating the perturbed map $\widetilde{u}$ to the unperturbed map $u$. We do not make direct use of the majority of them, although they are key ingredients in the proofs of other estimates from \cite{FH23} that we cite. One estimate which we do use relates the perturbed and unperturbed pullback metrics:

\begin{lem}[{\cite[Equation $4.3$]{FH23}}]\label{lem:metric_comparison}
Fix $(\eta, J) \in \cD(Y)$. Fix a $J$-holomorphic curve $u: C \to \bR \times Y$ and any $(\delta,\epsilon)$-tame perturbation $(u, f)$. Then we have
$$\gamma/2 \leq \widetilde{\gamma} \leq 2\gamma,$$
where we recall that $\gamma = u^*g$ and $\widetilde{\gamma} = \widetilde{u}^*g$. 
\end{lem}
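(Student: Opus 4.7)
The plan is to reduce the lemma to a short algebraic computation, exploiting the product structure and translation invariance of $g$. Write $u = (a, v)$ with $a: C \to \bR$ and $v: C \to Y$. Because the perturbation is purely in the $\partial_a$ direction and $g$ is translation-invariant in $a$, the perturbed map is simply $\widetilde{u} = (a + f, v)$. The metric splits orthogonally as $g = da \otimes da + \lambda \otimes \lambda + \omega(-, J-)$, and only the first summand depends on the $\bR$-factor. Consequently the last two summands pull back identically under $u$ and $\widetilde{u}$, and
\[
\widetilde{\gamma} - \gamma \;=\; d(a+f) \otimes d(a+f) - da \otimes da \;=\; da \otimes df + df \otimes da + df \otimes df.
\]

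The next step is a pointwise symmetric-tensor estimate on a tangent vector $X \in T_\zeta C$ (at a point where $du \ne 0$, so that $\gamma$ is defined). Since $|da|_g \equiv 1$ with respect to the framed Hamiltonian metric, one has $|da(X)| = |da(u_* X)| \leq |u_* X|_g = |X|_\gamma$. Thus
\[
\bigl|\widetilde{\gamma}(X,X) - \gamma(X,X)\bigr| \;\leq\; 2\,|X|_\gamma \,|df(X)| + |df(X)|^2.
\]
If one can guarantee $|df|_\gamma \leq c$ for a small universal constant $c$, then $|df(X)| \leq c |X|_\gamma$ and the right-hand side is bounded by $(2c + c^2)\,\gamma(X,X)$. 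Choosing $c$ small enough that $2c + c^2 \leq 1/2$ then yields $\tfrac12 \gamma(X,X) \leq \widetilde{\gamma}(X,X) \leq \tfrac{3}{2}\gamma(X,X) \leq 2\gamma(X,X)$, which is exactly the claim.

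The only real content, then, is extracting the pointwise bound $|df|_\gamma \leq c$ from the definition of a $(\delta, \epsilon)$-tame perturbation in \cite[Def.~4.24]{FH23}. The parameter $\epsilon$ there controls the $C^2$-norm of $f$, and the support of $f$ lies outside a $\delta/2$-neighborhood of $\operatorname{Crit}(u)$, where $u$ is an immersion and $\gamma$ is genuinely a metric. One would consult that definition to verify that choosing $\epsilon$ sufficiently small (depending on $\delta$, $u$, and ambient geometry, as the definition allows) forces $|df|_\gamma \leq c$. This is the step where one must be most careful, since all constants need to be organized so that the smallness of $\epsilon$ translates into smallness of $df$ measured with respect to the pullback metric $\gamma$, which itself may degenerate near $\operatorname{Crit}(u)$; the $\delta/2$-neighborhood condition on $\operatorname{supp}(f)$ is exactly what rescues this.

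I expect this last verification of constants to be the only nontrivial obstacle; everything else follows from translation invariance of $g$ and elementary symmetric tensor inequalities. Indeed, the essential reason the lemma is true is structural: perturbing in the Reeb-symplectic direction could never distort $\gamma$ uniformly, but perturbing in the flat $\partial_a$ direction produces a correction whose only nonlinear term is a perfect square, making both the upper and lower two-sided bounds automatic once $df$ is small.
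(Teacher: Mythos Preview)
Your approach is correct. The paper does not prove this lemma itself; it cites \cite[Equation 4.3]{FH23} and only remarks that the immersed case is straightforward while the non-immersed case requires care because $\gamma$ degenerates near $\operatorname{Crit}(u)$. Your computation---using the product structure of $g$ to write $\widetilde{u} = (a+f, v)$ and reduce to $\widetilde{\gamma} - \gamma = da \otimes df + df \otimes da + df \otimes df$, then bounding via $|df|_\gamma$---makes exactly this mechanism explicit, and you correctly identify that the support condition on $f$ (vanishing in a $\delta/2$-neighborhood of $\operatorname{Crit}(u)$) is what rescues the bound from the degeneracy of $\gamma$. The verification you defer, namely that the quantitative constraints in \cite[Definition 4.24]{FH23} force $|df|_\gamma$ to be uniformly small, is indeed where the substantive content lies, and the paper likewise leaves that to the cited reference.
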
  

We also remark that if the curve $u$ is immersed, then we can use much simpler perturbations, because there are no singularities for the perturbation to avoid. Call a perturbation \emph{$\epsilon$-Morse} if $a \circ \widetilde{u}$ is Morse over the entire domain $C$ and if the $C^2$ norm of $f$ is smaller than $\epsilon$. Our main dynamical results (Theorems~\ref{thm:2d_mid}--\ref{thm:geodesics}) only require working with embedded holomorphic curves. A reader who is solely interested in these results can replace all instances of $(\delta, \epsilon)$-tame perturbations with $\epsilon$-Morse perturbations, where $\epsilon$ is very small.  Lemma~\ref{lem:metric_comparison} is also straightforward if $u$ is immersed. In this case $\widetilde{\gamma}$ is $\epsilon$-close to $\gamma$ in the $C^1$ topology. In the non-immersed setting, some care is required because $\gamma$ degenerates as it approaches $\operatorname{Crit}(u)$.

\subsubsection{Tracts} Fix a perturbed $J$-holomorphic curve $(u, f)$. Tracts and strips, introduced in \cite{FH23}, are highly structured compact portions of the domain $C$. They are important because they satisfy several geometric estimates. A \emph{tract} in $(u, f)$ is a connected, compact embedded surface $\widetilde{C} \subset C$, possibly with boundary and corners, such that:
\begin{enumerate}[(\roman*)]
\item The boundary $\partial\widetilde{C}$ is disjoint from $\operatorname{Crit}(a \circ \widetilde{u})$; 
\item The boundary of $\widetilde{C}$ decomposes as $\partial_h \widetilde{C} \cup \partial_v \widetilde{C}$ where
\begin{enumerate}[(a)]
\item $\partial_h\widetilde{C} \cap \partial_v\widetilde{C}$ is a finite set of corners;
\item If it is non-empty, each component of $\partial_v\widetilde{C}$ is tangent to the gradient vector field $\operatorname{grad}_{\widetilde{\gamma}}(a \circ \widetilde{u})$;
\item The function $a \circ \widetilde{u}$ is constant on each component of $\partial_h\widetilde{C}$.
\end{enumerate}
\end{enumerate}

The sets $\partial_h\widetilde{C}$ and $\partial_v\widetilde{C}$ are respectively called the \emph{horizontal} and \emph{vertical} boundaries of the tract $\widetilde{C}$. 

\subsubsection{Strips}\label{subsec:strips} For the proof of Proposition~\ref{prop:tract_decomposition} at the end of this section, we need to introduce the notion of a strip. Strips, which are closely related to tracts, were originally defined in \cite[Definition $4.17$]{FH23}; we write down a simpler version of their definition which is sufficient for our purposes. A \emph{strip} (in our sense) in $(u, f)$ is an embedded rectangle $\widetilde{C} \subset C$, i.e. a compact domain of genus zero with four smooth boundary curves meeting in four corners, satisfying the following two properties. First, $\widetilde{C}$ contains no critical points of $a \circ \widetilde{u}$. Second, the boundary $\partial\widetilde{C}$ decomposes as a union $\partial_h\widetilde{C} \cup \partial_v\widetilde{C}$ with certain properties. The set $\partial_v\widetilde{C}$ is called the \emph{vertical boundary}, and consists of a pair of disjoint smooth curves tangent to $\operatorname{grad}(a \circ \widetilde{u})$. The set $\partial_h\widetilde{C}$ is called the \emph{horizontal boundary}, and consists of a pair of disjoint smooth curves, which each meet the segments of $\partial_v\widetilde{C}$ at the corners. An illustration is provided in Figure~\ref{fig:tracts_and_strips}. 
We label the two components of $\partial_h\widetilde{C}$ with the notation $\partial_h^{\pm}\widetilde{C}$. The labelling is uniquely determined by the condition $\sup_{\zeta \in \partial_h^+\widetilde{C}} (a \circ \widetilde{u})(\zeta) > \sup_{\zeta \in \partial_h^-\widetilde{C}} (a \circ \widetilde{u})(\zeta)$. We call $\partial_h^+\widetilde{C}$ the \emph{top horizontal boundary} and $\partial_h^-\widetilde{C}$ the \emph{bottom horizontal boundary}. 

The strips we use below are always defined by fixing a segment $\mathcal{I}$ mapping into a level set of $a \circ \widetilde{u}$ and taking a union of gradient flow segments starting at points $\zeta \in \mathcal{I}$. The resulting strip $\widetilde{C}$ will have $\partial_h^-\widetilde{C} = \mathcal{I}$ and $\partial_h^+\widetilde{C}$ consisting of the endpoints of these gradient flow segments. 

\begin{figure}
\includegraphics[width=.4\textwidth]{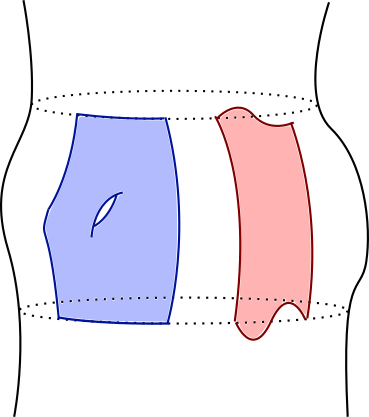}
\caption{A tract (blue) and a strip (red).}
\label{fig:tracts_and_strips}
\end{figure}

\subsubsection{Exponential area bound for tracts} In the classical theory of $J$-holomorphic curves in symplectizations, area bounds are deduced from Hofer energy bounds. The following result provides an alternative in the absence of Hofer energy bounds. It is arguably the most crucial estimate in \cite{FH23}. For example, it shows up in the proofs of Proposition~\ref{prop:fh_quantization}, Lemma~\ref{lem:strip_estimate}, and Lemma~\ref{lem:modest_flow_lines} below. 

\begin{prop}[{\cite[Theorem $9$]{FH23}}] \label{prop:fh_area_bound}
Fix $(\eta, J) \in \cD(Y)$. Let $(u, f)$ be a $(\delta, \epsilon)$-tame perturbed $J$-holomorphic curve with domain $C$ and let $\widetilde{C} \subset C$ be a tract of $(u, f)$, for which there exist constants $a_+ > a_-$ such that
\begin{enumerate}[(\roman*)]
\item $(a \circ \widetilde{u})(\widetilde{C}) \subset [a_-, a_+]$;
\item $(a \circ \widetilde{u})(\partial_h\widetilde{C}) \cap (a_-, a_+) = \emptyset$; 
\item $a_+$ and $a_-$ are regular values of $a \circ \widetilde{u}$. 
\end{enumerate}

Then there exists a stable constant $c_1 = c_1(\eta, J) \geq 1$ such that 
\begin{equation} \label{eq:fh_area_bound2} \operatorname{Area}_{\widetilde{\gamma}}(\widetilde{C}) = \leq c_1 e^{c_1(a_+ - a_-)}\Big(\int_{(a \circ u)^{-1}(a_-) \cap \widetilde{C}} \widetilde{\alpha} + \cA(u)\Big).\end{equation}
\end{prop}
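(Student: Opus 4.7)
The plan is to reduce the estimate to a Gronwall-type integral inequality for the height-parametrized level-set flux
\begin{equation*}
L(s) := \int_{\Gamma_s} \widetilde{\alpha}, \qquad \Gamma_s := (a\circ \widetilde{u})^{-1}(s)\cap \widetilde{C}.
\end{equation*}
Two features of a tract are decisive. First, $\partial_v \widetilde{C}$ is tangent to $\operatorname{grad}_{\widetilde{\gamma}}(a\circ \widetilde{u})$, so for any $v$ tangent there, $\widetilde{j}v$ lies in a level set and $\widetilde{\alpha}(v) = -d(a\circ \widetilde{u})(\widetilde{j}v) = 0$; no flux leaks through the vertical walls. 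Second, $\partial_h \widetilde{C}$ lies in the level sets of $a\circ \widetilde{u}$ at the regular values $a_\pm$, so $L(s)$ is smooth in $s$ on $(a_-, a_+)$ and accessible via Stokes.

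The argument would proceed in three steps. In Step~1, the Wirtinger-type pointwise decomposition $\operatorname{dvol}_{\widetilde{\gamma}} = |\widetilde{\alpha}|^2_{\mathrm{flat}}\, ds \wedge dt + \widetilde{u}^*\omega$ (valid up to $(\delta,\epsilon)$-tame perturbation errors controlled by Lemma~\ref{lem:metric_comparison}), combined with the coarea formula and the holomorphicity identity $|\widetilde{\alpha}|_{\widetilde{\gamma}} = |d(a\circ \widetilde{u})|_{\widetilde{\gamma}}$, yields
\begin{equation*}
\operatorname{Area}_{\widetilde{\gamma}}(\widetilde{C}) \leq c\int_{a_-}^{a_+} L(s)\, ds + c\,\cA(u).
\end{equation*}
In Step~2, I would apply Stokes' theorem to $\widetilde{C}_s := \widetilde{C}\cap \{a\circ \widetilde{u} \leq s\}$; vanishing of $\widetilde{\alpha}$ on $\partial_v \widetilde{C}$ gives
\begin{equation*}
L(s) = \int_{\partial_h^- \widetilde{C}} \widetilde{\alpha} + \int_{\widetilde{C}_s} d\widetilde{\alpha}.
\end{equation*}
Writing $d\alpha = u^*d\lambda$ and decomposing $d\lambda = d\lambda|_{\ker\lambda} + \lambda \wedge \iota_{R_\eta} d\lambda$, one bounds $|d\widetilde{\alpha}|$ pointwise by a stable multiple of $|\widetilde{\alpha}|^2_{\mathrm{flat}} + |\widetilde{u}^*\omega|_{\mathrm{flat}}$ modulo the tame perturbation error, yielding
\begin{equation*}
L(s) \leq L(a_-) + c\int_{a_-}^s L(\sigma)\, d\sigma + c\,\cA(u).
\end{equation*}
Step~3 is the integral-form Gronwall inequality, giving $L(s) \leq (L(a_-) + c\,\cA(u))\, e^{c(s-a_-)}$; substituting into the Step~1 bound and collecting constants delivers the claim. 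Stability of $c_1$ follows from the continuous dependence of every intermediate constant on the jets of $(\eta, J)$.

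The hard part will be the pointwise control of $d\widetilde{\alpha}$ in Step~2, specifically the contribution of $\lambda \wedge \iota_{R_\eta} d\lambda$, which has no definite sign and does not split cleanly into an $|\alpha|^2$ piece plus a $u^*\omega$ piece. Handling it requires a Cauchy--Schwarz-type pointwise estimate on the tangent plane of a holomorphic curve relative to $\operatorname{Span}(\partial_a, R_\eta)$, and absorption of the indefinite piece into the exponential constant $c_1$. Two subsidiary technical points must also be addressed: the orientation convention on the possibly multi-component level sets $\Gamma_s$ so that $L(s)$ is nonnegative (or, equivalently, replacing $L$ by $|L|$ throughout), and the behavior near the $\delta$-neighborhood of $\operatorname{Crit}(u)$, where $(\delta,\epsilon)$-tameness is needed to preserve the Morse structure of $a\circ \widetilde{u}$ and thus the validity of the coarea argument.
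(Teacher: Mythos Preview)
Your proposal is correct and follows essentially the same approach as the paper's summary (the result is cited from \cite{FH23} and only sketched here): express the area via the level-set flux $e(t)=\int_{(a\circ\widetilde{u})^{-1}(t)\cap\widetilde{C}}\widetilde{\alpha}$, derive a Gronwall-type inequality $e(t_+)-e(t_-)\lesssim\int_{t_-}^{t_+}e(s)\,ds+\delta$ away from critical values, and integrate. One minor simplification in the paper's sketch is that Step~1 is an exact identity $\operatorname{Area}_{\widetilde{\gamma}}(\widetilde{C})=\int_{a_-}^{a_+}e(t)\,dt+\cA(u)$ coming directly from $\operatorname{dvol}_{\widetilde{\gamma}}=\widetilde{u}^*da\wedge\widetilde{\alpha}+\widetilde{u}^*\omega$, so the coarea/Wirtinger route is unnecessary.
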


Given its importance, we briefly summarize the proof of Proposition~\ref{prop:fh_area_bound} for the reader. One can compute 
$$\operatorname{Area}_{\widetilde{\gamma}}(\widetilde{C}) = \int_{\widetilde{C}} \widetilde{u}^*da \wedge \widetilde{\alpha} + \widetilde{u}^*\omega = \int_{a_-}^{a_+} \Big(\int_{(a \circ \widetilde{u})^{-1} \cap \widetilde{C}} \widetilde{\alpha}\Big) dt + \cA(u).$$

Write $e(t)$ for the $\widetilde{\alpha}$-measure of $(a \circ \widetilde{u})^{-1}(t) \cap \widetilde{C}$. Whenever the interval $[t_-, t_+]$ contains no critical values, one proves the bound $e(t_+) - e(t_-) \lesssim \int_{t_-}^{t_+} e(s) ds + \delta$, where $\delta$ is some controllable error term. Using Gronwall's inequality, it follows that $e(t)$ grows exponentially in $t$. This is plugged into the computation above to deduce \eqref{eq:fh_area_bound2}. 

\subsubsection{Strip estimates} We now state two technical estimates for strips of perturbed $J$-holomorphic curves, which are used only in the proof of Proposition~\ref{prop:tract_decomposition} at the end of this section. Lemma~\ref{lem:strip_estimate}, which is a simpler version of a lemma from \cite{FH23}, shows that the length of the bottom horizontal boundary of a strip is controlled by the length of the top horizontal boundary, provided that the strip is not too tall. 

\begin{lem}[{\cite[Lemma $4.21$]{FH23}}] \label{lem:strip_estimate}
Fix $(\eta, J) \in \cD(Y)$. Then there exists a stable constant $c_2 = c_2(\eta, J) \geq 1$ such that the following holds. Let $\widetilde{C}$ be a strip of a $(\delta,\epsilon)$-tame perturbed $J$-holomorphic curve $(u, f)$ such that $a \circ \widetilde{u}$ is constant on $\partial_h^-\widetilde{C}$ and 
$$\sup_{\zeta \in \widetilde{C}} (a \circ \widetilde{u})(\zeta) - \inf_{\zeta \in \widetilde{C}} (a \circ \widetilde{u})(\zeta) \leq c_2^{-1}$$

Then we have the bound
$$\int_{\partial_h^{-}\widetilde{C}} \widetilde{\alpha} \leq 2\int_{\partial_h^{+}\widetilde{C}} \widetilde{\alpha} + 2c_2\int_{\widetilde{C}} u^*\omega.$$
\end{lem}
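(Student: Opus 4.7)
The plan is to apply Stokes' theorem to $\widetilde{\alpha}$ on the sub-strips $\widetilde{C}_t := \{a \circ \widetilde{u} \geq t\} \cap \widetilde{C}$ for $t \in [a_-, a_+]$, where $a_-$ denotes the constant value of $a \circ \widetilde{u}$ on $\partial_h^-\widetilde{C}$ and $a_+ := \sup_{\widetilde{C}}(a \circ \widetilde{u})$, and then to feed the resulting identity into a coarea/Gronwall argument directly analogous to the proof of Proposition~\ref{prop:fh_area_bound}. For each $t$, let $e(t) := \int_{(a\circ\widetilde{u})^{-1}(t) \cap \widetilde{C}} \widetilde{\alpha}$, so that $e(a_-) = \int_{\partial_h^-\widetilde{C}} \widetilde{\alpha}$ is the quantity to be bounded.

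The first key input I would establish is that $\widetilde{\alpha}$ annihilates $\operatorname{grad}_{\widetilde{\gamma}}(a \circ \widetilde{u})$: since $\widetilde{\alpha}(V) = -d(a \circ \widetilde{u})(\widetilde{j}V) = -\widetilde{\gamma}(\operatorname{grad}_{\widetilde{\gamma}}(a\circ\widetilde{u}), \widetilde{j}V)$ and $\widetilde{j}$ is a $\widetilde{\gamma}$-isometry with $\widetilde{j}^2 = -1$, plugging in $V = \operatorname{grad}_{\widetilde{\gamma}}(a\circ\widetilde{u})$ gives zero. Consequently $\widetilde{\alpha}$ integrates to zero along the gradient-tangent curves $\partial_v\widetilde{C}$, and Stokes on $\widetilde{C}_t$ (with orientations chosen so that $\widetilde{\alpha}$ is nonnegative on level sets and on $\partial_h^+\widetilde{C}$) yields
\[ e(t) \leq \int_{\partial_h^+\widetilde{C}} \widetilde{\alpha} + \int_{\widetilde{C}_t} |d\widetilde{\alpha}|. \]
The second input is a stable pointwise bound $|d\widetilde{\alpha}|_{\widetilde{\gamma}} \leq c_0$: at immersed points of $u$ one has $|u^*d\lambda|_\gamma \leq \|d\lambda\|_g$ because the tangent plane is $J$-invariant, and $(\delta,\epsilon)$-tameness transfers this bound to $d\widetilde{\alpha}$ and $\widetilde{\gamma}$ up to harmless corrections absorbed into $c_0$. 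Combined with the pullback-area identity $\operatorname{Area}_{\widetilde{\gamma}}(\cdot) = \int \widetilde{u}^*da \wedge \widetilde{\alpha} + \widetilde{u}^*\omega$ and the coarea formula, this yields the Gronwall-type bound
\[ e(t) \leq A + c_0 \int_t^{a_+} e(s)\,ds, \qquad A := \int_{\partial_h^+\widetilde{C}} \widetilde{\alpha} + c_0 \int_{\widetilde{C}} u^*\omega. \]

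Setting $E(t) := \int_t^{a_+} e(s)\,ds$ converts this into $-E' \leq A + c_0 E$ with $E(a_+) = 0$, whose solution satisfies $e(t) \leq A \cdot e^{c_0(a_+-t)}$. Evaluating at $t = a_-$ and taking $c_2 := \max(c_0/\ln 2, 1)$, the hypothesis $a_+ - a_- \leq c_2^{-1}$ forces $e^{c_0(a_+ - a_-)} \leq 2$, giving the claimed bound (since $2c_0 \leq 2c_2$). The main obstacle will be verifying the pointwise bound $|d\widetilde{\alpha}|_{\widetilde{\gamma}} \leq c_0$ with a genuinely stable constant in the $(\delta,\epsilon)$-tame setting, and justifying Stokes' theorem despite the fact that level sets of $a \circ \widetilde{u}$ are smooth $1$-manifolds only for a.e.\ $t$ (by Sard) and $\partial_h^+\widetilde{C}$ is merely a smooth curve rather than a level set; both issues are manageable by standard smoothing and approximation arguments, which is precisely where the $(\delta,\epsilon)$-tameness of the perturbation plays its essential role.
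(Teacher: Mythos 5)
The paper does not prove this lemma at all: it is quoted verbatim from \cite[Lemma 4.21]{FH23}, and the paper's only remark is the one-line comment that it ``is proved using Proposition~\ref{prop:fh_area_bound}.'' So there is no internal proof to compare against, and what you have written is essentially a reconstruction of the Fish--Hofer argument, made inline. That said, I think your reconstruction captures the correct mechanism, and in fact is more accurate than the paper's one-line gloss: Proposition~\ref{prop:fh_area_bound} as stated bounds area in terms of the \emph{bottom} boundary integral, so one cannot literally deduce the strip estimate (bottom in terms of top) by applying it as a black box; one has to re-run the underlying Gronwall differential inequality ``downward,'' which is exactly what you do. The skew-symmetry observation that $\widetilde{\alpha}(\operatorname{grad}_{\widetilde{\gamma}}(a\circ\widetilde{u}))=0$, forcing $\widetilde{\alpha}$ to vanish on $\partial_v\widetilde{C}$, is the right starting point and matches what powers \eqref{eq:fh_area_bound2} in the paper's own sketch.

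Two points deserve more care than your sketch gives them. First, the pointwise bound $|d\widetilde{\alpha}|_{\widetilde{\gamma}}\le c_0$: you correctly flag this as the main obstacle, but note that $d\widetilde{\alpha}$ involves derivatives of the modified almost-complex structure $\widetilde{j}$, which is only implicitly defined (via the $\widetilde{\gamma}$-isometry condition) inside $\operatorname{supp}(f)$, and the unperturbed $u^*d\lambda$ is controlled by $\|d\lambda\|_g$ only on the set where the tangent plane is honestly $J$-invariant, i.e.\ away from $\operatorname{supp}(f)$. Making this bound stable really does require the explicit $(\delta,\epsilon)$-tame estimates from \cite[\S 4.1]{FH23}, and this is where most of the technical content of the cited lemma lives; it is not merely a ``harmless correction.'' Second, you parenthetically assume that orientations can be ``chosen so that $\widetilde{\alpha}$ is nonnegative on $\partial_h^+\widetilde{C}$.'' On level sets this nonnegativity is automatic from the identity $\widetilde{\alpha}(\widetilde{j}\operatorname{grad})=|\operatorname{grad}|^2$, but $\partial_h^+\widetilde{C}$ of a general strip (as defined in \S\ref{subsec:strips}) is \emph{not} contained in a level set --- e.g.\ in the applications in Step~2 and Step~3 of Proposition~\ref{prop:tract_decomposition}, $\partial_h^+$ lies on small circles $\cC$ around critical points --- so $\widetilde{\alpha}$ restricted to $\partial_h^+$ has no definite sign in the boundary orientation. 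This also makes the truncation $\int_{\partial_h^+\cap\widetilde{C}_t}\widetilde{\alpha}\le\int_{\partial_h^+}\widetilde{\alpha}$ that your Gronwall inequality implicitly uses (once the level $t$ passes $\inf_{\partial_h^+}(a\circ\widetilde{u})$) non-automatic. The cited lemma is of course true, but your proof sketch should either restrict $t$ to $[a_-,\inf_{\partial_h^+}(a\circ\widetilde{u})]$ (after which the height bound still does the job) or justify the truncation more carefully, rather than treating the sign of $\widetilde{\alpha}\restriction\partial_h^+$ as a free orientation choice.
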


Lemma~\ref{lem:strip_estimate} is proved using Proposition~\ref{prop:fh_area_bound}. To state the next lemma, we need another definition. A strip $\widetilde{C}$ is \emph{rectangular} if both $\partial_h^{\pm}\widetilde{C}$ are contained inside level sets of $a \circ \widetilde{u}$. The lemma asserts that for any finite collection of strips which are not too tall or too short, one of them contains a gradient flow line of $a \circ \widetilde{u}$ of controlled length running from the bottom horizontal boundary component to the top horizontal boundary component. 

\begin{lem}[{\cite[Lemma $4.23$]{FH23}}] \label{lem:modest_flow_lines}
Fix $(\eta, J)\in\cD(Y)$. There exists a stable constant $c_3 = c_3(\eta, J) \geq 1$ such that the following holds. Let $(u, f)$ be a $(\delta,\epsilon)$-tame perturbed $J$-holomorphic curve.  Let $\{\widetilde{C}_k\}_{k=1}^n$ be a finite set of rectangular strips of $(u, f)$ satisfying the following properties:
\begin{enumerate}[(\roman*)]
\item $a_0 = \inf_{\zeta \in \widetilde{C}_k} (a \circ \widetilde{u})(\zeta)$ is independent of $k$;
\item $a_1 = \sup_{\zeta \in \widetilde{C}_k} (a \circ \widetilde{u})(\zeta)$ is independent of $k$;
\item $a_1 - a_0 \leq c_3^{-1}$;
\item $\sum_{k=1}^n \int_{\widetilde{C}_k} u^*\omega \leq (a_1 - a_0)\sum_{k=1}^n \int_{\partial_h^-\widetilde{C}_k} \widetilde{\alpha}$;
\item Each of the strips $\{\widetilde{C}_k\}_{k = 1}^n$ are pairwise disjoint. 
\end{enumerate}

Then there exists $k \in \{1, \ldots, n\}$ and a smooth map $q: [0, s] \to \widetilde{C}_k$ such that
$$\dot q(s) = \operatorname{grad}_{\widetilde{\gamma}}(a \circ \widetilde{u})(q(s)), \quad (a \circ \widetilde{u} \circ q)(0) = a_0, \quad (a \circ \widetilde{u} \circ q)(s) = a_1$$
and 
$$\operatorname{length}_{\widetilde{\gamma}}(q([0,s])) \leq c_3(a_1 - a_0).$$
\end{lem}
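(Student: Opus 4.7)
The plan is to produce a short gradient flow line by a pigeonhole argument comparing the total $\widetilde{\gamma}$-area of the strips to a suitable integral of their flow-line lengths. First, I would set up product coordinates on each $\widetilde{C}_k$: since $\partial_v\widetilde{C}_k$ is tangent to $\operatorname{grad}_{\widetilde{\gamma}}(a\circ\widetilde{u})$ and $\widetilde{C}_k$ contains no critical points of $a\circ\widetilde u$, the positive gradient flow foliates $\widetilde{C}_k$ and gives a diffeomorphism $\partial_h^-\widetilde{C}_k \to \partial_h^+\widetilde{C}_k$. For $s \in \partial_h^-\widetilde{C}_k$, let $\ell_k(s)$ denote the $\widetilde{\gamma}$-length of the corresponding flow line; since $|d(a\circ\widetilde{u})|_{\widetilde{\gamma}} \le 1$, one has $\ell_k(s) \ge a_1 - a_0$, and the goal is to find $k$ and $s$ with $\ell_k(s) \leq c_3(a_1-a_0)$.

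Second, I would control the horizontal cross-sections $e_k(a):=\int_{(a\circ\widetilde u)^{-1}(a)\cap\widetilde{C}_k}\widetilde\alpha$ uniformly in $a\in[a_0,a_1]$. Lemma~\ref{lem:strip_estimate} applied to the sub-strip between heights $a_0$ and $a$ gives $e_k(a) \geq \tfrac12 e_k(a_0) - c_2 \int_{\widetilde{C}_k}\widetilde{u}^*\omega$, and a Gronwall-type argument (a local version of the one underlying Proposition~\ref{prop:fh_area_bound}, with the $\cA(u)$ term replaced by $\int_{\widetilde C_k}\widetilde u^*\omega$) gives a matching upper bound $e_k(a) \leq c(e_k(a_0) + \int_{\widetilde{C}_k}\widetilde{u}^*\omega)$ for $a\in[a_0,a_1]$. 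Combining these bounds with the area identity $\operatorname{Area}_{\widetilde\gamma}(\widetilde{C}_k) = \int_{a_0}^{a_1} e_k(a)\,da + \int_{\widetilde{C}_k}\widetilde{u}^*\omega$, summing over $k$, and invoking hypothesis (iv) to absorb the $\omega$-area, I obtain
$$\sum_k\operatorname{Area}_{\widetilde\gamma}(\widetilde{C}_k) \leq C(a_1-a_0)\sum_k e_k(a_0)$$
for a universal constant $C$, provided $c_3$ is chosen larger than a universal combination of $c_1$ and $c_2$.

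Third and crucially, I would establish an integrated comparison between $\operatorname{Area}_{\widetilde\gamma}(\widetilde{C}_k)$ and the flow-line lengths. In coordinates $(s,a)$ on $\widetilde{C}_k$ with $s$ parametrizing $\partial_h^-\widetilde{C}_k$ by $\widetilde\alpha$-arc length, a direct computation using $\widetilde\alpha(\operatorname{grad}(a\circ\widetilde u))=0$ yields area density equal to $P(s,a)/|\operatorname{grad}(a\circ\widetilde u)|^2$, where $P(s,a):=\widetilde\alpha(\partial_s)$ satisfies $P(s,a_0)\equiv 1$. Establishing a positive lower bound $P(s,a)\ge c'$ across $[a_0,a_1]$ (via Lemma~\ref{lem:strip_estimate} applied in thin vertical sub-strips of fixed $s$-width) and combining with Cauchy--Schwarz in the $a$-variable then gives $\sum_k \int_0^{e_k(a_0)} \ell_k(s)\,ds \leq C''(a_1-a_0)\sum_k e_k(a_0)$ for a universal $C''$. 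Pigeonhole, applied to the total measure $\sum_k e_k(a_0)$, produces $k$ and $s$ with $\ell_k(s) \le C''(a_1-a_0)$, completing the proof upon taking $c_3 \ge C''$.

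The main obstacle will be the third step, namely the lower bound on $P(s,a)$ across the full vertical range $[a_0,a_1]$. The product coordinates $(s,a)$ can shear arbitrarily across long vertical distances, so the smallness of $a_1-a_0$ from hypothesis (iii) is essential to keep the deviation of $P$ from its value $1$ at $a=a_0$ bounded, and must be combined with the $(\delta,\epsilon)$-tame perturbation machinery together with Lemma~\ref{lem:metric_comparison} to rule out pathologies near the critical set of the unperturbed height function.
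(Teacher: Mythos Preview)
Your Steps 1 and 2 are sound and correctly recover the area bound $\sum_k\operatorname{Area}_{\widetilde\gamma}(\widetilde C_k)\le C(a_1-a_0)\sum_k e_k(a_0)$. Note that the paper does not prove this lemma itself; it cites \cite[Section $4.3.1$]{FH23} and comments that the argument ``is rather intricate, since the lengths of gradient flow lines could be heavily distorted in the areas where the norm of $\operatorname{grad}(a \circ \widetilde{u})$ is small.'' Your Step 3 is precisely where this distortion must be handled, and there is a real gap.

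The problem is the lower bound $P(s,a)\ge c'$. Your proposed derivation, applying Lemma~\ref{lem:strip_estimate} to a thin vertical sub-strip of $s$-width $\Delta s$, yields in the limit $\Delta s\to 0$ an inequality of the form $1\le 2P(s,a)+2c_2\,\rho(s,a)$, where $\rho(s,a)$ is the $u^*\omega$-area of the infinitesimal strip along the flow line through $s$ between heights $a_0$ and $a$. Hypothesis (iv) bounds only the \emph{total} $\omega$-area $\sum_k\int\rho(s,a_1)\,ds$; it gives no pointwise control on $\rho$, so the inequality becomes vacuous on any set of $s$ where the $\omega$-area concentrates. Worse, since your own computation gives $P=|V|\cdot|\operatorname{grad}(a\circ\widetilde u)|$, a uniform bound $P\ge c'$ would force $|V|\ge c'/|\operatorname{grad}(a\circ\widetilde u)|$, i.e.\ neighbouring flow lines must diverge at a rate inversely proportional to the gradient norm. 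That is much stronger than what is needed, and it is not something the hypotheses deliver: smallness of $a_1-a_0$ and the tame-perturbation estimates do not keep $|\operatorname{grad}(a\circ\widetilde u)|$ away from zero on the interior of a strip, since strips exclude only the critical points of $a\circ\widetilde u$ themselves, not neighbourhoods where the gradient is small. The passage from the area bound to a single short flow line in \cite{FH23} does not proceed via a pointwise bound of this type; your last paragraph correctly identifies the obstacle, but the resolution requires substantially more than what you sketch.
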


The proof of Lemma~\ref{lem:modest_flow_lines} is rather intricate, since the lengths of gradient flow lines could be heavily distorted in the areas where the norm of $\operatorname{grad}(a \circ \widetilde{u})$ is small. We refer the reader to \cite[Section $4.3.1$]{FH23} for details. 

\subsubsection{Action quantization} 
Fish--Hofer \cite[Theorem $4$]{FH23} proved that a holomorphic curve $u:C \to \bR \times Y$ has a positive lower bound on its action near any interior global maximum/minimum of the function $a \circ u$. Their lower bound depends on the genus of $C$, which suffices for our intended applications. For the sake of a cleaner statement, we note that the bound can be made genus-independent using the compactness theory of $J$-holomorphic currents \cite[Remark $5.20$]{Prasad23b}. We now state the precise quantization result. 

\begin{prop}[{\cite[Theorem $4$]{FH23}}]  \label{prop:fh_quantization} 
Fix $(\eta, J)\in\cD(Y)$. Fix any real number $s > 0$. There exists a stable constant 
$\hbar = \hbar(\eta, J, s) > 0$ such that, for any compact, connected $J$-holomorphic curve $u: C \to \mathbb{R} \times Y$, we have 
$$\mathcal{A}(u) \geq \hbar > 0$$ 
provided that the following properties are satisfied for some $a_0 \in \bR$:
\begin{enumerate}[(\roman*)]
\item Either $\min_{\zeta \in C}(a \circ u)(\zeta)$ or $\max_{\zeta \in C}(a \circ u)(\zeta)$ is equal to $a_0$;
\item $(a \circ u)(\partial C) \cap [a_0 - s, a_0 + s] = \emptyset$.
\end{enumerate}
\end{prop}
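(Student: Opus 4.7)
The plan is to argue by contradiction. Suppose the proposition fails, so there exists a sequence of compact connected $J$-holomorphic curves $u_k: C_k \to \bR \times Y$ satisfying (i) and (ii) with some heights $a_{0,k}$ but with $\cA(u_k) \to 0$. By the $\bR$-translation invariance of $J$ and $\cA$, I may assume $a_{0,k} = 0$ for every $k$, and after passing to a subsequence that the minimum alternative in (i) holds throughout, so the interior minimum of $a \circ u_k$ equals $0$ and $u_k(\partial C_k) \subset \{a > s\}$. Pick $\zeta_k \in C_k$ with $(a \circ u_k)(\zeta_k) = 0$ and let $V_k \subseteq C_k$ denote the connected component of $(a \circ u_k)^{-1}([0, s/2])$ containing $\zeta_k$; by hypothesis $\partial V_k \subset (a \circ u_k)^{-1}(s/2)$. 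The goal is to sandwich $\operatorname{Area}_\gamma(V_k)$ between a uniform positive constant and a quantity tending to $0$.

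For the lower bound I would invoke the standard monotonicity inequality for $J$-holomorphic curves: there are stable constants $r_0, c_0 > 0$ depending only on $(\eta, J)$ such that for any $J$-holomorphic curve $u$ and any $\zeta$ with $u^{-1}(\overline{B}_{r_0}(u(\zeta))) \cap \partial C = \emptyset$, one has $\operatorname{Area}_{u^*g}(u^{-1}(\overline{B}_{r_0}(u(\zeta)))) \geq c_0 r_0^2$. Choose $r_0 < s/2$; condition (ii) ensures the hypothesis holds for $\zeta_k$, and since $u_k$ lies in $\{a \geq 0\}$ the $r_0$-ball around $u_k(\zeta_k) = (0, y_k)$ has $a$-coordinate confined to $[0, r_0] \subseteq [0, s/2]$, so the component of $u_k^{-1}(\overline{B}_{r_0}(u_k(\zeta_k)))$ containing $\zeta_k$ sits inside $V_k$. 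This gives $\operatorname{Area}_\gamma(V_k) \geq c_0 r_0^2$ uniformly in $k$.

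For the upper bound I would apply Proposition~\ref{prop:fh_area_bound}. The obstacle is that $a_- = 0$ is not a regular value of $a \circ u_k$. To fix this, choose a $(\delta_k, \epsilon_k)$-tame perturbation $f_k$ with $\delta_k, \epsilon_k \to 0$ to get $\widetilde u_k$ for which $a \circ \widetilde u_k$ is Morse off a small neighborhood of $\operatorname{Crit}(u_k)$, then pick regular values $a_-^k > 0$ arbitrarily close to $0$ and $a_+^k$ close to $s/2$, and let $\widetilde V_k$ be the tract of $\widetilde u_k$ between these levels corresponding to $V_k$. Proposition~\ref{prop:fh_area_bound} yields
$$\operatorname{Area}_{\widetilde\gamma}(\widetilde V_k) \leq c_1 e^{c_1 s/2} \Bigl( \int_{(a \circ \widetilde u_k)^{-1}(a_-^k) \cap \widetilde V_k} \widetilde \alpha_k + \cA(u_k) \Bigr).$$
Because $\widetilde\alpha = -\widetilde u_k^* da \circ \widetilde j$ vanishes at every critical point of $a \circ \widetilde u_k$, and because near a nondegenerate Morse minimum the level set at height $a_-^k$ above the min has length $O(\sqrt{a_-^k})$ while $|\widetilde\alpha|$ itself scales like $\sqrt{a_-^k}$ in local holomorphic coordinates, the bottom boundary $\widetilde\alpha$-integral can be made smaller than $\cA(u_k)$ for each $k$ by choosing $a_-^k$ close enough to $0$. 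Combining and using Lemma~\ref{lem:metric_comparison} to pass from $\widetilde\gamma$ to $\gamma$ gives $\operatorname{Area}_\gamma(V_k) \lesssim \cA(u_k) \to 0$, contradicting the lower bound.

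The main obstacle will be the analysis near the interior extremum: the bottom of the tract degenerates to a single critical point as $a_-^k \to 0$, and one must execute the tame perturbation and the $\widetilde\alpha$-integral estimate uniformly enough to apply Proposition~\ref{prop:fh_area_bound} cleanly. This vanishing-of-the-bottom-boundary phenomenon is essentially the geometric source of the quantization, and its careful implementation comprises the bulk of the proof of Theorem~4 in \cite{FH23}.
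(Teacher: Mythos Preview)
Your approach and the paper's both argue by contradiction and both invoke Proposition~\ref{prop:fh_area_bound}, but they close the argument differently. The paper's one-line sketch uses Fish's target-local Gromov compactness: the exponential area bound yields a \emph{uniform} area bound on a slab near the extremum (only a bounded bottom integral is needed for this), compactness then produces a nonconstant limit curve with zero action through the interior extremum, and this is impossible because a zero-action curve is everywhere tangent to $\operatorname{Span}(\partial_a, R_\eta)$ and hence locally a trivial cylinder, which has no interior extremum of $a$. Your route replaces compactness by the monotonicity inequality; this is more elementary, and as a bonus it avoids the genus hypothesis that target-local Gromov compactness needs (cf.\ the paper's remark just before the proposition about genus dependence). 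The price is that you must prove the \emph{stronger} fact that the bottom $\widetilde\alpha$-integral can be made smaller than $\cA(u_k)$, so that the area itself tends to zero, rather than merely stays bounded.

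Two points to tighten. First, your monotonicity ball is centered at $\zeta_k$ where $a\circ u_k=0$, so once $a_-^k>0$ the ball is not contained in the tract $\widetilde V_k\subset\{a_-^k\le a\le a_+^k\}$ to which you apply Proposition~\ref{prop:fh_area_bound}; you should instead center the ball at some $\zeta_k'\in V_k$ at a fixed positive height (existing by connectedness of $V_k$), so that for $a_-^k$ small the ball sits inside $\widetilde V_k$. Second, the tame perturbation leaves $a\circ\widetilde u$ possibly non-Morse on the $\delta$-neighborhood of $\operatorname{Crit}(u)$, so if the interior extremum happens to occur at a critical point of $u$ your ``Morse minimum'' estimate for the bottom integral needs a separate local analysis; you correctly flag this as the crux, and it is indeed where the work in \cite{FH23} lies.
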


Proposition~\ref{prop:fh_quantization} is proved via a contradiction argument using Proposition~\ref{prop:fh_area_bound} and Fish's target-local Gromov compactness theorem \cite{Fish11}. 

\subsubsection{Geodesic distance lemma} The following elementary lemma is used in the proof of Lemma~\ref{lem:tract_geometry_2} below. 

\begin{lem}[{\cite[Lemma $4.29$]{FH23}}] \label{lem:geodesic_distance}
Fix $(\eta,J)\in\cD(Y)$. There exists a stable constant $\epsilon_4 = \epsilon_4(\eta, J) \in (0, 1/100)$ such that the following holds for any $\epsilon \leq \epsilon_4$. Each smooth unit-speed immersion $q: [0,T] \to \bR \times Y$ such that
\begin{enumerate}[(\roman*)]
\item $\lambda(\dot q(t)) > 0$ for each $t \in [0,T]$;
\item $\epsilon \leq \int_q \lambda \leq 10\epsilon$;
\item the set of $t \in [0,T]$ such that $\lambda(\dot q(t)) < 1/2$ has Lebesgue measure at most $\epsilon$
\end{enumerate}
satisfies the bound
$$\operatorname{dist}_g(q(0),q(T)) > \epsilon/2.$$
\end{lem}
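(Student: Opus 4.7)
The plan is to combine a rough length estimate for $q$ itself with a Stokes' theorem argument on a short bounding disk, exploiting the gap between $\int_q\lambda$ (controlled below by $\epsilon$) and $\int_\sigma\lambda$ for any shortcut $\sigma$ (controlled above by the length of $\sigma$, since $|\lambda|_g = 1$).

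First I would observe that $q$ itself cannot be very long. Since $|\dot q|_g = 1$, one has $|\lambda(\dot q)| \leq 1$ pointwise. Let $B = \{t : \lambda(\dot q(t)) < 1/2\}$ and $G = [0,T]\setminus B$. By hypothesis $|B|\le\epsilon$, and on $G$ we have $\lambda(\dot q)\ge 1/2$, so
\[
\tfrac{1}{2}|G| \;\le\; \int_G \lambda(\dot q)\,dt \;\le\; \int_q\lambda \;\le\; 10\epsilon,
\]
giving $|G|\le 20\epsilon$ and thus $T = |B|+|G|\le 21\epsilon$. In particular the image of $q$ lies in the closed $g$-ball $\overline{B}_{21\epsilon}(q(0))$.

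Next I would assume for contradiction that $L := \operatorname{dist}_g(q(0),q(T)) \leq \epsilon/2$. Take a minimizing unit-speed geodesic $\sigma$ from $q(T)$ back to $q(0)$, of length $L$, and form the concatenated loop $\gamma = q * \sigma$. By the previous paragraph $\gamma$ lies in $\overline{B}_{22\epsilon}(q(0))$, a ball which (for $\epsilon_4$ smaller than the injectivity radius of $g$, which is stable since $g$ depends smoothly on $(\eta,J)$ and $Y$ is compact) is a geodesically convex normal neighborhood. One may therefore fill $\gamma$ by the explicit disk obtained from the straight-line geodesic homotopy to $q(0)$, namely $F(se^{i\theta}) = \exp^g_{q(0)}(s\cdot \exp^{g,-1}_{q(0)}(\gamma(e^{i\theta})))$. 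A standard estimate in normal coordinates bounds the $g$-area of this disk by $C\cdot(\operatorname{length}_g\gamma)^2 \leq C'\epsilon^2$ for a stable constant $C'$.

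Now apply Stokes' theorem:
\[
\int_\gamma \lambda \;=\; \int_D d\lambda.
\]
The left side splits as $\int_q\lambda - \int_\sigma\lambda \;\geq\; \epsilon - \|\lambda\|_g\cdot L \;\geq\; \epsilon - \epsilon/2 \;=\; \epsilon/2$, using $|\lambda|_g\le 1$ (since $R_\eta$ is the $g$-dual of $\lambda$ and $|R_\eta|_g = 1$). The right side is bounded by $\|d\lambda\|_g\cdot\operatorname{Area}_g(D)\le C''\epsilon^2$ for another stable constant $C''$. Combining these yields $\epsilon/2\le C''\epsilon^2$, i.e. $\epsilon\ge 1/(2C'')$, which contradicts the assumption $\epsilon\leq\epsilon_4$ once we choose $\epsilon_4$ smaller than $\min(1/100, \text{injectivity radius}, 1/(2C''))$. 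This gives $\operatorname{dist}_g(q(0),q(T)) > \epsilon/2$, as claimed.

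The main delicate point is not the Stokes computation itself but verifying the two stable constants: that the injectivity radius of $g$ is uniform under small $C^\infty$ perturbation of $(\eta,J)$, and that the isoperimetric/filling constant $C'$ in normal coordinates is likewise stable. Both follow from compactness of $Y$ together with the continuous dependence of $g$ and its curvature on $(\eta, J)$, but these checks are what prevent the argument from being merely a one-line Stokes computation.
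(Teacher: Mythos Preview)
Your argument is correct and takes a genuinely different route from the one the paper cites. The paper (following \cite{FH23}) proves the lemma by working in geodesic normal coordinates $y^1,\ldots,y^{2n+1}$ chosen so that $\lambda$ is $C^0$-close to $dy^1$; one then reads off directly that the $y^1$-coordinate of $q$ increases by roughly $\int_q\lambda \ge \epsilon$, and since the metric is close to Euclidean in these coordinates this forces $\operatorname{dist}_g(q(0),q(T))$ to be comparable to $\epsilon$. Your approach instead bounds the total length of $q$ (using hypotheses (ii) and (iii)), closes up with a short geodesic to form a loop in a small convex ball, fills by a geodesic cone of area $O(\epsilon^2)$, and applies Stokes to $\lambda$ to reach a contradiction. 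Both arguments ultimately rest on the same stable geometric data (injectivity radius, curvature/metric distortion in small balls, $\|d\lambda\|_g$), but yours is more intrinsic and avoids the bookkeeping of choosing adapted coordinates, at the cost of invoking a filling-area estimate. One cosmetic point: when you write the left side as $\int_q\lambda - \int_\sigma\lambda$, the sign of the $\sigma$-term depends on orientation conventions; since you only use $\big|\int_\sigma\lambda\big|\le L\le\epsilon/2$, the estimate $\int_\gamma\lambda\ge\epsilon/2$ is unaffected, but it would be cleaner to write it with an absolute value.
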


The bound in Lemma~\ref{lem:geodesic_distance} is proved by direct computation in geodesic local coordinates $y^1, \ldots, y^{2n+1}$, with the additional condition that $\lambda$ is close to $dy^1$. We need to state Lemma~\ref{lem:geodesic_distance} here, instead of alongside Lemma~\ref{lem:tract_geometry_2}, because the constant $\epsilon_4$ appears in the statement of Proposition~\ref{prop:local_area_bound_technical} below. 

\subsection{Proof of the connected local area bound}
\label{subsec:local_area_proof}

We now prove Proposition~\ref{prop:local_area_bound}.  In fact, this will be deduced from a more technical bound, Proposition~\ref{prop:local_area_bound_technical}, which we now state. The statement requires defining a stable constant 
\begin{equation}\label{eq:local_area_const} \epsilon_5 := 2^{-24}\min(c_2^{-1}, c_3^{-4}, \epsilon_4) \end{equation}
where $c_2$, $c_3$, $\epsilon_4$ are the stable constants from Lemmas~\ref{lem:strip_estimate},~\ref{lem:modest_flow_lines}, and ~\ref{lem:geodesic_distance}, respectively.

\begin{prop}\label{prop:local_area_bound_technical}
Fix $(\eta, J) \in \cD(Y)$. Let $\epsilon_4$ and $\epsilon_5$ denote the stable constants from Lemma~\ref{lem:geodesic_distance} and \eqref{eq:local_area_const}, respectively. There exists a stable constant $c_6 = c_6(\eta, J) \geq 1$ such that the following holds. Let $u: C \to \mathbb{R} \times Y$ be a compact, connected $J$-holomorphic curve satisfying the following properties:
\begin{enumerate}[label=(L\arabic*)]
\item \label{label:l1} $(a \circ u)(\partial C) = \{a_0, a_1\}$ where $a_1 > a_0$;
\item \label{label:l2} $a_0$ and $a_1$ are regular values of the projection $a \circ u: C \to \mathbb{R}$; 
\item \label{label:l3} $a_1 - a_0 \geq \epsilon_5/8$;
\item \label{label:l4} $\sup_{\zeta \in C} (a \circ u)(\zeta) - \inf_{\zeta \in C} (a \circ u)(\zeta) \leq \epsilon_5$;
\item \label{label:l5} $\cA(u) \leq 2^{-48}\epsilon_4\epsilon_5$; 
\item \label{label:l6} The set of all $\zeta \in (a \circ u)^{-1}(a_0)$ such that $|u^*\lambda|_\gamma(\zeta) < 1/2$ has Lebesgue measure at most $\epsilon_4$ in $(a \circ u)^{-1}(a_0)$, where Lebesgue measure is defined using the pullback metric $\gamma$.
\end{enumerate}

Then for each $\zeta \in C$, we have the bound
$$\operatorname{Area}_\gamma(S_{\epsilon_5}(\zeta)) \leq c_6(\chi(C)^2 + 1).$$
\end{prop}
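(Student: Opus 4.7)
The plan is to decompose the surface $S_{\epsilon_5}(\zeta)$ into tracts using Morse theory on the perturbed height function $a \circ \widetilde{u}$, apply the exponential area bound of Proposition~\ref{prop:fh_area_bound} to each tract, and aggregate using topological bounds on the number of tracts together with a propagation argument for the horizontal $\widetilde{\alpha}$-integrals.

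First I would pass to a $(\delta,\epsilon)$-tame perturbation $(u,f)$ with $\delta,\epsilon$ chosen small enough that $a \circ \widetilde{u}$ is Morse outside a small neighborhood of $\operatorname{Crit}(u)$, all its critical values are distinct from $a_0$ and $a_1$, and, by Lemma~\ref{lem:metric_comparison}, all areas and norms are changed by a factor of at most $2$. Between any two consecutive critical values of $a \circ \widetilde{u}$, each connected component of the preimage inside $S_{\epsilon_5}(\zeta)$ is a topological cylinder, which is a tract in the sense of Fish--Hofer. Morse theory bounds the total number of critical points on $C$ by a constant multiple of $|\chi(C)| + 1$, and the number of cylinders across each horizontal slab is bounded by the first Betti number of the sub-slab, again $O(|\chi(C)| + 1)$. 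This produces a decomposition into $O(\chi(C)^2 + 1)$ tracts.

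For each tract $T$, Proposition~\ref{prop:fh_area_bound} combined with hypothesis~\ref{label:l4} (so that the exponential factor $e^{c_1(a_+ - a_-)}$ is uniformly bounded by $e^{c_1 \epsilon_5}$) yields
$$\operatorname{Area}_{\widetilde{\gamma}}(T) \leq c_1 e^{c_1 \epsilon_5}\Big(\int_{\partial_h^- T} \widetilde{\alpha} + \cA(u)\Big).$$
Summed over all tracts, the action contribution is $O(\chi(C)^2 \cA(u))$, which by~\ref{label:l5} is $O(1)$. The problem therefore reduces to controlling the total of $\int_{\partial_h^- T} \widetilde{\alpha}$ across all tracts. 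At the lowest horizontal level, hypothesis~\ref{label:l6} combined with Lemma~\ref{lem:geodesic_distance} forces this integral to be bounded: any segment of $(a\circ u)^{-1}(a_0) \cap S_{\epsilon_5}(\zeta)$ along which $|u^*\lambda|_\gamma > 1/2$ and whose $\widetilde{\alpha}$-length exceeds $\epsilon_5/2$ would leave the ambient metric ball of radius $\epsilon_5$ centered at $u(\zeta)$, contradicting the definition of $S_{\epsilon_5}(\zeta)$.

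The technical heart of the argument, and the main obstacle, is to propagate this bottom-level length bound across the intermediate critical levels of $a \circ \widetilde{u}$. The strategy is to work slab-by-slab, applying Lemma~\ref{lem:strip_estimate} and Lemma~\ref{lem:modest_flow_lines} in tandem: the former relates the horizontal $\widetilde{\alpha}$-integrals on the two boundaries of a short strip up to a multiple of the enclosed $\omega$-area, while the latter produces short gradient flow lines across a family of rectangular strips of modest total $\omega$-area. Combining these estimates, the sum of $\int_{\partial_h^- T} \widetilde{\alpha}$ over tracts $T$ in a given slab can be shown to grow by at most a constant multiplicative factor from one level to the next, with an additive loss of order $\cA(u)$. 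After $O(|\chi(C)| + 1)$ slabs the aggregate bound remains $O(|\chi(C)| + 1)$, which when multiplied through the per-tract estimate yields $\operatorname{Area}_\gamma(S_{\epsilon_5}(\zeta)) \leq c_6(\chi(C)^2 + 1)$, as desired. The choice of $\epsilon_5$ in~\eqref{eq:local_area_const} as a small multiple of $c_2^{-1}$, $c_3^{-4}$, and $\epsilon_4$ is precisely what ensures that Lemmas~\ref{lem:strip_estimate},~\ref{lem:modest_flow_lines}, and~\ref{lem:geodesic_distance} can all be simultaneously invoked at each stage of the iteration.
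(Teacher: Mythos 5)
Your high-level plan — perturb, decompose into tracts, apply the exponential area bound (Proposition~\ref{prop:fh_area_bound}), and aggregate using topological bounds — matches the paper's strategy in spirit, but the specific decomposition you propose has a genuine flaw, and the paper takes a structurally different route to avoid it.

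You propose to cut $S_{\epsilon_5}(\zeta)$ into horizontal slabs at consecutive critical values of $a \circ \widetilde{u}$ and to bound the number of such slabs, and the number of cylinders per slab, by $O(|\chi(C)|+1)$ using Morse theory. Morse theory does not give this: the Morse inequalities and the Euler characteristic identity constrain only the \emph{alternating sum} of critical point counts, not the counts individually. A Morse function on a surface of fixed Euler characteristic can have arbitrarily many index-$0$ and index-$1$ critical points (and correspondingly many critical values and level-set components), and nothing about the $J$-holomorphic equation forbids an unbounded number of interior local minima of $a\circ\widetilde u$. So the number of slabs, and the number of components in each slab, is \emph{not} controlled by $\chi(C)$, and the $O(\chi(C)^2+1)$ count of tracts falls through. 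This infects the propagation step: even with a bounded multiplicative Gronwall loss in total height $\epsilon_5$, an unbounded number of slabs and components defeats any per-slab bookkeeping.

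A second issue is that the connected components of $(a \circ \widetilde{u})^{-1}([t_1,t_2]) \cap S_{\epsilon_5}(\zeta)$ are not tracts in the Fish--Hofer sense: $S_{\epsilon_5}(\zeta)$ is carved out by a metric-ball constraint, so its boundary is neither horizontal (in a level set) nor vertical (tangent to $\operatorname{grad}(a\circ\widetilde u)$). You cannot apply Proposition~\ref{prop:fh_area_bound} directly to such pieces. Relatedly, your bottom-level argument bounds the $\widetilde\alpha$-integral of a \emph{single} long arc that stays in the ball, but $(a\circ u)^{-1}(a_0)\cap S_{\epsilon_5}(\zeta)$ could a priori have many short components with large total integral, and hypothesis~\ref{label:l6} and Lemma~\ref{lem:geodesic_distance} alone do not rule this out.

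The paper avoids all of this by decomposing the \emph{entire} domain $C$ (not $S_{\epsilon_5}(\zeta)$) into tracts that span the full height from $a_0$ to $a_1$, cut along gradient flow lines of $a\circ\widetilde u$ rather than at intermediate critical levels; this is Proposition~\ref{prop:tract_decomposition}. The number $N$ of tracts may be large and is never bounded, but each tract has a bounded number of horizontal boundary components (Lemma~\ref{lem:tract_topology}), each with $\widetilde\alpha$-measure in $(\epsilon_4, 10\epsilon_4)$, so a single application of Proposition~\ref{prop:fh_area_bound} per tract gives an area bound of order $1-\chi(C)$. The missing ingredient in your sketch is the covering step: the paper shows (Lemma~\ref{lem:tract_geometry}) that $\widetilde S_{4\epsilon_5}(\zeta)$ is covered by at most $2-3\chi(C)$ tracts, via a graph-theoretic argument that combines the Euler characteristic bookkeeping of Lemma~\ref{lem:tract_topology_2} (most tracts are rectangular) with the geometric fact (Lemma~\ref{lem:tract_geometry_2}, using~\ref{label:l6} and Lemma~\ref{lem:geodesic_distance}) that $S$ cannot touch both vertical boundary components of a rectangular tract. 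That combinatorial covering argument, rather than level-by-level propagation, is what produces the $\chi(C)^2+1$ bound.
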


Proposition~\ref{prop:local_area_bound_technical} generalizes an estimate proved by Fish--Hofer (stated in \cite[Proposition $4.30$]{FH23}). They made the additional assumption that the domain $C$ is homeomorphic to a compact annulus. The main novelty in the proof of Proposition~\ref{prop:local_area_bound_technical} is the introduction of combinatorial and topological arguments to deal with non-annular holomorphic curves. Let us defer the proof for the moment and first prove Proposition~\ref{prop:local_area_bound} assuming that Proposition~\ref{prop:local_area_bound_technical} holds. The idea is that, assuming the action of $u$ is sufficiently small, any point $\zeta \in C$ is contained inside a surface satisfying the assumptions of Proposition~\ref{prop:local_area_bound_technical}. In particular, to ensure that \ref{label:l6} holds, we need the following technical lemma, which asserts that most tangent planes of a low-action holomorphic curve are nearly vertical. 

\begin{lem}\label{lem:tangent_nearly_vertical}
Write $\mathcal{Q} \subset \bR$ for the set of $t$ such that i) $t$ is a regular value of $a \circ u$ and ii) the set of all $\zeta \in (a \circ u)^{-1}(t)$ such that $|\alpha|_\gamma(\zeta) < 1/2$ has Lebesgue measure greater than $\epsilon_4$. Then the Lebesgue measure of $\mathcal{Q}$ is at most $2\epsilon_4^{-1}\cA(u)$. 
\end{lem}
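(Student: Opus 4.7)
The plan is to reduce the estimate to an application of the coarea formula on $C$, starting from the pointwise identity
\[
u^*\omega \;=\; (1 - |\alpha|_\gamma^2)\,\operatorname{dvol}_\gamma
\]
that holds at every smooth point of $u$ for any $J$-holomorphic curve; together with the companion identity $|\alpha|_\gamma = |u^*da|_\gamma$, this expresses the action as an average of how far the tangent planes of $C$ deviate from vertical, which is exactly the mechanism the lemma exploits.

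First I would verify the pointwise identity by direct computation. At a point $\zeta \in C \setminus \operatorname{Crit}(u)$, pick a $\gamma$-orthonormal frame $e_1, je_1$ of $T_\zeta C$. Setting $w := Du(e_1)$, the $J$-holomorphic condition gives $Du(je_1) = Jw$, and $w, Jw$ form a $g$-orthonormal frame for the image plane. Decompose $w = A\partial_a + BR_\eta + \xi_0$ with $\xi_0 \in \ker\lambda$; using $J\partial_a = R_\eta$ and the fact that $J$ preserves $\ker\lambda$, one gets $Jw = -B\partial_a + AR_\eta + J\xi_0$. A short calculation gives $|\alpha|_\gamma^2 = \lambda(w)^2 + \lambda(Jw)^2 = A^2 + B^2$, likewise $|u^*da|_\gamma^2 = A^2 + B^2$, and since $\omega$ vanishes on $\mathbb{R}\partial_a \oplus \mathbb{R}R_\eta$,
\[
u^*\omega(e_1, je_1) \;=\; \omega(\xi_0, J\xi_0) \;=\; |\xi_0|_g^2 \;=\; 1 - (A^2 + B^2) \;=\; 1 - |\alpha|_\gamma^2,
\]
which establishes the pointwise identity.

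Next I would apply the coarea formula to $a\circ u \colon C \to \mathbb{R}$. Since $|d(a\circ u)|_\gamma = |u^*da|_\gamma = |\alpha|_\gamma$ is strictly positive off the measure-zero set of critical points of $a \circ u$, the formula yields
\[
\mathcal{A}(u) \;=\; \int_C (1-|\alpha|_\gamma^2)\,\operatorname{dvol}_\gamma \;=\; \int_{\mathbb{R}} \int_{(a\circ u)^{-1}(t)} \frac{1-|\alpha|_\gamma^2}{|\alpha|_\gamma}\, d\ell_t\, dt,
\]
where $d\ell_t$ is arc length with respect to $\gamma$. The function $x \mapsto (1-x^2)/x$ is decreasing on $(0,1]$ and equals $3/2$ at $x = 1/2$, so the integrand is bounded below by $3/2$ wherever $|\alpha|_\gamma < 1/2$. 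For each $t \in \mathcal{Q}$ the definition of $\mathcal{Q}$ forces this sublevel subset to have $\gamma$-length at least $\epsilon_4$, so the inner integral is at least $\tfrac{3}{2}\epsilon_4$. Restricting the outer integral to $\mathcal{Q}$ gives $\mathcal{A}(u) \geq \tfrac{3}{2}\epsilon_4 \, |\mathcal{Q}|$, which rearranges to the claimed bound $|\mathcal{Q}| \leq 2\epsilon_4^{-1}\mathcal{A}(u)$. There does not appear to be a serious obstacle in this argument; one just needs to note that the discrete critical set of $u$ and the measure-zero set of critical values of $a \circ u$ are negligible for both the pointwise identity and the coarea formula.
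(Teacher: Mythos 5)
Your proof is correct, and in fact you get the slightly sharper constant $\tfrac{2}{3}\epsilon_4^{-1}$ rather than $2\epsilon_4^{-1}$. The paper itself gives no argument here: it simply cites [FH23, Lemma 4.27] with parameters $\delta = \epsilon_4$, $\theta = 1/2$, so what you have done is unpack that cited lemma in the special case needed. The ingredients you use --- the pointwise identity $u^*\omega = (1 - |\alpha|_\gamma^2)\,\mathrm{dvol}_\gamma$ together with $|d(a\circ u)|_\gamma = |\alpha|_\gamma$, and then the coarea formula --- are exactly the standard mechanism underlying this sort of bound in feral curve theory, so your route and the cited one are morally the same; the value of your write-up is that it is self-contained. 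One small imprecision at the end: the relevant bad set for the coarea step is the critical locus of $a\circ u$ (tangent planes horizontal), not $\operatorname{Crit}(u)$ (where $Du = 0$), and these are genuinely different. The former is still negligible, but the reason is that $a\circ u$ is harmonic for an $\eta$-adapted $J$, so if $a\circ u$ is nonconstant its critical points are isolated; and if $a\circ u$ is constant then it has no regular values, $\mathcal{Q} = \emptyset$, and the lemma is vacuous. With that remark inserted, the argument is complete.
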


\begin{proof}
This follows immediately from \cite[Lemma $4.27$]{FH23} with parameters $\delta = \epsilon_4$ and $\theta = 1/2$. 
\end{proof}

We now give the proof of Proposition~\ref{prop:local_area_bound}. 

\begin{proof}[Proof of Proposition~\ref{prop:local_area_bound}]

Let $u: C \to \bR \times Y$ be a standard $J$-holomorphic curve. Pick any point $\zeta \in C$. We will show that, when $\cA(u)$ is sufficiently small, there exists a compact surface $\widetilde{C} \subset C$ containing $\zeta$ in the middle which satisfies \ref{label:l1}--\ref{label:l6} and has Euler characteristic bounded below by $\chi(C)$. Proposition~\ref{prop:local_area_bound_technical} then implies the area bound. The proof will take $6$ steps. 

\noindent\textbf{Step $1$:} This step defines the surface $\widehat{C}$. To define the surface, we need to assume that $\cA(u) \leq 2^{-48}\epsilon_4\epsilon_5$. Now we choose a positive parameter $r \in (\epsilon_5/8, \epsilon_5/4)$ and set $a_0 := (a \circ u)(\zeta) - r$ and $a_1 := (a \circ u)(\zeta) + r$. We fix $r$ so that i) both $a_0$ and $a_1$ are regular values of $a \circ u$ and ii) the set of all $\zeta' \in (a \circ u)^{-1}(a_0)$ such that $|u^*\lambda|_\gamma(\zeta') <  1/2$ has Lebesgue measure at most $\epsilon_4$. There exists a full measure set of $r$ satisfying i) by Sard's theorem, and a positive measure set of $r$ satisfying ii) by Lemma~\ref{lem:tangent_nearly_vertical} and our assumed bound on $\cA(u)$, so an $r$ satisfying both conditions exists. 

Set $C_0 := (a \circ u)^{-1}([a_0, a_1]) \subset C$. Write $\Delta$ for the disjoint union of compact components of $C\,\setminus\,\operatorname{Int}(C_0)$ and set $C_1 := C_0 \cup \Delta$. Let $\widehat{C}$ be the connected component of $C_1$ containing $\zeta$. 

\noindent\textbf{Step $2$:} This step proves the two-sided bound 
\begin{equation}\label{eq:local_area_bound2}
2 \geq \chi(\widehat{C}) \geq \chi(C).
\end{equation}

The upper bound follows from the fact that $\widehat{C}$ is connected. To prove the lower bound, we must show $\chi(\Sigma) \leq 0$ where $\Sigma := C \,\setminus\,\operatorname{Int}(\widehat{C})$. Since $C$ is connected and has empty boundary, it follows from the inverse function theorem that any proper embedding of a surface with empty boundary into $C$ is surjective. It follows that each connected component of $\Sigma$ must have at least one boundary component. Since each connected component of $\Sigma$ intersects $\widehat{C}$, it cannot entirely consist of components of $C_0$ and $\Delta$. It follows that each connected component of $\Sigma$ is non-compact. We have shown that each connected component of $\Sigma$ is non-compact and has non-empty boundary, so $\Sigma$ has non-positive Euler characteristic. 

\noindent\textbf{Step $3$:} Write $\widehat{u}$ for the restriction of $u$ to $\widehat{C}$. This step verifies that $\widehat{u}$ satisfies \ref{label:l2} and \ref{label:l3}. Condition \ref{label:l2} is satisfied becaause $a_0$ and $a_1$ are regular values of $a \circ u$. Condition \ref{label:l3} is satisfied because $r \in (\epsilon_5/8, \epsilon_5/4)$. 

\noindent\textbf{Step $4$:} Assume that $\cA(u) < \hbar(\eta, J, \epsilon_5/8)$ where $\hbar$ denotes the constant from Proposition~\ref{prop:fh_quantization}. This step verifies that $\widehat{u}$ satisfies \ref{label:l1} and \ref{label:l4} given this assumption. 

Write
$$L_+ := \sup_{z \in \widehat{C}}(a \circ u)(z) - \sup_{z \in \partial\widehat{C}}(a \circ u)(z),\quad L_- := \inf_{z \in \partial\widehat{C}}(a \circ u)(z) - \inf_{z \in \widehat{C}}(a \circ u)(z).$$

Note that both $L_+$ and $L_-$ are positive by definition. By the contrapositive of Proposition~\ref{prop:fh_quantization}, it follows that both $L_+$ and $L_-$ are at most $\epsilon/8$. Since $(a \circ \widehat{u})(\zeta) = (a_1 + a_0)/2$, we can rearrange the bounds for $L_+$ and $L_-$ to conclude that
$$\sup_{z \in \partial\widehat{C}} (a \circ \widehat{u})(z) \geq (a_1 + a_0)/2 - \epsilon_5/8 > a_0,\quad \inf_{z \in \partial\widehat{C}} (a \circ \widehat{u})(z) \leq (a_1 + a_0)/2 + \epsilon_5/8 < a_1.$$

Since $(a \circ \widehat{u})(\widehat{C}) \subseteq \{a_0, a_1\}$ by construction, the bounds above are sufficient to conclude \ref{label:l1}. To prove \ref{label:l4}, we use \ref{label:l1} and the upper bounds for $L_+$ and $L_-$:
\begin{equation*}
\sup_{z \in \widehat{C}}(a \circ u)(z) - \inf_{z \in \widehat{C}}(a \circ u)(z) = L_+ + L_- + (a_1 - a_0) \leq \epsilon_5/8 + \epsilon_5/8 + \epsilon_5/2 = \epsilon_5.
\end{equation*}

\noindent\textbf{Step $5$:} This step verifies that $\widehat{u}$ satisfies \ref{label:l5} and \ref{label:l6}. Both follow from our assumptions and the construction in Step $1$. 

\noindent\textbf{Step $6$:} By Proposition~\ref{prop:local_area_bound_technical}, we have the bound \begin{equation} \label{eq:local_area_bound1} \operatorname{Area}_\gamma(S_{\epsilon_5/16}(\zeta) \cap \widehat{C}) = \operatorname{Area}_\gamma(S_{\epsilon_5/16}(\zeta)) \leq c_0(\chi(\widehat{C})^2 + 1) \end{equation} 
where $c_0 = c_0(\eta, J) > 0$ is a stable constant. The area bound then follows from \eqref{eq:local_area_bound2}. 

\end{proof}

It remains to prove Proposition~\ref{prop:local_area_bound_technical} and this will take up the remainder of the paper.  As the proof gets rather technical, let us start by providing a sketch  of the argument.

\vspace{2 mm}

{\em Outline of the proof of Proposition~\ref{prop:local_area_bound_technical}}.

Let $u: C \to \bR \times Y$ denote a compact, connected $J$-holomorphic curve satisfying \ref{label:l1}--\ref{label:l6}. We begin by constructing a \emph{tract decomposition} of $u$ (Proposition~\ref{prop:tract_decomposition}, see also Figure~\ref{fig:tract_decomposition}). Recall that a \emph{tract} is a compact embedded surface with corners in $C$, with \emph{horizontal} boundary components mapping into level sets and \emph{vertical} boundary components mapping into gradient flow lines of the function $a \circ u$. We show that, after perturbing $u$ slightly, the domain $C$ can be cut up into tracts with each boundary component having controlled length. The existence of such a decomposition, for annular domains, is itself implicit in the first five steps of the proof of \cite[Proposition $4.30$]{FH23}, though the explicit statement we formulate here is novel and care is required to get the right statement. Our proof mostly follows these steps, with a new argument to take care of the fact our domain might not be annular; an expository emphasis is also to isolate the key constants to clarify that they are stable.  We defer it to the end of the body of the paper.  

The next step after construction of the tract decomposition is to bound the area of each tract. We show that the number of vertical and horizontal boundary components are each controlled by $\chi(C)$ (Lemma~\ref{lem:tract_topology}) as is the total Euler characteristic of all the tracts (Lemma~\ref{lem:tract_topology_2}). Recalling that each horizontal boundary component has controlled length, we conclude that each tract has a uniform bound on the length of its entire bottom boundary, depending only on $\chi(C)$ and ambient geometry. Applying Proposition~\ref{prop:fh_area_bound} bounds the area of each tract by a constant depending only on $\chi(C)$ and ambient geometry. 

The final step is to cover $S_{\epsilon_5}(\zeta)$ by a controlled number of tracts. This gives a bound on its area since we have already bounded the area of each tract. A geodesic distance argument that we learned from \cite{FH23} implies that $S_{\epsilon_5}(\zeta)$ cannot intersect both vertical boundary components of a ``rectangular" tract, defined (analogously to rectangular strips) below. The topological lemmas mentioned above imply that most tracts are rectangular. These results are combined with a graph-theoretic argument to prove the desired covering bound.

\subsubsection{Statement of tract decomposition}

We now begin the process of making the above outline rigorous. The first part is the statement of the tract decomposition.

\begin{prop} \label{prop:tract_decomposition}
Fix $(\eta, J) \in \cD(Y)$. Let $u: C \to \mathbb{R} \times Y$ be a compact, connected $J$-holomorphic curve satisfying \ref{label:l1}--\ref{label:l6}. There exists a fixed constant $\delta_0 = \delta_0(u, \eta, J) > 0$ such that the following holds. For any sufficiently small $\epsilon > 0$, there exists a $(\delta_0, \epsilon)$-tame perturbation $(u, f)$ and a finite set of tracts $\{\widetilde{C}_k\}_{k=1}^N$ satisfying the following properties:
\begin{enumerate}[(a)]
\item $C = \bigcup_{k=1}^N \widetilde{C}_k$.
\item For each $k \neq k'$, the intersection $\widetilde{C}_k \cap \widetilde{C}_{k'}$ is either empty or equal to a disjoint union of components of $\partial_v\widetilde{C}_k$. 
\item For each $k$, we have $(a \circ \widetilde{u})(\partial_h\widetilde{C}_k) = \{a_0, a_1\}$.
\item For each $k$ and each component $L \in \pi_0(\partial_h\widetilde{C}_k)$ such that $(a \circ \widetilde{u})(L) = a_0$, we have $\int_L \widetilde{\alpha} < 10\epsilon_4$. Moreover, if $L$ is not a circle, then $\int_L \widetilde{\alpha} > \epsilon_4$. 
\item For each $k$ and each component $L \in \pi_0(\partial_v\widetilde{C}_k)$, we have
$$\operatorname{length}_{\widetilde{\gamma}}(L) \leq c_3(a_1 - a_0).$$
\end{enumerate}
\end{prop}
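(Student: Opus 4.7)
The plan is to follow the scheme implicit in the first five steps of the proof of \cite[Proposition 4.30]{FH23}, adapting it to handle the non-annular topologies allowed here. The construction proceeds in four stages: choosing a tame perturbation, subdividing the bottom level set, constructing vertical gradient-flow arcs to the top level set, and cutting along these to produce the tracts.

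First, I would fix $\delta_0 > 0$ small (depending on $u$, $\eta$, $J$) so that $(\delta_0, \epsilon)$-tame perturbations exist, and for any sufficiently small $\epsilon$ choose a $(\delta_0, \epsilon)$-tame perturbation $(u, f)$ such that $a \circ \widetilde{u}$ is Morse on $C$ away from a controlled neighborhood of $\operatorname{Crit}(u)$, such that $a_0$ and $a_1$ remain regular values of $a \circ \widetilde{u}$, and such that $\widetilde{\alpha}$ is $C^0$-close enough to $\alpha$ that \ref{label:l6} carries over with a slightly weaker constant (say $1/4$ in place of $1/2$). Existence of such a perturbation is standard; the control on $\widetilde{\gamma}$ needed in what follows comes from Lemma~\ref{lem:metric_comparison}.

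Second, I would subdivide the level set $\mathcal{L}_0 := (a \circ \widetilde{u})^{-1}(a_0)$ into pieces of $\widetilde{\alpha}$-length in $[\epsilon_4, 10\epsilon_4]$, as required by (d). Since $a_0$ is regular, $\mathcal{L}_0$ is a compact $1$-manifold; each component is either a circle in $\operatorname{int}(C)$ or an arc with endpoints in $\partial C \cap (a\circ u)^{-1}(a_0)$. Walking along each non-circle component, and along each circle component of $\widetilde{\alpha}$-length $>10\epsilon_4$, I place a finite set $P$ of marked points at intervals of $\widetilde{\alpha}$-length roughly $2\epsilon_4$, leaving shorter circles alone. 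The resulting partition gives the candidate ``$a_0$-level'' horizontal boundaries of the tracts.

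Third, for each $p \in P$ I need a gradient flow arc $\sigma_p$ from $p$ to the level $a_1$ with $\widetilde{\gamma}$-length at most $c_3(a_1 - a_0)$. Naively integrating $\operatorname{grad}_{\widetilde{\gamma}}(a \circ \widetilde{u})$ upward from $p$ may fail because of saddles of $a \circ \widetilde{u}$, so instead I apply Lemma~\ref{lem:modest_flow_lines} iteratively. At each stage, $\mathcal{L}_0$ together with the already-constructed flow arcs cuts the slab $(a\circ\widetilde{u})^{-1}([a_0, a_1])$ into a finite disjoint union of rectangular strips which satisfy (i)--(iii) of Lemma~\ref{lem:modest_flow_lines} in view of \ref{label:l3} and \ref{label:l4} and the choice of $\epsilon_5$. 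Hypothesis (iv) is verified using \ref{label:l5}: each bottom horizontal boundary has $\widetilde{\alpha}$-length at least $\epsilon_4$, so the sum of actions over all strips is controlled by $\cA(u)$, which is much smaller than $(a_1-a_0)\#P\cdot\epsilon_4$. The lemma produces a short gradient arc in some strip; I add this arc to $\{\sigma_p\}$, refining the partition of $\mathcal{L}_0$ at its starting point if necessary to preserve (d), and repeat. The process terminates after finitely many steps once every remaining strip carries such an arc, yielding the required vertical boundaries and the length bound (e).

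Finally, the tracts $\widetilde{C}_k$ are taken to be the closures of the connected components of $C \setminus (\mathcal{L}_0 \cup \bigcup_p \sigma_p)$. Properties (a)--(c) are immediate from the construction, (d) is built into the subdivision step, and (e) comes from Lemma~\ref{lem:modest_flow_lines}. The main obstacle is the inductive construction of the $\sigma_p$: in the annular setting of \cite{FH23} a single pass through the level set suffices, whereas here the presence of saddles of $a\circ\widetilde{u}$ and multiply-connected topology forces the iteration described above. The key technical point is to maintain hypothesis (iv) of Lemma~\ref{lem:modest_flow_lines} through the iteration, which in turn relies on the smallness of $\cA(u)$ guaranteed by \ref{label:l5} and the explicit choice of $\epsilon_5$ in \eqref{eq:local_area_const}.
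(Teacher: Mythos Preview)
Your Step~3 contains a genuine gap. The pieces obtained by cutting $(a\circ\widetilde{u})^{-1}([a_0,a_1])$ along $\mathcal{L}_0$ and the already-constructed flow arcs are \emph{not} rectangular strips in the sense required by Lemma~\ref{lem:modest_flow_lines}: strips must contain no critical points of $a\circ\widetilde{u}$, whereas your pieces will in general contain saddles, local extrema, and possibly the disks $\cD$ around $\operatorname{Crit}(u)$ where $a\circ\widetilde{u}$ is not even Morse. They may also have positive genus or more than four boundary arcs. So the lemma simply does not apply, and the iteration never gets started. The obstacle you identify (saddles blocking the upward flow) is real, but ``apply Lemma~\ref{lem:modest_flow_lines} iteratively'' does not address it.

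The paper's route is different in a crucial way. Before ever invoking Lemma~\ref{lem:modest_flow_lines}, it spends two steps (Steps~2--3) using the strip estimate Lemma~\ref{lem:strip_estimate} to prove that the set of \emph{bad} initial conditions in $\partial_h^-C$---those whose upward gradient flow line hits the $\delta$-disks $\cD$ around $\operatorname{Crit}(u)$, or limits to a Morse critical point---has total $\widetilde\alpha$-measure at most $8c_2\cA(u)$. This leaves a large \emph{good} set $\cT\subset\partial_h^-C$ on which the flow genuinely reaches $\partial_h^+C$. Now for any interval $\cI\subset\partial_h^-C$ of $\widetilde\alpha$-length at least $((a_1-a_0)^{-1}+10c_2)\cA(u)$, one builds honest strips by flowing up from subintervals of $\cI\cap\cT$; these strips are critical-point-free \emph{by construction}, and a single application of Lemma~\ref{lem:modest_flow_lines} produces the desired short flow arc. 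The non-annular topology plays no role in this part; it is handled only at the very end (Step~5), where one subdivides $\partial_h^-C$ into intervals $\cL_j$ of $\widetilde\alpha$-length in $(3\epsilon_4,5\epsilon_4)$, extracts a middle subinterval $\widehat{\cL}_j$ of length $>\epsilon_4$, applies Step~4 to each $\widehat{\cL}_j$, and cuts $C$ along the resulting flow arcs. The key input you are missing is the a~priori identification of the good set $\cT$ via Lemma~\ref{lem:strip_estimate}, together with the requirement $\delta\ll c_2\cA(u)$ in Step~1 which makes the bad-set bounds work.
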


The simplest kinds of tracts in the decomposition are \emph{rectangular} tracts, i.e. tracts with zero genus, two horizontal boundary components, and two vertical components. The tract decomposition could, however, contain tracts with positive genus and many horizontal/vertical boundary components. See Figure~\ref{fig:tract_decomposition} for a schematic of what the tract decomposition might look like. We defer the proof of Proposition~\ref{prop:tract_decomposition} for the moment, collecting some useful lemmas about the asserted tract decomposition first.

\begin{figure}
\includegraphics[width=.6\textwidth]{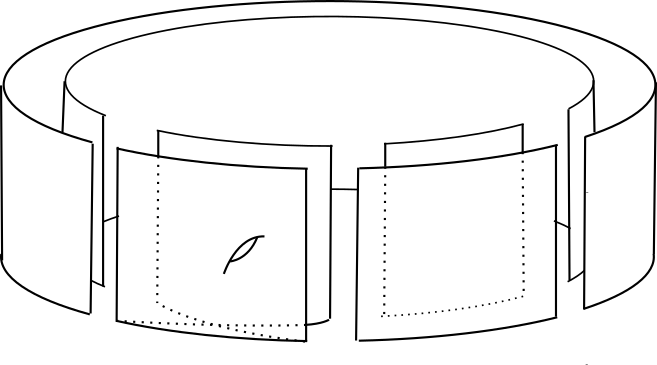}
\caption{A schematic of the tract decomposition. One of the displayed tracts has positive genus and several horizontal/vertical boundary components.}
\label{fig:tract_decomposition}
\end{figure}

\subsubsection{Tract topology bounds} Fix $(\eta, J) \in \cD(Y)$ and a compact, connected $J$-holomorphic curve satisfying \ref{label:l1}--\ref{label:l6}. Use Proposition~\ref{prop:tract_decomposition} to construct a perturbation $(u, f)$ and tract decomposition $\{\widetilde{C}_k\}_{k=1}^N$. As mentioned above, the tracts $\widetilde{C}_k$ could have complicated topology. The next two lemmas provide some a priori topological control. Lemma~\ref{lem:tract_topology} bounds the number of horizontal and vertical boundary components of each tract in terms of $\chi(C)$. 

\begin{lem}\label{lem:tract_topology}
For each $k$, we have the bounds
\begin{equation}\label{eq:tract_topology}\#\pi_0(\partial_v \widetilde{C}_k) \leq 2 - 2\chi(C),\quad \#\pi_0(\partial_h\widetilde{C}_k) \leq 4 - 3\chi(C).\end{equation}
\end{lem}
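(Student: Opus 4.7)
The plan is to combine combinatorial observations about $\partial\widetilde{C}_k$ with an Euler characteristic calculation on $C$.

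First I would establish the local structure. Each component of $\partial_v \widetilde{C}_k$ is an arc, not a closed loop: being tangent to $\operatorname{grad}_{\widetilde{\gamma}}(a \circ \widetilde{u})$ and disjoint from critical points (tract condition (i)), it cannot close up. More crucially, every \emph{horizontal} boundary component of a tract must lie on $\partial C$: if a horizontal component were in the interior of $C$, then coverage (property (a)) forces another tract to occupy the opposite side of this level set, and that tract would share this horizontal piece with $\widetilde{C}_k$, violating property (b) which restricts overlaps to vertical components. Consequently all corners of $\widetilde{C}_k$ lie on $\partial C$, the vertical arcs are properly embedded in $C$, and counting the corners as endpoints of vertical arcs versus endpoints of horizontal arc components yields $n_h^{k,\mathrm{arc}} = n_v^k$, where $n_h^{k,\mathrm{arc}}$ counts horizontal arc components (in contrast to horizontal circle components, whose count I denote $c^k$, so $n_h^k = n_v^k + c^k$).

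Next comes the Euler characteristic identity. Since $\widetilde{C}_k$ is an \emph{embedded} subsurface of $C$, no vertical arc has $\widetilde{C}_k$ on both local sides, so if $V^k$ denotes the number of distinct vertical arcs on $\partial \widetilde{C}_k$ then $n_v^k = V^k$. Cutting $C$ along these $V^k$ properly embedded arcs raises $\chi$ by $V^k$ and separates the result as $\widetilde{C}_k \sqcup D_1 \sqcup \cdots \sqcup D_m$, giving
\[
\chi(\widetilde{C}_k) \;+\; \sum_{j=1}^{m} \chi(D_j) \;=\; \chi(C) + V^k.
\]
Each $D_j$ is a connected compact surface with nonempty boundary (it borders $\widetilde{C}_k$ along at least one of the cut arcs, by connectedness of $C$), so $\chi(D_j) \leq 1$; similarly $\chi(\widetilde{C}_k) \leq 1$, and thus $V^k \leq 1 + m - \chi(C)$.

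For the bound $n_v^k \leq 2 - 2\chi(C)$, the main task is to show $m \leq 1 - \chi(C)$. The idea is to analyze how the $V^k$ vertical arcs partition $\partial C$ into arcs and how the resulting complementary pieces $D_j$ sit in $C$: each $D_j$ is bounded on the ``inside'' by some subset of the vertical arcs and on the ``outside'' by pieces of $\partial C$, and the ways in which the arc endpoints alternate along the components of $\partial C$ restrict $m$. Once $m \leq 1 - \chi(C)$ is established, combining with $V^k \leq 1 + m - \chi(C)$ gives $V^k \leq 2 - 2\chi(C)$ and hence $n_v^k = V^k \leq 2 - 2\chi(C)$. The hard part is this combinatorial bound on $m$: a priori one only has $m \leq V^k$, and tightening this to $m \leq 1 - \chi(C)$ requires a careful inductive analysis separating disk complementary pieces (which saturate $\chi(D_j) = 1$) from more complex ones.

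Finally, for $n_h^k \leq 4 - 3\chi(C)$, each horizontal circle component of $\partial \widetilde{C}_k$ is a whole boundary component of $\partial C$, and each such $\partial C$ component is used by at most one tract, so $c^k \leq b(C)$. Using $b(C) = 2 - 2g(C) - \chi(C)$ and $g(C) \geq 0$ gives $c^k \leq 2 - \chi(C)$, and combining with the previous paragraph,
\[
n_h^k \;=\; n_v^k + c^k \;\leq\; (2 - 2\chi(C)) + (2 - \chi(C)) \;=\; 4 - 3\chi(C).
\]
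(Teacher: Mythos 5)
Your strategy---an Euler characteristic computation after cutting along the vertical arcs---is the right one, and the two derivations you do carry out correctly (the corner count $n_h^{k,\mathrm{arc}} = n_v^k$, the cut identity $\chi(\widetilde{C}_k) + \sum_j \chi(D_j) = \chi(C) + V^k$, the fact that each piece has $\chi \leq 1$, and the treatment of horizontal circles). But the crucial estimate $m \leq 1-\chi(C)$ is left as an acknowledged gap, and without it you have no bound at all. The "careful inductive analysis" you gesture at is not needed: the missing input is a single local observation that you almost have. Each boundary \emph{cycle} of $D_j$ that contains any vertical arc must contain at least \emph{two} of them, because a vertical arc runs monotonically from the level $a_0$ to the level $a_1$ while a horizontal arc stays at constant height---so to close up a cycle you must go up and come back down, using two distinct vertical arcs. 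Since $D_j$ shares at least one vertical arc with $\widetilde{C}_k$ (else $C$ is disconnected), it therefore shares at least two. This gives $m \leq V^k/2$ directly, and plugging into your own identity $V^k \leq 1 + m - \chi(C)$ yields $V^k/2 \leq 1 - \chi(C)$, i.e.\ $V^k \leq 2 - 2\chi(C)$, exactly as desired (the degenerate case $V^k \in \{0,1,2\}$ is handled separately since $\chi(C) \leq 0$ under \ref{label:l1}).

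This "at least two" lemma is precisely what the paper uses: rather than splitting along each arc individually, it works with $\widehat{C} = \overline{C \setminus \widetilde{C}_k}$, writes $M = \chi(\widetilde{C}_k) + \chi(\widehat{C}) - \chi(C)$ via Mayer--Vietoris, and bounds $\chi(\widehat{C}) \leq M/2$ because $\widehat{C}$ has at most $M/2$ components (each sharing $\geq 2$ vertical arcs with $\widetilde{C}_k$, each with $\chi \leq 1$). That is a minor repackaging of your cut computation; the essential content you need to add is the paragraph above. Your treatment of the second bound via $n_h^k = n_v^k + c^k$, $c^k \leq \#\pi_0(\partial C) \leq 2-\chi(C)$ is the same as the paper's and is fine.
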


\begin{proof}
Fix any $k$. Write $M := \#\pi_0(\partial_v \widetilde{C}_k)$ for the number of vertical boundary components of $\widetilde{C}_k$. Let $\widehat{C}$ denote the closure of $C \,\setminus\,\widetilde{C}_k$. The proof will take $2$ steps. 

\noindent\textbf{Step $1$:} This step proves the first bound in \eqref{eq:tract_topology}. We assume without loss of generality that $M > 2$. If $M \leq 2$, then the desired bound is immediate because by \ref{label:l1} we have $\chi(C) \leq 0$. It follows from the Mayer--Vietoris sequence that 
$$M = \chi(\widetilde{C}_k) + \chi(\widehat{C}) - \chi(C).$$

We show $\chi(\widetilde{C}_k) \leq 1$ and $\chi(\widehat{C}) \leq M/2$. The upper bound on $\chi(\widetilde{C}_k)$ follows from the fact that $\chi(\widetilde{C}_k)$ is connected and has non-empty boundary. The upper bound on $\chi(\widehat{C})$ is deduced as follows. Each connected component of $\widehat{C}$ is a tract which shares at least at least two vertical boundary components with $\widetilde{C}_k$. Therefore, $\widehat{C}$ has at most $M/2$ connected components. Each connected component of $\widehat{C}$ has non-empty boundary and therefore has Euler characteristic $\leq 1$. Combine both of these upper bounds with the identity for $M$ and re-arrange to get the first bound in \eqref{eq:tract_topology}. 

\noindent\textbf{Step $2$:} This step proves the second bound in \eqref{eq:tract_topology}. Each connected component of $\partial_h\widetilde{C}_k$ is either i) a compact interval which intersects exactly two components of $\partial_v \widetilde{C}_k$ or ii) a circle which is a connected component of $\partial C$. Any component of $\partial_v\widetilde{C}_k$ intersects exactly two components of $\partial_h\widetilde{C}_k$, so there are $M$ components of the former type, which we showed in Step $1$ is bounded above by $2 - 2\chi(C)$. There are at most $\#\pi_0(\partial C)$ components of the latter type. This gives the bound 
$$\#\pi_0(\partial_h\widetilde{C}_k) \leq 2 + \#\pi_0(\partial C) - 2\chi(C).$$

The second bound in \eqref{eq:tract_topology} now follows from plugging in the inequality $\#\pi_0(\partial C) \leq 2 - \chi(C)$. 
\end{proof}

The next lemma collects an elementary identity, and some useful related observations, used in the proof of Lemma~\ref{lem:tract_geometry} below. 

\begin{lem}\label{lem:tract_topology_2}
The Euler characteristic of the domain $C$ satisfies the following identity:
\begin{equation} \label{eq:tract_topology_2} 2\chi(C) = \sum_{k=1}^N (2\chi(\widetilde{C}_k) - \#\pi_0(\partial_v\widetilde{C}_k)).\end{equation}

Moreover, for each $k$ we have $2\chi(\widetilde{C}_k) \leq \#\pi_0(\partial_v\widetilde{C}_k)$, with equality if and only if either 
\begin{enumerate}[(a)]
\item $\chi(\widetilde{C}_k) = 1$ and $\#\pi_0(\partial_v\widetilde{C}_k) = 2$;
\item $C = \widetilde{C}_k$ and $\chi(\widetilde{C}_k) = \#\pi_0(\partial_v\widetilde{C}_k) = 0$. 
\end{enumerate}
\end{lem}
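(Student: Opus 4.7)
My plan is to split the argument into two parts: first the identity, then the inequality with its equality characterization.

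For the identity, the key observation is that the tract decomposition exhibits $C$ as a union of closed subsurfaces meeting pairwise only along their vertical boundary components. First I would note that every component of $\partial_v \widetilde{C}_k$ is a compact arc rather than a circle, since it is a trajectory of $\operatorname{grad}_{\widetilde{\gamma}}(a \circ \widetilde{u})$, and a Morse gradient flow admits no periodic orbits (as $a \circ \widetilde{u}$ is strictly monotone along such trajectories, and the boundary avoids critical points). Combined with property (b) of Proposition~\ref{prop:tract_decomposition}, this implies that each internal vertical arc lies in exactly two tracts. Triple intersections of tracts are empty: the only candidates are corners, and by the uniqueness of gradient trajectories at non-critical points each corner is the meeting point of exactly one vertical and one horizontal arc, with the horizontal arc belonging to a single tract. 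Inclusion--exclusion for Euler characteristic then yields $\chi(C) = \sum_k \chi(\widetilde{C}_k) - V$, where $V$ counts internal vertical arcs; since $2V = \sum_k \#\pi_0(\partial_v \widetilde{C}_k)$, doubling produces the identity~\eqref{eq:tract_topology_2}.

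For the inequality, fix $k$ and let $g$ and $b$ denote the genus and number of boundary components of $\widetilde{C}_k$, with $M := \#\pi_0(\partial_v \widetilde{C}_k)$; the target inequality reads $4 - 4g - 2b \leq M$. The crucial structural observation is that any ``mixed'' boundary component containing both horizontal and vertical arcs must contain at least two vertical arcs. Indeed, walking around such a component, $a \circ \widetilde{u}$ is constant on horizontal arcs (taking value $a_0$ or $a_1$) and strictly monotone along vertical arcs, so the horizontal arcs must alternate between the two levels and at least two vertical arcs are required to close the cycle. Purely vertical boundary circles are again forbidden by the Morse gradient argument, and property (c) of Proposition~\ref{prop:tract_decomposition} forces both $a_0$ and $a_1$ to appear on $\partial_h \widetilde{C}_k$. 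A short case analysis on $(g, b)$ completes the bound: for $g \geq 1$ or $b \geq 2$ the inequality is automatic from $4 - 4g - 2b \leq 0 \leq M$; the case $b = 0$ is excluded because a closed tract would be a connected component of $C$, forcing $C$ closed and contradicting $\partial C \neq \emptyset$ (from \ref{label:l1}); and for $g = 0, b = 1$ the sole boundary component must be mixed by (c), yielding $M \geq 2 = 2\chi(\widetilde{C}_k)$.

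Finally, equality $4g + 2b + M = 4$ pins us to $(g, b, M) \in \{(0, 1, 2), (0, 2, 0), (1, 0, 0)\}$. The first is a rectangular tract with four corners and gives case (a). The third is ruled out as above. In the second, both boundary circles of the annular tract $\widetilde{C}_k$ must be purely horizontal (no closed gradient orbits) and sit at $a_0$ and $a_1$ respectively by (c); moreover $M = 0$ means $\widetilde{C}_k$ shares no boundary with any other tract, so it is an open-and-closed subset of $C$ and equals $C$ by connectedness, giving case (b). I expect the main technical subtlety to be keeping careful track of the combinatorics of tract boundaries, in particular systematically ruling out pathological configurations (vertical boundary circles, one-arc mixed components, corners where three or more arcs meet) via the Morse property of $a \circ \widetilde{u}$ and the uniqueness of gradient trajectories off the critical set.
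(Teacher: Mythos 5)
Your proposal is correct and follows essentially the same route as the paper, with a bit more combinatorial detail in places where the paper is terse. For the identity, both arguments reduce to the same count: vertical boundary arcs are interior (since \ref{label:l1} forces $\partial C$ to map into $\{a_0, a_1\}$, hence to lie entirely in the horizontal boundary), each arc is shared by exactly two tracts, and Mayer--Vietoris/inclusion--exclusion gives $\chi(C) = \sum_k \chi(\widetilde{C}_k) - V$ with $2V = \sum_k M_k$. For the inequality, the paper simply splits into $M_k = 0$ (where $\widetilde{C}_k$ is clopen, hence $= C$, and Proposition~\ref{prop:tract_decomposition}(c) forces $\geq 2$ boundary circles, so $\chi \leq 0$) and $M_k > 0$ (where it asserts $M_k \geq 2$ and uses $\chi \leq 1$ from nonempty boundary). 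Your $(g, b)$ case analysis lands in the same place, and it has the modest advantage of actually justifying the assertion $M_k > 0 \Rightarrow M_k \geq 2$, which the paper states without argument; your ``walking around a mixed boundary component'' observation is exactly the reason. One small slip worth noting: in enumerating solutions of $4g + 2b + M = 4$ with $g, b, M \geq 0$, you omit $(g,b,M) = (0,0,4)$; it doesn't affect the conclusion since $b = 0$ is already excluded, but the list as written is incomplete.
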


\begin{proof}
The proof of the lemma will take $3$ steps. 

\noindent\textbf{Step $1$:} This step proves \eqref{eq:tract_topology_2}. For each $k$, write $M_k := \#\pi_0(\partial_v\widetilde{C}_k)$ for the number of vertical boundary components of $\widetilde{C}_k$. Write $M$ for the number of gradient trajectories of $a \circ \widetilde{u}$ which are vertical boundary components of some $\widetilde{C}_k$. It follows that $2M = \sum_k M_k$ because each gradient trajectory is a vertical boundary component of exactly two tracts. The Mayer--Vietoris sequence then implies
\begin{equation*} 2\chi(C) = \sum_k 2\chi(C_k) - 2M = \sum_k (2\chi(C_k) - M_k).\end{equation*}

\noindent\textbf{Step $2$:} Fix any $k$. The next two steps prove that $2\chi(\widetilde{C}_k) \leq \#\pi_0(\partial_v\widetilde{C}_k)$ and characterizes the equality cases. This step provides a proof assuming that $\#\pi_0(\partial_v\widetilde{C}_k) = 0$, i.e. the tract has no vertical boundary components. By Proposition~\ref{prop:tract_decomposition}(b) it follows that $\widetilde{C}_k$ is both closed and open in $C$. Since $C$ is connected, it follows that $C = \widetilde{C}_k$. From Proposition~\ref{prop:tract_decomposition}(c), we conclude that that $\#\pi_0(\partial\widetilde{C}_k) \geq 2$. This implies the bound 
$$2\chi(\widetilde{C}_k) \leq 0 = \#\pi_0(\partial_v\widetilde{C}_k),$$
with equality if and only if $\chi(\widetilde{C}_k) = \#\pi_0(\partial_v\widetilde{C}_k) = 0$. 

\noindent\textbf{Step $3$:} This step has the same aim as the previous step, but in the case where $\#\pi_0(\partial_v\widetilde{C}_k) > 0$. In this case, we must have $\#\pi_0(\partial_v\widetilde{C}_k) \geq 2$, and that $\widetilde{C}_k$ has non-empty boundary, so we conclude that
$$2\chi(\widetilde{C}_k) \leq 2 \leq \#\pi_0(\partial_v\widetilde{C}_k).$$

Equality holds if and only if $\chi(\widetilde{C}_k) = 1$ and $\#\pi_0(\partial_v\widetilde{C}_k) = 2$. 
\end{proof}

\begin{rem}\label{rem:tract_topology}
\normalfont
A tract $\widetilde{C}$ is \emph{rectangular} if it has zero genus, two horizontal boundary components, and two vertical boundary components. A tract is rectangular if and only if $\chi(\widetilde{C}) = 1$ and $\#\pi_0(\partial_v\widetilde{C}) = 2$. The primary implication of the identity \eqref{eq:tract_topology_2} is that, barring a degenerate case, the tract $\widetilde{C}_k$ is rectangular for all but at most $-\chi(C)$ indices $k$. 
\end{rem}

\subsubsection{Tract coverings of controlled size} As above, fix a perturbed $J$-holomorphic curve $(u, f)$ and a tract decomposition $\{\widetilde{C}_k\}_{k=1}^N$. The following lemma asserts that local connected components can be covered by a controlled number of tracts. 

\begin{lem}\label{lem:tract_geometry}
For any point $\zeta \in C$, there exists a covering of the surface $\widetilde{S}_{4\epsilon_5}(\zeta) := \widetilde{u}^{-1}(B_{4\epsilon_5}(\widetilde{u}(\zeta))$ by $2 - 3\chi(C)$ tracts.
\end{lem}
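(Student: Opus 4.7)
The plan is to construct an adjacency graph of the tracts meeting $\widetilde{S}_{4\epsilon_5}(\zeta)$, establish a leaf-degree bound on its rectangular vertices via a geodesic distance argument, and then count using the topological bounds of Lemmas~\ref{lem:tract_topology} and~\ref{lem:tract_topology_2}. First, let $G$ be the graph whose vertices are the tracts $\widetilde{C}_k$ meeting $\widetilde{S}_{4\epsilon_5}(\zeta)$, with an edge joining $\widetilde{C}_k$ and $\widetilde{C}_{k'}$ whenever they share a vertical boundary component $\ell$ with $\ell \cap \widetilde{S}_{4\epsilon_5}(\zeta) \neq \emptyset$. Since two distinct tracts overlap only along vertical boundaries by Proposition~\ref{prop:tract_decomposition}(b) and $\widetilde{S}_{4\epsilon_5}(\zeta)$ is path-connected, any path in $\widetilde{S}_{4\epsilon_5}(\zeta)$ between points of different tracts must cross a shared vertical boundary at a point of $\widetilde{S}_{4\epsilon_5}(\zeta)$; hence $G$ is connected.

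Next I would show that each rectangular tract $\widetilde{C}_k$ has degree at most $1$ in $G$. The horizontal boundary component $L$ at level $a_0$ satisfies $\int_L \widetilde{\alpha} \in [\epsilon_4, 10\epsilon_4]$ by Proposition~\ref{prop:tract_decomposition}(d), and $\widetilde{u}(L)$ lies in the slice $\{a_0\} \times Y$. Transferring condition~\ref{label:l6} from $u$ to $\widetilde{u}$ via Lemma~\ref{lem:metric_comparison}, the curve $\widetilde{u} \circ L$ meets the hypotheses of Lemma~\ref{lem:geodesic_distance}, so its two endpoints lie at $\bR \times Y$-distance exceeding $\epsilon_4/2$. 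Combined with Proposition~\ref{prop:tract_decomposition}(e), which bounds each vertical boundary's $\widetilde{\gamma}$-length by $c_3 \epsilon_5 \leq 2^{-24} c_3^{-3} \epsilon_4$, the two vertical boundaries of $\widetilde{C}_k$ have images in $\bR \times Y$ separated by at least $\epsilon_4/4$. Since the ball $B_{4\epsilon_5}(\widetilde{u}(\zeta))$ has diameter $\leq 8 \epsilon_5 \ll \epsilon_4/4$, the two vertical boundaries cannot simultaneously meet $\widetilde{S}_{4\epsilon_5}(\zeta)$.

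Finally, I would count. Let $R$ and $N$ denote the numbers of rectangular and non-rectangular tracts in $G$. The identity of Lemma~\ref{lem:tract_topology_2} states that $\sum_k (2\chi(\widetilde{C}_k) - \#\pi_0(\partial_v \widetilde{C}_k)) = 2\chi(C)$, where each rectangular tract contributes $0$ and each non-rectangular tract contributes at most $-1$; hence $N \leq -2\chi(C)$. By the leaf-degree bound from the previous step, rectangular vertices are leaves of $G$, so the induced subgraph on non-rectangular vertices is connected with at least $N - 1$ edges. Double-counting vertical boundaries incident to non-rectangular tracts (each rectangular leaf consumes one slot, each internal edge consumes two), together with the degree bound from Lemma~\ref{lem:tract_topology} and the aggregate bound from Lemma~\ref{lem:tract_topology_2}, produces the asserted bound $|V(G)| = R + N \leq 2 - 3\chi(C)$.

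The main obstacle is the sharpness of the count in the last step: a naive double-count only gives $R + N \leq 2 - 4\chi(C)$, and extracting the stated constant $3$ requires a careful trade-off between $N$ and the vertical-boundary degree available to rectangular leaves, exploiting the fact that a non-rectangular tract with many vertical boundaries contributes proportionally more than $1$ to the Euler-characteristic deficit in Lemma~\ref{lem:tract_topology_2}. A further subtlety is the verification of the hypotheses of Lemma~\ref{lem:geodesic_distance} for the perturbed map $\widetilde{u}$, where Lemma~\ref{lem:metric_comparison} must be used to transfer the nearly vertical assumption~\ref{label:l6} from $u$ to $\widetilde u$ without losing too much in the constants.
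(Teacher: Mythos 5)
Your proposal follows the same overall strategy as the paper: build an adjacency graph on the tracts covering $\widetilde{S}_{4\epsilon_5}(\zeta)$, show rectangular tracts are leaves via the geodesic-distance argument (which is precisely what the paper isolates as Lemma~\ref{lem:tract_geometry_2}), and then count using Lemmas~\ref{lem:tract_topology} and~\ref{lem:tract_topology_2}. Your refined graph definition --- putting an edge only when the shared vertical boundary actually meets $\widetilde{S}_{4\epsilon_5}(\zeta)$ --- is in fact a reasonable clarification, since it makes the degree-one claim for rectangular tracts immediate from Lemma~\ref{lem:tract_geodesic_distance}-type reasoning rather than requiring one to puzzle over minimality of the cover.

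The genuine gap is exactly the one you flag, and the fix is not the ``careful trade-off'' you gesture at --- it is a parity observation. You estimate each non-rectangular tract's contribution to the identity $\sum_k\bigl(2\chi(\widetilde{C}_k) - \#\pi_0(\partial_v\widetilde{C}_k)\bigr) = 2\chi(C)$ by $\leq -1$, yielding only $N \leq -2\chi(C)$ and hence $D \leq 2 - 4\chi(C)$. But in fact $2\chi(\widetilde{C}_k) - \#\pi_0(\partial_v\widetilde{C}_k)$ is always \emph{even}: going around any boundary circle of a tract, the corners alternate between levels $a_0$ and $a_1$ (each vertical arc is a gradient trajectory joining the two levels), so there is an even number of vertical arcs on each boundary circle, hence $\#\pi_0(\partial_v\widetilde{C}_k)$ is even. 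Since $2\chi(\widetilde{C}_k)$ is trivially even, a non-rectangular (non-degenerate) tract contributes $\leq -2$, giving the sharper bound $N \leq -\chi(C)$. This is exactly what underlies Remark~\ref{rem:tract_topology} and the paper's Step~$1$ assertion $Z \geq D + \chi(C)$. Plugging $N \leq -\chi(C)$ into either your leaf-plus-internal-edge double count or the paper's unified inequality $2\chi(C) \leq 2D - Z - \sum_i N_i$ closes the argument to $D \leq 2 - 3\chi(C)$. Without the parity observation your argument gives only the weaker constant, so the proof as written is incomplete, though it would still suffice for the quadratic area bound in Proposition~\ref{prop:local_area_bound_technical}.
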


Note that $\chi(C) \leq 0$ by \ref{label:l1}, so the number $2 - 3\chi(C)$ from the lemma is indeed positive. Recall that a tract $\widetilde{C}$ is \emph{rectangular} if it has zero genus, two horizontal boundary components, and two vertical boundary components. The proof of Lemma~\ref{lem:tract_geometry} requires the following technical lemma, which asserts that $\widetilde{S}_{4\epsilon_5}(\zeta)$ cannot intersect both vertical boundary components of a rectangular tract. The proof of this lemma is similar to the proof of \cite[Lemma $4.35$]{FH23}.

\begin{lem}\label{lem:tract_geometry_2}
For any point $\zeta \in C$ and any $k$ such that $\widetilde{C}_k$ is rectangular, the surface $\widetilde{S}_{4\epsilon_5}(\zeta)$ does not intersect both vertical boundary components of $\widetilde{C}_k$. 
\end{lem}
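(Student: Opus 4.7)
The plan is to argue by contradiction, and to derive conflicting upper and lower bounds for the $g$-distance between the endpoints of the bottom horizontal arc of $\widetilde{C}_k$. Assume that $\widetilde{S}_{4\epsilon_5}(\zeta)$ meets both vertical boundary components $L_1,L_2$ of the rectangular tract $\widetilde{C}_k$, and pick $\zeta_i \in L_i \cap \widetilde{S}_{4\epsilon_5}(\zeta)$ for $i=1,2$. Since $\widetilde{S}_{4\epsilon_5}(\zeta)$ is the $\widetilde{u}$-preimage of $B_{4\epsilon_5}(\widetilde{u}(\zeta))$, the triangle inequality gives
\[\operatorname{dist}_g(\widetilde{u}(\zeta_1),\widetilde{u}(\zeta_2)) \leq 8\epsilon_5.\]

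Because $\widetilde{C}_k$ is rectangular, $\partial_h^-\widetilde{C}_k$ is a single arc connecting the two vertical components. Flowing down the gradient of $a\circ\widetilde{u}$ along $L_i$ carries $\zeta_i$ to a point $\zeta_i' \in \partial_h^-\widetilde{C}_k$. Property (e) of Proposition~\ref{prop:tract_decomposition} bounds the $\widetilde{\gamma}$-length of each $L_i$ by $c_3(a_1-a_0)\leq c_3\epsilon_5$; since $\widetilde{u}$ pulls back $g$ to $\widetilde{\gamma}$, this yields $\operatorname{dist}_g(\widetilde{u}(\zeta_i),\widetilde{u}(\zeta_i')) \leq c_3\epsilon_5$. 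Combining with the previous display,
\[\operatorname{dist}_g(\widetilde{u}(\zeta_1'),\widetilde{u}(\zeta_2')) \leq 8\epsilon_5 + 2c_3\epsilon_5.\]

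Now I would parameterize $\partial_h^-\widetilde{C}_k$ at unit $\widetilde{\gamma}$-speed by $\ell\colon[0,T]\to \widetilde{C}_k$ from $\zeta_1'$ to $\zeta_2'$, and set $q:=\widetilde{u}\circ\ell$, whose image lies in the level set $\{a=a_0\}$. Property (d) of Proposition~\ref{prop:tract_decomposition} gives $\int\widetilde{\alpha}\in[\epsilon_4,10\epsilon_4]$ along $\ell$. Choosing the perturbation parameter $\epsilon$ in $(u,f)$ sufficiently small, one uses the definition of $\widetilde{\alpha}$ together with $(\delta,\epsilon)$-tameness to conclude that along the level set $\{a\circ\widetilde{u}=a_0\}$ one has $\widetilde{\alpha}=\widetilde{u}^*\lambda$ up to arbitrarily small error, so $\int_q\lambda\in[\epsilon_4,10\epsilon_4]$ and $\lambda(\dot q)>0$; property \ref{label:l6}, transferred from $u$ to $\widetilde{u}$ via Lemma~\ref{lem:metric_comparison}, bounds the measure of $\{t:\lambda(\dot q(t))<1/2\}$ by $\epsilon_4$. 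Thus the hypotheses of Lemma~\ref{lem:geodesic_distance} hold with $\epsilon=\epsilon_4$, giving
\[\operatorname{dist}_g(\widetilde{u}(\zeta_1'),\widetilde{u}(\zeta_2')) > \epsilon_4/2.\]
The definition $\epsilon_5=2^{-24}\min(c_2^{-1},c_3^{-4},\epsilon_4)$ is calibrated precisely so that $8\epsilon_5+2c_3\epsilon_5<\epsilon_4/2$, yielding the desired contradiction.

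The main obstacle is the identification of $\widetilde{\alpha}$ with $\widetilde{u}^*\lambda$ on the level set $\{a\circ\widetilde{u}=a_0\}$ (which is not definitional, but rather holds approximately for small tame perturbations), together with the transfer of the angular hypothesis \ref{label:l6} from the unperturbed to the perturbed picture. Both steps require a careful reading of the quantitative tame perturbation estimates of Fish--Hofer, exactly as in the annular case treated in \cite[Lemma~4.35]{FH23}; the rectangularity hypothesis on $\widetilde{C}_k$ is what lets us replace their annular geometry with a single horizontal arc directly connecting the two relevant boundary components.
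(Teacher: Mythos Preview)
Your argument is essentially the paper's own proof, phrased as a contradiction rather than as a direct lower bound on the extrinsic distance $d$ between the two vertical boundary components; the ingredients (Proposition~\ref{prop:tract_decomposition}(d),(e), condition~\ref{label:l6}, and Lemma~\ref{lem:geodesic_distance}) and the numerical check via \eqref{eq:local_area_const} are identical.

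One remark that simplifies the ``main obstacle'' you flag: by the construction in Step~5 of the proof of Proposition~\ref{prop:tract_decomposition}, the horizontal boundary $\partial_h\widetilde{C}_k$ is contained in $\partial C$, and the tame perturbation $f$ is compactly supported in $C\setminus\partial C$. Hence on the bottom arc $\partial_h^-\widetilde{C}_k$ one has $\widetilde{u}=u$, $\widetilde{\gamma}=\gamma$, and $\widetilde{\alpha}=\alpha=u^*\lambda$ \emph{exactly}; there is no need to shrink $\epsilon$ or invoke Lemma~\ref{lem:metric_comparison} to transfer \ref{label:l6}. The hypotheses of Lemma~\ref{lem:geodesic_distance} then follow directly from \ref{label:l2}, \ref{label:l6}, and Proposition~\ref{prop:tract_decomposition}(d), just as in the paper.
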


\begin{proof}
Let $\widetilde{C} := \widetilde{C}_k$ denote any rectangular tract. Then $\widetilde{C}$ has two horizontal and two vertical boundary components. Let $L$ denote the bottom horizontal boundary component, defined rigorously as the unique horizontal boundary component contained in $(a \circ \widetilde{u})^{-1}(a_0)$. Then $L$ is a compact interval connecting the two vertical boundary components. Write $\partial L = \zeta_+ - \zeta_-$ where $\zeta_{\pm}$ are distinct points in $\widetilde{C}$. Write $\gamma_{\pm}$ for the vertical boundary components intersecting $L$ at $\zeta_{\pm}$ respectively. Write
$$d := \inf_{\substack{z_+\in\gamma_+\\ z_-\in\gamma_-}} \operatorname{dist}_g(\widetilde{u}(z_+), \widetilde{u}(z_-))$$
for the extrinsic distance between $\gamma_+$ and $\gamma_-$. Since both $\gamma_+$ and $\gamma_-$ have length at most $c_3(a_1 - a_0)$ by Proposition~\ref{prop:tract_decomposition}(e), it follows from the triangle inequality that
$$d \geq \operatorname{dist}_g(\widetilde{u}(\zeta_+), \widetilde{u}(\zeta_-)) - 2c_3(a_1 - a_0).$$

Let $q: [0, T] \to C$ denote the unique unit-speed parameterization of $L$ such that $q(0) = \zeta_-$ and $q_+ = \zeta_+$. Apply \ref{label:l6}, Proposition~\ref{prop:tract_decomposition}(d), and Lemma~\ref{lem:geodesic_distance} to the curve $\widetilde{q} := \widetilde{u} \circ q$ to bound $\operatorname{dist}_g(\widetilde{u}(\zeta_+), \widetilde{u}(\zeta_-))$ from below. We deduce the bound
$$d \geq \epsilon_4/2 - 2c_3(a_1 - a_0).$$ 
The right-hand side is seen to be strictly greater than $8\epsilon_5$ using \ref{label:l4} and the bound $\epsilon_5 \leq 2^{-24}\min(c_3^{-3}, \epsilon_4)$.
We conclude that $\gamma_+$ and $\gamma_-$ have extrinsic distance greater than $8\epsilon_5$ from each other. By the triangle inequality, they cannot both intersect $\widetilde{S}_{4\epsilon_5}(\zeta)$ for any choice of $\zeta \in C$. 
 \end{proof}

Lemma~\ref{lem:tract_geometry} is proved by combining Lemmas~\ref{lem:tract_topology_2} and \ref{lem:tract_geometry_2} with a combinatorial argument. 

\begin{proof}[Proof of Lemma~\ref{lem:tract_geometry}] 
Let $\widehat{C}_1, \ldots, \widehat{C}_D$ denote a minimal-size cover of $\widetilde{S}_{4\epsilon_5}(\zeta)$ by tracts. Our goal is to prove the bound $D \leq 2 - 3\chi(C)$. We assume without loss of generality that $D \geq 2$. The proof will take $4$ steps. 

\noindent\textbf{Step $1$:} Write $Z$ for the number of indices $i$ such that $\widehat{C}_i$ is rectangular. This step observes that $Z \geq D + \chi(C)$. This is a direct consequence of Lemma~\ref{lem:tract_topology_2} and Remark~\ref{rem:tract_topology}. It is important that we assume $D \geq 2$ here, to avoid the degenerate case stated in Lemma~\ref{lem:tract_topology_2}(b). 

\noindent\textbf{Step $2$:} This step constructs a connected graph $G$ as follows. The vertices are $\{1, \ldots, D\}$. For any $i \neq j$, we add an edge between them if the tracts $\widehat{C}_i$ and $\widehat{C}_j$ share a vertical boundary component.  The connected surface $\widetilde{S}_{4\epsilon_5}(\zeta)$ intersects each of the tracts $\widehat{C}_i$ since they form a cover of minimal size. This implies that $\bigcup_{i=1}^d \widehat{C}_i$ is connected, which in turn implies that $G$ is connected. 

\noindent\textbf{Step $3$:} Let $i$ be any vertex such that $\widehat{C}_i$ is rectangular. This step shows that $i$ is a vertex of degree $1$, which is equivalent to the assertion that $\widetilde{S}_{4\epsilon_5}(\zeta)$ intersects exactly one vertical boundary component of $\widehat{C}_i$. This assertion follows from applying Lemma~\ref{lem:tract_geometry_2} to the rectangular tract $\widehat{C}_i$ and the fact that $G$ is connected. 

\noindent\textbf{Step $4$:} This step uses Lemma~\ref{lem:tract_topology_2} and some basic graph theory to prove that $D \leq 2 - 3\chi(C)$. For each $i \in \{1, \ldots, D\}$, let $N_i$ denote the degree of the vertex $i$ and $M_i$ denote the number of vertical boundary components of $\widehat{C}_i$. Note that $N_i \leq M_i$ for any $i$. By Step $3$, we have $N_i = 1$ and $M_i = 2$ when $\widehat{C}_i$ is rectangular, so we get the improved bound $N_i \leq M_i - 1$ in this case. We deduce the inequality 
\begin{equation}\label{eq:tract_topology_3} 2\chi(C) \leq \sum_{i=1}^D (2\chi(\widehat{C}_i) - M_i) \leq 2D - Z - \sum_{i=1}^D N_i.\end{equation}

The first inequality uses \eqref{eq:tract_topology_2} and the assertion, proved in Lemma~\ref{lem:tract_topology_2}, that every term on its right-hand side is $\leq 0$. The second inequality uses the observed bounds for each $M_i$ above and the bound $\chi(\widehat{C}_i) \leq 1$. The last two terms on the right are controlled by $D$ and $\chi(C)$. We proved that $Z \geq D + \chi(C)$ in Step $1$. Since $G$ is connected, it has at least $D - 1$ edges. The sum $\sum_{i=1}^D N_i$ is twice the number of edges of $G$, so we conclude that $\sum_{i=1}^D N_i \geq 2D - 2$. Plug these bounds into \eqref{eq:tract_topology_3} to get an upper bound on $D$:
$$2\chi(C) \leq 2D - (D + \chi(C)) - (2D - 2)\quad\Rightarrow\quad D \leq 2 - 3\chi(C).$$
\end{proof}

\subsubsection{Proof of Proposition~\ref{prop:local_area_bound_technical}} 

We can now explain the proof of the technical bound Proposition~\ref{prop:local_area_bound_technical}, contingent on Proposition~\ref{prop:tract_decomposition}.

\begin{proof}[Proof of Proposition~\ref{prop:local_area_bound_technical}]

Use Proposition~\ref{prop:tract_decomposition} to construct a $(\delta, \epsilon)$-tame perturbation $(u, f)$ and a tract decomposition $\{\widetilde{C}_k\}_{k=1}^N$. Recall that $\widetilde{u}$ denotes the perturbed map and that $\widetilde{\gamma} = \widetilde{u}^*g$ denotes the perturbed metric. The proposition states that $\epsilon > 0$ can be chosen to be arbitrarily small; we will choose $\epsilon$ to be smaller than $\epsilon_5$. This implies that $\operatorname{dist}_g(\widetilde{u}(\zeta'), u(\zeta')) < \epsilon_5$ for any $\zeta' \in C$. It follows from this that $S_{\epsilon_5}(\zeta) \subseteq \widetilde{S}_{4\epsilon_5}(\zeta)$. Using this observation and Lemma~\ref{lem:metric_comparison}, it follows that
$$\operatorname{Area}_\gamma(S_{\epsilon_5}(\zeta)) \leq \operatorname{Area}_{\gamma}(\widetilde{S}_{4\epsilon_5}(\zeta)) \leq 2\operatorname{Area}_{\widetilde{\gamma}}(\widetilde{S}_{4\epsilon_5}(\zeta)).$$

So, to complete the proof it suffices to bound the area of $\widetilde{S}_{4\epsilon_5}(\zeta)$ with respect to the metric $\widetilde{\gamma}$. By Lemma~\ref{lem:tract_topology}, each tract has at most $4 - 3\chi(C)$ horizontal boundary components, and by Proposition~\ref{prop:tract_decomposition}(d) it follows that
$$\int_{\partial_h\widetilde{C}_k\,\cap\,(a \circ \widetilde{u})^{-1}(a_0)} \widetilde{\alpha} \leq 10\epsilon_4(4 - 3\chi(C))$$
for each $k$. The area bound in Proposition~\ref{prop:fh_area_bound} shows for each $k$ the bound
$$\operatorname{Area}_{\gamma}(\widetilde{C}_k) \leq 2\operatorname{Area}_{\widetilde{\gamma}}(\widetilde{C}_k) \leq c_0(1 - \chi(C))$$
where $c_0(\eta, J) > 0$ is stable and $k$-independent. By Lemma~\ref{lem:tract_geometry}, $\widetilde{S}_{4\epsilon_5}(\zeta)$ is covered by $2 - 3\chi(C)$ tracts, and the desired area bound follows. 
\end{proof}

\subsubsection{Proof of tract decomposition}\label{subsec:tract_decomp}

To conclude, we need to provide the promised proof of Proposition~\ref{prop:tract_decomposition}, which will take up the remainder of the paper. As we explained in our earlier outline of our arguments, a large part of the proof repeats arguments found in \cite{FH23}, so we only provide sketches for much of this part.  On the other hand, many estimates from \cite{FH23} and many of the assumptions \ref{label:l1}--\ref{label:l6} are used in the proof, and it is crucial to keep careful account of the relevant constants, so even in the sketched parts of the proof we are very precise about the estimates and assumptions used. 

\begin{proof}[Proof of Proposition~\ref{prop:tract_decomposition}]

The proof of Proposition~\ref{prop:tract_decomposition} is simple when $\cA(u) = 0$, and we begin by explaining this:
in this case, the map $u$ is a branched covering map from $C$ onto $[a_0, a_1] \times \gamma$, where $\gamma$ is a closed orbit of $R_\eta$. Cut up $\gamma$ into intervals $\{\cI_\ell\}_{\ell = 1}^M$ with length in $(3\epsilon_4, 4\epsilon_4)$ (or leave it be if it is shorter than that) such that the segments $[a_0, a_1] \times \{z\}$ do not intersect a critical value of $u$ for any $\ell$ and any endpoint $z \in \partial\cI_\ell$. For each $\ell$, define $\widetilde{C}_\ell := u^{-1}([a_0, a_1] \times \cI_\ell)$. The set $\{\widetilde{C}_\ell\}_{\ell = 1}^M$ is the desired tract decomposition. 

Thus, we can assume $\cA(u) > 0$, which will be a standing assumption for the rest of the proof. Fix $(\eta, J) \in \cD(Y)$ and a compact, connected $J$-holomorphic curve $u: C \to \bR \times Y$ satisfying \ref{label:l1}--\ref{label:l6}. The proof takes $5$ steps. The first four steps closely follow the first five steps in the proof of \cite[Proposition $4.30$]{FH23}. The stable constants $c_2$, $c_3$, $\epsilon_4$ from Lemmas~\ref{lem:strip_estimate},~\ref{lem:modest_flow_lines},~\ref{lem:geodesic_distance}, respectively, and the stable constant $\epsilon_5$ from \eqref{eq:local_area_const} will appear frequently.

\noindent\textbf{Step $1$:} This step fixes a $(\delta, \epsilon)$-tame perturbation $(u, f)$ where $\delta$ and $\epsilon$ satisfy suitable bounds. We require $\delta$ to be smaller than a stable constant depending on the map $u$. We also require $\delta \ll c_2\cA(u)$, which is only possible because we are assuming $\cA(u) > 0$. We then require $\epsilon$ to be smaller than a constant depending on $u$ and $\delta$ and smaller than the constant $\epsilon_5$.  The proof that such a perturbation exists is given in Step $1$ of the proof of \cite[Proposition $4.30$]{FH23}.

\noindent\textbf{Step $2$:} Steps $2$--$4$ will show that for a large measure set of initial conditions $\zeta \in \partial_h^- C$, there exists a solution of the gradient flow equation
\begin{equation} \label{eq:gradient_flow} q: [0,T] \to C,\quad q'(s) = \operatorname{grad}_{\widetilde{\gamma}}(a \circ \widetilde{u})(q(s)),\quad q(0) = \zeta \end{equation}
terminating on $\partial_h^+C$. As in \cite{FH23}, we define some relevant sets:
\begin{equation} \label{eq:gradient_flow_sets}
\begin{split}
\cC &:= \{\zeta' \in \operatorname{Crit}(u)\,|\,r(\zeta') = \delta/2\},\\
\cD &:= \{\zeta' \in \operatorname{Crit}(u)\,|\,r(\zeta') < \delta/2\}.
\end{split}
\end{equation}
The set $\cD$ is a union of small disks of radius $\delta/2$, centered at the critical points of $u$, and the set $\cC$ is the union of the boundaries of these disks. 

Step $2$ and its proof in particular follows Step $2$ of the proof of \cite[Proposition $4.30$]{FH23}.  Its goal is to show that solutions to \eqref{eq:gradient_flow} only pass through $\cD$ for a small $\widetilde{\alpha}$-measure set of initial conditions. Calling this set of initial conditions $\cD^- \subseteq \partial_h^-C$, we can prove the bound
\begin{equation} \label{eq:bad_points_1} \int_{\cD^-} \widetilde{\alpha} \leq 4c_2\cA(u).\end{equation}

Here is an outline of the proof of \eqref{eq:bad_points_1}. We note that any gradient flow line starting from a point in $\cD^-$ must intersect the set $\cC'$. Then, using the gradient flow, we construct a disjoint union of strips such that i) their top boundaries lie in $\cC$, ii) their bottom boundaries lie in $\cD^-$ and iii) the total $\widetilde{\alpha}$-measure of the bottom boundaries is close to that of $\cD^-$. The $\widetilde{\alpha}$-measure of $\cC$ is $\lesssim \delta$, which is by Step $1$ much less than $c_2\cA(u)$. Then \eqref{eq:bad_points_1} follows from the height bound \ref{label:l4} and Lemma~\ref{lem:strip_estimate}. 

\noindent\textbf{Step $3$:} This step follows Step $3$ of \cite[Proposition $4.30$]{FH23} and its proof. The perturbed height function $a \circ \widetilde{u}$ is Morse on $C \,\setminus\,\cD$. It follows that $a \circ \widetilde{u}$ has finitely many critical points in $C \,\setminus\,\cD$ and each one is non-degenerate. For each $k \in \{0, 1, 2\}$, write $\cM_k$ for the set of index-$k$ critical points of $a \circ \widetilde{u}$ in $C\,\setminus\,\cD$. The goal of this step is to show, for each $k$, that the set of initial conditions in $\partial_h^- C$ whose gradient flow lines limit to a point in $\cM_0$, $\cM_1$, or $\cM_2$ is small. It is clear that only a finite set $\mathcal{F}^- \subset \partial_h^-C$ of initial conditions have gradient flow lines limiting to a point in $\cM_0$ or $\cM_1$, so it remains to control the $\widetilde{\alpha}$-measure of the set of initial conditions whose gradient flow lines limit to a point in $\mathcal{M}_2$. 

We denote this set by $\mathcal{E}^- \subset \partial_h^-C$ and assert the bound 
\begin{equation}\label{eq:bad_points_2} |\int_{\cE^-}\widetilde{\alpha}| \leq 4c_2\cA(u).\end{equation}

The proof of \eqref{eq:bad_points_2} is similar in style to the proof of \eqref{eq:bad_points_1}. By the Morse lemma, there exists for each point $z \in \cM_2$ a circle $\mathcal{C}_z$ of arbitrarily small length, such that any gradient flow line limiting to $z$ intersects $\mathcal{C}_z$ exactly once transversely. We choose such circles $\mathcal{C}_z$ such that their total length is at most $c_2\cA(u)$.  Using the gradient flow, we construct a disjoint union of strips such that i) their top boundaries lie in $\cC_z$ for some $z \in \cM_2$, ii) their bottom boundaries lie in $\cE^-$ and iii) the total $\widetilde{\alpha}$-measure of the bottom boundaries is close to that of $\cE^-$. Then \eqref{eq:bad_points_2} follows from the height bound \ref{label:l4} and Lemma~\ref{lem:strip_estimate}. 

\noindent\textbf{Step $4$:} This step follows Steps $4$ and $5$ of \cite[Proposition $4.30$]{FH23} and their proofs. It shows that for each closed interval $\cI \subset \partial_h^- C$ satisfying
$$\int_{\cI} \widetilde{\alpha} \geq ( (a_1 - a_0)^{-1} + 10c_2)\cA(u)$$
there exists a solution $q: [0, T] \to C$ to the equation
$$q'(s) = \operatorname{grad}_{\widetilde{\gamma}}(a \circ \widetilde{u})(q(s))$$
such that $q(0) \in \cI$, $q(T) \in \partial_h^+C$, and 
$$\operatorname{length}_{\widetilde{\gamma}}(q([0,T])) \leq c_3(a_1 - a_0).$$

To prove this claim, we define $\cT \subset \partial_h^- C$ to be the complement of the set $\cD^- \cup \cE^- \cup \cF^-$. For any point $\zeta \in \cT$, there exists a solution $q$ to \eqref{eq:gradient_flow} such that $q(0) = \zeta$ and $q(T) \in \partial_h^+ C$. It follows from \eqref{eq:bad_points_1} and \eqref{eq:bad_points_2} that
$$\int_{\partial_h^- C\,\setminus\,\cT} \widetilde{\alpha} = \int_{\cD^- \cup \cE^- \cup \cF^-} \widetilde{\alpha} \leq 8c_2\cA(u).$$
It follows that there exists a finite set $\{\cT_k\}_{k=1}^N$ of pairwise disjoint intervals, each contained in $\cT$, such that 
\begin{equation} \label{eq:bad_points_3} \sum_{k=1}^N \int_{\cT_k}\widetilde{\alpha} \geq \int_{\partial_h^- C}\widetilde{\alpha} - 10c_2\cA(u).\end{equation}

Now write $\cI' := \cI\,\cap\,\cup_{k=1}^N \cT_k$. It follows from \eqref{eq:bad_points_3} that
$$\int_{\cI'} \widetilde{\alpha} \geq \int_{\cI} \widetilde{\alpha} - 10c_2\cA(u) \geq (a_1 - a_0)^{-1}\cA(u).$$

The strips associated to the intervals $\cT_k \cap \cI'$ satisfy the assumptions of Lemma~\ref{lem:modest_flow_lines}. The only nontrivial assumptions to check are (iii) and (iv). Assumption (iii) follows from \ref{label:l4} and Assumption (iv) follows from the inequality above. Applying Lemma~\ref{lem:modest_flow_lines} produces the desired gradient flow line. 

\noindent\textbf{Step $5$:} This step uses the result of the previous step to complete the proof of the proposition. Unlike the other steps, this step does not have any close counterpart in the proof of \cite[Proposition $4.30$]{FH23}. Choose a finite cover $\{\cL_j\}_{j = 1}^M$ of $\partial_h^- C$ satisfying the following properties: 
\begin{enumerate}
\item Each $\cL_j$ is homeomorphic to a closed interval or a circle.
\item $\int_{\cL_j} \widetilde{\alpha} < 5\epsilon_4$. 
\item If $\cL_j$ is homeomorphic to a closed interval, then $\int_{\cL_j} \widetilde{\alpha} > 3\epsilon_4$. 
\item For any $j \neq j'$, the interiors of $\cL_j$ and $\cL_{j'}$ (relative to $\partial_h^-C$) are disjoint. 
\end{enumerate}

For any $j$ such that $\cL_j$ is homeomorphic to an interval, we define a sub-interval $\widehat{\cL}_j \subset \cL_j$ as follows. Write $\zeta_j^-$ and $\zeta_j^+$ for the left and right endpoints of $\cL_j$ with respect to the orientation defined by $\widetilde{\alpha}$. Fix interior points $\zeta_j^0, \zeta_j^1 \in \cL_j$ such that $\zeta_j^0$ is to the left of $\zeta_j^1$ and the following holds. Write $\cL_j^0$, $\cL_j^1$, and $\cL_j^2$ for the sub-intervals with oriented boundaries $\zeta_j^0 - \zeta_j^-$, $\zeta_j^1 - \zeta_j^0$, and $\zeta_j^+ - \zeta_j^1$, respectively. Then we require
$$\int_{\cL_j^i} \widetilde{\alpha} \in (\epsilon_4, 2\epsilon_4)$$ for each $i \in \{0,1,2\}$, and set $\widehat{\cL}_j  := \cL_j^1$. 

Note that by \ref{label:l3}, \ref{label:l5}, and the bound $c_2 \leq 2^{24}\epsilon_5^{-1}$ (see \eqref{eq:local_area_const}), we have the bound $( (a_1 - a_0)^{-1} + 10c_2)\cA(u) \leq \epsilon_4$. Therefore, the interval $\widehat{\cL}_j$ satisfies the required length lower bound in Step $4$. It follows from Step $4$ that for each $j \in \{1, \ldots, M\}$ such that $\cL_j$ is homeomorphic to an interval, there exists a point $\zeta_j \in \widehat{\cL}_j$ and a gradient flow trajectory $q_j: [0, T_j] \to C$ such that $q_j(0) = \zeta_j$, $q_j(T_j) \in \partial_h^+C$, and the length of $q_j([0, T_j])$ is at most $c_3(a_1 - a_0)$. 

Set
$$\dot{C} := C\,\setminus\,\cup_j q_j([0,T_j])$$
and write $\{\dot{C}_k\}_{\ell=1}^N$ for the connected components of $\dot{C}$. For each $k \in \{1, \ldots, N\}$, the closure $\widetilde{C}_k$ of $\dot{C}_k$ relative to $C$ is a tract. We verify that $\{\widetilde{C}_k\}_{k=1}^N$ satisfy the properties of Proposition~\ref{prop:tract_decomposition}. Proposition~\ref{prop:tract_decomposition}(a--c) are evident from the construction. The upper bound in Proposition~\ref{prop:tract_decomposition}(d) follows from the fact that any $L \in \pi_0(\partial_h^-\widetilde{C}_k)$ is contained in the union of at most two of the sets $\cL_j$. The lower bound in Proposition~\ref{prop:tract_decomposition}(d) follows from the fact that if $L$ is not a circle, then $L$ must contain either $\cL_j^0$ or $\cL_j^2$ for some $j$. Proposition~\ref{prop:tract_decomposition}(e) follows from the fact that for each $k$ and each component $L' \in \pi_0(\partial_v\widetilde{C}_k)$, there exists some $j$ such that $L' = q_j([0,T_j])$, and therefore $L'$ has length at most $c_3(a_1 - a_0)$.

\end{proof}

\appendix
\section{Verifying assumptions}\label{sec:verify_monotonicity} 

This short appendix colllects some elementary arguments verifying that important classes of maps and flows satisfy the assumptions of our main results. In the case of surface maps, we show that Hamiltonian surface diffeomorphisms and rational area-preserving $2$-torus diffeomorphisms are monotone. We start with Hamiltonian diffeomorphisms.  

\begin{lem}\label{lem:hamiltonian_monotone}
Any Hamiltonian diffeomorphism of a closed, oriented surface $\Sigma$ is monotone. 
\end{lem}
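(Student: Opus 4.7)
The plan is to use a Hamiltonian isotopy from $\operatorname{id}$ to $\phi$ to construct a diffeomorphism $F: Y_\phi \to S^1 \times \Sigma$ that reduces the question of monotonicity to an elementary computation on the trivial mapping torus.

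Let $\psi_t$ be a Hamiltonian isotopy with $\psi_0 = \operatorname{id}$ and $\psi_1 = \phi$, generated by a time-dependent Hamiltonian $H: S^1 \times \Sigma \to \bR$. I would define $F: Y_\phi \to S^1 \times \Sigma$ by $F([t, p]) = (t, \psi_t(p))$. Since $\psi_1 = \phi$, the two prescriptions $F([1, p]) = (1, \phi(p))$ and $F([0, \phi(p)]) = (0, \phi(p))$ agree modulo the gluing $1 \sim 0$, so $F$ descends to a well-defined diffeomorphism (its inverse is $(t, q) \mapsto [t, \psi_t^{-1}(q)]$). Crucially, $F$ preserves the fibers over $S^1$.

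Using fiber-preservation, the family of symplectic linear maps $d\psi_t: T_p \Sigma \to T_{\psi_t(p)} \Sigma$ assembles into a complex bundle isomorphism $V_\phi \cong F^* \operatorname{pr}_\Sigma^* T\Sigma$, where $\operatorname{pr}_\Sigma: S^1 \times \Sigma \to \Sigma$ denotes projection. Hence
$$c_1(V_\phi) = \chi(\Sigma) \cdot F^* \operatorname{pr}_\Sigma^* \alpha,$$
where $\alpha \in H^2(\Sigma; \bZ)$ is the positive generator. Next, setting $K(t, p) := H(t, \psi_t(p))$, a direct computation using $\iota_{X_{H_t}} \omega = dH_t$ and $\psi_t^* \omega = \omega$ shows
$$F^* \operatorname{pr}_\Sigma^* \omega = \omega_\phi - d(K\,dt).$$
Passing to cohomology, $[\omega_\phi] = A \cdot F^* \operatorname{pr}_\Sigma^* \alpha$, where $A := \int_\Sigma \omega > 0$.

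Comparing the two displayed identities in $H^2(Y_\phi; \bR)$ yields the monotonicity condition $c_1(V_\phi) = (\chi(\Sigma)/A) \cdot [\omega_\phi]$ with explicit constant $c = \chi(\Sigma)/A$. Rationality of $\phi$ is also immediate, since $[\omega_\phi]$ is a positive real multiple of the integral class $F^* \operatorname{pr}_\Sigma^* \alpha$. The main ``work'' is the direct computation of $F^* \operatorname{pr}_\Sigma^* \omega$, but this is a routine unpacking of the differential of $F$ combined with the defining property of the Hamiltonian vector field; I do not anticipate any substantial obstacle.
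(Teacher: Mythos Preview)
Your proposal is correct and follows essentially the same approach as the paper: both construct the fiber-preserving diffeomorphism $Y_\phi \cong S^1 \times \Sigma$ from the Hamiltonian isotopy and use it to compare $c_1(V_\phi)$ and $[\omega_\phi]$ with the corresponding classes for the identity map. The paper packages this as the slightly more general statement that monotonicity is invariant under Hamiltonian isotopy (Lemma~\ref{lem:hamiltonian_isotopy_monotone}) and then reduces to the identity, whereas you carry out the direct computation in one shot and also extract the explicit constant $c = \chi(\Sigma)/A$.
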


Lemma~\ref{lem:hamiltonian_monotone} follows immediately from the next lemma and the easily verified fact that the identity map is monotone. 

\begin{lem}\label{lem:hamiltonian_isotopy_monotone}
Let $\phi$ and $\phi'$ be a pair of Hamiltonian isotopic area-preserving diffeomorphisms of a closed, oriented surface $\Sigma$ equipped with an area form $\omega$. Then $\phi'$ is monotone if and only $\phi$ is. 
\end{lem}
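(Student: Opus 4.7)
The plan is to exhibit an explicit diffeomorphism $F \colon Y_\phi \to Y_{\phi'}$ coming from the Hamiltonian isotopy and show that it preserves both the cohomology class of the mapping torus two-form and the first Chern class of the vertical tangent bundle; monotonicity of $\phi'$ will then pull back under $F^*$ to monotonicity of $\phi$.

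Concretely, let $H \colon \bR/\bZ \times \Sigma \to \bR$ be a time-dependent Hamiltonian whose time-one flow $\psi^1_H$ satisfies $\phi' = \phi \circ \psi^1_H$. As already noted in the proof of Proposition~\ref{prop:2d_mapping_torus}, the map $(t, p) \mapsto (t, (\psi^t_H)^{-1}(p))$ on $[0,1] \times \Sigma$ descends to a diffeomorphism $F \colon Y_\phi \to Y_{\phi'}$ of mapping tori, and pulls back the framed Hamiltonian structure $(dt, \omega_{\phi'})$ on $Y_{\phi'}$ to $(dt, \omega_\phi + dH \wedge dt)$ on $Y_\phi$. Since $H\, dt$ is a well-defined $1$-form on $Y_\phi$ and $d(H\, dt) = dH \wedge dt$, the correction term is exact, and so
\[
F^*[\omega_{\phi'}] = [\omega_\phi] \in H^2(Y_\phi; \bR).
\]
Because $F^*$ is an isomorphism on cohomology sending rational classes to rational classes, this already gives that $[\omega_{\phi'}]$ is a real multiple of a rational class iff $[\omega_\phi]$ is, so rationality is preserved.

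Next I would verify $F^*c_1(V_{\phi'}) = c_1(V_\phi)$. The diffeomorphism $F$ preserves the projection to $\bR/\bZ$, so its derivative sends fibers to fibers, restricting on $\{t\} \times \Sigma$ to $(\psi^t_H)^{-1}$. Because each $\psi^t_H$ is area-preserving, this fiberwise differential is an orientation-preserving linear isomorphism, so $dF$ gives an isomorphism of oriented real rank-$2$ bundles $V_\phi \to F^*V_{\phi'}$. Viewing these as complex line bundles via any choice of compatible almost complex structure, we obtain $F^*c_1(V_{\phi'}) = c_1(V_\phi)$. Combining this with the previous paragraph, the identity $c_1(V_{\phi'}) = c\, [\omega_{\phi'}]$ pulls back under $F^*$ to $c_1(V_\phi) = c\, [\omega_\phi]$ with the same constant $c$, and since $F^*$ is injective on cohomology the two conditions are equivalent. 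This proves the lemma.

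The main obstacle is essentially nonexistent here: the key isomorphism $F$ is already constructed in the paper, and the only things to check are the explicit formula for $F^*\omega_{\phi'}$ (which the paper records) and fiberwise orientation-preservation (which is immediate from area-preservation). Neither step requires any nontrivial computation.
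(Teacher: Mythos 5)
Your proposal is correct and takes essentially the same approach as the paper: both use the diffeomorphism $f_H \colon Y_\phi \to Y_{\phi'}$, $(t,p) \mapsto (t,(\psi^t_H)^{-1}(p))$ and check that it identifies $[\omega_\phi]$ with $[\omega_{\phi'}]$ (via exactness of $dH\wedge dt$) and $c_1(V_\phi)$ with $c_1(V_{\phi'})$. You spell out the Chern class step a bit more explicitly than the paper does, but the argument is the same.
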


\begin{proof}
Choose a Hamiltonian function $H: \bR/\bZ \times \Sigma \to \bR$ whose time-one Hamiltonian flow is $\phi^{-1}\phi'$. Write $\{\psi^t\}_{t \in \bR}$ for the Hamiltonian flow of $H$. We use this choice to identify the mapping torii of $\phi$ and $\phi'$:
$$f_H: j_\phi \simeq Y_{\phi'},\qquad(t, p) \mapsto (t, (\psi^t)^{-1}(p)).$$

We compute 
$$f_H^*c_1(V_{\phi'}) = c_1(V_\phi),\quad f_H^*[\omega_{\phi'}] = [f_H^*\omega_{\phi'}] = [\omega_\phi + dH \wedge dt] = [\omega_\phi].$$

It follows from this computation that $\phi'$ is monotone if and only if $\phi$ is. 
\end{proof}

Next, we show that any rational area-preserving torus diffeomorphism is monotone. 

\begin{lem}\label{lem:torus_monotone}
Write $\mathbb{T}^2 := (\bR/\bZ)^2$ and let $\omega := dx \wedge dy$ denote the standard area form. Any area-preserving diffeomorphism $\phi: \mathbb{T}^2 \to \mathbb{T}^2$ has $c_1(V_\phi) = 0$. Therefore, if $\phi$ is rational, then it is monotone. 
\end{lem}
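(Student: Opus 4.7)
My plan is to prove $c_1(V_\phi) = 0$ by reducing to the case of a linear map $A \in \operatorname{SL}(2,\bZ)$ via smooth isotopy, and then exhibiting an explicit nowhere-vanishing section of $V_A$. The monotonicity of rational $\phi$ will then follow immediately from $c_1(V_\phi) = 0 = 0 \cdot [\omega_\phi]$.

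\emph{Reduction step.} Any orientation-preserving diffeomorphism $\phi$ of $\mathbb{T}^2$ is smoothly isotopic (through diffeomorphisms, not necessarily area-preserving) to the linear representative $A \in \operatorname{SL}(2, \bZ)$ of its mapping class, since $\operatorname{Mod}(\mathbb{T}^2) = \operatorname{SL}(2,\bZ)$. A standard suspension construction applied to such an isotopy yields an orientation-preserving diffeomorphism $Y_\phi \xrightarrow{\sim} Y_A$ that intertwines the projections to $S^1$; taking derivatives, this diffeomorphism identifies $V_\phi$ with $V_A$ as oriented real rank-$2$ bundles. Hence $c_1(V_\phi) = c_1(V_A)$, and it suffices to treat the linear case.

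\emph{Linear case.} Using the translation-invariant framing $T\mathbb{T}^2 \cong \mathbb{T}^2 \times \mathbb{R}^2$, present $V_A$ as the quotient of $\mathbb{R} \times \mathbb{T}^2 \times \mathbb{R}^2$ by the $\bZ$-action $(t, p, v) \sim (t+1, Ap, Av)$. A nowhere-vanishing section constant in the $\mathbb{T}^2$-factor then corresponds to a continuous map $Y : \mathbb{R} \to \mathbb{R}^2 \setminus \{0\}$ satisfying $Y(t+1) = A Y(t)$. Because $\operatorname{SL}(2,\mathbb{R})$ is path-connected, I fix a smooth path $\{A_s\}_{s \in [0,1]}$ in $\operatorname{SL}(2,\mathbb{R})$ with $A_0 = I$, $A_1 = A$, and $\dot A_0 = \dot A_1 = 0$, together with any $v \in \mathbb{R}^2 \setminus \{0\}$; then
\[
Y(t) := A^{\lfloor t \rfloor}\, A_{t - \lfloor t \rfloor}\, v
\]
is a smooth, nowhere-vanishing path (a product of invertible matrices applied to $v \ne 0$). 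It glues continuously across each integer since $A_0 = I$ and $A_1 = A$, and it satisfies $Y(t+1) = A Y(t)$ directly from the formula. This produces a nowhere-vanishing section of $V_A$, so $V_A$ is a trivial oriented rank-$2$ bundle and $c_1(V_A) = 0$, completing the proof.

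The main potential obstacle is the reduction step: although the fact that smoothly isotopic maps have diffeomorphic mapping tori is standard, one must check carefully that the suspension diffeomorphism is constructed fiberwise and so identifies vertical bundles; this is a straightforward but not entirely trivial check. By contrast, the explicit construction of the section in the linear case is completely elementary.
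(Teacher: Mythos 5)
Your proof is correct, and the first step (reducing to the linear map $A \in \operatorname{SL}(2,\bZ)$ representing the mapping class of $\phi$) matches the paper's reduction. Where you diverge is the linear case: the paper splits into cases on $r(A) = \operatorname{rank}\ker(A-\operatorname{Id})$, using a Mayer--Vietoris computation of $H_2(Y_A)$ when $r(A)=0$ (showing the fiber class generates and pairs trivially with $c_1$), and a constant fixed vector $v$ as a section when $r(A) \geq 1$. You instead observe that since $\operatorname{SL}(2,\bR)$ is path-connected, one can interpolate from $I$ to $A$ and twist a constant vector $v$ along this path, producing a nowhere-vanishing section of $V_A$ in all cases at once. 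This is a cleaner, unified argument that avoids both the case split and the homological computation, at the cost of needing the small bookkeeping about boundary derivatives ($\dot A_0 = \dot A_1 = 0$) to make the section smooth, though continuity would already suffice for the Chern class conclusion. Note also the (harmless) sign-convention discrepancy: with the paper's gluing $(1,p)\sim(0,\phi(p))$, the compatibility condition for a $p$-independent section in the translation-invariant framing reads $Y(0) = A\,Y(1)$ rather than $Y(t+1) = A\,Y(t)$; the construction works identically after replacing $A$ by $A^{-1}$ in your formula, or equivalently replacing the path by one from $I$ to $A^{-1}$.
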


\begin{proof}
Fix any area-preserving diffeomorphism $\phi: \mathbb{T}^2 \to \mathbb{T}^2$. For any matrix $A \in \operatorname{SL}(2, \bZ)$, write $\phi_A$ for the torus map $w \mapsto Bw$. Any area-preserving diffeomorphism is isotopic through area-preserving diffeomorphisms to some $\phi_A$. Such an isotopy identifies mapping torii and Chern classes. Therefore, it is sufficient to prove the lemma under the assumption that $\phi = \phi_A$ for some $A \in \operatorname{SL}(2, \bZ)$.

Write $r(A) := \operatorname{rank}(\ker(A - \operatorname{Id}))$. This is an integer between $0$ and $2$, inclusive. We give separate proofs that $c_1(V_{\phi}) = 0$ depending on the value of $r(A)$. If $r(A) = 0$, then it follows from the Mayer--Vietoris sequence that $b_2(Y_\phi) = 1 + r(A) = 1$ and that the second homology group of $Y_\phi$ is generated by a torus fiber. The class $c_1(V_\phi)$ has zero pairing with a torus fiber, so $c_1(V_\phi) = 0$.

Now, assume $r(A) \geq 1$. Then the matrix $A$ fixes some nonzero vector $v \in \bR^2$. Therefore, the differential of $\phi = \phi_A$ fixes the constant vector field $v$ on $\mathbb{T}^2$. This vector field defines a non-vanishing section of the vertical tangent bundle $V_\phi$. We conclude that $c_1(V_\phi) = 0$. 
\end{proof}

Next, we consider flows on $3$-manifolds. We show that the geodesic flow of a Finsler surface is the Reeb flow of a torsion contact form. 

\begin{lem}\label{lem:finsler_torsion}
Let $F$ be a closed Finsler surface. Then there exists a torsion contact form $\alpha$ on the unit tangent bundle $SF$ whose Reeb vector field generates the geodesic flow. 
\end{lem}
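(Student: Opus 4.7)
The plan is to use the classical Hilbert (Cartan) contact form on the unit tangent bundle, and then to observe that the vertical vector field along the fibers of $SF \to F$ provides a nowhere-vanishing section of $\xi$, which actually gives $c_1(\xi) = 0$ (stronger than torsion).

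First I would recall the construction of the Hilbert one-form $\omega$ on $TF \setminus 0$. In local coordinates $(x^1,x^2,v^1,v^2)$ it is given by $\omega = \sum_i (\partial F/\partial v^i)\,dx^i$, where $F$ denotes the Finsler function. Homogeneity of $F$ ensures that $\omega$ descends (after restriction) to a one-form $\alpha$ on the indicatrix bundle $SF = F^{-1}(1)$. A standard computation in Finsler geometry (e.g.\ via the Legendre transform pulling back the canonical one-form on $T^*F$) shows that $\alpha$ is a contact form whose Reeb vector field is the geodesic spray, i.e.\ the generator of the geodesic flow.

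Next I would verify that every vector tangent to a fiber of the projection $\pi \colon SF \to F$ lies in $\xi := \ker\alpha$. From the coordinate formula, vertical vectors are combinations of the $\partial_{v^j}$, and every $dx^i$ annihilates them; hence $\omega$ (and thus $\alpha$) vanishes on vertical vectors.

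Finally, since $F$ is a closed oriented surface, each fiber of $\pi$ is a smooth strictly convex curve in $T_x F$ (the Finsler indicatrix), inheriting a canonical orientation from the orientation of $T_x F$. Consequently $\pi$ is an oriented smooth $S^1$-bundle, and it admits a global smooth nowhere-vanishing vertical vector field $V$ (rotation along fibers). By the preceding step, $V$ is a nowhere-vanishing section of the oriented rank-$2$ real bundle $\xi$. Therefore its Euler class vanishes, and so $c_1(\xi) = 0 \in H^2(SF; \mathbb{Z})$, which is certainly torsion.

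The only subtlety I anticipate is verifying the two Finsler-specific facts used above: that $\alpha$ is genuinely contact with the asserted Reeb field (a classical computation, but non-trivial in the Finsler setting because $F$ is not quadratic), and that the indicatrix bundle is an oriented smooth $S^1$-bundle. Both are standard in Finsler geometry, so I would simply cite them; the rest of the argument is essentially formal.
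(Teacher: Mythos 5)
Your argument works when $F$ is orientable, and in that case it gives a slightly stronger conclusion ($c_1(\xi)=0$ rather than merely torsion). However, you have silently strengthened the hypothesis: the lemma says ``closed Finsler surface,'' not ``closed \emph{oriented} Finsler surface,'' and your step ``since $F$ is a closed oriented surface, \dots\ $\pi$ is an oriented smooth $S^1$-bundle'' genuinely uses orientability of $F$. When $F$ is non-orientable (say $F=\mathbb{RP}^2$), the vertical line bundle $\ell = \ker(d\pi) \subset \xi$ is \emph{not} orientable over $SF$: since $TSF \cong \ell \oplus \pi^*TF$ and $SF$ is orientable, one has $w_1(\ell) = \pi^*w_1(TF)$, and by the Gysin sequence $\pi^*\colon H^1(F;\mathbb{Z}/2) \to H^1(SF;\mathbb{Z}/2)$ is injective, so $w_1(\ell)\neq 0$. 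In that case $\ell$ has no nowhere-vanishing section, so you cannot conclude that $\xi$ is trivial, and indeed $c_1(\xi)$ need not vanish. Since Theorem~\ref{thm:geodesics} is stated for arbitrary closed Finsler surfaces, the non-orientable case cannot be dropped.

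The paper's proof gets around this: instead of trying to trivialize $\ell$, it notes that $\ell \subset \xi$ and $\xi = \ell \oplus J\ell$ exhibits $\xi$ as the complexification $\ell\otimes_{\mathbb{R}}\mathbb{C}$, so the real structure gives an isomorphism $\xi\cong\overline{\xi}$, hence $2c_1(\xi) = c_1(\xi) + c_1(\overline{\xi}) = 0$. This uses only that $\alpha$ vanishes on vertical vectors (which you correctly observe), not that $\ell$ is orientable, and it yields exactly the torsion conclusion the lemma asserts. If you wish to keep your argument, you could either add the orientability hypothesis (and check it suffices for the application), or pass to the orientation double cover and descend; but the cleanest fix is the complexification argument.
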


\begin{proof}
We recall, following \cite{DGZ17}, how to realize a Finsler geodesic flow as a Reeb flow of a contact form. Let $v: TF \to \bR$ denote the Finsler norm and set $H := v^2/2$. Define a $1$-form $\alpha$ on $TF$ by the local coordinate expression $\sum_{i=1}^2 \partial_{q^i} H\, dp^i$, where $p^i$ and $q^i = \partial_{p^i}$ denote local coordinates on the base and fiber. It is proved in \cite[Section $2$]{DGZ17} that $\alpha$ restricts to a contact form on $SF$ and its Reeb vector field generates the geodesic flow. 

Now, we claim that, when we endow it with a complex structure $J$, the $2$-plane bundle $\xi := \ker(\alpha)$ has torsion Chern class. Write $\ell$ for the real line bundle defined as the tangent bundle along the fibers of $SF$. Note that $\alpha$ vanishes on $\ell$, so $\ell$ is a sub-bundle of $\xi$. It follows that $\xi$ is isomorphic to the complexification of $\ell$. The induced complex conjugation map, fixing $\ell$ and acting by $-1$ on $J\ell$, identifies $\xi$ with its conjugate bundle $\overline{\xi}$. It follows that $c_1(\xi)$ is $2$-torsion:
$$2c_1(\xi) = c_1(\xi) + c_1(\overline{\xi}) = c_1(\xi) - c_1(\xi) = 0.$$ 
\end{proof}

\bibliographystyle{abbrv}
\bibliography{main}

\end{document}